\documentclass[10pt,a4paper]{amsart}
\usepackage[T1]{fontenc}
\usepackage{lmodern}
\usepackage{microtype}
\usepackage[a4paper,textwidth=152mm,textheight=232mm,
  centering,headheight=14pt,headsep=12pt,footskip=24pt]{geometry}
\usepackage{amsmath,amssymb,mathtools,bm}
\usepackage{booktabs,tabularx,array}
\usepackage{enumitem}
\usepackage{flafter}
\makeatletter
\def\fps@figure{t}
\def\fps@table{t}
\makeatother
\usepackage{xcolor}
\usepackage{tikz}
\usetikzlibrary{calc}
\usepackage{pgfplots}
\pgfplotsset{compat=1.18}
\usepgfplotslibrary{fillbetween}
\usepackage[hypertexnames=false]{hyperref}
\usepackage[nameinlink,noabbrev]{cleveref}
\hypersetup{colorlinks=true,linkcolor=blue!45!black,
  citecolor=blue!45!black,urlcolor=blue!45!black,
  pdfauthor={Thijs Laarhoven},
  pdftitle={Spherical Statistics and Phase Transitions in High-Dimensional Random Lattices}}
\numberwithin{equation}{section}

\newtheorem{theorem}{Theorem}[section]
\newtheorem{proposition}[theorem]{Proposition}
\newtheorem{lemma}[theorem]{Lemma}
\newtheorem{corollary}[theorem]{Corollary}

\theoremstyle{definition}
\newtheorem{definition}[theorem]{Definition}
\theoremstyle{remark}
\newtheorem{remark}[theorem]{Remark}

\newcommand{\R}{\mathbb R}
\newcommand{\Z}{\mathbb Z}
\newcommand{\E}{\mathbb E}
\newcommand{\Prb}{\mathbb P}
\newcommand{\Var}{\operatorname{Var}}
\newcommand{\vol}{\operatorname{vol}}
\newcommand{\cL}{\mathcal L}
\newcommand{\Rstar}{R^*}
\newcommand{\Rell}{R_\ell}
\newcommand{\1}{\mathbf 1}
\newcommand{\wt}{\omega}
\newcommand{\Rcrit}{R_{\mathrm{crit}}}
\newcommand{\TV}{d_{\mathrm{TV}}}
\newcommand{\Span}{\operatorname{span}}

\definecolor{figblue}{RGB}{0,92,153}
\definecolor{figorange}{RGB}{213,94,0}
\definecolor{figgreen}{RGB}{0,138,98}
\definecolor{figpurple}{RGB}{117,112,179}

\title[Spherical statistics and phase transitions]
{Spherical Statistics and Phase Transitions \\ in
High-Dimensional Random Lattices}
\author{Thijs Laarhoven}
\date{12 September 2026; version 26}
\subjclass[2020]{Primary 11H31; Secondary 60D05, 52C17, 11H50}

\begin{document}
\begin{abstract}
We study the spherical statistics of all the points in a thin shell of a
high-dimensional random lattice. The exact relations between lattice
points make it unclear when predictions based only on spherical geometry
should hold. We answer this question for several statistics, including the
number of shell points, the balance of their directions, and the occurrence,
repetition, and distribution of differences between them. We identify sharp
thresholds as the shell radius grows and show that these statistics undergo
several distinct phase transitions. A shell can already agree with one
geometric prediction while still differing strongly from another. Our
results hold for a single sampled lattice, with probability tending to one
as the dimension grows. They include estimates that hold across a complete
shell, bounds close to the transition thresholds, and extensions to randomly
shifted lattices. A Lean formalization verifies the main results, assuming
the classical formulas and probability model stated in the code.
\end{abstract}

\maketitle
\enlargethispage{5pt}

\section{Introduction and overview}\label{sec:introduction}

\subsection{Random lattices between structure and randomness}

A lattice $\cL\subset\R^n$ is a periodic set of points in dimension $n$.
Its points are linked by exact rules: sums and differences of lattice
points are again lattice points. Even so, some statistics of a
high-dimensional random lattice are close to those of a random point cloud.
We use the Haar-random unimodular model: the lattice has determinant one
and is sampled from the natural invariant probability distribution on such
lattices.

Let $\Rstar$ be the radius of a Euclidean ball of volume one. This is the
natural length scale at which the first nonzero lattice points appear.
We study a thin spherical shell of radius $R=c\Rstar$, where $c>1$ is a
fixed radius factor. Write $S_R$ for the Euclidean shell consisting of
vectors with lengths between $R(1-n^{-2})$ and $R(1+n^{-2})$, and let
$A_R:=\cL\cap S_R$ be the set of all lattice points in this shell. We write
$N_R:=|A_R|$ for the number of these points and $P_R:=\vol(S_R)$ for the
Euclidean shell volume. Both have exponential scale $c^{n+o(n)}$; for the
random count, this holds with high probability. A shell with so many points
may seem similar to a large sample of independent uniform points on a
sphere. Its lattice relations, however, still matter. The central question
is:

\begin{quote}
\emph{Which geometric statistics of the complete shell of one typical
high-dimensional random lattice agree with predictions from a continuous
spherical shell, and how large must the shell radius be for each agreement
to hold?}
\end{quote}

The word \emph{complete} matters: we include every lattice point in the
shell. Classical mean-value formulas average over lattices. We instead
seek statements that hold for all relevant points or differences of one
sampled lattice at once. Such a statement is called \emph{quenched}; an
average over lattices is called \emph{annealed}.

Our main statistic counts pairs of shell points with a given difference.
We call the two points the \emph{parents} of that difference. For a fixed
lattice vector $\mathbf d$, let $r_{\cL,R}(\mathbf d)$ count the possible
first parents:
\begin{equation}\label{eq:intro-representation-count}
 r_{\cL,R}(\mathbf d):=
 \#\{\mathbf x\in A_R:\mathbf x-\mathbf d\in A_R\}.
\end{equation}
Each first parent $\mathbf x$ determines exactly one second parent
$\mathbf y=\mathbf x-\mathbf d$. Thus $r_{\cL,R}(\mathbf d)$ also counts
the ordered pairs of shell points whose difference is $\mathbf d$.
The geometric prediction is the volume where the shell overlaps a
translated copy of itself. We call this overlap a spherical lens and write
\[
 K_R(\mathbf d):=\vol\bigl(S_R\cap(S_R+\mathbf d)\bigr).
\]
Here $S_R+\mathbf d$ is the shell translated by $\mathbf d$.
We ask when $r_{\cL,R}(\mathbf d)$ is close to $K_R(\mathbf d)$ for every
relevant target $\mathbf d$ at once. This tells us which differences occur,
how often they occur, and how they are distributed. A first question is
whether
\[
 A_R\subseteq A_R-A_R:
\]
can every vector in the shell itself be written as a difference of two
points from that same shell? The lens volume predicts the number of
possible parents, and $r_{\cL,R}(\mathbf d)$ counts the lattice points in
that lens.

\subsection{The main answers at a glance}

Saying that a lattice shell ``looks random'' is not precise enough.
Different statistics can give different answers. The results below concern
specific statistics; they do not say that all shell points are jointly
independent.

The following informal statement gives the main results in terms of the
counts just defined. It leaves out error constants and the technical
conditions for more general ranges of difference lengths. The threshold
claims concern fixed radius factors on either side of the stated boundaries.

\begin{theorem}[Main results, informal]
\label{thm:main-overview}
Sample a Haar-random lattice and consider its complete shell $A_R$ at radius
$R=c\Rstar$ for a fixed radius factor $c>1$. As the dimension
tends to infinity, the following conclusions hold with high probability.
The items follow the order of the technical sections.
\begin{enumerate}[label=\textup{(\roman*)},leftmargin=*]
\item \textbf{Threshold $c=1$: shell size} \textup{(see Lemma~\ref{lem:thin-shell-count} for details).}
The number of shell points is close to the Euclidean volume of the shell:
\[
 N_R=(1+o(1))P_R.
\]
\item \textbf{Threshold $c=2/\sqrt3$: parents of every shell vector} \textup{(see Theorem~\ref{thm:coverage} for details).}
Below the threshold, for $1<c<2/\sqrt3$, only an exponentially small
fraction of shell vectors are differences of two shell points. Above it,
for $c>2/\sqrt3$, every shell vector is such a difference, and all parent
counts agree with their lens predictions up to a relative error tending
to zero:
\[
 r_{\cL,R}(\mathbf d)=(1+o(1))K_R(\mathbf d)
 \qquad\text{for every }\mathbf d\in A_R.
\]
\item \textbf{Threshold $c=2/\sqrt3$: parents in a random translate} \textup{(see Corollary~\ref{cor:affine-same-shell} for details).}
For $c>2/\sqrt3$, the same parent-count law holds when the target and
first parent lie in the radius-$R$ shell of one independently and uniformly
translated lattice, while the second parent lies in $A_R$. The numbers of
allowed partners of all translated shell points follow the same lens
prediction. Probability here is over both the lattice and this one random
translate.
\item \textbf{Threshold $c=3\sqrt3/4$: collisions between pairs} \textup{(see Theorem~\ref{thm:energy} for details).}
Let $E$ count the ordered quadruples of shell points for which the first
pair and the second pair have the same difference. Equivalently,
$E=\sum_{\mathbf d\in\cL}r_{\cL,R}(\mathbf d)^2$.
Its exponential growth rate is
\[
 \frac{\log E}{n}
 =\max\left\{2\log c,\,
        3\log c+\log\frac4{3\sqrt3}\right\}+o(1).
\]
The first term is larger below the threshold and comes from unavoidable
coincidences, such as choosing the same pair twice. The second is larger
above the threshold and is the prediction from lens volumes. At the
threshold the two terms are equal.
\item \textbf{Threshold $c=\sqrt2$: the full difference distribution} \textup{(see Theorem~\ref{thm:convolution} for details).}
Let $p(\mathbf d)$ be the probability that two independent uniform shell
points have difference $\mathbf d$. Let $q(\mathbf d)$ assign a
probability to that same lattice vector in proportion to its lens volume:
\[
 p(\mathbf d)=\frac{r_{\cL,R}(\mathbf d)}{N_R^2},
 \qquad
 q(\mathbf d)=\frac{K_R(\mathbf d)}
                   {\sum_{\mathbf z\in\cL}K_R(\mathbf z)}.
\]
Their total-variation (TV) distance is
$\tfrac12\sum_{\mathbf d\in\cL}|p(\mathbf d)-q(\mathbf d)|$. It tends to
one below the threshold, for $1<c<\sqrt2$, and to zero above it, for
$c>\sqrt2$. Both distributions are supported on the same lattice: the
geometric prediction assigns weights to those lattice points.
\item \textbf{Threshold $c=1$: directional balance} \textup{(see Theorems~\ref{thm:dense-shell-isometry} and~\ref{thm:linear-degree-tensor-moments} for details).}
For every fixed $c>1$, project the normalized shell points onto any
direction. Their average squared projection agrees with the prediction
from a uniform sphere, in all directions at once. Higher even moments
agree in the sense described below, through degrees proportional to the
dimension.
\end{enumerate}
\end{theorem}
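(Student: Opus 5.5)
The statement is an informal summary. I would prove it item by item by reducing each item to its technical counterpart. The tools are the same throughout: Siegel's mean-value formula for first moments, Rogers' second-moment formula for variances, and Chebyshev/Markov combined with union bounds to pass from annealed to quenched statements.

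The first step is a set of geometric computations that locate the thresholds.
\begin{itemize}[leftmargin=*]
\item \emph{Item (i).} Siegel's formula gives $\E N_R=P_R$. Rogers' formula gives $\Var N_R=O(P_R)$, up to contributions from collinear vectors, which are negligible because $P_R=c^{n+o(n)}\to\infty$. Chebyshev then gives $N_R=(1+o(1))P_R$, which is Lemma~\ref{lem:thin-shell-count}.
\item \emph{Lens volume.} For $|\mathbf d|=t$, the lens $S_R\cap(S_R+\mathbf d)$ is a thin neighbourhood of an $(n-2)$-sphere of radius $R\sqrt{1-t^2/4R^2}$. Hence
\[
 K_R(\mathbf d)=\bigl(c\sqrt{1-t^2/4R^2}\bigr)^{n+o(n)}.
\]
\item \emph{Coverage threshold.} At $t=R$ this is $(c\sqrt3/2)^{n+o(n)}$. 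It grows exponentially exactly when $c>2/\sqrt3$.
\item \emph{Energy threshold.} The lens prediction for $E$ is $\int K_R(\mathbf z)^2\,d\mathbf z$. Writing $u=t^2/4R^2$, this amounts to maximizing $u^{1/2}(1-u)$ times constants. The optimum is at $u=1/3$ and gives $3\log c+\log\frac4{3\sqrt3}$. The competing term $2\log c$ is the diagonal contribution $E\ge N_R^2$.
\item \emph{Convolution threshold.} Typical differences have $t\approx\sqrt2R$. There $K_R=(c/\sqrt2)^{n+o(n)}$, which explains $c=\sqrt2$.
\end{itemize}

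The second step handles the positive (above-threshold) direction of items (ii)--(v). I would fix a target $\mathbf d$ and treat $r_{\cL,R}(\mathbf d)$ as a lattice-point count in the lens.
\begin{itemize}[leftmargin=*]
\item Conditioning on $\mathbf d\in\cL$ is done via the two-point Siegel--Rogers formula. This yields conditional mean $K_R(\mathbf d)$ and conditional variance $O(K_R(\mathbf d))$, plus terms from rational dependencies between $\mathbf x$ and $\mathbf d$, which I would bound separately.
\item For uniformity over all $\mathbf d\in A_R$, I expect to need higher moments or an exponential-tail bound for lattice counts. A plain variance bound is not enough: the union runs over $c^{n}$ targets while $K_R\approx(c\sqrt3/2)^n$.
\item I would therefore use moments of order growing with $n$, via Rogers' higher-moment formula restricted to linearly independent tuples. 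Dependent tuples are then controlled by counting sublattices.
\item Item (iii) repeats this with a uniform random shift. Randomness over the shift replaces the conditioning on $\mathbf d\in\cL$, which in fact makes the moment computation cleaner.
\item Item (iv) follows by summing these estimates over $\mathbf d$ weighted by length shells. This matches the integral above, and the diagonal term supplies the lower bound $E\ge N_R^2$.
\item Item (v) follows from $\sum_{\mathbf d}|p-q|$. I would split $\mathbf d$ by length: typical lengths where $K_R$ is exponentially large give concentration, and atypical lengths carry negligible $q$-mass.
\end{itemize}

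The third step handles the negative (below-threshold) direction.
\begin{itemize}[leftmargin=*]
\item \emph{Coverage.} The expected number of $\mathbf d\in A_R$ with $r_{\cL,R}(\mathbf d)\ge1$ is at most $\sum\E r\approx P_R K_R$. This is an exponentially small fraction of $P_R$ when $c<2/\sqrt3$, so Markov's inequality gives the claim.
\item \emph{Convolution.} When $c<\sqrt2$, typical $\mathbf d$ have $K_R\to0$. So $p$ is concentrated on a set of $\le N_R^2$ points, each with $r\ge1$, whereas $q$ spreads its mass over far more lattice points. This forces the TV distance to tend to one.
\end{itemize}

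The fourth step is item (vi). I would apply the same first- and second-moment method to the empirical tensor $\sum_{\mathbf x\in A_R}(\mathbf x/|\mathbf x|)^{\otimes 2k}$, tested against symmetric tensors. Uniformity over directions comes from an $\varepsilon$-net of size $e^{O(n)}$. That net is affordable because $N_R$ is exponential with a $c$-dependent rate, provided $k\le\delta n$ with $\delta$ small in terms of $c$.

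The main obstacle is the quenched uniformity in item (ii) just above $2/\sqrt3$. There the lens counts are only $e^{\epsilon n}$, so one needs tail bounds strong enough to survive a union bound over $e^{n\log c}$ targets. This in turn means controlling high Rogers moments, whose error terms come from sublattice dependencies. I would first try Rogers' formula with moment order $m\asymp n$ and bound the dependent-tuple contributions by a crude count of integer matrices of bounded height.
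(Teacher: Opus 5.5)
You correctly identify the quenched step above $2/\sqrt3$ as the crux, but the remedy you propose would not work. Your threshold calculations, item~(i), the first-moment argument below $2/\sqrt3$, the support-counting argument below $\sqrt2$, and the idea of unfolding the random shift in (iii) all agree with the paper.

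\emph{Moment order.} Order $\asymp n$ is neither necessary nor workable with these tools. The paper sums the centered moments over \emph{all} lattice targets, $\E\sum_{\mathbf d\in\cL\cap S_R}|\omega(\mathbf d)r_{\cL,R}(\mathbf d)-m_*|^{2h}$, before applying Markov. For fixed $c>2/\sqrt3$, a \emph{fixed} $h>\log c/\log(c\sqrt3/2)$ already makes the partition part $\vol(S_R)\,m_*^{-h}$ exponentially small. At order $m\asymp n$, your crude count alone is $e^{\Theta(n^2)}$ matrices (already $3^{m^2}$ sign patterns), and the Gram-normalization loss is $e^{O(m^2\log n)}$. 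One exponential penalty per diagram cannot pay for this. The paper keeps $h\le H\log n$, where all such losses are $e^{o(n)}$.

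\emph{The real difficulty.} After Markov, every non-partition diagram lacks one fiber, so it carries the $c$-independent prefactor $\vol(S_R)/m_*=(2/\sqrt3)^{n+o(n)}$. Counting bounded-height matrices gives no compensating factor, and it also ignores the infinitely many matrices with large entries or denominators $q\ge2$. Three ingredients are needed to close the argument.
\begin{enumerate}
\item The longitudinal identity $2A+\sum_jB_j=1$ for height-one columns. It leaves only equality and the reflection $\mathbf x\mapsto\mathbf d-\mathbf x$ as one-parent relations. These dependent tuples are the Poisson main term, with cumulants $2^{b-1}\omega^bK_R$, not errors. By parity, the same identity rules out two-parent relations. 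This is essential: such a relation would cost exactly $(3/4)^{n/2}$ and leave no margin.
\item The Gram bound $\det G\le16/27$ for relations with at least three parents.
\item The Rayleigh bound $e/4$ summed over dyadic coefficient heights, together with $q^{-n}$ summed over denominators.
\end{enumerate}
The resulting margins $(8/9)^n$, $(e/3)^{n/2}$ and $3^{-n/2}$ are what make the shell-wide Markov step work.

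Item~(vi) also fails as proposed. With only a second moment per direction, Chebyshev gives failure probability of order $1/P_R=c^{-n+o(n)}$ per test vector. Any net that controls quadratic forms on $S^{n-1}$ has at least about $2^n$ points. The union bound therefore breaks down for every $c$ below roughly $2$, which is exactly the range containing all the thresholds. For degree $k\asymp n$ it is worse:
\begin{itemize}
\item a degree-$k$ form needs mesh $O(1/k)$, i.e.\ $e^{\Theta(n\log n)}$ points;
\item testing against $u^{\otimes k}$ controls an injective-type norm, not the relative Hilbert--Schmidt norm in the statement.
\end{itemize}
The paper uses no net. It applies the order-two Rogers formula to the norm-one Hilbert-valued map $\mathbf u\mapsto\mathbf u^{\otimes k}$. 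With $Z_k=\sum_{\mathbf x\in A_R}\widehat{\mathbf x}^{\otimes k}$, this gives $\E\|Z_k-P_RM^{(k)}_\sigma\|_F^2\le(2+o(1))P_R$, independently of the ambient dimension. Markov then controls the whole tensor at cost $O(1/(\varepsilon^2P_Rm_{k,n}))$, which yields exactly the condition $\log c+\psi(\kappa/2)>0$. For $k=2$ it controls all directions at once via $\|\cdot\|_{\rm op}\le\|\cdot\|_F$.

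A smaller point concerns (iv). The upper bound must come from an annealed estimate such as $\E\mathcal E\le I_2(R)+e^{o(n)}P^2$ (order-three Rogers) plus Markov, valid for all $c>1$. Pointwise lens estimates are unavailable for the sparse targets that dominate below $c_E$. Above $c_E$, the lower bound also needs the lattice sum of $K_R(\mathbf d)^2$, over a band around $\|\mathbf d\|/R=2/\sqrt3$, to concentrate around $I_2(R)$.
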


The references in each item give the precise assumptions and error bounds.
The main tool is a bound on high moments of parent and partner counts that
holds uniformly over the required parameter ranges
(Propositions~\ref{thm:intrinsic-marked-moments}
and~\ref{thm:affine-marked-moments} in Section~\ref{sec:shell-moments}).
At a fixed density above the coverage threshold, a sufficiently large fixed
moment order is enough. To study a shrinking window around the threshold,
we need orders proportional to $\log n$. The energy, total-variation, and
angular results use only low-order moments. The rest of the introduction
explains the geometry and meaning of these results.

\subsection{Why pair counts in a shell matter}

In high dimensions, volume changes quickly with distance from the origin.
Restricting points to a thin shell keeps their lengths nearly fixed and
lets us study their directions and sums or differences. Spherical-cap and
lens volumes give natural predictions for pair counts. Such predictions
also appear in random-spherical models used to estimate cryptographic
security. Here we ask whether they hold for the complete shell of one
sampled lattice.

With an independent random translate, pairs with one point in the
translated lattice and one in the original lattice describe residual
vectors for a random target in the closest vector problem. The scope below
explains how these geometric results differ from claims about algorithms
or structured lattices.

\subsection{What the difference statistics measure}

We write the shell radius as $R=c\Rstar$. As $c$ grows, the shell contains
more lattice points. Different statistics, however, start agreeing with
spherical predictions at different values of $c$.

Coverage asks whether each relevant difference occurs at least once.
Collision energy counts repeated differences. After normalization, it is
the probability that two independently chosen ordered pairs have the same
difference. Total variation compares the full difference distributions and
bounds their disagreement for every bounded test function. These are
different questions. Total-variation convergence alone guarantees neither
exact coverage nor agreement of collision-energy exponents.

These three thresholds therefore answer different questions about the
same shell. Figure~\ref{fig:shell-phases} brings them together with the
other results of Theorem~\ref{thm:main-overview}, with one row per item.

\begin{figure}[t]
\centering
\begin{tikzpicture}
\begin{axis}[
  width=.87\textwidth,
  height=10.7cm,
  xmin=.94,
  xmax=1.55,
  ymin=.25,
  ymax=6.50,
  axis x line*=bottom,
  axis y line*=left,
  y axis line style={draw=none},
  ytick style={draw=none},
  xlabel={shell radius factor $c$},
  ytick={1,2,3,4,5,6},
  yticklabels={
    \hyperref[thm:main-overview]{(vi)} directional\\balance,
    \hyperref[thm:main-overview]{(v)} full difference\\distribution,
    \hyperref[thm:main-overview]{(iv)} collisions\\between pairs,
    \hyperref[thm:main-overview]{(iii)} parents after\\a random shift,
    \hyperref[thm:main-overview]{(ii)} parents in\\the same shell,
    \hyperref[thm:main-overview]{(i)} shell size},
  xtick={1,1.1547005,1.2990381,1.4142136,1.55},
  xticklabels={$1$,$2/\sqrt3$,$3\sqrt3/4$,$\sqrt2$,$1.55$},
  tick label style={font=\small},
  yticklabel style={font=\footnotesize,align=right},
  label style={font=\small},
  title={the six items of Theorem~\ref{thm:main-overview}},
  title style={font=\small},
  legend style={
    at={(.5,1.065)},anchor=south,legend columns=3,
    draw=none,fill=none,font=\scriptsize,
    /tikz/every even column/.append style={column sep=7pt}
  },
  legend cell align=left,
  clip=false
]
\addlegendimage{gray!70,densely dashed,line width=2.6pt}
\addlegendentry{below threshold}
\addlegendimage{figblue,line width=2.6pt}
\addlegendentry{above threshold}
\addlegendimage{figorange,line width=1.1pt}
\addlegendentry{threshold}
\addplot[gray!70,densely dashed,line width=3pt]
  coordinates {(.94,6) (1,6)};
\addplot[figblue,line width=3pt]
  coordinates {(1,6) (1.55,6)};
\addplot[gray!70,densely dashed,line width=3pt]
  coordinates {(.94,5) (1.1547005,5)};
\addplot[figblue,line width=3pt]
  coordinates {(1.1547005,5) (1.55,5)};
\addplot[gray!70,densely dashed,line width=3pt]
  coordinates {(.94,4) (1.1547005,4)};
\addplot[figblue,line width=3pt]
  coordinates {(1.1547005,4) (1.55,4)};
\addplot[gray!70,densely dashed,line width=3pt]
  coordinates {(.94,3) (1.2990381,3)};
\addplot[figblue,line width=3pt]
  coordinates {(1.2990381,3) (1.55,3)};
\addplot[gray!70,densely dashed,line width=3pt]
  coordinates {(.94,2) (1.4142136,2)};
\addplot[figblue,line width=3pt]
  coordinates {(1.4142136,2) (1.55,2)};
\addplot[gray!70,densely dashed,line width=3pt]
  coordinates {(.94,1) (1,1)};
\addplot[figblue,line width=3pt]
  coordinates {(1,1) (1.55,1)};
\draw[figorange,line width=1.1pt]
  (axis cs:1,5.80)--(axis cs:1,6.20);
\draw[figorange,line width=1.1pt]
  (axis cs:1.1547005,4.80)--(axis cs:1.1547005,5.20);
\draw[figorange,line width=1.1pt]
  (axis cs:1.1547005,3.80)--(axis cs:1.1547005,4.20);
\draw[figorange,line width=1.1pt]
  (axis cs:1.2990381,2.80)--(axis cs:1.2990381,3.20);
\draw[figorange,line width=1.1pt]
  (axis cs:1.4142136,1.80)--(axis cs:1.4142136,2.20);
\draw[figorange,line width=1.1pt]
  (axis cs:1,.80)--(axis cs:1,1.20);
\node[font=\scriptsize,align=center,anchor=north]
  at (axis cs:.968,5.85) {usually\\empty};
\node[font=\scriptsize,align=center,anchor=north]
  at (axis cs:1.275,5.85) {point count matches\\the shell's volume};
\node[font=\scriptsize,align=center,anchor=north]
  at (axis cs:1.067,4.85) {most targets\\have no parents};
\node[font=\scriptsize,align=center,anchor=north]
  at (axis cs:1.352,4.85) {every shell vector has\\the expected number of parents};
\node[font=\scriptsize,align=center,anchor=north]
  at (axis cs:1.067,3.85) {most targets\\have no parents};
\node[font=\scriptsize,align=center,anchor=north]
  at (axis cs:1.352,3.85) {every shell vector has\\the expected number of parents};
\node[font=\scriptsize,align=center,anchor=north]
  at (axis cs:1.147,2.85) {forced coincidences\\set the scale};
\node[font=\scriptsize,align=center,anchor=north]
  at (axis cs:1.424,2.85) {other pair collisions\\set the scale};
\node[font=\scriptsize,align=center,anchor=north]
  at (axis cs:1.207,1.85) {difference pattern\\far from the prediction};
\node[font=\scriptsize,align=center,anchor=north]
  at (axis cs:1.482,1.85) {whole pattern\\matches the prediction};
\node[font=\scriptsize,align=center,anchor=north]
  at (axis cs:.968,.85) {usually\\empty};
\node[font=\scriptsize,align=center,anchor=north]
  at (axis cs:1.275,.85) {balanced averages\\in every direction};
\end{axis}
\end{tikzpicture}
\caption[Thresholds for the six main results.]{Thresholds for the six
items of Theorem~\ref{thm:main-overview}, shown in the same order.
Orange marks a threshold; gray and blue show the ranges below and above
it. Items~\textup{(ii)} and~\textup{(iii)} share one boundary.
Item~\textup{(ii)} uses the same radius for parents and targets;
\textup{(iii)} extends its above-threshold conclusion to a random shift.
In~\textup{(iv)}, forced coincidences include repeating a pair, zero
differences, and pairs related by symmetry. Here ``scale'' means
exponential growth. In~\textup{(v)}, the predicted and observed patterns
are compared on the same lattice.}
\label{fig:shell-phases}
\end{figure}

The three larger thresholds in Figure~\ref{fig:shell-phases} come from two basic
geometric calculations. For a difference of length $\|\mathbf d\|$, define
its relative length, or \emph{relative lag}, by $\alpha:=\|\mathbf d\|/R$.
For fixed $0<\alpha<2$, the lens volume has exponential scale
\[
 K_R(\mathbf d)=\exp\bigl(n g_c(\alpha)+o(n)\bigr),
 \qquad
 g_c(\alpha):=\log c+\frac12\log\left(1-\frac{\alpha^2}{4}\right).
\]
Figure~\ref{fig:lens-exponent-map} shows where this exponent is zero. It
also compares the two terms that determine the collision energy, usually
called \emph{additive energy}.

\begin{figure}[t]
\centering
\begin{minipage}[t]{.49\textwidth}
\centering\vspace{0pt}
\begin{tikzpicture}
\begin{axis}[
  width=\linewidth,height=6.5cm,
  xmin=1,xmax=1.56,ymin=0,ymax=1.50,
  xlabel={shell factor $c$},ylabel={relative lag $\alpha=\|\mathbf d\|/R$},
  xtick={1,1.2,1.4,1.55},
  ytick={0,.5,1,1.4142136},yticklabels={$0$,$0.5$,$1$,$\sqrt2$},
  tick label style={font=\scriptsize},
  label style={font=\footnotesize},
  legend style={at={(0,-.27)},anchor=north west,draw=none,
                fill=none,font=\scriptsize,row sep=1pt},
  legend cell align=left,axis on top,clip=true
]
\path[name path=lenslower] (axis cs:1,0)--(axis cs:1.56,0);
\path[name path=lensupper] (axis cs:1,1.50)--(axis cs:1.56,1.50);
\addplot[name path=lensboundary,draw=none,forget plot,
         domain=1:1.56,samples=300] {2*sqrt(1-1/x^2)};
\addplot[figblue!13,forget plot] fill between[of=lenslower and lensboundary];
\addplot[gray!18,forget plot] fill between[of=lensboundary and lensupper];
\addplot[black,line width=1.05pt,domain=1:1.56,samples=300]
  {2*sqrt(1-1/x^2)};
\addlegendentry{lens boundary $g_c(\alpha)=0$}
\addplot[figorange,only marks,mark=*,mark size=2.2pt]
  coordinates {(1.1547005,1)};
\addlegendentry{coverage: $(2/\sqrt3,1)$}
\addplot[figgreen,only marks,mark=diamond*,mark size=2.8pt]
  coordinates {(1.299038106,1.154700539)};
\addlegendentry{energy: $(3\sqrt3/4,2/\sqrt3)$}
\addplot[figpurple,only marks,mark=square*,mark size=2.2pt]
  coordinates {(1.4142136,1.4142136)};
\addlegendentry{TV: $(\sqrt2,\sqrt2)$}
\node[font=\small,text=figblue!80!black]
  at (axis cs:1.37,.48) {$g_c(\alpha)>0$};
\node[font=\small,text=gray!75!black]
  at (axis cs:1.10,1.17) {$g_c(\alpha)<0$};
\end{axis}
\end{tikzpicture}
\end{minipage}\hfill
\begin{minipage}[t]{.49\textwidth}
\centering\vspace{0pt}
\begin{tikzpicture}
\begin{axis}[
  width=\linewidth,height=6.5cm,
  xmin=1,xmax=1.55,ymin=-.30,ymax=1.10,
  xlabel={shell factor $c$},ylabel={energy exponent},
  xtick={1,1.2990381,1.55},
  xticklabels={$1$,$3\sqrt3/4$,$1.55$},
  ytick={0,.5,1},
  tick label style={font=\scriptsize},
  label style={font=\footnotesize},
  legend style={at={(0,-.27)},anchor=north west,draw=none,
                fill=none,font=\scriptsize,row sep=1pt},
  legend cell align=left,samples=160,clip=true
]
\addplot[figorange,densely dashed,line width=1.4pt,domain=1:1.55]
  {2*ln(x)};
\addlegendentry{diagonal: $2\log c$}
\addplot[figblue,line width=1.4pt,domain=1:1.55]
  {3*ln(x)+ln(4/(3*sqrt(3)))};
\addlegendentry{squared-lens exponent}
\addplot[figgreen,only marks,mark=diamond*,mark size=2.8pt]
  coordinates {(1.299038106,.523248144)};
\addlegendentry{energy threshold: $c=3\sqrt3/4$}
\end{axis}
\end{tikzpicture}
\end{minipage}
\caption[Geometric calculations behind the thresholds.]{Geometric
calculations behind the thresholds. Left: with
$\alpha=\|\mathbf d\|/R$, the black curve is $g_c(\alpha)=0$. The lens
volume is exponentially large in the blue region and exponentially small
in the gray region. The circle and square mark the coverage and TV
thresholds on this curve. The green diamond marks the energy threshold at
$(c,\alpha)=(3\sqrt3/4,2/\sqrt3)$, below the black curve. Its vertical
coordinate is the relative length contributing most to the squared-lens
integral. Right: the same diamond marks where the squared-lens and forced
diagonal contributions have equal exponents. The energy growth exponent is
the larger of the two curves, which meet at $c=3\sqrt3/4$. The estimates in
the text justify these volume predictions for a sampled lattice.}
\label{fig:lens-exponent-map}
\end{figure}

\subsection{One shell and its difference scales}

The simplest result uses the same radius for the endpoints and the target
differences.

All these results start from the same pair counts. Knowing which counts
are positive gives coverage. Summing their squares gives additive energy
and the related collision entropy. Normalizing them gives the difference
distribution used in the total-variation result. If we fix one endpoint
instead, we count its partners in the original or translated lattice.
These statistics describe different aspects of the same pair counts; a
result for one does not by itself prove a result for another.

The parent-count result says more than coverage alone: each target has
approximately the number of representations predicted by its spherical
lens. Because $A_R=-A_R$, the same count also measures the number of
partners available to each centered shell point. This relation between a
shell and its translates is the shell \emph{autocorrelation} studied here.

We can also study differences of other lengths. For $\alpha\in(0,2)$, let
$S_{\alpha R}$ be the Euclidean shell of the same relative width at radius
$\alpha R$. We count differences $\mathbf d\in\cL\cap S_{\alpha R}$ while
both endpoints $\mathbf x,\mathbf y$ stay in the single shell $A_R$.
Thus $S_{\alpha R}$ selects a range of difference lengths; it does not
change the endpoint shell or the lattice. The general estimates control
the representation count of every such difference. They also control every
endpoint's number of partners whose difference lies in that range, under
the stated restrictions on the radius ratios.

Knowing the case $\alpha=1$ does not settle the other cases. For each fixed
$\alpha\ne1$, the two thin shells are disjoint for all sufficiently large
$n$. More generally, a shell of smaller radius is not contained in one of
larger radius. Same-shell coverage therefore does not tell us how
representations are distributed within a given shorter band. This matters
close to the coverage threshold. Choose the endpoint radius just above
$(2/\sqrt3)\Rstar$ and look at a slightly shorter difference band, with
both relative changes on the scale $1/\log n$. Suitable choices bring the
band down to $(2/\sqrt3+o(1))\Rstar$ while leaving enough parents for a
bound that holds for all endpoints and targets at once. This follows from
the same theory for a fixed shell. The argument also covers all nonzero
lattice points inside the resulting critical ball; see
Corollary~\ref{cor:critical-ball-coverage}.

There is also an extension to a randomly shifted lattice, or affine coset.
Choose a shift $\boldsymbol\tau$ independently and uniformly modulo
$\cL$. One endpoint then lies in the translated lattice
$\boldsymbol\tau+\cL$ and the other in $\cL$. Within the stated ranges,
the same lens prediction controls the counts for all endpoints and target
differences in the translated lattice. Its point nearest the origin lies
near $\Rstar$. A further result gives many representations of that point
as a difference, with one parent in each lattice. These results give
geometric estimates for closest vector problems with a random target.
Probability is over both the random lattice and the independent random
shift. We do not claim uniformity over shifts chosen after seeing the
lattice. See Proposition~\ref{thm:affine-pointwise},
Proposition~\ref{prop:affine-minimum}, and
Lemma~\ref{lem:affine-terminal-lens}.

The main difficulty is to control exponentially many targets at once.
The marked-moment estimates in Sections~\ref{sec:rogers-engine}
and~\ref{sec:shell-moments} sum over these targets before applying Markov's
inequality. A further union bound is needed only when combining a given
family of different queries. The bounds on geometric and arithmetic
correction terms hold uniformly over the required ranges. This allows the
moment order to grow when we study shells close to the threshold.

\subsection{Directions and angular tests}

The main contribution concerns the pair counts of a complete shell.
The results on angular isometry and tensor moments describe the shell
directions. Some already follow from lower moments.

For each shell point $\mathbf x$, write
$\widehat{\mathbf x}:=\mathbf x/\|\mathbf x\|$ for its unit direction.
For a test vector $\mathbf u\in\R^n$, the scalar product
$\langle\mathbf u,\widehat{\mathbf x}\rangle$ is its projection onto that
direction. The prediction from a uniform sphere for the average squared
projection is $\|\mathbf u\|^2/n$.

The directional balance in the informal main result means that
\[
 \frac1{N_R}\sum_{\mathbf x\in A_R}
   \langle\mathbf u,\widehat{\mathbf x}\rangle^2
 =(1+o(1))\frac{\|\mathbf u\|^2}{n}
 \qquad\text{for every }\mathbf u\in\R^n.
\]
The relative error tends to zero exponentially fast, with the same bound
for all test vectors. At $c=2/\sqrt3$, higher even directional moments also
match those of a uniform spherical direction. This holds for all even
degrees up to any fixed fraction of the dimension smaller than
$0.080964856\ldots$, with error measured in relative Hilbert--Schmidt norm.

To explain these higher moments, fix a degree $k$ and average every product
of $k$ coordinates of $\widehat{\mathbf x}$ over the shell. Together, these
averages form the empirical $k$-th moment tensor. We compare it with the
tensor obtained from one uniform random direction on the sphere. The
Hilbert--Schmidt error is the square root of the sum of squared differences
between corresponding entries, divided by the norm of the comparison
tensor.

The shell therefore has no preferred direction at the scale measured by
these averages.

We also study bounded nonnegative functions of the angle between two shell
points. We compute their pair sums averaged over lattices, including the
contribution from collinear pairs. We then bound the probability that such
a weighted pair count is unusually large for a sampled lattice. This is
an upper-tail bound; it does not give a matching lower-tail bound or
two-sided concentration. See
Proposition~\ref{thm:weighted-angular-transfer} and
Corollary~\ref{cor:quenched-weighted-kernel-transfer}.
Corollary~\ref{cor:common-cap-statistics} applies this bound to pairs lying
in spherical caps with a common random direction.

\subsection{Contribution and relation to earlier work}
\label{sec:novelty-comparison}

The main contribution is to control the pair counts of one complete
lattice shell with exponentially many points, for all relevant targets at
once. The angular operator and tensor estimates give additional results
for averages of the shell directions. The proofs use the classical
mean-value formulas of Siegel and Rogers \cite{Siegel1945,Rogers1955};
they do not assume that shell points are independent.

Earlier work on high-dimensional random lattices includes Poisson limits
for short vectors, limits for their angles, and moment results for larger
sets \cite{Sodergren2011,SodergrenAngles2011,StrombergssonSodergren2016}.
Our results differ in the statistics studied, the size of the sets, and
the requirement to control all targets at once. We compare each result
with earlier work in its technical section. We also explain which
classical results the moment method uses and where the Euclidean energy
constant comes from.

\subsection{Scope and roadmap}

All statements concern real Haar-random lattices of determinant one,
either centered or with an independent uniform random shift. Whenever we
compare a lattice statistic with a geometric prediction, we specify both
the statistic and the prediction. We make no claim of joint independence
for complete shells or of a full point-process limit. We also do not claim
uniform bounds for shifts chosen after seeing the lattice. Structured
modular or module lattices and finite-precision algorithms are outside the
scope of this paper.

Sections~\ref{sec:setup} and~\ref{sec:comparison-framework} introduce the
model and the geometric predictions. Section~\ref{sec:rogers-engine}
bounds the correction terms in Rogers' formula.
Section~\ref{sec:shell-moments} combines these bounds into the uniform
marked-moment estimates. Section~\ref{sec:growing-rogers} uses these
moments to control all relevant counts for a centered or translated lattice
at once. Sections~\ref{sec:additive-phases}--\ref{sec:convolution} then
treat coverage, additive energy, and total variation, in that order. The
second-moment estimates shared by the energy and total-variation results
are proved before those results. Section~\ref{sec:angular-transfer}
treats angular isometry and tensor moments using moments of order two.
Appendix~\ref{app:angular-kernels} gives the identities and upper-tail
bounds for bounded angular functions. Table~\ref{tab:notation-index} at the
end lists the main notation. The central results appear at the start of
their sections, before the supporting estimates and proofs.

\subsection{Lean formalization}\label{sec:lean-formalization}

All the main theorems of this paper have been formally proved in Lean~4,
under the classical results and probability assumptions stated explicitly
in the code. These assumptions include the Siegel and Rogers mean-value
identities and the spherical Gram-density formula. The full development
compiles and passes its axiom checks. It uses neither proof placeholders
(\texttt{sorry}) nor project-specific axioms.

The Lean source code is available for independent checking,
together with the required dependency versions, build instructions, and a
list of the formalized statements and their assumptions.

\subsection*{Use of AI}

Most of the content of this paper was generated by ChatGPT~5.6 Sol and
ChatGPT~6 Astra. The Lean formalization was produced entirely by AI.
The author is responsible for the final manuscript and for interpreting
the formal verification results.

\section{Setup and continuum-comparison observables}\label{sec:setup}

Let
\[
 X_n:=\mathrm{SL}_n(\R)/\mathrm{SL}_n(\Z)
\]
be the space of unimodular lattices with its Haar probability measure.  We
write $\cL\sim X_n$.  All logarithms are natural, and every asymptotic
statement is as $n\to\infty$.  Unless a theorem states otherwise, named
constants and compact parameter sets are chosen before this limit.
Subscripts on $O_J(\cdot)$ indicate the permitted parameter dependence;
unsubscripted $O(\cdot)$ and $o(\cdot)$ are uniform over the ranges quantified
in the same statement.  The notation $e^{-\Omega_c(n)}$ means a bound
$e^{-a_cn}$ for some $a_c>0$ and all sufficiently large $n$; different
occurrences may use different constants.  Here $c$ is fixed away from
any boundary excluded by the relevant theorem; shrinking windows require
the separate moment--buffer estimates of
Corollary~\ref{cor:moment-buffer}.  Likewise,
$f_n=e^{-\Theta_c(n)}$ means that
$e^{-b_cn}\le f_n\le e^{-a_cn}$ for some $0<a_c\le b_c<\infty$.
For positive sequences, $f_n=\Theta(g_n)$ means that $f_n/g_n$ is bounded
above and below by positive constants.
The random affine extension of $X_n$ is the probability space
\[
 \widetilde X_n:=
 \{(\cL,\boldsymbol\tau):\cL\in X_n,
       \ \boldsymbol\tau\in\R^n/\cL\},
\]
where first $\cL$ is sampled from $X_n$ and then, conditionally on $\cL$,
$\boldsymbol\tau$ is sampled from Haar probability measure on the torus
$\R^n/\cL$.  We identify the second coordinate with the affine lattice
$\boldsymbol\tau+\cL$; all statements are independent of the representative
chosen for $\boldsymbol\tau$.  The closed Euclidean ball of radius $r$ is
$B_r$, and
\[
 \kappa_n:=\vol(B_1),\qquad \Rstar:=\kappa_n^{-1/n},
 \qquad \vol(B_{t^{1/n}\Rstar})=t.
\]
Stirling's formula gives
\begin{equation}\label{eq:Rstar-stirling}
 (\Rstar)^2=\frac{n}{2\pi e}\exp(O(\log n/n)).
\end{equation}
For a Borel function $f$ vanishing at the origin, its Siegel transform is
$\widehat f(\cL)=\sum_{\mathbf x\in\cL\setminus\{0\}}f(\mathbf x)$.
We use thin shells of relative half-width $w_n=n^{-2}$,
\[
 S_R:=\{\mathbf x:R(1-w_n)\le\|\mathbf x\|\le R(1+w_n)\},
\]
and the representation function
\begin{equation}\label{eq:representation-function}
 r_{\cL,R}(\mathbf d):=
 \#\{\mathbf x\in\cL\cap S_R:\mathbf x-\mathbf d\in\cL\cap S_R\}.
\end{equation}

\begin{proposition}[Siegel mean-value formula]\label{thm:siegel}
For $n\ge2$ and every nonnegative integrable $f:\R^n\to\R$,
\[
 \E_{\cL}\widehat f(\cL)=\int_{\R^n}f(\mathbf x)\,d\mathbf x.
\]
\end{proposition}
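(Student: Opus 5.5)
The plan is to follow Siegel's original argument, using the invariance of Haar measure on $X_n$. Define the functional $\mu(f):=\E_{\cL}\widehat f(\cL)$ on nonnegative Borel functions $f$ that vanish at the origin.

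\textbf{Step 1: $\mu$ is a rotation-, shear- and volume-invariant Radon measure away from the origin.} Monotone convergence and linearity of the sum show that $\mu$ is a positive, countably additive functional. Hence $\mu(f)=\int f\,d\nu$ for some Borel measure $\nu$ on $\R^n\setminus\{0\}$. Local finiteness of $\nu$ needs a bound $\E_\cL\#(\cL\cap B_r\setminus\{0\})<\infty$. For this, take a Siegel fundamental domain and use reduction theory: the number of nonzero lattice points in $B_r$ is at most a polynomial in $r/\lambda_1(\cL)$, and $\lambda_1^{-k}$ is integrable for $k<n$. Since $\mathrm{SL}_n(\R)$ acts on $X_n$ preserving Haar measure, and $\widehat{f\circ g}(\cL)=\widehat f(g\cL)$, the measure $\nu$ is $\mathrm{SL}_n(\R)$-invariant.

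\textbf{Step 2: identify $\nu$.} For $n\ge2$, $\mathrm{SL}_n(\R)$ acts transitively on $\R^n\setminus\{0\}$ with stabilizer a unimodular-type subgroup. Up to scaling, the only invariant Radon measure on $\R^n\setminus\{0\}$ is Lebesgue measure. One way to see this: push $\nu$ forward under the transitive action, use uniqueness of invariant measures on the homogeneous space $\mathrm{SL}_n(\R)/\mathrm{Stab}$, and note that Lebesgue measure is one such invariant measure. Hence $\nu=a\cdot\mathrm{Leb}$ for a constant $a\ge0$.

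\textbf{Step 3: show $a=1$.} This is the step I expect to be the main obstacle, since it is the genuine content of the theorem. Apply the formula to $f=\1_{B_r}$ and let $r\to\infty$. Deterministically, $\#(\cL\cap B_r)/\vol(B_r)\to1/\det\cL=1$ for each lattice. To pass the limit inside the expectation, I need a dominated-convergence bound uniform in $r$. Use
\[
 \#(\cL\cap B_r\setminus\{0\})/\vol(B_r)\le C_n\bigl(1+\lambda_1(\cL)^{-(n-1)}\bigr),
\]
which holds via successive minima and Minkowski's second theorem. Integrability of $\lambda_1^{-(n-1)}$ on $X_n$ follows from Step 1's integrability bound, and this is where $n\ge2$ is used. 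Dominated convergence then gives $a=1$.

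\textbf{Extension.} Finally, extend from indicators of balls to all nonnegative integrable $f$ by the measure identification: both sides are measures agreeing as Lebesgue measure. The value $f(0)$ is irrelevant, since $\{0\}$ is Lebesgue-null and excluded from the Siegel transform.
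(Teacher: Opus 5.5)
The paper gives no proof of this statement. It imports Siegel's 1945 theorem as a classical black box; it is also the $k=1$ case of the Rogers formula in Proposition~\ref{thm:rogers-general}, where admissibility forces $D=(1)$ and $c_D=1$. The Lean development takes it as an explicit hypothesis. Your proposal is the standard invariance proof, which is correct in outline:
\begin{itemize}
\item identify $f\mapsto\E_\cL\widehat f(\cL)$ with an $\mathrm{SL}_n(\R)$-invariant Radon measure on $\R^n\setminus\{0\}$;
\item use transitivity and unimodularity of the stabilizer to get $a\cdot\mathrm{Leb}$;
\item fix $a=1$ by lattice-point counting in large balls with dominated convergence.
\end{itemize}
Your route gives a self-contained derivation whose only external inputs are reduction theory and uniqueness of invariant measures on $G/H$. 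The paper's citation buys brevity and treats Siegel and Rogers uniformly as imported results.

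There is one point to repair. In Step~1, a bound by ``a polynomial in $r/\lambda_1$'' must have degree $n$, since it has to capture the $r^n$ growth. But $\E\lambda_1^{-n}=\infty$, because $\Prb[\lambda_1<\epsilon]\asymp\epsilon^n$ makes the integral diverge logarithmically. So local finiteness does not follow as you wrote it. You need the refined bound already in Step~1:
\[
 \#(\cL\cap B_r\setminus\{0\})\le C_n\bigl(r^n+(r/\lambda_1)^{n-1}\bigr).
\]
It follows from $\#(\cL\cap B_r)\le C_n\prod_i\max(1,r/\lambda_i)$ and Minkowski's second theorem. You also need the Siegel-set estimate $\Prb[\lambda_1<\epsilon]=O(\epsilon^n)$. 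Once Step~2 gives $\nu=a\cdot\mathrm{Leb}$ with $a<\infty$, the tail bound $\Prb[\lambda_1\le\epsilon]\le a\kappa_n\epsilon^n$ follows. That bound yields $\E\lambda_1^{-(n-1)}<\infty$ without further input, so Step~3 is not circular.

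A minor correction: the hypothesis $n\ge2$ enters through the transitivity in Step~2, not through integrability of $\lambda_1^{-(n-1)}$. For $n=1$ that integrability is trivial, and the formula fails because $X_1=\{\Z\}$.
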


\section{The continuum-comparison framework}
\label{sec:comparison-framework}

The central question becomes precise only after choosing a common state space
and a mode of comparison.  Let
\[
 A_R(\cL):=\cL\cap S_R,\qquad
 N_R:=|A_R(\cL)|,\qquad P_R:=\vol(S_R).
\]
Whenever $N_R>0$, define the empirical shell and directional probability
measures
\begin{equation}\label{eq:empirical-shell-measures}
 U_{\cL,R}:=\frac1{N_R}\sum_{\mathbf x\in A_R(\cL)}
              \delta_{\mathbf x},
 \qquad
 \nu_{\cL,R}:=\frac1{N_R}\sum_{\mathbf x\in A_R(\cL)}
              \delta_{\mathbf x/\|\mathbf x\|}.
\end{equation}
On the empty-shell event $N_R=0$, set $U_{\cL,R}=\delta_0$
and $\nu_{\cL,R}=\delta_{\mathbf e_1}$, where $\mathbf e_1$ is the
first coordinate unit vector.  All displayed quotients involving $N_R$
are interpreted on $N_R>0$.  Thus the difference law is always a defined
probability measure.  Their continuum counterparts are normalized Lebesgue measure
$\sigma_R=P_R^{-1}\mathbf1_{S_R}(\mathbf x)d\mathbf x$ and normalized
spherical measure $\sigma_{n-1}$.  At normalized radius $c=R/\Rstar$,
we also write $U_{\cL,c}:=U_{\cL,R}$.

For any probability measure $\mu$ on $\R^n$, let $\widetilde\mu$ denote its
reflection, defined by $\widetilde\mu(E):=\mu(-E)$.  We use the normalization
\[
 \TV(\mu,\nu):=\sup_{E\ \mathrm{Borel}}|\mu(E)-\nu(E)|;
\]
on a common countable support this equals
$\tfrac12\sum_x|\mu(\{x\})-\nu(\{x\})|$.

The difference of two independent shell points has empirical law
\begin{equation}\label{eq:empirical-difference-law}
 \Gamma_{\cL,R}:=U_{\cL,R}*\widetilde U_{\cL,R},
 \qquad
 \Gamma_{\cL,R}(\{\mathbf d\})
 =\frac{r_{\cL,R}(\mathbf d)}{N_R^2}.
\end{equation}
The continuous difference law $\sigma_R*\widetilde\sigma_R$ has density
\[
 \frac{K_R(\mathbf d)}{P_R^2},\qquad
 K_R(\mathbf d):=\vol\bigl(S_R\cap(S_R+\mathbf d)\bigr).
\]
Since $\Gamma_{\cL,R}$ is atomic and the continuous law is
absolutely continuous, their total-variation distance is identically one.
The meaningful lattice-supported continuum comparator is therefore
\begin{equation}\label{eq:framework-sampled-comparator}
 \Pi_{\cL,R}(\mathbf d)
 :=\frac{K_R(\mathbf d)}{Z_{\cL,R}},
 \qquad
 Z_{\cL,R}:=\sum_{\mathbf z\in\cL}K_R(\mathbf z),
 \qquad \mathbf d\in\cL.
\end{equation}
Siegel's formula predicts $Z_{\cL,R}\approx P_R^2$; the quenched
normalization and the comparison
$\TV(\Gamma_{\cL,R},\Pi_{\cL,R})$ are proved later.

This leads to three distinct questions.
\begin{enumerate}[leftmargin=*]
\item \emph{Pointwise incidence comparison:} for how many, or for all, target
differences $\mathbf d$ is $r_{\cL,R}(\mathbf d)$ close to the lens volume
$K_R(\mathbf d)$?
\item \emph{Normed autocorrelation comparison:} when do support,
$\ell^2$ collision statistics, and total variation agree with their
continuum predictions?
\item \emph{Marginal angular comparison:} for which scalar, matrix, tensor,
or kernel tests does $\nu_{\cL,R}$ act like $\sigma_{n-1}$?
\end{enumerate}
The theorems answer substantial parts of each question, but they do not
collapse the three notions into one.  In particular, angular moment agreement
does not imply pointwise lens regularity, and support coverage does not imply
total-variation convergence.

For the affine space, replace the first measure in
\eqref{eq:empirical-shell-measures} by the empirical measure on
$(\boldsymbol\tau+\cL)\cap S_R$.  Differences of two points in the same
affine coset are centered lattice vectors, whereas a mixed affine--centered
difference remains in the affine coset.  This is why the marked theory has
both an affine vertex and centered increments.  Averaging over the
independent Haar mark removes one affine column exactly, but it does not
yield a theorem uniform over every shift of a fixed lattice.

\section{The growing-order Rogers moment method}\label{sec:rogers-engine}
This section isolates the growing-order Rogers machinery used by the marked
moment calculations.  It records the exact finite-dimensional formula, the
spherical and Poisson tools, and exhaustive bounds for partition, geometric
nonpartition, and arithmetic/large-coefficient diagrams.  Throughout
Sections~\ref{sec:rogers-engine}--\ref{sec:shell-moments},
write $\ell:=\lceil\log n\rceil$ and let $h$ be the positive integer
half-order of a centered moment.  Uniform assertions allow
$1\le h\le H\log n$ for any fixed $H>0$; the near-critical application
in Section~\ref{sec:growing-rogers} specializes to $h=\ell$.

The mean-value formulas of Siegel and Rogers
\cite{Siegel1945,Rogers1955,RogersMoments1955,Rogers1956}, the Wishart/Gram
density, and the Poisson cumulant identities are classical.
Han~\cite[Propositions~2.2--2.4]{Han2024} records the arithmetic/height split,
a geometric remainder, and the partition terms for product indicators.
These estimates already have explicit order dependence; merely taking
$k=O(\log n)$ is not the new step.
Here the kernels couple a mark to thin lens fibers.
Proposition~\ref{prop:uniform-diagram-budget} retains their exact weighted
fiber powers while summing all nonpartition matrices uniformly for
$k\le2H\log n+1$.  The shell-specific additions are the four scalar support
relations, the first nonpartition column under a product angular measure,
and the separate denominator and height sums.  The $16/27$ and $e/4$
estimates are proved below for these kernels; the factor $q^{-n}$ is inherited
from Rogers' coefficient.

The proof starts with the exact formula in
Proposition~\ref{thm:rogers-general}, the four kernels and their scalar
certificates in Lemmas~\ref{lem:four-raw-kernels}--%
\ref{lem:four-kernel-relation-certificate}, and the first nonpartition
column in Lemma~\ref{fs-lem:first-offending}.  The geometric estimate and
the separate denominator/height sums then enter the complete classification
of Lemma~\ref{lem:complete-diagram-audit} and
Proposition~\ref{prop:uniform-diagram-budget}.  Section~\ref{sec:shell-moments}
evaluates the partition terms and applies this common correction bound.

\subsection{The exact Rogers interface}
\begin{proposition}[Rogers' multiple mean-value formula in the Han normalization]\label{thm:rogers-general}
Let $1\le k\le n-1$ and let $F:(\R^n)^k\to\R_{\ge0}$ be bounded, Borel measurable, and compactly supported, with $F=0$ whenever any coordinate is zero.  For $1\le r\le k$ and $q\ge1$, let $\mathcal D^k_{r,q}$ be the set of integral $r\times k$ matrices $D$ for which there are pivot columns
\[
 1=j_1<j_2<\cdots<j_r\le k
\]
with $j_1=1$ as part of Han's admissibility convention, such that,
writing $[D]^j$ for the $j$-th column of $D$,
\begin{enumerate}[label=(\roman*),leftmargin=*]
\item every column of $D$ is nonzero;
\item $([D]^{j_1},\ldots,[D]^{j_r})=qI_r$;
\item $D_{ij}=0$ whenever $j<j_i$;
\item the gcd of all entries of $D$ is $1$.
\end{enumerate}
Then
\begin{align}
 &\E_{\cL}\sum_{\mathbf{v}_1,\ldots,\mathbf{v}_k\in\cL\setminus\{0\}}
 F(\mathbf{v}_1,\ldots,\mathbf{v}_k)\notag\\
 &\qquad=\sum_{r=1}^k\sum_{q\ge1}\sum_{D\in\mathcal D^k_{r,q}}
 c_D\int_{(\R^n)^r}
 F\!\left(\frac1q(\mathbf{v}_1,\ldots,\mathbf{v}_r)D\right)
 \,d\mathbf{v}_1\cdots d\mathbf{v}_r,\label{eq:rogers-general}
\end{align}
where
\begin{equation}\label{eq:rogers-coeff}
 c_D=
 \frac{\#\{\mathbf{a}\in\{0,1,\ldots,q-1\}^r:\mathbf{a}D/q\in\Z^k\}^{\,n}}
 {q^{nr}}.
\end{equation}
Moreover,
\begin{equation}\label{eq:rogers-denom}
 c_D=1\quad(q=1),\qquad c_D\le q^{-n}\quad(q\ge2).
\end{equation}
\end{proposition}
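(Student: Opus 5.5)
The identity \eqref{eq:rogers-general} is classical, so I will not rederive it. I would import it from Rogers' mean-value theorem \cite{Rogers1955}, in the normalization recorded by Han~\cite[Propositions~2.2--2.4]{Han2024}, and check only that the admissibility conventions (i)--(iv) and the coefficient \eqref{eq:rogers-coeff} match that source. The underlying reasoning is as follows. One groups the $k$-tuples $(\mathbf v_1,\dots,\mathbf v_k)$ of nonzero lattice vectors by the rank $r$ of the $\Q$-span of the tuple. Each tuple is written as $\frac1q(\mathbf v_{j_1},\dots,\mathbf v_{j_r})D$, where the pivot columns are the first columns that increase the rank, and the conditions (i)--(iv) select one representative per class. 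Averaging over $X_n$ then turns the sum over the primitive $r$-tuple into a Lebesgue integral, with the density factor $c_D$ accounting for sublattice indices. The hypotheses $1\le k\le n-1$, boundedness and compact support are exactly those under which the source guarantees absolute convergence. The only part that needs checking here is \eqref{eq:rogers-denom}.

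For $q=1$, the set $\{0\}^r$ has the single element $\mathbf a=0$, which trivially satisfies the condition, so $c_D=1$. For $q\ge2$, let $G\subseteq(\Z/q\Z)^r$ be the set of residues $\mathbf a$ with $\mathbf aD\equiv0 \pmod q$. This is a subgroup: it is the kernel of the homomorphism $\phi:(\Z/q\Z)^r\to(\Z/q\Z)^k$, $\mathbf a\mapsto\mathbf aD \bmod q$. Hence
\[
 |G| = q^r/|\operatorname{im}\phi|,
\]
and it suffices to show $|\operatorname{im}\phi|\ge q$. That gives $c_D=|G|^n q^{-nr}\le q^{n(r-1)}q^{-nr}=q^{-n}$.

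The main step is bounding the image from below, and I would use the gcd condition (iv). The image is the row span of $D$ reduced mod $q$. Suppose every element of the image had order dividing some proper divisor $q'$ of $q$. Then $q'D\equiv0\pmod q$ entrywise, so $q/q'>1$ divides every entry of $D$, which contradicts (iv). Therefore the exponent of the finite abelian group $\operatorname{im}\phi$ is exactly $q$. A finite abelian group contains an element whose order equals its exponent, so $\operatorname{im}\phi$ contains an element of order $q$, and hence $|\operatorname{im}\phi|\ge q$.

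Only (iv) is used for the bound; the pivot structure (ii) merely shows that pivot columns impose no constraint. I expect the only real obstacle to be the bookkeeping: confirming that Han's convention $j_1=1$ and the exact definition of the numerator in \eqref{eq:rogers-coeff} agree with the cited source, so that the imported identity and the bound refer to the same $c_D$.
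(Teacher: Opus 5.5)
Your proposal is correct. For the identity \eqref{eq:rogers-general} it does what the paper does: it imports Rogers' Theorem~4 in Han's normalization and checks that conditions (i)--(iv), including $j_1=1$, and the coefficient \eqref{eq:rogers-coeff} match the source. One bookkeeping correction: the paper locates this in Han's Theorem~2.1 and Eq.~(2.2). Han's Propositions~2.2--2.4 are the remainder and partition estimates used later, so they are not the right place to check the conventions.

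The two routes differ on \eqref{eq:rogers-denom}. The paper simply cites the sentence following Han's Eq.~(2.2). You instead prove the bound:
\begin{itemize}
\item You identify the numerator of $c_D$ with $|\ker\phi|$ for $\phi:(\Z/q\Z)^r\to(\Z/q\Z)^k$, $\mathbf a\mapsto\mathbf aD\bmod q$.
\item You use primitivity (iv) to show that $\operatorname{im}\phi$ has exponent exactly $q$. The rows of $D$ reduce into the image, so an exponent $q'\mid q$ with $q'<q$ would force $q/q'\ge2$ to divide every entry.
\item Hence $\operatorname{im}\phi$ contains a cyclic subgroup of order $q$, so $|\ker\phi|\le q^{r-1}$ and $c_D\le q^{-n}$.
\end{itemize}
That argument is sound.

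Your route gives a self-contained check of the coefficient bound that all the arithmetic correction estimates rely on. It also shows that the bound uses only (iv) and integrality of $D$, not the echelon conditions (ii)--(iii). The paper's route is shorter but leaves that bound entirely to the cited source.
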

\begin{proof}
Rogers' original general formula is Theorem~4 of~\cite[Theorem~4, eqs.~(9)--(11), pp.~251--252]{Rogers1955}.  We use exactly the row-echelon/admissible-matrix normalization stated by Han~\cite[Section~2, Theorem~2.1, Eq.~(2.2)]{Han2024}: Han's four defining conditions are the four conditions above, including $1=j_1$.  The coefficient in Han's Eq.~(2.2) is exactly~\eqref{eq:rogers-coeff}, and the sentence immediately following that equation records $c_D\le q^{-n}$.  Since all kernels to which we apply~\eqref{eq:rogers-general} are nonnegative raw-moment kernels, no signed interchange is required; centering is performed only after the raw terms are evaluated.  For general convergence issues see Schmidt~\cite{Schmidt1958}.

\end{proof}

\begin{lemma}[Measurability of the concrete lattice kernels]
\label{lem:haar-kernel-measurable}
Let $F:(\R^n)^k\to[0,\infty]$ be Borel measurable.  Then its Siegel
transform
\[
 \widehat F(\cL):=
 \sum_{\mathbf v_1,\ldots,\mathbf v_k\in\cL\setminus\{0\}}
 F(\mathbf v_1,\ldots,\mathbf v_k)
\]
is measurable on $X_n$.  The same is true on $\widetilde X_n$ when finitely
many coordinates are required to lie in the affine coset, and for every
finite product, finite sum, and nonnegative power of the shell indicators
used below.  Consequently all raw marked moments in
Sections~\ref{sec:shell-moments}--\ref{sec:growing-rogers} are legitimate
nonnegative measurable integrands and Tonelli may be applied before any
centering operation.
\end{lemma}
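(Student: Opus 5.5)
The plan is to reduce measurability of $\widehat F$ to measurability of a countable supremum of finite sums of compositions of Borel maps. I will work on $G:=\mathrm{SL}_n(\R)$ rather than on $X_n$. A Borel function on $X_n$ is the same thing as a right-$\mathrm{SL}_n(\Z)$-invariant Borel function on $G$, because the quotient map $\pi:G\to X_n$ is open and continuous and admits a Borel fundamental domain. Concretely, $\cL=g\Z^n$ is identified with $\pi(g)$. I will therefore show that
\[
 \Phi(g):=\sum_{\mathbf m_1,\ldots,\mathbf m_k\in\Z^n\setminus\{0\}}F(g\mathbf m_1,\ldots,g\mathbf m_k)
\]
is a Borel function $G\to[0,\infty]$ and that $\Phi(g\gamma)=\Phi(g)$ for $\gamma\in\mathrm{SL}_n(\Z)$. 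Invariance holds because $\gamma$ permutes $\Z^n\setminus\{0\}$ bijectively, so it only reindexes the sum. Since all terms are nonnegative, reordering is legitimate in $[0,\infty]$.

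For measurability, fix one index tuple $(\mathbf m_1,\ldots,\mathbf m_k)$. The map $g\mapsto(g\mathbf m_1,\ldots,g\mathbf m_k)$ is continuous from $G$ to $(\R^n)^k$. Composing it with the Borel function $F$ gives a Borel function of $g$. The index set $(\Z^n\setminus\{0\})^k$ is countable, so $\Phi$ is the supremum of the increasing sequence of finite partial sums over an exhausting chain of finite subsets. Each partial sum is Borel, hence so is the supremum, with values in $[0,\infty]$. The invariant Borel function $\Phi$ then descends to a measurable function on $X_n$. To see this, note that for a Borel set $B\subseteq[0,\infty]$ the set $\Phi^{-1}(B)$ is saturated and Borel. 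Its image $\pi(\Phi^{-1}(B))$ equals $\pi(\Phi^{-1}(B)\cap\mathcal F)$ for a Borel fundamental domain $\mathcal F$. Since $\pi$ restricted to $\mathcal F$ is a Borel isomorphism onto $X_n$, this image is Borel.

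The affine case on $\widetilde X_n$ works the same way. I parametrize $(\cL,\boldsymbol\tau)$ by pairs $(g,\mathbf t)\in G\times[0,1)^n$ via $\boldsymbol\tau=g\mathbf t$. A point of the coset $\boldsymbol\tau+\cL$ is then $g(\mathbf t+\mathbf m)$ with $\mathbf m\in\Z^n$, which is jointly continuous in $(g,\mathbf t)$ for each fixed $\mathbf m$. The sum over indices is again countable. Invariance under the action $(g,\mathbf t)\mapsto(g\gamma,\gamma^{-1}\mathbf t\bmod\Z^n)$, combined with a shift of $\mathbf m$, again only reindexes the sum. The Haar measure on $\widetilde X_n$ is the pushforward of Haar measure on $G$ times Lebesgue measure on a Borel fundamental domain of the semidirect-product action, so the same descent argument applies.

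The remaining closure claims are immediate. Shell indicators $\1_{S_R}$ are Borel because $S_R$ is closed. Finite products, finite sums and nonnegative powers of nonnegative measurable functions into $[0,\infty]$ are measurable, with the convention $0\cdot\infty=0$. Raw marked moments are Siegel transforms of such products, so they are covered by the argument above. Tonelli then applies before centering because every integrand is nonnegative and jointly measurable. The only step needing care is the descent from $G$ to $X_n$ and $\widetilde X_n$, and the Borel fundamental-domain argument above is how I would handle it.
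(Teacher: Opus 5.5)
Your proposal is correct and follows essentially the same route as the paper: pull the transform back to $\mathrm{SL}_n(\R)$, write it as a countable sum of nonnegative Borel functions $g\mapsto F(g\mathbf m_1,\ldots,g\mathbf m_k)$, and use right $\mathrm{SL}_n(\Z)$-invariance to descend to $X_n$, handling the affine space by writing coset points as $g(\mathbf t+\mathbf m)$ over a fundamental domain. The only difference is that you spell out the descent step, via saturated preimages and the Borel isomorphism $\pi|_{\mathcal F}$, where the paper simply appeals to the quotient Borel structure.
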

\begin{proof}
Pull the transform back to $\mathrm{SL}_n(\R)$.  There it is the countable sum
\[
 \sum_{\mathbf m_1,\ldots,\mathbf m_k\in\Z^n\setminus\{0\}}
 F(g\mathbf m_1,\ldots,g\mathbf m_k)
\]
of nonnegative Borel functions of $g$.  It is therefore Borel and is invariant
under the right action of $\mathrm{SL}_n(\Z)$, so it descends to the quotient
Borel structure on $X_n$.  On the affine space, use a Borel fundamental domain
and write every affine point as $\boldsymbol\tau+g\mathbf m$; the same
countable-sum argument applies.  Products and finite sums of the particular
indicator kernels remain Borel, and an integer power is a finite product.
The zero-coordinate convention in
Proposition~\ref{thm:rogers-general} is enforced by inserting the corresponding
indicators.  All kernels are nonnegative, so Tonelli requires no prior
integrability assertion.
\end{proof}

\begin{lemma}[Growing order is within the exact Rogers range]\label{lem:growing-order-legal}
For every fixed $H>0$, all sufficiently large $n$, and integers
$1\le h\le H\log n$, every tuple order $k\le2h+1$ satisfies
\[
 k\le n-1,\qquad
 n\ge\left\lfloor\frac{k^2}{4}\right\rfloor+3.
\]
The first inequality is the validity condition for the exact formula in
Proposition~\ref{thm:rogers-general}.  The second is the sufficient range
for the product-indicator remainder estimate recorded in
\cite[Proposition~2.2]{Han2024}; it is not an additional hypothesis of the
identity.  In particular the specialization $h=\ell$ is admissible.
\end{lemma}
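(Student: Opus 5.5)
This is a direct elementary verification. Fix $H>0$ and let $1\le h\le H\log n$, so every tuple order satisfies
\[
 k\le 2h+1\le 2H\log n+1.
\]
Both inequalities in Lemma~\ref{lem:growing-order-legal} will follow from comparing this logarithmic bound with the linear and quadratic growth in $n$. There is no probabilistic content here; the only care needed is to make "sufficiently large $n$" depend on $H$ alone, and not on $h$ or $k$.

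\emph{First inequality.} We need $2H\log n+1\le n-1$. Since $\log n/n\to0$, there is $n_1(H)$ such that $2H\log n+2\le n$ for all $n\ge n_1(H)$. For such $n$, every admissible $k$ satisfies $k\le n-1$. This is exactly the hypothesis $1\le k\le n-1$ of Proposition~\ref{thm:rogers-general}, so the exact formula applies to every tuple order that arises.

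\emph{Second inequality.} Bound the floor by the quantity inside it:
\[
 \left\lfloor\frac{k^2}{4}\right\rfloor+3\le\frac{(2H\log n+1)^2}{4}+3=H^2(\log n)^2+H\log n+\frac{13}{4}.
\]
Since $(\log n)^2/n\to0$, there is $n_2(H)$ such that this right-hand side is at most $n$ for all $n\ge n_2(H)$.

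Taking $n\ge\max\{n_1(H),n_2(H)\}$ gives both inequalities simultaneously, for all $h$ in the range and all $k\le 2h+1$. The remark that the second inequality is only a sufficient range for Han's remainder estimate \cite[Proposition~2.2]{Han2024}, and not a hypothesis of the identity, needs no proof: one just notes that Proposition~\ref{thm:rogers-general} requires only $k\le n-1$.

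\emph{Specialization $h=\ell$.} Here $\ell=\lceil\log n\rceil\le\log n+1\le 2\log n$ for $n\ge3$. So $h=\ell$ lies in the range $1\le h\le H\log n$ with $H=2$, and the general case applies. Alternatively, one can absorb the $+1$ by using $k\le 2\log n+3$ directly in the two estimates above.

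The only point needing attention is this last bookkeeping step, because $\ell$ is a ceiling rather than a fixed multiple of $\log n$. Beyond that there is no real obstacle: the threshold on $n$ is explicit in $H$ and uniform in $h$ and $k$.
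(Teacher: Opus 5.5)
Your proposal is correct and follows essentially the same route as the paper, which simply observes that $k=O_H(\log n)=o(n)$ and $k^2=O_H(\log^2 n)=o(n)$ uniformly in the range. Your explicit thresholds $n_1(H),n_2(H)$ and the check $\ell\le 2\log n$ for $n\ge3$ (so $h=\ell$ fits with $H=2$) are just a more detailed version of that same observation.
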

\begin{proof}
Uniformly in the stated range, $k=O_H(\log n)=o(n)$ and
$k^2=O_H(\log^2n)=o(n)$, proving both inequalities.
All Rogers orders in the marked calculations are at most $2h+1$.
Thus the exact finite-dimensional identity applies separately at each $n$;
the shell-specific remainder bounds are proved below.
\end{proof}

\begin{lemma}[Full rank and the first marked pivot]\label{lem:rogers-pivot-facts}
In the normalization of Proposition~\ref{thm:rogers-general}, column $1$ is always the first pivot.  If $r=k$, then $D=qI_k$ and admissibility forces $q=1$.  Thus every nonidentity Rogers diagram has rank at most $k-1$.
\end{lemma}
\begin{proof}
The equality $j_1=1$ is part of Han's admissibility definition~\cite[Theorem~2.1]{Han2024}.  If $r=k$, every column is a pivot, so condition (ii) gives $D=qI_k$.  Condition (iv) says the gcd of all entries is one; hence $q=1$.
\end{proof}

For later reference, for $D\in\mathcal D^k_{r,q}$ write
\begin{equation}\label{eq:diagram-integral-definition}
 \mathcal I_D(F):=c_D\int_{(\R^n)^r}
 F\!\left(q^{-1}(\mathbf v_1,\ldots,\mathbf v_r)D\right)
 \,d\mathbf v_1\cdots d\mathbf v_r.
\end{equation}
Thus the right-hand side of~\eqref{eq:rogers-general} is literally the sum of
the quantities $\mathcal I_D(F)$; it is not an abstract or independently
specified remainder.

\begin{lemma}[Supported Han columns are the actual shell columns]
\label{lem:supported-han-kernel}
Put
\[
 \mathbf y_j=q^{-1}\sum_{i=1}^rD_{ij}\mathbf v_i
 \qquad(1\le j\le k).
\]
Suppose that the integrand in~\eqref{eq:diagram-integral-definition} is one
of the target or endpoint raw-moment kernels below and is nonzero at
$(\mathbf v_1,\ldots,\mathbf v_r)$.  Then every $\mathbf y_j$ satisfies
exactly the shell indicators assigned to column $j$.  If column $1$ is the
mark, Han admissibility gives $\mathbf y_1=\mathbf v_1$ and every nonpivot
column has the literal decomposition
\begin{equation}\label{eq:actual-han-column}
 \mathbf y_j=\frac{D_{1j}}q\mathbf y_1+
              \sum_{i=2}^r\frac{D_{ij}}q\mathbf v_i.
\end{equation}
For $q=1$ and a centered marked column, write $A=D_{1j}$ and
$B_i=D_{ij}$ for the unmarked coefficients.  Uniformly for
$\alpha\in J\Subset(0,2)$, the supported shell constraints imply
\begin{align*}
 \left|2A+\sum_{i=2}^rB_i-1\right|
 &\le C_J\left(1+\sum_{i=2}^r|B_i|\right)w_n
 &&\text{(target)},\\
 \left|2A+\alpha^2\left(\sum_{i=2}^rB_i-1\right)\right|
 &\le C_J\left(1+\sum_{i=2}^r|B_i|\right)w_n
 &&\text{(endpoint)}.
\end{align*}
In the height-one class $|B_i|\le1$, these errors are $O_J(kw_n)$.
The integer rounding in Lemmas~\ref{fs-lem:target-relations},
\ref{fs-lem:endpoint-nonresonance}, and
\ref{lem:generic-endpoint-low-parent} is used only in that class and under
the endpoint nonresonance conditions stated there.  For arbitrary heights
we instead retain the exact transverse coefficients in
Lemma~\ref{lem:uniform-scalar-certificate} and use
Lemma~\ref{fs-lem:arithmetic-direct}.
Finally, any remaining shell indicator is a factor in $[0,1]$, so deleting
it can only increase the integral.
\end{lemma}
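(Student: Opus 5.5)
The plan is to read off everything from the definition of the Han columns $\mathbf y_j=q^{-1}\sum_i D_{ij}\mathbf v_i$ and the explicit shell indicators in the raw-moment kernels, using no structure beyond Proposition~\ref{thm:rogers-general}.

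\emph{Column identification.} The first claim is tautological once we note where the integrand is evaluated. By~\eqref{eq:diagram-integral-definition}, the $j$-th argument of $F$ is exactly $\mathbf y_j$. So if $F\neq0$ at $(\mathbf v_1,\ldots,\mathbf v_r)$, every indicator factor of $F$ attached to column $j$ equals one at $\mathbf y_j$. By Lemma~\ref{lem:rogers-pivot-facts}, column~$1$ is the pivot $j_1=1$, and condition~(ii) gives $[D]^1=q\mathbf e_1$, hence $\mathbf y_1=\mathbf v_1$. Substituting $\mathbf v_1=\mathbf y_1$ into the definition of $\mathbf y_j$ gives~\eqref{eq:actual-han-column} verbatim. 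The final sentence is also immediate: every indicator lies in $[0,1]$ and the integrand is nonnegative, so deleting an indicator can only increase $\mathcal I_D(F)$.

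\emph{Scalar relations for $q=1$.} Write $\mathbf m:=\mathbf y_1$ for the mark. The strategy is to take the inner product of~\eqref{eq:actual-han-column} with $\mathbf m$ and evaluate every inner product by polarization from the shell constraints. Each pivot $\mathbf v_i$ with $i\ge2$ is itself a supported unmarked column, since $D_{i,j_i}=q$ and the other rows vanish there.
\begin{itemize}
\item \emph{Target kernel.} The mark is $\mathbf d\in S_{\alpha R}$, and an unmarked column $\mathbf x$ satisfies $\mathbf x,\mathbf x-\mathbf d\in S_R$. Polarization gives $\langle\mathbf x,\mathbf d\rangle=\tfrac12\alpha^2R^2+O(R^2w_n)$. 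Pairing $\mathbf y_j=A\mathbf d+\sum_i B_i\mathbf v_i$ with $\mathbf d$ then yields
\[
\tfrac12\alpha^2R^2=A\alpha^2R^2+\tfrac12\alpha^2R^2\sum_i B_i+O\bigl(R^2w_n(1+|A|+\textstyle\sum_i|B_i|)\bigr).
\]
\item \emph{Endpoint kernel.} The mark is $\mathbf x\in S_R$, and an unmarked column is an increment $\mathbf z\in S_{\alpha R}$ with $\mathbf x-\mathbf z\in S_R$. Here $\langle\mathbf x,\mathbf z\rangle=\tfrac12\alpha^2R^2+O(R^2w_n)$ and $\|\mathbf x\|^2=R^2(1+O(w_n))$. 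The same pairing gives $AR^2+\tfrac12\alpha^2R^2\sum_i B_i=\tfrac12\alpha^2R^2$, again up to the same error.
\end{itemize}
Multiplying by $2/(\alpha^2R^2)$ in the target case and by $2/R^2$ in the endpoint case produces the two displayed forms. The constants are uniform because $\alpha$ ranges over the compact set $J\Subset(0,2)$, where $\alpha^{-2}$ is bounded.

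\emph{Removing $|A|$ from the error.} The only real point is that the raw error contains $w_n|A|$, while the statement allows only $1+\sum_i|B_i|$. I would bootstrap. The relation gives $2|A|\le C_J(1+\sum_i|B_i|)+C_Jw_n|A|$. Since $w_n=n^{-2}\to0$, for large $n$ this gives $|A|\le C'_J(1+\sum_i|B_i|)$. Substituting back yields the stated bounds. In the height-one class each $|B_i|\le1$ and $r\le k$, so $\sum_i|B_i|\le k$ and the error is $O_J(kw_n)$. I expect no serious obstacle. The main care points are fixing the kernel conventions, namely that the endpoint kernel's unmarked columns are increments and not partners, since this is what produces $\alpha^2$ rather than $2-\alpha^2$, and carrying out the bootstrap on $|A|$ uniformly in $J$.
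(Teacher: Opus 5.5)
Your proposal is correct and matches the paper's proof. The column identification is the definition of $F$; $[D]^1=q\mathbf e_1$ gives $\mathbf y_1=\mathbf v_1$ and \eqref{eq:actual-han-column}; and the scalar relations come from polarization together with linearity of the inner product with the mark. The only difference is minor: the paper divides by the mark's exact squared norm $\|\mathbf m\|^2$ rather than by $R^2$ or $\alpha^2R^2$, so the marked term contributes exactly $2A$ and the error $\epsilon_{\rm out}-\sum_iB_i\epsilon_i$ never contains $|A|$. That makes your bootstrap on $|A|$ unnecessary, along with its requirement $C_Jw_n<2$, i.e.\ large $n$.
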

\begin{proof}
The first assertion is the definition of the product indicator $F$.  Since
the first pivot column of $D$ equals $q\mathbf e_1$, one has
$\mathbf y_1=\mathbf v_1$, and matrix multiplication gives
\eqref{eq:actual-han-column}.  The identity
\[
 \|\mathbf z\|^2-\|\mathbf z-\mathbf m\|^2
 =2\langle\mathbf z,\mathbf m\rangle-\|\mathbf m\|^2
\]
shows that the normalized longitudinal coordinate of each parent and the
output is $\lambda+\epsilon_i$, where $|\epsilon_i|\le C_Jw_n$ and
$\lambda=1$ in the target case, $\lambda=\alpha^2$ in the endpoint case.
Here the coordinate is twice the inner product with the mark divided by
its squared norm.  Linearity therefore gives
\[
 2A+\lambda\left(\sum_{i=2}^rB_i-1\right)
 =\epsilon_{\rm out}-\sum_{i=2}^rB_i\epsilon_i.
\]
This proves both inequalities with their coefficient dependence intact.
The height-one specialization uses $\sum_i|B_i|\le k$.
Pointwise monotonicity of the product of nonnegative indicators proves the
last assertion.
\end{proof}

\subsection{The spherical Gram density used below}
The only continuous high-dimensional distribution formula imported into the angular estimates is the following standard consequence of the Wishart density.

\begin{lemma}[Gram matrix of independent spherical directions]\label{lem:gram-density}
Let $2\le s\le d-2$ and let $\mathbf{u}_1,\ldots,\mathbf{u}_s$ be independent uniform vectors on $S^{d-1}$.  Their Gram matrix
\[
 G=(\langle\mathbf{u}_i,\mathbf{u}_j\rangle)_{i,j\le s}
\]
has, on the positive-definite correlation-matrix cone $\mathcal E_s^\circ=\{G\succ0:G_{ii}=1\}$, density with respect to Lebesgue measure in the $s(s-1)/2$ off-diagonal coordinates proportional to
\begin{equation}\label{eq:gram-density}
 (\det G)^{(d-s-1)/2}.
\end{equation}
For $s=O(\log n)$ and $n-O(\log n)\le d\le n$, the ratio between the total Euclidean volume of the correlation-matrix domain and the normalizing integral in~\eqref{eq:gram-density} is at most
\begin{equation}\label{eq:gram-normalizer}
 \exp(O(s^2\log n)).
\end{equation}
\end{lemma}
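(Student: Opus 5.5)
The plan is to derive the density from Gaussian vectors and the Wishart law, and then bound the normalizer by an elementary volume argument near the identity matrix.

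\textbf{Density.} Let $\mathbf g_1,\ldots,\mathbf g_s$ be independent standard Gaussian vectors in $\R^d$ and $W=(\langle\mathbf g_i,\mathbf g_j\rangle)$. Since $d\ge s$, $W$ has the classical Wishart density proportional to $(\det W)^{(d-s-1)/2}e^{-\operatorname{tr}W/2}$ on the positive-definite cone. Write $W=\Delta G\Delta$ with $\Delta=\operatorname{diag}(w_{11}^{1/2},\ldots,w_{ss}^{1/2})$, so that $G$ is exactly the Gram matrix of $\mathbf u_i=\mathbf g_i/\|\mathbf g_i\|$. The change of variables from $(w_{ii})_i$ together with the off-diagonal entries of $G$ to $W$ has Jacobian $\prod_i w_{ii}^{(s-1)/2}$, because each off-diagonal entry $w_{ij}=G_{ij}\sqrt{w_{ii}w_{jj}}$ and every index appears in $s-1$ pairs. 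Moreover $\det W=\det G\prod_i w_{ii}$ and $\operatorname{tr}W=\sum_i w_{ii}$. The joint density therefore factorizes as $(\det G)^{(d-s-1)/2}$ times a product of functions of the individual $w_{ii}$. Integrating out the $w_{ii}$ leaves a density on $\mathcal E_s^\circ$ proportional to \eqref{eq:gram-density}. Since the $\mathbf u_i$ are independent and uniform on $S^{d-1}$, this is the claimed law. The hypothesis $s\le d-2$ only guarantees that the exponent is positive, so the density is bounded.

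\textbf{Normalizer.} For the numerator, all entries of a correlation matrix lie in $[-1,1]$, so $\vol(\mathcal E_s)\le 2^{s(s-1)/2}=\exp(O(s^2))$. For the denominator I will avoid exact Gamma-function bookkeeping and bound the integral below by a small box around $I_s$. Restrict to matrices whose off-diagonal entries satisfy $|G_{ij}|\le 1/(sd)$. By Gershgorin, $\|G-I\|_{\mathrm{op}}\le 1/d$ on this box, so every eigenvalue is at least $1-1/d$. Hence $G\succ0$ and $\det G\ge(1-1/d)^s$. Raising to the power $(d-s-1)/2\le d/2$ gives $(\det G)^{(d-s-1)/2}\ge e^{-O(s)}$. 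The box has volume $(2/(sd))^{s(s-1)/2}$. With $d\le n$ and $s=O(\log n)$, this is $\exp(-O(s^2\log n))$. So the normalizing integral is at least $\exp(-O(s^2\log n))$, and the ratio is at most $\exp(O(s^2)+O(s^2\log n))$, which is \eqref{eq:gram-normalizer}.

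\textbf{Main obstacle.} The delicate point is choosing the box width in the lower bound. A width of order $1/s$ would cost a factor $2^{-\Theta(d)}$ from the determinant power, which is far too small. Shrinking the width to $1/(sd)$ moves the loss into the box volume instead, where it is only polynomial in $n$ per coordinate. The Jacobian in the density step is standard, and I will check it against the case $s=2$.
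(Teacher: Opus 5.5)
Your proof is correct and follows the paper's argument essentially step for step. Both derive the density from the Wishart law through the factorization $W=\Delta G\Delta$; you also write out the Jacobian $\prod_i w_{ii}^{(s-1)/2}$, which the paper leaves implicit. Both then bound the normalizer using a Gershgorin box of width $\Theta((sd)^{-1})$ around $I_s$ (the paper takes $(10sd)^{-1}$), compared against the trivial volume bound $2^{s(s-1)/2}$.
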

\begin{proof}[Reference and short derivation]
Let $Z$ be a $d\times s$ matrix with independent $N(0,1)$ entries.  Then
$W=Z^{\mathsf T}Z$ is Wishart and has density proportional to
\[
 (\det W)^{(d-s-1)/2}e^{-\operatorname{tr}(W)/2};
\]
see Muirhead~\cite[Section~3.2.1, p.~85]{Muirhead1982}.  Writing $W=DGD$ with $D=\operatorname{diag}(\|Z_1\|,\ldots,\|Z_s\|)$, where $Z_i$ is the $i$-th column of $Z$, separates radial variables from the normalized columns $\mathbf{u}_i=Z_i/\|Z_i\|$ and yields~\eqref{eq:gram-density}.  For~\eqref{eq:gram-normalizer}, put $m=s(s-1)/2$.  The correlation cone lies in $[-1,1]^m$.  The box
\[
 \mathcal B=\{G:|G_{ij}|\le(10sd)^{-1}\ (i<j),\ G_{ii}=1\}
\]
has volume $(5sd)^{-m}$ and lies in the positive-definite cone by
Gershgorin.  Every eigenvalue on $\mathcal B$ is at least
$1-(s-1)/(10sd)$, so
\[
 \inf_{G\in\mathcal B}(\det G)^{(d-s-1)/2}
 \ge \exp(-C s)
\]
in the asserted asymptotic range, where $s\le d-2$ for all sufficiently
large $n$.  In the stated range the exponent is nonnegative and
$0<\det G\le1$, so the normalizing integral is finite on the bounded
domain; the box also shows that it is positive.
Thus the normalizing integral is at least
$(5sd)^{-m}e^{-Cs}$, whereas the Euclidean volume of the whole domain is at
most $2^m$.  Their ratio is at most
$\exp(O(s^2\log(sd)))=\exp(O(s^2\log n))$, with a constant uniform for the
range used below.
\end{proof}

\subsection{Poisson cumulants and centered partition identities}
\begin{lemma}[Poisson cumulants and the no-singleton centered-moment formula]\label{lem:poisson-cumulants}
Let $\Pi$ be a Poisson random measure of intensity $\nu$ on a measurable
space and let $f$ be bounded and measurable with support of finite $\nu$-measure.  Put
$Z=\int f\,d\Pi$, let $\kappa_b(Z)$ denote its $b$-th cumulant, write
$[m]:=\{1,\ldots,m\}$, and let $\mathcal P([m])$ be the set of partitions
of $[m]$.  Then, for every $b\ge1$,
\begin{equation}\label{eq:poisson-cumulants}
 \kappa_b(Z)=\int f^b\,d\nu.
\end{equation}
For any random variable with finite moments through order $m$,
\begin{equation}\label{eq:centered-cumulant-partitions}
 \E(Z-\E Z)^m
 =\sum_{\substack{\pi\in\mathcal P([m])\\ |B|\ge2\ \forall B\in\pi}}
 \prod_{B\in\pi}\kappa_{|B|}(Z).
\end{equation}
In particular every partition contributing to a centered $2h$-th moment has at most $h$ blocks, and the number of such partitions is at most $(2h)^{2h}$.
\end{lemma}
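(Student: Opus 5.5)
The plan has three parts, one for each assertion of the lemma.

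For \eqref{eq:poisson-cumulants} I will compute the cumulant generating function of $Z$ directly. By Campbell's theorem for Poisson random measures, for bounded $f$ with support of finite $\nu$-measure,
\[
 \log\E e^{tZ}=\int\bigl(e^{tf}-1\bigr)\,d\nu .
\]
The right-hand side is finite for all real $t$, since $|e^{tf}-1|\le e^{|t|\,\|f\|_\infty}\1_{\{f\ne0\}}$. I will justify Campbell's formula by the usual route: first for simple $f=\sum_i a_i\1_{E_i}$ with disjoint $E_i$, where $\Pi(E_i)$ are independent Poisson variables, and then pass to bounded $f$ by uniform approximation on the finite-measure support, using dominated convergence. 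Expanding $e^{tf}-1=\sum_{b\ge1}t^bf^b/b!$ and interchanging sum and integral (justified by the same domination) gives $\log\E e^{tZ}=\sum_b t^b\int f^b\,d\nu/b!$. Reading off coefficients yields $\kappa_b(Z)=\int f^b\,d\nu$.

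For \eqref{eq:centered-cumulant-partitions} I will use the classical moment--cumulant formula. For any random variable $Y$ with finite moments through order $m$,
\[
 \E Y^m=\sum_{\pi\in\mathcal P([m])}\prod_{B\in\pi}\kappa_{|B|}(Y).
\]
This is proved by comparing Taylor coefficients of $\exp(K(t))$ with $K(t)=\sum_b\kappa_bt^b/b!$; only finitely many moments are needed, so one works with truncated formal expansions. I will apply it to $Y=Z-\E Z$. Cumulants of order at least $2$ are translation invariant, while $\kappa_1(Y)=0$. Hence every partition containing a singleton block contributes zero, and what remains is exactly the stated sum over partitions with all blocks of size at least $2$.

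For the counting claims, take $m=2h$. If every block has size at least $2$, then the number of blocks satisfies $2|\pi|\le 2h$, so $|\pi|\le h$. The number of partitions of $[2h]$ is the Bell number $B_{2h}$. Assigning each element a label in $[2h]$ (its block's index) surjects onto the set of partitions, so $B_{2h}\le(2h)^{2h}$.

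There is no real obstacle here; the only care needed is justifying the interchanges in Campbell's formula, which is why the boundedness and finite-support hypotheses are imposed.
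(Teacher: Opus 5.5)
Your proposal is correct and follows essentially the same route as the paper: the Poisson exponential (Campbell) formula gives the cumulants, and the moment--cumulant formula applied to $Z-\E Z$ with $\kappa_1=0$ removes the singleton blocks. The $(2h)^{2h}$ bound then comes from the same block-labeling surjection, and your extra justification of Campbell's formula via simple functions and uniform approximation fills in a step the paper treats as standard.
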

\begin{proof}
The Poisson exponential formula gives
\[
 \log\E e^{tZ}=\int(e^{tf}-1)\,d\nu;
\]
differentiating at $t=0$ yields~\eqref{eq:poisson-cumulants}.  The ordinary moment--cumulant formula is
\[
 \E Z^m=\sum_{\pi\in\mathcal P([m])}\prod_{B\in\pi}\kappa_{|B|}(Z).
\]
Replacing $Z$ by $Z-\E Z$ sets the first cumulant to zero and leaves every cumulant of order at least two unchanged, proving~\eqref{eq:centered-cumulant-partitions}.  With no singleton blocks a partition of $2h$ points has at most $h$ blocks, and the crude bound $|\mathcal P([2h])|\le(2h)^{2h}$ is immediate by assigning to each element a block label in $[2h]$.
\end{proof}

\subsection{Thin shells, fiber disintegration, and a two-vector shell count}
For the near-unit calculations in this section and
Section~\ref{sec:shell-moments}, use the following specialization of the
fixed-shell autocorrelation problem.  Set
\[
 w_n:=n^{-2},\qquad
 \gamma:=1-\frac1\ell=\frac{\ell-1}{\ell},\qquad
 \mathcal N_\star:=\left(\frac43\right)^{n/2},
\]
and define the endpoint shell and the observed difference band by
\[
 S_R:=\{\mathbf{x}:R(1-w_n)\le\|\mathbf{x}\|\le R(1+w_n)\},
 \qquad T_R:=S_{\gamma R}.
\]
Both endpoints in every representation remain in the same shell $S_R$.
The set $T_R$ merely selects which radial part of the difference set
$S_R-S_R$ is being tested.  The parameter-free formulation with
$T_{\alpha,R}=S_{\alpha R}$, including the intrinsic same-shell case
$\alpha=1$, is recorded in Proposition~\ref{thm:intrinsic-marked-moments}.
For every $\mathbf d\in\R^n$, write
\[
 \mu_R(\mathbf d):=K_R(\mathbf d)
 =\vol(S_R\cap(S_R+\mathbf d)).
\]
On $T_R$, define
\[
 \mu_*(R):=\inf_{\mathbf{d}\in T_R}\mu_R(\mathbf{d}),\quad
 \wt_R(\mathbf{d}):=\frac{\mu_*(R)}{\mu_R(\mathbf{d})}.
\]
On $T_R$ one has $0<\wt_R\le1$ and
$\wt_R(\mathbf{d})\mu_R(\mathbf{d})=\mu_*(R)$ exactly.
Every band weight in this paper is extended by zero outside its defining
band; a quotient defining it is evaluated only on that band, where the
lens is positive.

\begin{lemma}[Polar disintegration of every free fiber]\label{lem:fiber-polar}
Fix a nonzero marked vector $\mathbf{m}$ and a shell fiber occurring below, either
\[
 \{\mathbf{x}\in S_R:\mathbf{x}-\mathbf{m}\in S_R\}
 \quad\text{or}\quad
 \{\mathbf{d}\in T_R:\mathbf{m}-\mathbf{d}\in S_R\}.
\]
Write $\mathbf{e}=\mathbf{m}/\|\mathbf{m}\|$ and $\mathbf{v}=t\mathbf{e}+\rho\mathbf{u}$ with $\mathbf{u}\in S^{n-2}\subset\mathbf{e}^{\perp}$.  Membership in either fiber depends only on $(t,\rho)$, and conditional on $(t,\rho)$ the direction $\mathbf{u}$ is exactly uniform.  Distinct free Rogers pivots therefore have independent uniform transverse directions conditional on all their scalar radial/longitudinal coordinates.
\end{lemma}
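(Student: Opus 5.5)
The plan is to reduce both fibers to constraints on the two scalars $\|\mathbf v\|$ and $\langle\mathbf v,\mathbf e\rangle$, and then use rotations fixing $\mathbf e$. Write $\mathbf v=t\mathbf e+\rho\mathbf u$ with $t=\langle\mathbf v,\mathbf e\rangle$, $\rho=\|\mathbf v-t\mathbf e\|\ge0$ and $\mathbf u\in S^{n-2}\subset\mathbf e^\perp$. Then
\[
\|\mathbf v\|^2=t^2+\rho^2,\qquad
\|\mathbf v-\mathbf m\|^2=(t-\|\mathbf m\|)^2+\rho^2 .
\]
For the first fiber, membership means $\|\mathbf v\|\in[R(1-w_n),R(1+w_n)]$ and $\|\mathbf v-\mathbf m\|$ lies in the same interval. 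For the second fiber, with $\mathbf v=\mathbf d$, membership means $\|\mathbf d\|\in[\gamma R(1-w_n),\gamma R(1+w_n)]$ and $\|\mathbf m-\mathbf d\|=\|\mathbf d-\mathbf m\|$ lies in the $S_R$ interval. In both cases the indicator is a Borel function of $(t,\rho)$ alone, which gives the first assertion.

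For uniformity of the direction, I will use the standard polar disintegration of Lebesgue measure on $\R^n=\R\mathbf e\oplus\mathbf e^\perp$:
\[
d\mathbf v=dt\,\rho^{n-2}\,d\rho\,d\sigma_{n-2}(\mathbf u)\cdot\vol(S^{n-2}),
\]
which is a product measure in $(t,\rho)$ and $\mathbf u$. Restricting to a fiber multiplies this by an indicator depending only on $(t,\rho)$, so the restricted measure still factors. Equivalently, the fiber is invariant under the subgroup $O(\mathbf e^\perp)\subset O(n)$ fixing $\mathbf e$, because that subgroup preserves $\|\mathbf v\|$ and $\|\mathbf v-\mathbf m\|$. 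Hence the conditional law of $\mathbf u$ given $(t,\rho)$ is rotation invariant on $S^{n-2}$, and therefore exactly $\sigma_{n-2}$. The set $\rho=0$ has measure zero and can be ignored.

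The multi-pivot statement is the step needing care. In a Rogers integral $\mathcal I_D$ the pivots $\mathbf v_2,\ldots,\mathbf v_r$ are integrated against independent Lebesgue measures, with $\mathbf v_1=\mathbf y_1=\mathbf m$ the mark by Lemma~\ref{lem:supported-han-kernel}. I would condition on $\mathbf m$ and decompose each free pivot with respect to the same axis $\mathbf e$. The product of Lebesgue measures then becomes a product over $i$ of $dt_i\,\rho_i^{n-2}d\rho_i\,d\sigma_{n-2}(\mathbf u_i)$. Any constraint imposed on pivot $i$ alone by its own fiber depends only on $(t_i,\rho_i)$. Conditionally on all scalars $(t_i,\rho_i)_{i\ge2}$, the $\mathbf u_i$ are therefore independent and uniform.

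The main subtlety is scope. Nonpivot columns \eqref{eq:actual-han-column} couple the $\mathbf u_i$ through their inner products, so the claim concerns the pivots' free measure before those coupling indicators are inserted. Those indicators are later handled via the Gram density of Lemma~\ref{lem:gram-density}, which applies precisely because of this conditional independence. I would state this explicitly so the lemma is not read as asserting independence under the full kernel.
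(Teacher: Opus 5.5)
Your proposal is correct and follows essentially the paper's own argument. Both shell conditions are written through $\|\mathbf v\|^2=t^2+\rho^2$ and $\|\mathbf v-\mathbf m\|^2=(t-\|\mathbf m\|)^2+\rho^2$, Lebesgue measure is disintegrated as a scalar Jacobian times $dt\,d\rho\,d\sigma(\mathbf u)$, and the product is taken over distinct pivot rows. Your caveat that independence is asserted only for the retained one-pivot factors, before coupling columns are imposed, matches how the paper uses the lemma in Lemma~\ref{fs-lem:first-offending}.
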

\begin{proof}
Both defining shell inequalities are functions only of $\|\mathbf{v}\|^2=t^2+\rho^2$ and $\|\mathbf{v}-\mathbf{m}\|^2=(t-\|\mathbf{m}\|)^2+\rho^2$ (or the analogous endpoint formula).  Euclidean measure in these coordinates is a scalar Jacobian times $dt\,d\rho\,d\sigma(\mathbf{u})$.  Product Lebesgue measure over distinct Rogers pivot rows therefore disintegrates into a product of uniform spherical direction measures after the scalar coordinates are fixed.
\end{proof}

\begin{lemma}[Thin-shell coarea formulas]\label{lem:thin-shell-coarea}
Let $w_n=n^{-2}$ and let $\alpha$ range in a fixed compact subinterval of $(0,2)$.  Uniformly for $R>0$ and every $\mathbf{d}$ with $\|\mathbf{d}\|=\alpha R(1+O(w_n))$,
\begin{equation}\label{eq:lens-coarea}
 \log\vol\{\mathbf{x}\in S_R:\mathbf{x}-\mathbf{d}\in S_R\}
 =n\log(R/\Rstar)+\frac n2\log\!\left(1-\frac{\alpha^2}{4}\right)+O(\log n).
\end{equation}
The logarithmic derivative of this volume with respect to $\alpha$ is $O(n)$ uniformly on such a compact interval.  Likewise, for $\mathbf{x}\in S_R$, the unweighted endpoint cap
\[
 C_{\mathbf{x}}:=\{\mathbf{d}\in T_R:\mathbf{x}-\mathbf{d}\in S_R\}
\]
has logarithmic derivative $O(n)$ when $\|\mathbf{x}\|/R$ varies in $[1-w_n,1+w_n]$.
\end{lemma}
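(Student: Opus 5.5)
The plan is to compute the lens volume exactly in the longitudinal/transverse coordinates of Lemma~\ref{lem:fiber-polar} and then extract the exponent. Write $\mathbf d=\|\mathbf d\|\mathbf e$ and $\mathbf x=t\mathbf e+\rho\mathbf u$ with $\mathbf u\in S^{n-2}$. Membership in the fiber depends only on two quantities,
\[
 a:=t^2+\rho^2
 \qquad\text{and}\qquad
 b:=\|\mathbf x-\mathbf d\|^2=a-2t\|\mathbf d\|+\|\mathbf d\|^2 .
\]
Both $a$ and $b$ must lie in $I:=[R^2(1-w_n)^2,R^2(1+w_n)^2]$. The map $(t,\rho)\mapsto(a,b)$ is a change of variables with $dt\,d\rho=\frac{da\,db}{4\|\mathbf d\|\rho}$. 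The volume element is $\rho^{n-2}|S^{n-2}|\,dt\,d\rho$, which gives the exact formula
\[
 \vol=\frac{|S^{n-2}|}{4\|\mathbf d\|}\int_{I}\int_{I}\rho(a,b)^{n-3}\,da\,db,
 \qquad
 \rho^2=a-\frac{(a-b+\|\mathbf d\|^2)^2}{4\|\mathbf d\|^2}.
\]

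\textbf{Evaluating the integrand on $I\times I$.} On $I\times I$ we have $a,b=R^2(1+O(w_n))$ and $\|\mathbf d\|=\alpha R(1+O(w_n))$. Hence $t=\|\mathbf d\|/2+O(Rw_n/\alpha)$ and
\[
 \rho^2=R^2\Bigl(1-\frac{\alpha^2}4\Bigr)\bigl(1+O_J(w_n)\bigr),
\]
uniformly because $1-\alpha^2/4$ is bounded below on $J\Subset(0,2)$. Raising this to the power $n-3$ changes it only by the factor $(1+O(w_n))^{n}=1+O(1/n)$, since $w_n=n^{-2}$. This is exactly the reason the shell half-width was chosen to be $n^{-2}$: the whole fiber is essentially a single $(n-2)$-sphere, and no genuine two-dimensional integral remains.

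\textbf{Collecting the prefactors.} The integration area is $|I|^2=(4R^2w_n(1+o(1)))^2$, so
\[
 \vol=|S^{n-2}|\,\frac{(4R^2w_n)^2}{4\alpha R}\,R^{n-3}\Bigl(1-\frac{\alpha^2}{4}\Bigr)^{(n-3)/2}\bigl(1+O(1/n)\bigr).
\]
Stirling gives $|S^{n-2}|=\kappa_n\,n^{O(1)}=(\Rstar)^{-n}n^{O(1)}$. The remaining factors $R^{-3}R^4w_n^2/\alpha$ and $(1-\alpha^2/4)^{-3/2}$ are $\exp(O(\log n))$, using $R\asymp\sqrt n$ up to the constant $c$. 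If $R$ is genuinely arbitrary, I would instead keep the exact form $n\log R+O(\log n)+4\log R-3\log R-\log R$ and check that the powers of $R$ cancel against the $R^{n}$ normalization. This yields \eqref{eq:lens-coarea}. Uniformity in $\alpha\in J$ and in $R$ comes from the fact that every error above depends only on $J$ and $n$.

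\textbf{Derivative statements.} Differentiating the exact formula in $\|\mathbf d\|$ under the integral sign, the integrand $\rho^{n-3}$ has logarithmic derivative
\[
 (n-3)\,\partial\log\rho=O_J(n)/R .
\]
The domain $I\times I$ does not depend on $\mathbf d$, and the prefactor $1/\|\mathbf d\|$ contributes only $O(1)$. So with respect to $\alpha$ (that is, $\|\mathbf d\|=\alpha R$) the log-derivative is $O(n)$. For the endpoint cap $C_{\mathbf x}$ the same coordinates apply with $\mathbf m=\mathbf x$. Here $\mathbf d$ ranges over $T_R=S_{\gamma R}$ and the constraint is $\mathbf x-\mathbf d\in S_R$. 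The analogous exact formula has integrand $\rho^{n-3}$ with $\rho^2$ a smooth function of $\|\mathbf x\|$, bounded below on the support because $\gamma\to1$ and $\gamma R<2R$. Differentiating in $s=\|\mathbf x\|/R$ therefore again gives $O(n)$.

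The main obstacle is bookkeeping rather than ideas. One must verify that the $\rho$-variation across the thin domain, multiplied by $n$, stays $O(1)$; this is where $w_n=n^{-2}$ is essential. One must also keep all constants uniform over $J$, including near the ends of $J$ where $1-\alpha^2/4$ and $\alpha$ approach their bounds.
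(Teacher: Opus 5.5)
Your proposal is correct and follows essentially the paper's route: an exact bipolar formula over a fixed rectangle independent of $\mathbf d$, a uniform estimate $\rho=R\sqrt{1-\alpha^2/4}\,(1+O_J(w_n))$, and differentiation of the positive integrand under the integral sign for both derivative claims. You use squared radii $a,b$, which gives the Jacobian $1/(4\|\mathbf d\|\rho)$ where the paper, using the radii $r,s$, has $rs/(\|\mathbf d\|\rho)$. One correction: drop the appeal to $R\asymp\sqrt n$, since the powers of $R$ cancel exactly and leave $(R/\Rstar)^n$ times $(|S^{n-2}|/\kappa_n)\,(4w_n^2/\alpha)\,(1-\alpha^2/4)^{(n-3)/2}$; this exact cancellation is what gives uniformity over all $R>0$.
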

\begin{proof}
Put $a=R(1-w_n)$, $b=R(1+w_n)$, and $d=\|\mathbf d\|$.
Here $|S^{n-2}|$ denotes the unnormalized surface area of the unit sphere
in $\R^{n-1}$, equal to $(n-1)\kappa_{n-1}$.
Denote the lens volume on the left of~\eqref{eq:lens-coarea} by $K(d)$.
Use the two radial coordinates
\[
 r=\|\mathbf x\|,\qquad s=\|\mathbf x-\mathbf d\|.
\]
If $\mathbf e=\mathbf d/d$ and
$\mathbf x=t\mathbf e+\rho\mathbf u$, then
\begin{equation}\label{eq:bipolar-rho}
 t=\frac{r^2+d^2-s^2}{2d},\qquad
 \rho(r,s,d)^2
 =r^2-\left(\frac{r^2+d^2-s^2}{2d}\right)^2.
\end{equation}
The Jacobian identity
\[
 dt\,d\rho=\frac{rs}{d\rho}\,dr\,ds
\]
therefore gives the exact bipolar-coordinate formula
\begin{equation}\label{eq:exact-lens-bipolar}
 K(d)=|S^{n-2}|\int_a^b\int_a^b
       \frac{rs}{d}\rho(r,s,d)^{n-3}\,ds\,dr.
\end{equation}
For $\alpha$ in a fixed compact $J\Subset(0,2)$ and all sufficiently large
$n$, every point of the integration rectangle satisfies the strict triangle
inequalities and
\[
 \rho(r,s,d)=R\sqrt{1-\alpha^2/4}\,(1+O_J(w_n)).
\]
Together with $b-a=2w_nR$, the standard formula for $|S^{n-2}|$ and
$\kappa_n(\Rstar)^n=1$, equation~\eqref{eq:exact-lens-bipolar} gives
\eqref{eq:lens-coarea}; all powers of $n$, both shell widths, and replacing
$n-3$ by $n$ contribute only $O_J(\log n)$.

The derivative assertion is not obtained by differentiating that remainder.
Instead, differentiate the exact positive integrand in
\eqref{eq:exact-lens-bipolar}.  Compactness of $J$ and
\eqref{eq:bipolar-rho} give, uniformly on the integration rectangle,
\[
 \left|\partial_d\log\left(\frac{rs}{d}
                 \rho(r,s,d)^{n-3}\right)\right|
 \le \frac{C_Jn}{R}.
\]
Differentiation under the integral is valid on this compact nondegenerate
domain.  Dividing by $K(d)$ shows that $|\partial_d\log K(d)|\le C_Jn/R$,
and hence that the logarithmic derivative with respect to $\alpha$ is
$O_J(n)$.

For the endpoint cap, regard $\|\mathbf x\|$ as the separation variable in
  the same formula, integrate $r=\|\mathbf d\|$ over the difference band and
$s=\|\mathbf x-\mathbf d\|$ over the endpoint shell, and differentiate the
exact integrand.  All three normalized lengths remain in compact subsets of
the strict triangle range, so the identical weighted-average argument gives
an $O_J(n)$ logarithmic derivative as $\|\mathbf x\|/R$ varies across its
shell.
\end{proof}

\begin{lemma}[Thin-shell second moment]\label{lem:thin-shell-count}
Uniformly for $R/\Rstar$ between a positive constant and $\exp(O(\log^2n))$,
\[
 \E_{\cL}|\cL\cap S_R|=\vol(S_R)
\]
and, for all sufficiently large $n$,
\begin{equation}\label{eq:shell-var}
 \Var_{\cL}(|\cL\cap S_R|)\le(2+o(1))\vol(S_R).
\end{equation}
The same holds for $T_R$.
\end{lemma}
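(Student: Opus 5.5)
The expectation is immediate from Siegel's formula (Proposition~\ref{thm:siegel}) applied to $f=\1_{S_R}$. This function vanishes at the origin because $R(1-w_n)>0$, and it is nonnegative and integrable. Hence $\E_{\cL}|\cL\cap S_R|=\int\1_{S_R}=\vol(S_R)$. The same argument applies to $T_R=S_{\gamma R}$.

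For the variance, I will expand
\[
|\cL\cap S_R|^2=\sum_{\mathbf v_1,\mathbf v_2\in\cL\setminus\{0\}}\1_{S_R}(\mathbf v_1)\1_{S_R}(\mathbf v_2)
\]
and apply Rogers' formula (Proposition~\ref{thm:rogers-general}) with $k=2$. The diagrams with $r=2$ reduce to $D=I_2$ by Lemma~\ref{lem:rogers-pivot-facts}, and they contribute exactly $\vol(S_R)^2$; this cancels against $(\E N)^2$. The $r=1$ diagrams are $D=(q,p)$ with $\gcd(p,q)=1$ and $p\neq0$. Each contributes
\[
c_D\int\1_{S_R}(\mathbf v)\1_{S_R}\bigl((p/q)\mathbf v\bigr)\,d\mathbf v .
\]
The second indicator forces $|p/q|\in[(1-w_n)/(1+w_n),(1+w_n)/(1-w_n)]$.

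The main work is controlling these $r=1$ terms. With $q=1$ the only possibilities are $p=\pm1$: the thin width $w_n=n^{-2}$ excludes every other integer ratio, because $|p|\ge2$ is far outside $1\pm O(n^{-2})$. These two diagrams give $2\vol(S_R)$, which is the source of the constant $2$.

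For $q\ge2$, I will bound each integral by $\vol(S_R)$ and use $c_D\le q^{-n}$ from \eqref{eq:rogers-denom}. Since $|p|\le 2q$, there are at most $4q+1$ admissible $p$ per $q$. Hence the total is at most
\[
\vol(S_R)\sum_{q\ge2}(4q+1)q^{-n}=O(2^{-n})\vol(S_R).
\]
In fact most of these terms vanish, because a reduced fraction $p/q\neq\pm1$ with $q$ small has $|p/q|-1$ of size at least $1/q$. Only $q\gtrsim n^2$ survives, and those terms are even smaller. Either way the $q\ge2$ contribution is $o(\vol(S_R))$, and summing everything gives \eqref{eq:shell-var}.

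The range $R/\Rstar\le\exp(O(\log^2 n))$ does not enter these bounds, since every term is proportional to $\vol(S_R)$. It matters only to guarantee integrability and finite compact support, so that Rogers' formula applies; the formula holds because $k=2\le n-1$. Lemma~\ref{lem:haar-kernel-measurable} provides measurability, and Tonelli applies because all the kernels are nonnegative. The same computation carries over verbatim to $T_R$, which is again a thin shell of relative half-width $w_n$.

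The only delicate step is the bookkeeping of the $r=1$ diagrams: checking that the pivot normalization leaves exactly the two $q=1$ terms, and that the $q\ge2$ sum converges geometrically. I expect this step to be routine.
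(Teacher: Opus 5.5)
Your proposal is correct and follows the paper's proof essentially step for step: Siegel for the mean, then order-two Rogers, where the identity diagram cancels the squared mean, the $q=1$, $p=\pm1$ diagrams give exactly $2\vol(S_R)$, and the $q\ge2$ diagrams are bounded by $q^{-n}\vol(S_R)$ times a numerator count. The only difference is cosmetic: the paper counts $O(1+qw_n)$ admissible numerators per $q$, while you use the cruder $4q+1$; both give $O(2^{-n})\vol(S_R)$.
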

\begin{proof}
The mean is Siegel.  Apply Proposition~\ref{thm:rogers-general} with $k=2$ to $F(\mathbf{x},\mathbf{y})=\1_{S_R}(\mathbf{x})\1_{S_R}(\mathbf{y})$.  The full-rank identity diagram gives $\vol(S_R)^2$.  A rank-one diagram has $\mathbf{y}=(a/q)\mathbf{x}$ with $q\ge1$, $a\ne0$, $\gcd(a,q)=1$, and coefficient $q^{-n}$.  The cases $q=1,a=\pm1$ contribute exactly $2\vol(S_R)$.

For $q\ge2$, support of both shell indicators implies
\[
 \frac{|a|}{q}\in\left[\frac{1-w_n}{1+w_n},\frac{1+w_n}{1-w_n}\right],
\]
so for each $q$ there are at most $O(1+qw_n)$ admissible integers $|a|$ (and two signs).  For every such pair, dropping the second shell indicator bounds the integral by $\vol(S_R)$.  Hence the total $q\ge2$ rank-one contribution is at most
\[
 C\vol(S_R)\sum_{q\ge2}q^{-n}(1+qw_n)
 =O(2^{-n})\vol(S_R).
\]
Thus
\[
 \Var_{\cL}(|\cL\cap S_R|)
 \le \bigl(2+O(2^{-n})\bigr)\vol(S_R),
\]
which implies~\eqref{eq:shell-var}.  This is the explicit two-vector specialization of Rogers' Theorem~4~\cite[Theorem~4, eqs.~(9)--(11)]{Rogers1955}.
\end{proof}

\subsection{The four raw marked kernels and their scalar geometry}

The correction argument below is used for four, and only four, raw kernels.
Writing them down before introducing the diagram classes removes a possible
ambiguity about which Rogers column is marked, which variables are free
pivots, and which shell fiber is being integrated.  Fix
$\alpha\in J\Subset(0,2)$, put $S=S_R$, $T=S_{\alpha R}$, and let
$0\le\omega\le1$ be a Borel radial flattening weight on $T$, extended
by zero to $\R^n$.  For $s\ge1$ define
\begin{align}
 F_s^{\rm ct}(\mathbf d,\mathbf x_1,\ldots,\mathbf x_s)
 &:=\mathbf1_T(\mathbf d)\omega(\mathbf d)^s
   \prod_{j=1}^s\mathbf1_S(\mathbf x_j)
                       \mathbf1_S(\mathbf x_j-\mathbf d),
 \label{eq:raw-kernel-ct}\\
 F_s^{\rm ce}(\mathbf x,\mathbf d_1,\ldots,\mathbf d_s)
 &:=\mathbf1_S(\mathbf x)
   \prod_{j=1}^s\mathbf1_T(\mathbf d_j)
                       \mathbf1_S(\mathbf x-\mathbf d_j)
                       \omega(\mathbf d_j),
 \label{eq:raw-kernel-ce}\\
 F_s^{\rm at}(\mathbf d,\mathbf c_1,\ldots,\mathbf c_s)
 &:=\mathbf1_T(\mathbf d)\omega(\mathbf d)^s
   \prod_{j=1}^s\mathbf1_S(\mathbf c_j)
                       \mathbf1_S(\mathbf d+\mathbf c_j),
 \label{eq:raw-kernel-at}\\
 F_s^{\rm ae}(\mathbf x,\mathbf c_1,\ldots,\mathbf c_s)
 &:=\mathbf1_S(\mathbf x)
   \prod_{j=1}^s\mathbf1_S(\mathbf c_j)
                       \mathbf1_T(\mathbf x-\mathbf c_j)
                       \omega(\mathbf x-\mathbf c_j).
 \label{eq:raw-kernel-ae}
\end{align}
Here the superscripts mean centered target, centered endpoint, affine target,
and affine endpoint.  In the first two kernels every displayed variable is a
lattice column and the first column is the mark.  In the last two kernels the
first variable is integrated over Euclidean space after affine unfolding;
only the $s$ centered columns $\mathbf c_j$ are presented to Rogers' formula.

\begin{lemma}[Raw-kernel identity and free fibers]
\label{lem:four-raw-kernels}
For each $1\le s\le2h$, the $s$-occurrence raw terms in the centered target,
centered endpoint, affine target, and affine endpoint moment expansions are,
respectively, the lattice sums of
\eqref{eq:raw-kernel-ct}--\eqref{eq:raw-kernel-ae}, with the first variable
summed over the indicated centered or affine lattice.  After conditioning on
the mark, every free pivot belongs to precisely one of the following fibers:
\begin{center}
\begin{tabular}{c|c|c|c}
kernel & mark & free pivot & Rogers order\\ \hline
$F_s^{\rm ct}$ & $\mathbf d\in T$ &
 $\mathbf x\in S$, $\mathbf x-\mathbf d\in S$ & $s+1$\\
$F_s^{\rm ce}$ & $\mathbf x\in S$ &
 $\mathbf d\in T$, $\mathbf x-\mathbf d\in S$ & $s+1$\\
$F_s^{\rm at}$ & $\mathbf d\in T$ (Euclidean) &
 $\mathbf c\in S$, $\mathbf d+\mathbf c\in S$ & $s$\\
$F_s^{\rm ae}$ & $\mathbf x\in S$ (Euclidean) &
 $\mathbf c\in S$, $\mathbf x-\mathbf c\in T$ & $s$.
\end{tabular}
\end{center}
\end{lemma}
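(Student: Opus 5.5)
The plan is to expand each $s$-th raw moment as a product of $s$ copies of the basic count, then exchange the finite product with the lattice sums using Tonelli. Lemma~\ref{lem:haar-kernel-measurable} makes this legitimate, since all summands are nonnegative and measurable.

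\emph{Centered target.} Take the weighted count $\omega(\mathbf d)\,r_{\cL,R}(\mathbf d)\mathbf1_T(\mathbf d)$ summed over marks $\mathbf d\in\cL\setminus\{0\}$. Raising the inner count to the power $s$ gives
\[
 \omega(\mathbf d)^s\Bigl(\sum_{\mathbf x\in\cL}\mathbf1_S(\mathbf x)\mathbf1_S(\mathbf x-\mathbf d)\Bigr)^s
 =\sum_{\mathbf x_1,\ldots,\mathbf x_s}\omega(\mathbf d)^s\prod_j\mathbf1_S(\mathbf x_j)\mathbf1_S(\mathbf x_j-\mathbf d).
\]
Summed over $\mathbf d$, this is exactly the lattice sum of \eqref{eq:raw-kernel-ct}. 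The restriction to nonzero columns is automatic: $0\notin S$, and $0\notin T$ for $\alpha\in J\Subset(0,2)$ and large $n$, so the zero-coordinate convention of Proposition~\ref{thm:rogers-general} costs nothing.

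\emph{Centered endpoint.} Here the mark is $\mathbf x\in S$. The count is $\sum_{\mathbf d}\mathbf1_T(\mathbf d)\mathbf1_S(\mathbf x-\mathbf d)\omega(\mathbf d)$, and its $s$-th power is the lattice sum of \eqref{eq:raw-kernel-ce}.

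In both centered cases all $s+1$ variables are lattice columns, so the Rogers order is $s+1$. This is within range by Lemma~\ref{lem:growing-order-legal}, since $s+1\le 2h+1$.

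\emph{Affine kernels.} Write the affine point as $\mathbf a\in\boldsymbol\tau+\cL$. In the target version, a mixed difference $\mathbf d=\mathbf a-\mathbf c$ with $\mathbf c\in\cL$ lies in the coset. Its parents are $\mathbf a=\mathbf d+\mathbf c\in S$ and $\mathbf c\in S$, which gives the factor $\mathbf1_S(\mathbf c_j)\mathbf1_S(\mathbf d+\mathbf c_j)$ in \eqref{eq:raw-kernel-at}. In the endpoint version, an affine endpoint $\mathbf x$ has partners $\mathbf c\in\cL$ with $\mathbf x-\mathbf c\in T$, which gives \eqref{eq:raw-kernel-ae}.

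Next comes the unfolding step. For a function $G(\mathbf y;\cL)$ that is invariant under $\mathbf y\mapsto\mathbf y+\mathbf v$ for $\mathbf v\in\cL$, averaging $\sum_{\mathbf y\in\boldsymbol\tau+\cL}G$ over Haar $\boldsymbol\tau\in\R^n/\cL$ (covolume one) gives
\[
 \E_{\boldsymbol\tau}\sum_{\mathbf y\in\boldsymbol\tau+\cL}G=\int_{\R^n}G(\mathbf y)\,d\mathbf y.
\]
Apply this for fixed $\mathbf y$ to the inner quantity $\sum_{\mathbf c_1,\ldots,\mathbf c_s\in\cL}F_s(\mathbf y,\mathbf c_1,\ldots,\mathbf c_s)$. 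Shifting $\mathbf y$ by a lattice vector only relabels the $\mathbf c_j$ jointly, so this inner quantity is exactly lattice-invariant. After unfolding, only the $s$ centered columns are presented to Rogers, so the order is $s$.

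Care is needed because $\mathbf c_j=0$ is a possible lattice point. However, $0\notin S$, so $\mathbf1_S(\mathbf c_j)$ kills that term, and no zero column ever appears.

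For the fiber table, fix the mark and read off the constraints on each remaining column directly from the kernel factors; each kernel couples a column only to the mark. In the centered cases, Lemma~\ref{lem:rogers-pivot-facts} makes column $1$ the first pivot, so $\mathbf y_1$ equals the mark. Every other free pivot $\mathbf v_i$ is the actual column $\mathbf y_{j_i}$, which lies in the displayed fiber by Lemma~\ref{lem:supported-han-kernel}. In the affine cases the mark is the Euclidean variable and every pivot is a $\mathbf c$-column, so the same reasoning applies.

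The main obstacle is the affine unfolding: checking exact lattice invariance and interchanging the $\boldsymbol\tau$-average with the nonnegative sums. Both are routine under Tonelli.
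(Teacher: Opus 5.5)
Your route is the paper's: expand the $s$-th power of the basic count, write the second affine point as the mark plus a centered lattice vector, unfold the affine mark to Lebesgue measure, use $\mathbf 0\notin S$ to exclude zero Rogers columns, and read the fibers off the factors that involve one pivot. The centered cases, the zero-column remark, the order count, and the fiber table are fine. The one real flaw is the justification of the unfolding step.

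You state $\E_{\boldsymbol\tau}\sum_{\mathbf y\in\boldsymbol\tau+\cL}G(\mathbf y)=\int_{\R^n}G$ for $\cL$-invariant $G$, and then claim that $G(\mathbf y)=\sum_{\mathbf c_1,\ldots,\mathbf c_s\in\cL}F_s(\mathbf y,\mathbf c_1,\ldots,\mathbf c_s)$ is invariant. It is not invariant, for two reasons:
\begin{itemize}
\item $G$ carries the mark factor $\1_T(\mathbf y)\omega(\mathbf y)^s$ (resp.\ $\1_S(\mathbf y)$), so it is supported in a bounded shell and cannot be periodic.
\item The joint relabeling $\mathbf c_j\mapsto\mathbf c_j\mp\mathbf v$ does not preserve the factors $\1_S(\mathbf c_j)$.
\end{itemize}
Moreover, if $G$ really were $\cL$-periodic and nonzero, both sides of your identity would equal $+\infty$. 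So the hypothesis you impose would make the identity useless for computing finite raw moments.

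The correct statement needs no invariance. For every nonnegative Borel $G$ and a fundamental domain $\mathcal F$ of volume one, Tonelli and the tiling of $\R^n$ by the translates $\mathcal F+\mathbf u$, $\mathbf u\in\cL$, give
\[
\int_{\mathcal F}\sum_{\mathbf u\in\cL}G(\boldsymbol\tau+\mathbf u)\,d\boldsymbol\tau
=\sum_{\mathbf u\in\cL}\int_{\mathcal F+\mathbf u}G(\mathbf z)\,d\mathbf z
=\int_{\R^n}G(\mathbf z)\,d\mathbf z .
\]
The only periodicity involved is that the coset sum $\boldsymbol\tau\mapsto\sum_{\mathbf y\in\boldsymbol\tau+\cL}G(\mathbf y)$ is well defined on $\R^n/\cL$, which is automatic. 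You appear to have transferred that property to $G$ itself. The displayed identity is exactly the one the paper uses; its full measurable form is Lemma~\ref{lem:affine-fiber-unfold}. With it in place of your invariance argument, your proof coincides with the paper's.
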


\begin{proof}
For the centered target, expand the $s$-th power of
$\omega(\mathbf d)r_{\cL,R}(\mathbf d)$ and name its $s$ parent points
$\mathbf x_1,\ldots,\mathbf x_s$; this gives
\eqref{eq:raw-kernel-ct}.  Expanding the endpoint degree gives
\eqref{eq:raw-kernel-ce}.  In the affine cases, first write every other
affine point as the affine mark plus a centered lattice vector.  For a
fundamental domain $\mathcal F$ of $\cL$, the nonnegative identity
\[
 \int_{\mathcal F}\sum_{\mathbf z\in\boldsymbol\tau+\cL}
 H(\mathbf z)d\boldsymbol\tau=\int_{\R^n}H(\mathbf z)d\mathbf z
\]
then unfolds the mark; its full measurable form is recorded later in
Lemma~\ref{lem:affine-fiber-unfold}.  The resulting kernels are
\eqref{eq:raw-kernel-at} and~\eqref{eq:raw-kernel-ae}.  The four fiber
descriptions are simply the nonzero factors involving one free pivot.  Since
$S$ is separated from the origin, the apparently unrestricted sums over
$\mathbf c\in\cL$ contain no zero Rogers column.  The asserted tuple orders
follow by counting the lattice, rather than Euclidean, variables.
\end{proof}

\begin{lemma}[Uniform scalar-coordinate constraints]
\label{lem:uniform-scalar-certificate}
Let $\mathbf e$ be the unit vector in the direction of the fixed mark and
write a free pivot as
$\mathbf v=t\mathbf e+\rho\mathbf u$, with
$\mathbf u\in S^{n-2}\subset\mathbf e^\perp$.  Uniformly for
$\alpha\in J$ and all four kernels, its supported scalar coordinates satisfy
\begin{center}
\begin{tabular}{c|c|c}
kernel & $t/R$ & $\rho/R$\\ \hline
$F_s^{\rm ct}$ & $\alpha/2+O_J(w_n)$ &
 $\sqrt{1-\alpha^2/4}+O_J(w_n)$\\
$F_s^{\rm ce}$ & $\alpha^2/2+O_J(w_n)$ &
 $\alpha\sqrt{1-\alpha^2/4}+O_J(w_n)$\\
$F_s^{\rm at}$ & $-\alpha/2+O_J(w_n)$ &
 $\sqrt{1-\alpha^2/4}+O_J(w_n)$\\
$F_s^{\rm ae}$ & $1-\alpha^2/2+O_J(w_n)$ &
 $\alpha\sqrt{1-\alpha^2/4}+O_J(w_n)$.
\end{tabular}
\end{center}
The constants are independent of $R$.  All displayed transverse radii are
therefore bounded above and below by fixed positive multiples of $R$.
Conditional on the complete scalar tuple, distinct free pivots have product
uniform measure in their transverse directions.  For a dependent column with transverse coefficients $b_i/q$, let
$\rho_0$ be the common transverse radius in the table.  Retain the exact
coefficients $c_i=(b_i/q)(\rho_i/\rho_0)$ after fixing the pivot scalars.
Then
\[
 \|c\|^2=(1+O_J(w_n))\|b/q\|^2,
 \qquad
 \Big\|\sum_i c_i\mathbf u_i\Big\|^2=1+O_J(w_n)
 \quad\hbox{on output support}.
\]
The relative error is uniform even for unbounded $q$ and coefficient height;
no additive coefficient-error estimate is asserted in that range.  The
exact scalar Jacobians remain inside the fiber measures.
\end{lemma}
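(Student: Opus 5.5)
The plan is to reduce every entry of the table to the two scalar equations coming from the pair of shell constraints in the fiber, using the identity already recorded in Lemma~\ref{lem:supported-han-kernel}. Fix the mark $\mathbf m$ with $\|\mathbf m\|=\mu R$, where $\mu=\alpha(1+O(w_n))$ for the target kernels and $\mu=1+O(w_n)$ for the endpoint kernels. Write the free pivot as $\mathbf v=t\mathbf e+\rho\mathbf u$. In each fiber of Lemma~\ref{lem:four-raw-kernels} we know $\|\mathbf v\|^2/R^2=a^2(1+O(w_n))$ and $\|\mathbf v-\sigma\mathbf m\|^2/R^2=b^2(1+O(w_n))$. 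Here $\sigma=\pm1$, and $a,b\in\{1,\alpha\}$ are read off from the fiber: for $F^{\rm ct}$ we have $(a,b,\sigma)=(1,1,+)$; for $F^{\rm ce}$, $(\alpha,1,+)$; for $F^{\rm at}$, $(1,1,-)$; and for $F^{\rm ae}$, $(1,\alpha,+)$.

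Subtracting the two constraints gives $2\sigma t\mu R=(a^2-b^2+\mu^2)R^2+O(w_nR^2)$. Since $\mu$ is bounded below on $J$, this determines $t/R$ up to $O_J(w_n)$. Plugging in each case should reproduce $\alpha/2$, $\alpha^2/2$, $-\alpha/2$, and $1-\alpha^2/2$. Then $\rho^2=\|\mathbf v\|^2-t^2$ yields the stated $\rho/R$ values, for example $\alpha^2-\alpha^4/4=\alpha^2(1-\alpha^2/4)$. Because $\alpha\in J\Subset(0,2)$, the limiting $\rho/R$ is bounded away from $0$. Taking a square root therefore turns the $O(w_n)$ error in $\rho^2$ into an $O_J(w_n)$ error in $\rho$, and it also gives the upper and lower multiples of $R$. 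Everything is homogeneous in $R$, so the constants do not depend on $R$.

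The conditional product-uniform statement is Lemma~\ref{lem:fiber-polar}. For the affine kernels it applies verbatim, since membership again depends only on $(t,\rho)$ relative to the Euclidean mark.

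For the dependent column $\mathbf y=(D_{1j}/q)\mathbf m+\sum_i (b_i/q)\mathbf v_i$, I would decompose $\mathbf y$ along $\mathbf e$ and $\mathbf e^\perp$. The transverse part is $\sum_i(b_i/q)\rho_i\mathbf u_i=\rho_0\sum_i c_i\mathbf u_i$, and $\|c\|^2=(1+O_J(w_n))\|b/q\|^2$ follows immediately from $\rho_i/\rho_0=1+O_J(w_n)$. This is a multiplicative statement, so it holds for any coefficients, with no dependence on height.

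The main obstacle is the claim $\|\sum_i c_i\mathbf u_i\|^2=1+O_J(w_n)$ on output support. The column $\mathbf y$ itself lies in a fiber of the same kind as the free pivots, which is the content of Lemma~\ref{lem:supported-han-kernel}. So applying the computation of the first paragraph to $\mathbf y$ gives that its transverse radius is $\rho_0(1+O_J(w_n))$. Dividing by $\rho_0$ then yields the claim with a relative error, regardless of how large $q$ or the $b_i$ are.

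The care needed is to not pass through the longitudinal identity of Lemma~\ref{lem:supported-han-kernel}, whose additive error grows with $\sum|B_i|$. Working directly with $\mathbf y$'s own shell constraints avoids this. The scalar Jacobians are never touched by this argument; they simply remain in the fiber measures.
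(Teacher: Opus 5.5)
Your proposal is correct and follows essentially the same route as the paper. The longitudinal coordinate comes from subtracting the two shell constraints: the paper does this row by row with the polarization identities, while you use one formula parametrized by $(a,b,\sigma)$. The transverse radius then follows from $\rho^2=\|\mathbf v\|^2-t^2$ and compactness of $J$, and the product measure from Lemma~\ref{lem:fiber-polar}. The relative norm bound follows from $\rho_i/\rho_0=1+O_J(w_n)$ together with the fact that a supported output lies in the same fiber as a pivot. Your choice to bypass the additive longitudinal error of Lemma~\ref{lem:supported-han-kernel} is exactly why the paper states that bound in relative form, uniformly in $q$ and coefficient height.
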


\begin{proof}
The first row follows from
\[
 2\langle\mathbf x,\mathbf d\rangle
 =\|\mathbf x\|^2+\|\mathbf d\|^2
   -\|\mathbf x-\mathbf d\|^2.
\]
The second is the same identity with $\mathbf x$ as the marked axis and
$\mathbf d$ as the pivot.  For the third row use
$\|\mathbf d+\mathbf c\|^2-\|\mathbf c\|^2
 =2\langle\mathbf c,\mathbf d\rangle+\|\mathbf d\|^2$;
for the fourth use
$\|\mathbf x-\mathbf c\|^2
 =\|\mathbf x\|^2+\|\mathbf c\|^2
   -2\langle\mathbf x,\mathbf c\rangle$.
Every relevant norm is $R$ or $\alpha R$ times $1+O(w_n)$, which gives the
listed longitudinal coordinates.  The transverse coordinates follow from
$\rho^2=\|\mathbf v\|^2-t^2$.  Compactness of $J\Subset(0,2)$ makes the
estimates uniform and nondegenerate.  Product angular measure is
Lemma~\ref{lem:fiber-polar}.  Finally, $\rho_i/\rho_0=1+O_J(w_n)$ uniformly, so summing
$c_i^2=(b_i/q)^2(1+O_J(w_n))$ proves the relative squared-norm bound.
The output has the same supported transverse-radius interval as a pivot.
Fubini retains the exact polar Jacobian.
\end{proof}

For the general centered endpoint kernel only finitely many low-parent
integer relations must be separated from zero.  We fix
$C_{\rm lp}:=10$ and define
\begin{equation}\label{eq:low-parent-gap}
 \Delta_{\rm lp}(\alpha):=
 \min_{\substack{(A,z)\in\mathbb Z^2\setminus\{(0,0)\}\\
                  |A|\le C_{\rm lp},\ |z|\le3}}
 |2A+\alpha^2z|.
\end{equation}
For $0<\alpha<2$, this gap vanishes exactly when
\[
 \alpha^2\in\left\{\frac23,\,1,\,\frac43,\,2,\,
                         \frac83,\,3,\,\frac{10}3\right\}.
\]
Indeed, a zero has $z\ne0$ and $\alpha^2=-2A/z$ with $1\le|z|\le3$.

\begin{lemma}[Height-one relations for the four kernels]
\label{lem:four-kernel-relation-certificate}
Let a supported $q=1$ nonpivot column have $t\le k$ distinct free
unmarked parents and let every nonzero unmarked coefficient
$B_j$ lie in $\{\pm1\}$.  Then the following statements hold uniformly for
$\alpha\in J$.
\begin{enumerate}[label=\textup{(\roman*)},leftmargin=*]
\item In $F_s^{\rm ct}$, if the column is
$\mathbf x'=A\mathbf d+\sum_jB_j\mathbf x_j$, then
\begin{equation}\label{eq:four-kernel-ct-relation}
 2A+\sum_jB_j=1.
\end{equation}
Thus $t=0,2$ are impossible; for $t=1$ the column is exactly equality
$(A,B_1)=(0,1)$ or reflection $(A,B_1)=(1,-1)$.
\item In $F_s^{\rm ce}$, if
$\mathbf d'=A\mathbf x+\sum_jB_j\mathbf d_j$, then
\begin{equation}\label{eq:four-kernel-ce-relation}
 \left|2A+\alpha^2\left(\sum_jB_j-1\right)\right|
 \le C_J(t+1)w_n.
\end{equation}
For $\alpha=(\ell-1)/\ell$ and $k\le2H\ell+1$ this forces
$A=0$ and $\sum_jB_j=1$.  For arbitrary $\alpha\in J$, the same conclusion
holds when $t\le2$ and
$\Delta_{\rm lp}(\alpha)>C_J(t+1)w_n$.  In either case equality is the only
one-parent column and $t=0,2$ are impossible.
\item In $F_s^{\rm at}$, a column
$\mathbf c'=\sum_jB_j\mathbf c_j$ satisfies
$\sum_jB_j=1$.  Hence equality is the only one-parent column and $t=0,2$
are impossible.
\item In $F_s^{\rm ae}$, a column
$\mathbf c'=\sum_jB_j\mathbf c_j$ satisfies
\begin{equation}\label{eq:four-kernel-ae-relation}
 \left|1-\frac{\alpha^2}{2}\right|
 \left|\sum_jB_j-1\right|\le C_J(t+1)w_n.
\end{equation}
If $|1-\alpha^2/2|\ge\delta>0$, this again forces
$\sum_jB_j=1$ for $k=O(\log n)$, with the same small-$t$ conclusions.
\end{enumerate}
In the centered cases the marked coefficient $A$ is fixed once the
unmarked coefficients are fixed, and $|A|=O_J(k)$.
\end{lemma}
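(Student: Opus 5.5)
The proof reduces each statement to the scalar longitudinal identity of Lemma~\ref{lem:supported-han-kernel}, which holds for a supported column up to an error $O_J((t+1)w_n)$. In each case that identity becomes an exact integer relation, and parity then does the rest. Throughout, $q=1$ and every nonzero unmarked coefficient is $\pm1$. So $\sum_j|B_j|=t\le k=O(\log n)$, and every error term is $O_J(kw_n)=O_J(\log n/n^2)=o(1)$.

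\emph{(i) Centered target.} The target inequality of Lemma~\ref{lem:supported-han-kernel} gives $|2A+\sum_jB_j-1|\le C_J(1+t)w_n<1$ for large $n$. The left side is an integer, so \eqref{eq:four-kernel-ct-relation} holds exactly. For $t=0$ it reads $2A=1$, which is impossible. For $t=2$, $B_1+B_2\in\{0,\pm2\}$ is even, so $2A+B_1+B_2$ is even and cannot equal $1$. For $t=1$ the two choices $B_1=\pm1$ give exactly $(A,B_1)=(0,1)$ (equality) and $(1,-1)$ (reflection).

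\emph{(iii) and (iv), the affine cases.} Here the mark is a Euclidean variable, not a lattice column, so a column has the form $\mathbf c'=\sum_jB_j\mathbf c_j$ with no $A$. Take the inner product with $\mathbf e$ and use the rows of Lemma~\ref{lem:uniform-scalar-certificate}. Every $\mathbf c_j$ and the output $\mathbf c'$ have longitudinal coordinate $\lambda R+O_J(w_nR)$, where $\lambda=-\alpha/2$ for the target kernel and $\lambda=1-\alpha^2/2$ for the endpoint kernel. Linearity then gives $|\lambda|\,|\sum_jB_j-1|\le C_J(t+1)w_n$, which is \eqref{eq:four-kernel-ae-relation} in case (iv). In case (iii), $|\lambda|=\alpha/2$ is bounded below on $J$; in case (iv) we assume $|\lambda|\ge\delta$. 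Either way the integer $\sum_jB_j-1$ has absolute value $o(1)$, hence vanishes. The small-$t$ conclusions follow as in (i): $t=0$ gives $0=1$, $t=2$ gives an even sum, and $t=1$ forces $B_1=1$.

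\emph{(ii) Centered endpoint.} The endpoint inequality of Lemma~\ref{lem:supported-han-kernel} gives \eqref{eq:four-kernel-ce-relation} directly. Write $z:=\sum_jB_j-1$, an integer with $|z|\le t+1\le k+1$.
\begin{itemize}
\item For $\alpha=(\ell-1)/\ell$, multiply by $\ell^2$. The quantity $2A\ell^2+(\ell-1)^2z$ is an integer of absolute value at most $C_J\ell^2(k+1)w_n=O(\log^3n/n^2)<1$, so it is zero. Since $\gcd(\ell^2,(\ell-1)^2)=1$, we get $\ell^2\mid z$. But $|z|\le2H\ell+2<\ell^2$ for large $n$, so $z=0$ and then $A=0$.
\item For general $\alpha\in J$ with $t\le2$, we have $|z|\le3$. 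The inequality and $\alpha^2<4$ give $|2A|\le12+o(1)$, so $|A|\le6\le C_{\rm lp}$. Then $(A,z)$ lies in the index set of \eqref{eq:low-parent-gap}. If $(A,z)\ne(0,0)$, then $|2A+\alpha^2z|\ge\Delta_{\rm lp}(\alpha)>C_J(t+1)w_n$, which contradicts \eqref{eq:four-kernel-ce-relation}. Hence $A=0$ and $z=0$.
\end{itemize}
In both regimes $\sum_jB_j=1$ and $A=0$, and the parity argument from (i) excludes $t=0$ and $t=2$ and leaves equality as the only one-parent column.

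\emph{The final sentence.} In case (i), $A=(1-\sum_jB_j)/2$ is determined by the $B_j$ and satisfies $|A|\le(k+1)/2$. In case (ii), $A=0$ in both regimes treated. More generally, \eqref{eq:four-kernel-ce-relation} gives $|A|\le2(k+1)+o(1)$; and by \eqref{eq:actual-han-column} the column is $A\mathbf x+\sum_jB_j\mathbf d_j$ with $\mathbf x\ne0$, so $A$ is determined by the column and the $B_j$.

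The only delicate point is (ii) at $\alpha=(\ell-1)/\ell$. Here $\alpha^2$ is rational with a denominator that grows, so no fixed gap is available. The argument needs the error $O(k\ell^2w_n)$ to stay below $1$ and $|z|<\ell^2$; both are where the restriction $k\le2H\ell+1$ enters.
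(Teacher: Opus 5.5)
Your proof is correct and follows the paper's argument essentially step for step. You round the longitudinal identity to an exact integer relation in (i), (iii) and (iv); in the near-unit endpoint case you multiply by $\ell^2$ and use $\gcd(\ell^2,(\ell-1)^2)=1$ together with $|z|\le k+1<\ell^2$; you invoke the finite gap $\Delta_{\rm lp}$ only when $t\le2$; and you finish with parity.

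One small imprecision concerns the general centered endpoint case. There, uniqueness of $A$ given the $B_j$ should come from \eqref{eq:four-kernel-ce-relation} confining $2A$ to an interval of length $2C_J(t+1)w_n<2$, as the paper notes in Lemma~\ref{lem:complete-diagram-audit}. It does not follow from $\mathbf x\ne0$, which only says that the output vector determines $A$.
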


\begin{proof}
For the centered target fiber, every supported parent and output obeys
\[
 \frac{\langle\mathbf x,\mathbf d\rangle}{\|\mathbf d\|^2}
 =\frac12+O_J(w_n).
\]
Insert the relation for $\mathbf x'$ and use linearity.  The distance of
$2A+\sum_jB_j-1$ from zero is $O_J((t+1)w_n)=o(1)$; it is an integer, so
it is zero.  Direct enumeration gives the small-$t$ alternatives in~(i).

For a centered endpoint parent,
\[
 \frac{2\langle\mathbf x,\mathbf d\rangle}{\|\mathbf x\|^2}
 =\alpha^2+O_J(w_n).
\]
Substitution gives~\eqref{eq:four-kernel-ce-relation}.  If
$\alpha=(\ell-1)/\ell$, multiply by $\ell^2$.  The left side becomes the
absolute value of
\[
 2A\ell^2+(\ell-1)^2z,\qquad z:=\sum_jB_j-1,
\]
up to an error $O_J(\ell^2(k+1)n^{-2})=o(1)$.  The integer therefore
vanishes.  Coprimality of $\ell^2$ and $(\ell-1)^2$, together with
$|z|\le k+1<\ell^2$ for large $n$, gives $z=A=0$.  If instead $t\le2$,
then $|z|\le3$ and~\eqref{eq:four-kernel-ce-relation} gives
$|A|\le C_{\rm lp}$ for large $n$; the stated gap again forces
$(A,z)=(0,0)$.  Parity gives the small-$t$ conclusions.

For the affine target use
$\langle\mathbf c,\mathbf d\rangle/\|\mathbf d\|^2
=-1/2+O_J(w_n)$ for the parents and output.  For the affine endpoint use
$\langle\mathbf x,\mathbf c\rangle/\|\mathbf x\|^2
=1-\alpha^2/2+O_J(w_n)$.  Substitution gives (iii)
and~\eqref{eq:four-kernel-ae-relation}; the left side in (iii) is an
integer at distance $o(1)$ from zero.  Finally,
\eqref{eq:four-kernel-ct-relation} determines $A$ in the target case, while
\eqref{eq:four-kernel-ce-relation} and $\alpha\in J$ give $|A|=O_J(k)$ in
the endpoint case.
\end{proof}

\subsection{Marked-diagram correction engine}
The next lemmas are the reusable output of this section.  Section
\ref{sec:shell-moments} verifies the shell-specific one-parent relations and
combines the partition diagrams; the present bounds then control every
remaining admissible matrix.

\subsubsection{An exhaustive marked-diagram trichotomy}\label{fs-sec:rogers}
\begin{definition}[Marked Rogers diagram classes]\label{fs-def:diagram-classes}
For a centered kernel, apply Proposition~\ref{thm:rogers-general} with the marked
target or endpoint in column $1$.  For an affine kernel, first unfold the
Euclidean mark and apply Rogers only to the remaining centered columns, as in
Lemma~\ref{lem:four-raw-kernels}.  Under the kernel-specific parameter conditions in
Proposition~\ref{prop:kernel-trichotomy-audit}, every supported admissible
matrix lies in exactly one of the following classes.
\begin{enumerate}[label=(\Alph*),leftmargin=*]
\item \emph{Partition diagrams}: $q=1$ and every nonmarked dependent column
is one of the one-parent equality/reflection identifications in
Lemma~\ref{lem:four-kernel-relation-certificate}.  In an affine kernel the
word ``nonmarked'' refers to every Rogers column, since the mark has already
been removed.
\item \emph{Geometric nonpartition diagrams}: $q=1$, every nonmarked free-row coefficient is in $\{0,\pm1\}$, and some dependent column genuinely combines at least three distinct free transverse pivots.
\item \emph{Arithmetic/large-coefficient diagrams}: $q\ge2$, or $q=1$ and some nonmarked free-row coefficient in a nonpivot column has absolute value at least $2$.
\end{enumerate}
A large coefficient in the centered marked row alone does not create a
fourth class: once the nonmarked coefficients are fixed, the longitudinal
equation fixes it and the transverse relation is unchanged.  This refines
the standard split in Han~\cite[Propositions~2.2--2.4]{Han2024} by using the
special shell geometry to identify exactly which $q=1$,
$\{0,\pm1\}$ matrices are partition diagrams.
\end{definition}

\begin{lemma}[Exhaustiveness and disjointness of the trichotomy]
\label{lem:diagram-trichotomy-exhaustive}
Suppose that the shell-specific height-one relation analysis has the following
two properties: every supported column involving at most one distinct free
unmarked pivot is either a designated equality/reflection partition column or
is unsupported, and no supported nonpartition column involves exactly two
such pivots.  Then classes \textup{(A)}--\textup{(C)} of
Definition~\ref{fs-def:diagram-classes} are pairwise disjoint and contain
every supported admissible matrix.
\end{lemma}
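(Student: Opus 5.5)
The plan is to argue by a direct case split on the admissible matrix $D\in\mathcal D^k_{r,q}$, using only the definitions of the three classes and the two stated hypotheses. Disjointness comes first and is immediate. Class (C) requires $q\ge2$ or a nonmarked free-row coefficient of absolute value at least $2$. Classes (A) and (B) both require $q=1$. Class (B) requires all nonmarked free-row coefficients to lie in $\{0,\pm1\}$. Class (A) consists of one-parent equality/reflection columns, whose nonmarked coefficient is $\pm1$ by Lemma~\ref{lem:four-kernel-relation-certificate}. So (C) is disjoint from (A) and from (B). Class (B) demands a dependent column with at least three distinct free transverse pivots, while in class (A) every nonmarked dependent column has exactly one parent. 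Hence (A) and (B) are disjoint as well.

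For exhaustiveness, take a supported admissible matrix $D$ not in (C). Then $q=1$ and every nonmarked free-row coefficient in every nonpivot column lies in $\{0,\pm1\}$. Pivot columns contribute no dependence, since by Lemma~\ref{lem:rogers-pivot-facts} they are unit columns. I will classify each nonpivot column by the number $t$ of distinct free unmarked pivots carrying a nonzero coefficient.
\begin{itemize}
\item If $t\le1$, the first hypothesis makes the column a designated equality/reflection column, because unsupported columns are excluded by support of $D$.
\item $t=2$ cannot occur. Such a column is not a one-parent partition column, so it is a supported nonpartition column with exactly two pivots, which the second hypothesis forbids.
\item Every remaining column therefore has $t\ge3$.
\end{itemize}
If no column has $t\ge3$, every nonmarked dependent column is a partition column, and $D\in$ (A). Otherwise some dependent column combines at least three distinct free transverse pivots, and $D\in$ (B).

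In the affine case the argument is the same after unfolding the mark, with ``nonmarked'' read as all Rogers columns. In the centered case, a large marked-row coefficient $A$ does not affect the class, because only unmarked coefficients enter the definitions. This matches the remark after Definition~\ref{fs-def:diagram-classes}.

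The only delicate point is bookkeeping. A column with $t=1$ whose coefficient pattern is not equality or reflection must be recognized as unsupported rather than as a new class. Likewise, ``distinct free pivots'' must be counted by nonzero entries in distinct pivot rows: repeated rows are impossible in a matrix. The hypotheses are phrased exactly to absorb this, so I expect no real obstacle. The substantive content, namely verifying the hypotheses for each kernel, is deferred to Lemma~\ref{lem:four-kernel-relation-certificate} and Proposition~\ref{prop:kernel-trichotomy-audit}.
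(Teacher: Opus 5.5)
Your proposal is correct and follows essentially the same route as the paper: you separate first by $q$, then by coefficient height, and then use the two low-parent hypotheses to show that a height-one matrix outside class (A) must contain a column combining at least three free pivots. The differences are cosmetic: the paper scans only to the first nonpartition column rather than classifying every nonpivot column by its parent count $t$, and the fact that pivot columns are unit columns comes from admissibility condition (ii) of Proposition~\ref{thm:rogers-general}, not from Lemma~\ref{lem:rogers-pivot-facts}.
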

\begin{proof}
Let $D$ be supported.  If $q\ge2$, then $D$ is in class~\textup{(C)}.  Hence
assume $q=1$.  If any nonmarked free-row entry of a nonpivot column has
absolute value at least two, then again $D$ is in class~\textup{(C)}.
We may therefore assume that all such entries belong to $\{0,\pm1\}$.

Scan the nonpivot columns in their Rogers order.  If every column is a
one-parent equality/reflection identification, the matrix is, by definition,
a partition diagram and lies in class~\textup{(A)}.  Otherwise take the first
nonpartition column.  The stated low-parent classification rules out zero,
one, and two distinct free unmarked pivots for this column.  It consequently
combines at least three distinct free transverse pivots and $D$ lies in
class~\textup{(B)}.  These alternatives are mutually exclusive because they
are separated first by $q$, then by coefficient height, and finally by the
one-parent partition property.

For the centered target, near-unit centered endpoint, generic centered
endpoint, affine target, and nonresonant affine endpoint, the two assumed
properties are precisely the four cases of
Lemma~\ref{lem:four-kernel-relation-certificate}.  Thus the trichotomy is
proved before it is used; the later shell-moment lemmas merely specialize
that support relation to their notation.
\end{proof}

\begin{proposition}[Trichotomy for the four kernels]
\label{prop:kernel-trichotomy-audit}
Every raw marked kernel used in the centered target, centered endpoint,
affine target, and affine endpoint moment propositions satisfies the hypotheses
of Lemma~\ref{lem:diagram-trichotomy-exhaustive}, under exactly the parameter
conditions stated in those propositions.  More precisely:
\begin{enumerate}[label=\textup{(\alph*)},leftmargin=*]
\item for a centered target, a supported height-one column with $t$ distinct
unmarked parents obeys $2A+\sum_{j=1}^tB_j=1$; the only one-parent columns are
equality and reflection, and $t=0,2$ are impossible;
\item for the near-unit centered endpoint band, the Diophantine identity
$2A+\gamma^2(\sum_jB_j-1)=O((t+1)w_n)$ forces
$A=0$ and $\sum_jB_j=1$; hence equality is the only one-parent column and
$t=0,2$ are impossible;
\item for a general fixed-ratio centered endpoint, the same conclusion for
$t\le2$ follows whenever
$\Delta_{\rm lp}(\alpha)\ge C_{\rm nr}(J)n^{-2}$ with
$C_{\rm nr}(J)$ larger than the uniform support-error constant;
\item after affine unfolding, the target equation forces
$\sum_jB_j=1$, while the endpoint equation forces the same identity whenever
$\inf_{\alpha\in J}|1-\alpha^2/2|>0$.  Equality is the only affine
one-parent column; there is no affine reflection partition.
\end{enumerate}
In every case, therefore, a supported height-one column is either a stated
partition column or combines at least three distinct free transverse pivots.
All remaining supported matrices have $q\ge2$ or a large unmarked
coefficient and belong to the arithmetic class.
\end{proposition}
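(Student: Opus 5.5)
The plan is to reduce the proposition to Lemma~\ref{lem:diagram-trichotomy-exhaustive} by checking its two hypotheses kernel by kernel. The hypotheses are: every supported column with at most one distinct free unmarked pivot is a designated partition column or is unsupported, and no supported nonpartition column has exactly two such pivots. Each check comes from Lemma~\ref{lem:four-kernel-relation-certificate}, which in turn rests on the scalar support relations of Lemma~\ref{lem:supported-han-kernel} and Lemma~\ref{lem:uniform-scalar-certificate}.

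The first step is to restrict to $q=1$ columns whose unmarked free-row coefficients all lie in $\{0,\pm1\}$. Every other supported matrix has $q\ge2$ or a large unmarked coefficient, so it falls into class~(C) by definition. As noted after Definition~\ref{fs-def:diagram-classes}, a large marked coefficient needs no separate treatment, since $A$ is determined by the unmarked coefficients.

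For a height-one column with $t$ distinct parents, I then take the four cases in turn.
\begin{enumerate}[label=\textup{(\alph*)},leftmargin=*]
\item In the centered target case, the longitudinal coordinate $\langle\mathbf x,\mathbf d\rangle/\|\mathbf d\|^2=\tfrac12+O_J(w_n)$ yields the integer identity $2A+\sum_jB_j=1$. Since $t\le k=O(\log n)$, the error $O((t+1)w_n)$ is $o(1)$, so the identity is exact. Enumerating $t\le2$ then does the rest. For $t=0$ we would need $2A=1$, which is impossible. For $t=1$ we need $2A\pm1=1$, giving exactly equality $(0,1)$ or reflection $(1,-1)$. For $t=2$ we need $2A+B_1+B_2=1$ with $B_i=\pm1$, so $2A\in\{-1,1,3\}$, which is impossible by parity.
\item In the near-unit endpoint case, with $\gamma=(\ell-1)/\ell$, I multiply through by $\ell^2$. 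Coprimality of $\ell^2$ and $(\ell-1)^2$ together with $|z|\le k+1<\ell^2$ forces $A=0$ and $\sum_jB_j=1$. Parity then excludes $t=0$ and $t=2$, and leaves equality as the only one-parent column.
\item In the generic endpoint case, $t\le2$ gives $|z|\le3$ and bounds $|A|$ by $C_{\rm lp}$. The hypothesis $\Delta_{\rm lp}(\alpha)\ge C_{\rm nr}(J)n^{-2}$, with $C_{\rm nr}(J)$ larger than the uniform support-error constant $C_J(t+1)$, then forces $(A,z)=(0,0)$.
\item In the affine cases there is no marked coefficient. The relation is $\sum_jB_j=1$, exactly for the target kernel and for the endpoint kernel after dividing by $|1-\alpha^2/2|\ge\delta$. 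The case $t=1$ then gives only equality, so there is no affine reflection partition.
\end{enumerate}

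In every case, a supported height-one column with $t\le2$ is a partition column, so any nonpartition height-one column has $t\ge3$. This is exactly what Lemma~\ref{lem:diagram-trichotomy-exhaustive} needs, and that lemma gives disjointness and exhaustiveness.

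The main obstacle is the uniformity of the rounding step: the support error must be $o(1)$ simultaneously for all $t\le2H\log n+1$. In case~(b) it must in fact be $o(\ell^{-2})$, and $\ell^2(k+1)n^{-2}=o(1)$ gives this. Case~(c) is more delicate: the nonresonance hypothesis is only needed for $t\le2$, because columns with $t\ge3$ are already in class~(B) regardless of their coefficient relation. I would state this explicitly, so that no Diophantine control is claimed for larger $t$.
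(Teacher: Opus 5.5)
Your proposal is correct and follows the paper's proof. The paper also obtains (a)--(d), including the parity and small-$t$ checks, directly from the four cases of Lemma~\ref{lem:four-kernel-relation-certificate}, uses the gap only for $t\le2$, observes that the marked coefficient is fixed by the unmarked ones, and then applies Lemma~\ref{lem:diagram-trichotomy-exhaustive}; you simply spell out the rounding and parity computations that the paper cites from that lemma.
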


\begin{proof}
Parts~\textup{(a)}--\textup{(d)}, including the parity checks, are the four
cases of Lemma~\ref{lem:four-kernel-relation-certificate}.  In part~(b),
multiplication by $\ell^2$ is the only use of the rational choice
$\gamma=(\ell-1)/\ell$.  Part~(c) invokes the gap only for $t\le2$, because
every $t\ge3$ column already receives the geometric penalty.  The final
sentence of that lemma also proves that no fifth ``marked-row only'' class is
missing.  Lemma~\ref{lem:diagram-trichotomy-exhaustive} now completes the
classification.  The same-shell centered endpoint conclusion in
Theorem~\ref{thm:coverage} uses central symmetry and the target count, so it
introduces no additional raw endpoint kernel.
\end{proof}

\begin{lemma}[Missing-fiber pivot principle]\label{fs-lem:missing-fiber}
In an $s$-occurrence raw marked moment, every nonidentity diagram has at
most $s-1$ free fiber pivots, both in the centered and unfolded-affine cases.
\end{lemma}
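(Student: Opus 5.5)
The claim is a rank count, and I will derive it from Lemma~\ref{lem:rogers-pivot-facts} together with the column bookkeeping in Lemma~\ref{lem:four-raw-kernels}.

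\emph{Centered case.} The raw $s$-occurrence kernel $F_s^{\rm ct}$ or $F_s^{\rm ce}$ has Rogers order $k=s+1$, and column $1$ is the mark. In the normalization of Proposition~\ref{thm:rogers-general}, column~$1$ is always the first pivot $j_1=1$; this is the admissibility convention recorded in Lemma~\ref{lem:rogers-pivot-facts}. So the mark always occupies one of the $r$ pivot rows, and after conditioning on it the free fiber pivots are exactly the remaining $r-1$ pivots $j_2<\dots<j_r$. A nonidentity diagram has $D\ne I_k$. By Lemma~\ref{lem:rogers-pivot-facts}, $r=k$ forces $D=qI_k$ with $q=1$, which is the identity. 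Hence a nonidentity diagram has $r\le k-1=s$, and therefore at most $s-1$ free fiber pivots.

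\emph{Affine case.} After unfolding, the mark is a Euclidean variable and is no longer a Rogers column, as in Lemma~\ref{lem:four-raw-kernels}. Rogers' formula is applied only to the $s$ centered columns, so $k=s$. Every pivot is now a free fiber pivot, each lying in the fiber listed in the table. Applying Lemma~\ref{lem:rogers-pivot-facts} again, a nonidentity diagram has $r\le s-1$, which gives at most $s-1$ free fiber pivots. If $s=1$, the only admissible $1\times1$ matrix is $(q)$ with gcd condition forcing $q=1$, so there is no nonidentity diagram and the statement holds vacuously. The same vacuous reasoning covers the centered case whenever it would be needed.

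The one step that needs care is checking that, in the centered case, the mark really is a pivot and not merely a column expressed through other pivots. Otherwise one might count $r$ free pivots rather than $r-1$. This point is exactly the condition $j_1=1$ in Han's convention together with the identity $\mathbf y_1=\mathbf v_1$ from Lemma~\ref{lem:supported-han-kernel}. Once that is in place, the rest of the argument is pure counting of rows.
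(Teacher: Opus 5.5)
Your argument is correct and is essentially the paper's proof. In the centered case you use $k=s+1$ with the mark forced to be the first pivot $j_1=1$, and in the unfolded-affine case $k=s$ with every pivot free; in both cases nonidentity forces $r\le k-1$ by Lemma~\ref{lem:rogers-pivot-facts}, giving $r-1\le s-1$ and $r\le s-1$ free pivots respectively. One small correction: your remark that the centered $s=1$ case is covered by ``vacuous reasoning'' is unnecessary rather than wrong, since there a nonidentity diagram has $r=1$ and hence exactly $0=s-1$ free pivots.
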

\begin{proof}
For a centered kernel the Rogers order is $k=s+1$; column $1$ is the
marked pivot and a nonidentity diagram has rank $r\le k-1=s$, leaving
exactly $r-1\le s-1$ free fiber pivots.  After affine unfolding the mark is
external, the Rogers order is $k=s$, and all $r\le k-1=s-1$ pivots are
free fibers.  These are applications of Lemma~\ref{lem:rogers-pivot-facts}.
\end{proof}

\begin{lemma}[First offending column principle]\label{fs-lem:first-offending}
Scan the nonpivot columns of a supported $q=1$ diagram from left to right and
let $j_*$ be the first nonpartition column.  Let
$\mathcal I_*$ be the set of free fiber rows having a nonzero coefficient
in column $j_*$; the centered marked row is excluded.  Retain every pivot
factor, even when its column occurs after $j_*$, and every partition factor
before $j_*$.  This retained integrand factors, conditional on the mark, as
\begin{equation}\label{eq:pre-offending-factorization}
 G_0(\mathbf m)\prod_{i\in\mathcal I_{\rm free}}
                         G_i(\mathbf m,\mathbf v_i),
 \qquad 0\le G_i\le1,
\end{equation}
after repeated occurrences belonging to the same partition block are
absorbed into the corresponding factor $G_i$.  Here
$\mathcal I_{\rm free}=\{2,\ldots,r\}$ in a centered marked diagram and
$\mathcal I_{\rm free}=\{1,\ldots,r\}$ after affine unfolding.
Consequently, after
disintegrating every pivot into scalar and transverse coordinates, the
directions $(\mathbf u_i)_{i\in\mathcal I_*}$ have product uniform
spherical measure.  Define the retained fiber masses at the fixed mark by
\[
 W_i(\mathbf m):=\int_{\R^n}G_i(\mathbf m,\mathbf v_i)\,d\mathbf v_i.
\]
Let $\mathcal M$ be the external or lattice marked domain, and let $V_i$
be constants satisfying $W_i(\mathbf m)\le V_i$ throughout $\mathcal M$.
If the shell indicator $H_{j_*}$ belonging to column $j_*$ has conditional
angular integral at most $\eta$ for every supported scalar tuple, then the
complete diagram integral, including all columns after $j_*$, satisfies
\begin{equation}\label{eq:first-offending-complete-bound}
 \begin{aligned}
 \mathcal I_D(F)
 &\le\eta\int_{\mathcal M}
       \prod_{i\in\mathcal I_{\rm free}}W_i(\mathbf m)\,d\mathbf m\\
 &\le\eta\,\vol(\mathcal M)
       \prod_{i\in\mathcal I_{\rm free}}V_i.
 \end{aligned}
\end{equation}
\end{lemma}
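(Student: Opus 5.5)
The bound is a Fubini--Tonelli computation with two deletions: discard every factor we do not need, and keep only one indicator. The kept indicator is the output shell indicator $H_{j_*}$ of the first offending column. The claimed factorization comes first. Write the integrand of $\mathcal I_D(F)$, with $q=1$ and $c_D=1$, in the pivot variables $\mathbf m$ and $(\mathbf v_i)_{i\in\mathcal I_{\rm free}}$. Each pivot column $j_i$ contributes a factor depending only on $\mathbf m$ and $\mathbf v_i$, namely its fiber indicators and the weight $\omega$ in the four kernels \eqref{eq:raw-kernel-ct}--\eqref{eq:raw-kernel-ae}. Before $j_*$, each partition column is by definition an equality or reflection of a single free pivot, as in Lemma~\ref{lem:four-kernel-relation-certificate}. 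In the reflection case the column is $\mathbf d-\mathbf x_i$, so its indicators are again functions of $(\mathbf m,\mathbf v_i)$. Multiplying these into $G_i$ keeps $0\le G_i\le1$, since all factors lie in $[0,1]$. Mark-only factors go into $G_0\le1$. This gives \eqref{eq:pre-offending-factorization}.

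Next we delete every column after $j_*$ that is not a pivot. We also delete every indicator of $j_*$ except $H_{j_*}$. Both deletions only increase the integral, by the monotonicity at the end of Lemma~\ref{lem:supported-han-kernel}. For each $i\in\mathcal I_{\rm free}$, disintegrate $\mathbf v_i=t_i\mathbf e+\rho_i\mathbf u_i$ as in Lemma~\ref{lem:fiber-polar}. The factor $G_i$ depends only on $(t_i,\rho_i)$ once $\mathbf m$ is fixed; this is the fiber statement of that lemma, and weights that are radial in the fiber variables are handled the same way. Hence, conditional on the mark and all scalar coordinates, the directions $\mathbf u_i$ have product uniform law with the exact polar Jacobians retained.

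The one nontrivial point is that $H_{j_*}$ does not depend only on scalar coordinates. By \eqref{eq:actual-han-column}, $H_{j_*}$ depends on $\mathbf m$, on the scalars, and on the directions $\mathbf u_i$ with $i\in\mathcal I_*$. By hypothesis its angular integral over $(\mathbf u_i)_{i\in\mathcal I_*}$ is at most $\eta$ for every supported scalar tuple. Apply Tonelli in the following order. Integrate first over $(\mathbf u_i)_{i\in\mathcal I_*}$: this gives a factor at most $\eta$. Then integrate over the remaining directions, which contributes nothing extra since those factors are scalar. Then integrate over the scalars, which recombines with the Jacobians into $\prod_i\int G_i\,d\mathbf v_i=\prod_i W_i(\mathbf m)$. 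Finally integrate over $\mathbf m\in\mathcal M$, using $G_0\le1$.

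The order is legitimate. The bound $\eta$ is uniform in the scalars, and after factoring out $\eta$ the integrand is still the product of the $G_i$, because $H_{j_*}$ was the only factor depending on those directions. This gives the first line of \eqref{eq:first-offending-complete-bound}. The second line follows from $W_i\le V_i$ on $\mathcal M$.

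The main care is bookkeeping. In the affine case, $\mathcal M$ is the unfolded Euclidean mark and every row is free. Also, repeated occurrences in a partition block must be merged into a single $G_i$ rather than counted as separate free rows.
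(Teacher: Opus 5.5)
Your proposal is correct and follows the paper's proof essentially step for step. The steps are the same: one-pivot factorization of the equality/reflection columns before $j_*$, deletion of the later dependent-column factors (all in $[0,1]$), polar disintegration via Lemma~\ref{lem:fiber-polar}, integrating the $\mathcal I_*$ directions first to extract $\eta$, and then Tonelli over the scalar coordinates and the mark. The paper also cites Han's echelon condition in its induction over the columns before $j_*$, but, as your argument shows, the factorization does not actually need it.
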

\begin{proof}
Write the pivot columns as $j_1<\cdots<j_r$.  Han's echelon condition
$D_{ij}=0$ for $j<j_i$ implies that a column at position $j$ can involve
only rows whose pivot has already appeared.  A partition column before
$j_*$ is, by Lemma~\ref{lem:four-kernel-relation-certificate}, either a
literal repetition of one active pivot or, only for the centered target
kernel, its reflection through the mark.  Its shell indicator and weight
therefore depend on the mark and that one pivot, but on no other pivot.
Multiplying all earlier columns in the same block merely changes the
one-pivot factor $G_i$.  Induction over the columns before $j_*$ proves
\eqref{eq:pre-offending-factorization}.  Pivot-column indicators themselves
already have this product form.

Delete the dependent-column indicators and weights after $j_*$, retaining
all pivot-column factors.  The deleted factors lie in $[0,1]$, so this gives
an upper bound.  At a fixed mark, disintegrate each exact one-pivot measure
$G_i(\mathbf m,\mathbf v_i)d\mathbf v_i$ into its scalar coordinates and
its transverse direction.  Radiality about the marked axis and
Lemma~\ref{lem:fiber-polar} give normalized uniform angular measures;
product Lebesgue measure makes them independent.  Integrating the directions
in $\mathcal I_*$ contributes at most $\eta$.  Tonelli restores the exact
scalar measures, of masses $W_i(\mathbf m)$.  Integrating the mark and using
$G_0\le1$ gives the first inequality in
\eqref{eq:first-offending-complete-bound}; the uniform envelopes $V_i$ give
the second.
\end{proof}

\subsubsection{Arithmetic and large-coefficient diagrams}\label{fs-sec:arithmetic-direct}
\begin{lemma}[Rayleigh--determinant bound]\label{fs-lem:rayleigh-det}
Let $s\ge2$, let $G\succeq0$ be an $s\times s$ correlation matrix, and let
$\mathbf c\in\R^s\setminus\{0\}$.  If
$\mathbf c^{\mathsf T}G\mathbf c=\tau$, then
\begin{equation}\label{fs-eq:rayleigh-det}
 \det G\le\frac{\tau}{\|\mathbf c\|^2}
 \left(\frac{s-\tau/\|\mathbf c\|^2}{s-1}\right)^{s-1}
 \le\frac{e\,\tau}{\|\mathbf c\|^2}.
\end{equation}
Thus if $\tau=1+o(1)$ and $\|\mathbf c\|^2\ge4(1-o(1))$, then
$\det G\le e/4+o(1)$.
\end{lemma}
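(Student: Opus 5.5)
Let $\lambda_1\ge\cdots\ge\lambda_s\ge0$ be the eigenvalues of $G$. Since $G$ is a correlation matrix, $\sum_i\lambda_i=\operatorname{tr}G=s$. Write $\mathbf c=\|\mathbf c\|\,\widehat{\mathbf c}$. The hypothesis says the Rayleigh quotient $\widehat{\mathbf c}^{\mathsf T}G\widehat{\mathbf c}$ equals $\tau/\|\mathbf c\|^2=:\theta$. By the Courant--Fischer characterization, the smallest eigenvalue satisfies $\lambda_s\le\theta$.

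The plan is to bound $\det G=\prod_i\lambda_i$ by maximizing the product subject to $\lambda_s\le\theta$ and $\sum_i\lambda_i=s$. Fix $\lambda_s=\mu\le\theta$. The remaining $s-1$ eigenvalues have sum $s-\mu$, so AM--GM gives $\prod_{i<s}\lambda_i\le\bigl((s-\mu)/(s-1)\bigr)^{s-1}$. Hence
\[
 \det G\le f(\mu):=\mu\Bigl(\frac{s-\mu}{s-1}\Bigr)^{s-1}.
\]
Differentiating $\log f$ gives $1/\mu-(s-1)/(s-\mu)$, which is positive exactly when $\mu<1$. So $f$ is increasing on $[0,1]$. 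If $\theta\le1$, then $\mu\le\theta$ gives $f(\mu)\le f(\theta)$, which is the first inequality in \eqref{fs-eq:rayleigh-det}.

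The case $\theta>1$ is the one point needing care, and it resolves easily. Here $f(\mu)\le f(1)=1$, and also $\det G\le1$ by Hadamard's inequality, while the first bound $f(\theta)$ may be smaller than $1$. I would treat it honestly in one of two ways. One option is to note that in the intended application $\theta\approx1/4<1$, so only $\theta\le1$ is needed. The cleaner option is to avoid the issue by using a different eigenvalue. If $\theta>1$, the same Courant--Fischer argument still gives $\lambda_s\le\theta$, but one cannot conclude $f(\lambda_s)\le f(\theta)$, because $f$ decreases on $[1,s]$. I would therefore state the first inequality under $\tau\le\|\mathbf c\|^2$, and prove the second inequality for all $\theta$. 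When $\theta\ge1/e$, the second inequality follows at once from Hadamard, since $\det G\le1\le e\theta$.

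For the second inequality with $\theta\le1$, bound the factor $\bigl((s-\theta)/(s-1)\bigr)^{s-1}=\bigl(1+(1-\theta)/(s-1)\bigr)^{s-1}$ by $e^{1-\theta}\le e$. Combining this with Hadamard in the remaining range gives $\det G\le e\theta$ in general.

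For the final assertion, take $\tau=1+o(1)$ and $\|\mathbf c\|^2\ge4(1-o(1))$. Then $\theta\le\tfrac14+o(1)<1$, so the second bound gives $\det G\le e\theta\le e/4+o(1)$.

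I expect no real obstacle. The only subtlety is the monotonicity range of $f$ and the $\theta>1$ case, which Hadamard's inequality handles.
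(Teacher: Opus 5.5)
Your argument is correct for $\theta:=\tau/\|\mathbf c\|^2\le 1$. Your proof of the second inequality for every $\theta$ (via $e^{1-\theta}\le e$ when $\theta\le1$, and Hadamard when $\theta\ge1/e$) is fine, and so is the final $e/4$ assertion.

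The gap is in the first inequality. The lemma asserts $\det G\le f(\theta)$, with $f(\mu)=\mu\bigl((s-\mu)/(s-1)\bigr)^{s-1}$, for every $\tau$. You explicitly leave the range $\theta>1$ unproved and propose restating the lemma under $\tau\le\|\mathbf c\|^2$.

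Your route stalls there because you use only the one-sided fact $\lambda_s\le\theta$. For $\theta\ge1$ this gives only $\det G\le\sup_{\mu\le\theta}f(\mu)=f(1)=1$, which is weaker than $f(\theta)<1$. The ``different eigenvalue'' you mention but do not carry out does close the gap:
\begin{itemize}
\item Courant--Fischer also gives $\lambda_1\ge\theta$.
\item AM--GM on the remaining $s-1$ eigenvalues, whose sum is $s-\lambda_1$, gives $\det G\le f(\lambda_1)$.
\item Since $f$ is decreasing on $[1,s]$, the chain $\lambda_1\ge\theta\ge1$ yields $f(\lambda_1)\le f(\theta)$.
\end{itemize}

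The paper's proof needs no case split. Rotate so that $\mathbf c/\|\mathbf c\|$ is the first basis vector; the $(1,1)$ entry is then $\theta$. Then $\det G$ equals $\theta$ times the determinant of the Schur complement, which is positive semidefinite with trace at most $s-\theta$. AM--GM gives $f(\theta)$ for every $\theta\in[0,s]$; if $\theta=0$, the first row vanishes and $\det G=0$. The second inequality then follows from $(1+(1-\theta)/(s-1))^{s-1}\le e$ without Hadamard. This approach uses the stronger information that $\theta$ is a diagonal entry in an orthonormal basis, not merely a bound on an extreme eigenvalue.

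Downstream, Lemma~\ref{fs-lem:spherical-penalty} uses the first inequality only for $\theta<1$, and Lemma~\ref{fs-lem:general-spherical-penalty} uses only the second. So your restricted version would not break anything later. As a proof of the lemma as stated, however, it is incomplete until you add the $\theta>1$ case.
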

\begin{proof}
Rotate so that $\mathbf c/\|\mathbf c\|$ is the first basis vector.  The
$(1,1)$ entry becomes $g_{11}=\tau/\|\mathbf c\|^2$.  The Schur complement is
positive semidefinite with trace at most $s-g_{11}$, so AM--GM on its $s-1$
eigenvalues gives the first inequality.  The second is
$(1+(1-g_{11})/(s-1))^{s-1}\le e$.
\end{proof}

\begin{lemma}[General coefficient spherical penalty]\label{fs-lem:general-spherical-penalty}
Fix $H>0$.  Let $n-O(\log n)\le d_\perp\le n$ and
$2\le s\le2H\log n+1$.  For independent
uniform $\mathbf{u}_1,\ldots,\mathbf{u}_s\in S^{d_\perp-1}$ and
$\mathbf c=(c_1,\ldots,c_s)\in\R^s\setminus\{0\}$, uniformly for
$\delta_n=o(1)$,
\begin{align}
 &\Prb\!\left[\left\|\sum_{i=1}^s c_i\mathbf{u}_i\right\|^2\in[1-\delta_n,1+\delta_n]\right]\notag\\
 &\quad\le\exp(O(s^2\log n))
 \min\left\{1,\frac{e(1+\delta_n)}{\|\mathbf c\|^2}\right\}^{(d_\perp-s-1)/2}.
 \label{fs-eq:general-spherical}
\end{align}
For radial factors $a_i=1+O(\eta_n)$, $\eta_n=o(1)$, apply the
same bound to the exact vector $(c_i a_i)_i$; its squared norm is
$(1+O(\eta_n))\|c\|^2$, uniformly in the coefficient height.
\end{lemma}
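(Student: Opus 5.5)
The plan is to compute the law of $\|\sum_i c_i\mathbf u_i\|^2$ through the Gram matrix $G=(\langle\mathbf u_i,\mathbf u_j\rangle)$. The identity
\[
 \Big\|\sum_i c_i\mathbf u_i\Big\|^2=\mathbf c^{\mathsf T}G\mathbf c
\]
turns the event into a condition on $G$ alone, namely $\tau:=\mathbf c^{\mathsf T}G\mathbf c\in[1-\delta_n,1+\delta_n]$.

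\emph{Main case $2\le s\le d_\perp-2$.} This holds for large $n$, since $s=O(\log n)$ and $d_\perp\ge n-O(\log n)$. Lemma~\ref{lem:gram-density} applies with $d=d_\perp$. The law of $G$ on the correlation cone has density $(\det G)^{(d_\perp-s-1)/2}/Z$, where $Z$ is the normalizing integral. The probability of the event is therefore at most
\[
 \frac{\vol(\mathcal E_s)}{Z}\,\sup\bigl\{(\det G)^{(d_\perp-s-1)/2}:\ \tau\in[1-\delta_n,1+\delta_n]\bigr\}.
\]
By \eqref{eq:gram-normalizer}, the prefactor $\vol(\mathcal E_s)/Z$ is at most $\exp(O(s^2\log n))$.

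For the supremum I will use Lemma~\ref{fs-lem:rayleigh-det}. It gives
\[
 \det G\le\frac{e\tau}{\|\mathbf c\|^2}\le\frac{e(1+\delta_n)}{\|\mathbf c\|^2}.
\]
A correlation matrix also satisfies $\det G\le1$ by Hadamard. Together,
\[
 \det G\le\min\{1,e(1+\delta_n)/\|\mathbf c\|^2\}.
\]
The exponent $(d_\perp-s-1)/2$ is nonnegative, so raising this bound to that power gives \eqref{fs-eq:general-spherical} directly.

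\emph{Uniformity and degenerate cases.} The constant in \eqref{eq:gram-normalizer} is uniform over $s=O(\log n)$ and $n-O(\log n)\le d\le n$. The Rayleigh bound holds for every $\mathbf c$ and every $s\ge2$, so the estimate is uniform in the coefficient height. For $s$ outside $[2,d_\perp-2]$ there is nothing to do: that range is empty for large $n$, and for small $n$ the bound is trivial because the probability is at most $1$.

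\emph{Radial-factor statement.} Set $\mathbf c'=(c_ia_i)_i$. Since $a_i^2=1+O(\eta_n)$,
\[
 \|\mathbf c'\|^2=\sum_i c_i^2a_i^2=(1+O(\eta_n))\|\mathbf c\|^2,
\]
and this relative error does not depend on $\mathbf c$. Applying the main bound to $\mathbf c'$ gives the claim.

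\emph{Main obstacle.} The main step needing care is the appeal to Lemma~\ref{lem:gram-density}. Its density is with respect to Lebesgue measure on the off-diagonal coordinates, so the event $\{\tau\in[1\pm\delta_n]\}$ is a measurable subset of the cone $\mathcal E_s$. Bounding its probability by the sup of the density times the total cone volume is then legitimate. The factor $\exp(O(s^2\log n))$ absorbs this crude volume bound, so the small width of the $\tau$-window is never used.
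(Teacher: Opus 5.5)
Your proposal is correct and follows the paper's proof. You bound the probability by the normalizer loss of Lemma~\ref{lem:gram-density} times the supremum of $(\det G)^{(d_\perp-s-1)/2}$ on the event, control that supremum with Lemma~\ref{fs-lem:rayleigh-det}, and handle the radial factors through the relative squared-norm estimate; your explicit use of Hadamard's inequality for the $\min\{1,\cdot\}$ branch is a small clarification that the paper leaves implicit.
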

\begin{proof}
By Lemma~\ref{lem:gram-density}, the Gram density is proportional to
$(\det G)^{(d_\perp-s-1)/2}$ and its domain/normalizer loss is at most
$\exp(O(s^2\log n))$.  On the event,
$\mathbf c^{\mathsf T}G\mathbf c\le1+\delta_n$, so
Lemma~\ref{fs-lem:rayleigh-det} gives the displayed determinant bound.  The radial assertion follows from
$\sum_i c_i^2a_i^2=(1+O(\eta_n))\sum_i c_i^2$; it uses a relative
error and therefore remains valid for arbitrarily large coefficients.
\end{proof}

\begin{lemma}[Fiber-product integral majorization]
\label{fs-lem:fiber-product-majorization}
Fix any external marked variables (possibly none) and let the remaining free
Rogers pivots $(\mathbf v_i)_{i\in\mathcal I}$ be restricted to shell fibers
$F_i$.  Disintegrate each
Lebesgue measure as
\[
 d\mathbf v_i=d\lambda_i(s_i)\,d\sigma_i(\mathbf u_i),
\]
where $s_i$ contains all radial/longitudinal coordinates (and their exact
Jacobian), while $\sigma_i$ is uniform probability measure on the relevant
transverse sphere.  Suppose that, for every supported scalar tuple
$s=(s_i)_{i\in\mathcal I}$, an indicator $H$ satisfies
\[
 \int H(s,(\mathbf u_i)_{i\in\mathcal I})
       \prod_{i\in\mathcal I}d\sigma_i(\mathbf u_i)\le\eta.
\]
Then, for every additional indicator $0\le G\le H$ and fiber weights
$0\le f_i\le1$,
\begin{equation}
 \int G\prod_{i\in\mathcal I} f_i(\mathbf v_i)\,d\mathbf v_i
 \le \eta\prod_{i\in\mathcal I}\vol(F_i).
 \label{fs-eq:fiber-product-majorization}
\end{equation}
The same assertion holds with weighted fiber masses on the right whenever
$f_i$ is incorporated into $d\lambda_i$.
\end{lemma}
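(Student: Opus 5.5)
The plan is a Tonelli--Fubini disintegration followed by integrating out the angular variables first.

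\emph{Step 1 (reduction to $G$ and $H$).} Since $0\le G\le H$ pointwise and each $0\le f_i\le1$, the integrand satisfies
\[
 G\prod_i f_i\le H\prod_i f_i .
\]
It therefore suffices to bound $\int H\prod_i f_i(\mathbf v_i)\,d\mathbf v_i$. All integrands are nonnegative and Borel, as in Lemma~\ref{lem:haar-kernel-measurable}, so Tonelli lets us integrate in any order without integrability checks.

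\emph{Step 2 (product disintegration).} For each free pivot, write $d\mathbf v_i=d\lambda_i(s_i)\,d\sigma_i(\mathbf u_i)$, as provided by Lemma~\ref{lem:fiber-polar}. The external marked variables are held fixed, so the fibers and their polar axes are fixed. Product Lebesgue measure on $\prod_i F_i$ then becomes
\[
 \prod_i d\lambda_i\otimes\prod_i d\sigma_i .
\]
For the unweighted statement we bound $f_i\le1$ and drop the weights. We then integrate the transverse directions first, with the scalar tuple $s=(s_i)_i$ fixed:
\[
 \int H(s,(\mathbf u_i)_i)\prod_i d\sigma_i(\mathbf u_i)\le\eta .
\]
This uses the hypothesis, which holds for every supported scalar tuple. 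If $s$ is unsupported, then $H$, and hence $G$, vanishes there, so such tuples contribute zero.

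\emph{Step 3 (scalar integration).} Integrating the bound $\eta$ against $\prod_i d\lambda_i(s_i)$ gives $\eta\prod_i\lambda_i(\text{scalar range of }F_i)$. By the disintegration, with $\sigma_i$ a probability measure, each factor equals $\vol(F_i)$. This proves \eqref{fs-eq:fiber-product-majorization}.

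\emph{Step 4 (weighted version).} If $f_i$ depends only on the scalar coordinates $s_i$ (the radial flattening weights are of this kind, by radiality about the mark), absorb it into $d\lambda_i$. Repeating Steps 2--3 gives $\eta\prod_i\int_{F_i}f_i$. If $f_i$ also depends on $\mathbf u_i$, the assertion is read with $f_i\,d\lambda_i\,d\sigma_i$ treated as a weighted fiber measure whose angular part is still dominated by $\sigma_i$. In that case bound $f_i\le1$ in the angular integral only, which still yields the weighted masses.

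\emph{Main obstacle.} The only delicate point is ensuring that the angular integral of $H$ is taken against the \emph{product} of uniform measures, independent of $G$ and of the weights. This is exactly why we pass from $G$ to $H$ before disintegrating, and why the weights must be functions of the scalar coordinates alone for the weighted form.
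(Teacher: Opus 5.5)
Your argument is correct and is essentially the paper's proof: disintegrate each pivot via Lemma~\ref{lem:fiber-polar}, integrate the transverse directions first using $G\le H$ and the bound $\eta$, and then apply Tonelli to the nonnegative scalar measures, whose total masses are the (weighted) fiber volumes. One caveat: delete the aside in Step~4 about weights that depend on $\mathbf u_i$. Bounding $f_i\le1$ inside the angular integral returns only the unweighted masses $\vol(F_i)$; obtaining $\int_{F_i}f_i$ would require $\int H\prod_i f_i\,d\sigma_i\le\eta\prod_i\int f_i\,d\sigma_i$, which fails in general. The lemma's weighted claim concerns only scalar weights absorbed into $d\lambda_i$, as your own final paragraph says.
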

\begin{proof}
Lemma~\ref{lem:fiber-polar} gives the displayed disintegration with no
change of variables beyond the exact polar Jacobian already contained in
$d\lambda_i$.  Integrate the transverse directions first, use $G\le H$ and
the uniform bound $\eta$, and then apply Tonelli to the nonnegative scalar
measures.  Their total masses are precisely the (weighted) fiber masses.
All remaining dependent-column restrictions only decrease the integral.
\end{proof}

\begin{lemma}[Uniform denominator--height summation]
\label{lem:denominator-height-summation}
Fix $H_0,C_0>0$.  Uniformly for
$1\le k\le2H_0\log n+1$ and every $\delta_n=O(k n^{-2})$, one has
\begin{align}
 &(C_0k)^{k^2}\sum_{q\ge2}q^{-n+k^2}
 \le e^{Ck^2\log n}2^{-n},
 \label{eq:denominator-sum-explicit}\\
 &\sum_{t\ge1}(C_0k2^t)^{k^2}
 \left(\frac{e+\delta_n}{2^{2t}}\right)^{(n-C_0k)/2}
 \le e^{Ck^2\log n}
       \left(\frac e4+\varepsilon_n\right)^{n/2},
 \label{eq:height-sum-explicit}
\end{align}
where $C=C(C_0,H_0)$ and one may take
$\varepsilon_n=C'_{C_0,H_0}\log n/n$.  The same two bounds hold after
multiplication by $2^k$ and after replacing $k^2$ in any polynomial
enumeration exponent by $C_0k^2$.
\end{lemma}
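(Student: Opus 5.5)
Both inequalities are elementary estimates on explicit series, so the plan is to bound each sum by its first term times a geometric factor, and to absorb every polynomial prefactor into $e^{Ck^2\log n}$. Throughout, use $k\le 2H_0\log n+1$, so $k^2=O(\log^2 n)=o(n)$ and $\log(C_0k)=O(\log\log n)\le O(\log n)$. In particular $(C_0k)^{k^2}=\exp(k^2\log(C_0k))\le e^{Ck^2\log n}$ for large $n$. Multiplying by $2^k$, or replacing $k^2$ by $C_0k^2$ in an enumeration exponent, changes these logarithms only by a factor depending on $C_0$. Those variants are therefore covered by enlarging $C$, and I will treat only the displayed forms.

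For \eqref{eq:denominator-sum-explicit}, write $m:=n-k^2$, so $m\ge n/2$ for large $n$. Then
\[
 \sum_{q\ge2}q^{-m}=2^{-m}\sum_{q\ge2}(2/q)^{m}
 \le 2^{-m}\Bigl(1+\int_2^\infty(2/x)^{m}\,dx\Bigr)
 \le 2^{-m}\Bigl(1+\frac{2}{m-1}\Bigr)\le 2\cdot2^{-m}.
\]
This gives $2^{-n}\cdot 2^{k^2+1}$. Since $2^{k^2+1}(C_0k)^{k^2}\le e^{Ck^2\log n}$, the first bound follows.

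For \eqref{eq:height-sum-explicit}, set $E:=(n-C_0k)/2$. The $t$-th term is
\[
 (C_0k)^{k^2}\,2^{tk^2}\,(e+\delta_n)^{E}\,2^{-2tE}.
\]
The ratio of consecutive terms is $2^{k^2-2E}\le 2^{-n/2}$ for large $n$, so the sum is at most twice the $t=1$ term, which equals
\[
 (C_0k)^{k^2}2^{k^2}\Bigl(\frac{e+\delta_n}{4}\Bigr)^{E}.
\]
It remains to pass from exponent $E$ to $n/2$. Because $(e+\delta_n)/4<1$ for large $n$, and $E\le n/2$ but $E$ is smaller, lowering the exponent increases the base power. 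I will bound
\[
 \Bigl(\frac{e+\delta_n}{4}\Bigr)^{E}=\Bigl(\frac{e+\delta_n}{4}\Bigr)^{n/2}\Bigl(\frac{4}{e+\delta_n}\Bigr)^{C_0k/2},
\]
where the last factor is $e^{O(k)}$ and is absorbed into $e^{Ck^2\log n}$. Finally,
\[
 \Bigl(\frac{e+\delta_n}{4}\Bigr)^{n/2}=\Bigl(\frac e4+\frac{\delta_n}{4}\Bigr)^{n/2}.
\]
Here $\delta_n/4=O(kn^{-2})\le C'\log n/n$, so it can be absorbed into $\varepsilon_n$. This delivers the stated form, with an even smaller $\varepsilon_n$ than claimed; that slack is harmless and matches the allowance $\varepsilon_n=C'\log n/n$.

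The only step needing care is the uniformity of constants. $C$ must depend only on $(C_0,H_0)$, and the threshold for ``$n$ large'' (needed for $k^2\le n/4$, for $C_0k<n$, and for $(e+\delta_n)/4<1$) must be uniform in $k$ and in the implicit constant of $\delta_n$. I will make these thresholds explicit, since they depend only on $H_0$, $C_0$ and that implicit constant. For finitely many small $n$ the statement is then absorbed by enlarging $C$, which is possible because each sum is finite for $n>k^2+1$ in the first case and for $2E>k^2$ in the second.
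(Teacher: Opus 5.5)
Your proposal is correct and follows the paper's proof essentially step for step. In both, each series is dominated by its first term: you use the same consecutive-term ratio $2^{k^2-(n-C_0k)}$ for the height sum, and in the denominator sum you settle for a factor $2$ where the paper gets $1+e^{-\Omega(n)}$. The polynomial prefactors and the exponent shift by $C_0k/2$ are then absorbed into $e^{Ck^2\log n}$, and $\delta_n=O(\log n/n^2)$ is absorbed into $\varepsilon_n=C'\log n/n$.

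One small correction concerns your closing remark about absorbing finitely many small $n$ into $C$. It does not work in general: for small $n$ the range $k\le 2H_0\log n+1$ can allow $k^2\ge n-1$, and then the denominator series already diverges. The lemma, like the paper's proof, is an asymptotic statement for all sufficiently large $n$, and your argument covers exactly that.
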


\begin{proof}
Put $m=n-k^2$.  Uniformly in the stated range, $m=n-o(n)$ and
\[
 \sum_{q\ge2}q^{-m}
 \le2^{-m}\left(1+\sum_{q\ge3}(2/q)^m\right)
 \le2^{-m}(1+e^{-\Omega(n)}).
\]
The factor $(C_0k)^{k^2}$ and the difference between $2^{-m}$ and
$2^{-n}$ are both $e^{O(k^2\log n)}$, proving
\eqref{eq:denominator-sum-explicit}.

For the height sum, the quotient of the term with index $t+1$ by the term
with index $t$ is
\[
 2^{k^2-(n-C_0k)}=e^{-\Omega(n)}
\]
uniformly for large $n$.  The series is therefore its $t=1$ term times
$1+e^{-\Omega(n)}$.  That term is
\[
 (2C_0k)^{k^2}
 \left(\frac{e+\delta_n}{4}\right)^{(n-C_0k)/2}.
\]
The polynomial factor and the shift of the exponent by $C_0k/2$ cost
$e^{O(k^2\log n)}$.  Since
$\delta_n=O(\log n/n^2)$, enlarging a deterministic base by
$C'\log n/n$ absorbs this error uniformly and proves
\eqref{eq:height-sum-explicit}.  The final variants only enlarge the
constant $C$.
\end{proof}

\begin{lemma}[Direct marked arithmetic remainder]\label{fs-lem:arithmetic-direct}
Fix $J\Subset(0,2)$ and $H>0$, and use one of the four raw kernels of
Lemma~\ref{lem:four-raw-kernels} at Rogers order $k\le2H\log n+1$.
Put $\chi=1$ for a centered kernel and $\chi=0$ after affine unfolding.
Suppose that the marked domain is $\mathcal M$ and each free pivot carries
a scalar weight in $[0,1]$ with weighted fiber mass at most $V_{\rm fib}$,
uniformly in the mark.  Then the total contribution of matrices satisfying
$q\ge2$, or $q=1$ with some free-row entry of magnitude at least two, is
at most
\begin{equation}\label{fs-eq:direct-arithmetic}
 \vol(\mathcal M)\max(1,V_{\rm fib})^{k-1-\chi}
 e^{O_{J,H}(k^2\log n)}
 \left[2^{-n}+\left(\frac e4+o(1)\right)^{n/2}\right].
\end{equation}
The exponent $k-1-\chi$ is $k-2$ for centered kernels and $k-1$
for unfolded affine kernels, as in Proposition~\ref{prop:uniform-diagram-budget}.
The respective free rows are $2,\ldots,r$ and $1,\ldots,r$.  Extra dependent-column factors in $[0,1]$ only
decrease this bound.
\end{lemma}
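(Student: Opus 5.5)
We split the arithmetic class into its two subclasses and bound each by a fixed-mark fiber estimate, followed by an enumeration of matrices. Throughout we fix the mark $\mathbf m\in\mathcal M$. For a centered kernel the free rows are $2,\ldots,r$; after affine unfolding they are $1,\ldots,r$. In either case there are at most $r-\chi\le k-1-\chi$ free pivots, by Lemma~\ref{lem:rogers-pivot-facts} and Lemma~\ref{fs-lem:missing-fiber}. We drop every dependent-column indicator except at most one, which only decreases the integral by Lemma~\ref{lem:supported-han-kernel}. Each retained free pivot then integrates to at most $V_{\rm fib}$, which accounts for the factor $\max(1,V_{\rm fib})^{k-1-\chi}$. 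Integrating the mark over $\mathcal M$ gives $\vol(\mathcal M)$. The work is in showing that the coefficient $c_D$, or one angular constraint, supplies the exponentially small bracket, and that summing over $D$ costs only $e^{O(k^2\log n)}$.

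\emph{Case $q\ge2$.} Here Proposition~\ref{thm:rogers-general} gives $c_D\le q^{-n}$. The only issue is counting matrices, which requires bounding their entries. On support, each dependent column $\mathbf y_j=q^{-1}\sum_iD_{ij}\mathbf v_i$ is a shell or band vector of norm $\Theta_J(R)$. Using Lemma~\ref{lem:uniform-scalar-certificate}, the transverse part $\sum_i c_i\mathbf u_i$ has norm $1+O(w_n)$. If $\|D_{\cdot j}/q\|$ is large, we do not count it directly. Instead, Lemma~\ref{fs-lem:general-spherical-penalty} with $\|c\|^2\ge 4^{t}$ penalizes it by $(e/4^t)^{(n-O(k))/2}$. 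So we bucket the unmarked coefficients of the worst column dyadically, $\max_i|D_{ij}|/q\in[2^{t-1},2^t)$. Within a bucket there are at most $(C_0kq2^t)^{k^2}$ matrices, since every entry is at most $q2^t$ and the marked entry is fixed up to $O(k)$ choices by the longitudinal relation. Bucket $t\le0$ carries only $q^{-n}$; buckets $t\ge1$ additionally carry the spherical penalty. Summing with \eqref{eq:denominator-sum-explicit} and \eqref{eq:height-sum-explicit} of Lemma~\ref{lem:denominator-height-summation} gives $e^{O(k^2\log n)}2^{-n}$. Cross terms are even smaller.

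\emph{Case $q=1$ with an unmarked entry $|D_{ij}|\ge2$.} Now $c_D=1$, so the saving must come from geometry. We take the first column $j_*$ containing such an entry. We apply Lemma~\ref{fs-lem:first-offending}: partition columns before $j_*$ factor, and we retain only $H_{j_*}$. By Lemma~\ref{lem:uniform-scalar-certificate}, the output transverse relation reads $\|\sum_ic_i\mathbf u_i\|^2=1+O(w_n)$ with $\|c\|^2\ge(4-o(1))\max_i|D_{ij}|^2/4$. The $\mathbf u_i$ are independent uniform on $S^{n-2}$, with $d_\perp=n-1$. Lemma~\ref{fs-lem:general-spherical-penalty} therefore gives $\eta\le e^{O(k^2\log n)}\min\{1,e(1+o(1))/\|c\|^2\}^{(n-k-2)/2}$.

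In the degenerate situation where only one free row appears with coefficient $B$, $|B|\ge2$, the relation forces $|B|\rho_i/\rho_0=1+o(1)$, which is impossible on support. Such columns are therefore unsupported, and the Gram argument applies when $s\ge2$. The marked coefficient is determined by Lemma~\ref{lem:supported-han-kernel}. Bucketing heights dyadically as before and applying \eqref{eq:height-sum-explicit} yields $e^{O(k^2\log n)}(e/4+o(1))^{n/2}$.

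The main obstacle is the enumeration in the unbounded-height regime. The paper insists on relative rather than additive coefficient errors, so the bucket count $(C_0k2^t)^{k^2}$ must be paired with a spherical penalty that decays like $4^{-tn/2}$, uniformly in $t$. I would first verify that Lemma~\ref{fs-lem:general-spherical-penalty} applies to the exact radially corrected vector $(c_ia_i)$ for every bucket, as its final sentence asserts. I would then check that the geometric ratio in Lemma~\ref{lem:denominator-height-summation} absorbs the tail. The handling of the marked row, including the fact that its coefficient is fixed once the unmarked coefficients are, rests on the last sentence of Lemma~\ref{lem:four-kernel-relation-certificate}.
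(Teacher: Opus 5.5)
Your overall architecture matches the paper's: the Rogers bound $c_D\le q^{-n}$ for denominators, the Rayleigh--Gram penalty $\min\{1,(e+o(1))/\|b_j/q\|^2\}^{p_n}$ with $p_n=(n-k-2)/2$ for one large column, unsupportedness of one-parent columns of relative height at least two, dyadic binning, and summation via Lemma~\ref{lem:denominator-height-summation}. Binning by the largest entry rather than by the paper's $L(D)=\max_j\|b_j/q\|$ is harmless, because the penalized column's norm dominates its largest entry. Your $q\ge2$ case is fine up to an off-by-one. On the bin $\max_{i,j}|D_{ij}|/q\in[2^{t-1},2^t)$ you only get $\|c\|^2\ge(1-o(1))4^{t-1}$, not $4^t$. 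So the bin $[1,2)$ carries no angular saving, and the penalized bins start at relative height $2$.

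The $q=1$ case does not close as written. You penalize the \emph{first} column $j_*$ containing an entry of magnitude at least two. But you bin ``as before'', i.e.\ by the largest entry over all columns, and that largest entry is what drives the count $(C_0k2^t)^{k^2}$. The only suppression you then have in bin $t$ is $\min\{1,e(1+o(1))/\|b_{j_*}\|^2\}^{p_n}$, which can equal $(e/5)^{p_n}$ for every $t$. For example, take $b_{j_*}=(2,1)$ while a later column such as $(m+1,-m)$ with $m\sim2^t$, supported on a set of positive measure, sets the height. Summing such bounds over $t$ diverges. Binning by the height of $j_*$ alone fails differently: it leaves the other columns' entries unbounded. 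The penalized column must be one attaining the bin's height, exactly as you did for $q\ge2$ and as the paper does (``the first column attaining the maximum $L(D)$''). Your own one-parent argument shows that such a column has at least two parents on support, so the repair is immediate.

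Relatedly, Lemma~\ref{fs-lem:first-offending} is the wrong tool here. Its factorization presupposes that every earlier nonpivot column is a partition column, which need not hold in the arithmetic class. You do not need it: once you delete every dependent-column factor except the selected one, the retained integrand is already a product of one-pivot factors, and Lemma~\ref{fs-lem:fiber-product-majorization} applies directly.

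Finally, for unbounded heights the marked entry is neither determined nor confined to $O(k)$ values. The last sentence of Lemma~\ref{lem:four-kernel-relation-certificate} covers only height-one columns. What your count actually needs is the norm-comparison bound $|A_j|\le C_Jk(q+\|b_j\|)$, and your enumeration does absorb it.
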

\begin{proof}
Every matrix under consideration is nonidentity, so $r\le k-1$ and the
number of free fibers is exactly $f=r-\chi\le k-1-\chi$.
All estimates below are conditional on the mark.  For an affine kernel the
mark is Euclidean and is not a Rogers row.

\emph{Exact coefficients and a height-attaining column.}
For a nonpivot column $j$, let $b_j$ be its vector of free-row integer
entries.  In a centered kernel its output is
$q^{-1}(A_j\mathbf m+\sum_i b_{ij}\mathbf v_i)$; in an affine kernel
omit the marked term.  Comparable input and output norms give
\begin{equation}\label{fs-eq:A-bound}
 |A_j|\le C_J\bigl(q+\sqrt{k}\|b_j\|\bigr)
 \le C_Jk(q+\|b_j\|).
\end{equation}
Let $\rho_0$ be the common transverse radius in
Lemma~\ref{lem:uniform-scalar-certificate}.  For each exact scalar tuple
the output constraint implies
\[
 \Big\|\sum_i c_i\mathbf u_i\Big\|^2\in[1-C_Jw_n,1+C_Jw_n],
 \quad c_i=\frac{b_{ij}\rho_i}{q\rho_0},
 \quad \|c\|^2\ge(1-C_Jw_n)\|b_j/q\|^2.
\]
If $b_j$ has at least two nonzero entries, Lemmas
\ref{fs-lem:general-spherical-penalty} and
\ref{fs-lem:fiber-product-majorization} bound the complete pivot integral
by the product of its weighted fiber masses times
\begin{equation}\label{fs-eq:large-coeff-angular}
 e^{C_Jk^2\log n}
 \min\left\{1,\frac{e+C_Jw_n}{\|b_j/q\|^2}\right\}^{p_n},
 \qquad p_n:=\frac{n-k-2}{2}>0.
\end{equation}
Here replacing the actual Gram-density exponent by $p_n$ enlarges the bound.
We keep the selected column and all pivot factors, and drop every other
dependent-column factor before integrating directions.

A column with $b_j=0$ is unsupported because its output transverse radius
would vanish.  A column with exactly one nonzero entry satisfies
$|b_{ij}|/q=1+O_J(w_n)$ on support.  Thus, for all sufficiently large $n$,
such a column has $\|b_j/q\|<2$.  This argument applies to every $q$;
it does not round $|b_{ij}|/q$ to an integer.  In particular a supported
height-attaining column of relative norm at least two has at least two
parents.  Centered rank-one diagrams have no free transverse row and are
unsupported; affine rank-one diagrams can occur only in the bounded-height
denominator family below if they are arithmetic.

\emph{Disjoint bins and their counts.}
Set
\[
 L(D):=\max_{j\ \mathrm{nonpivot}}\|b_j/q\|,
 \qquad
 B(D):=\max_{\substack{i\ \mathrm{free}\\j\ \mathrm{nonpivot}}}|b_{ij}|.
\]
The maximum over an empty set is zero.  The arithmetic family is the
disjoint union
\begin{align*}
 \mathcal C_{\rm b}&=\{q\ge2,\ L(D)<2\},\\
 \mathcal C_{\rm d,a}&=\{q\ge2,\ 2^a\le L(D)<2^{a+1}\},\quad a\ge1,\\
 \mathcal C_{\rm h,a}&=\{q=1,\ B(D)\ge2,\
                         2^a\le L(D)<2^{a+1}\},\quad a\ge1.
\end{align*}
The last family is exhaustive at $q=1$ because $B(D)\ge2$ implies
$L(D)\ge2$.  This coefficient-height test is made before norm binning:
a $q=1$ column with many $\pm1$ entries belongs to the geometric class,
even if its Euclidean coefficient norm exceeds two.

There are at most $2^k$ pivot patterns.  In a bin $L(D)<2H_*$ every free
entry has magnitude at most $2qH_*$, and \eqref{fs-eq:A-bound} bounds every
marked entry by $C_JkqH_*$.  Ignoring all echelon-zero and gcd restrictions
therefore gives the uniform overcount
\begin{equation}\label{fs-eq:matrix-count}
 2^k(C_JkqH_*)^{k^2},
 \qquad H_*=1\text{ in }\mathcal C_{\rm b},\quad
 H_*=2^a\text{ in a height bin}.
\end{equation}
For a height bin choose the first column attaining the maximum $L(D)$.
It has at least two parents and supplies \eqref{fs-eq:large-coeff-angular}
with $\|b_j/q\|\ge2^a$.  Its index is determined by $D$ and needs no
extra enumeration.

\emph{Three separate summations.}
Write
\[
 E_n=2^k(C_Jk)^{k^2}e^{C_Jk^2\log n},\qquad
 Q_n=\sum_{q\ge2}q^{-n+k^2},\qquad
 T_n=\sum_{a\ge1}2^{ak^2}
       \left(\frac{e+C_Jw_n}{2^{2a}}\right)^{p_n}.
\]
After division by
$\vol(\mathcal M)\max(1,V_{\rm fib})^{k-1-\chi}$, the three disjoint
contributions are bounded, respectively, by
\begin{align}
 \mathcal C_{\rm b}:&\quad E_n Q_n,
 &\bigcup_{a\ge1}\mathcal C_{\rm d,a}:&\quad E_n Q_nT_n,
 &\bigcup_{a\ge1}\mathcal C_{\rm h,a}:&\quad E_n T_n.
 \label{eq:arithmetic-master-sum}
\end{align}
The first two retain the exact Rogers bound $c_D\le q^{-n}$; the last
has $c_D=1$.  No suppression is discarded inside either infinite sum.
Lemma~\ref{lem:denominator-height-summation} gives, uniformly in $k$,
\[
 Q_n\le e^{O(k^2)}2^{-n},\qquad
 T_n\le e^{O(k^2\log n)}(e/4+o(1))^{n/2}.
\]
Indeed the ratio of consecutive height terms is
$2^{k^2-2p_n}=e^{-\Omega(n)}$, so the bin $[2,4)$ dominates.  Since
$Q_n<1$ for large $n$, the mixed contribution is at most $E_nT_n$;
the total is at most $E_n(Q_n+2T_n)$.  This proves the claimed bound.
In centered kernels all marked entries have already been counted by
\eqref{fs-eq:A-bound}.  If $q=1$ and only a marked entry is large, then
$B(D)\le1$ and the support relation places the matrix in the partition
or geometric class, so this does not leave an omitted arithmetic family.
\end{proof}

\begin{table}[t]
\centering\small
\begin{tabularx}{\textwidth}{@{}lXXX@{}}
\toprule
Disjoint family & Coefficient test & Available suppression & Normalized sum\\
\midrule
$\mathcal C_{\rm b}$ & $q\ge2$, $L<2$ & $q^{-n}$ & $E_nQ_n$\\
$\mathcal C_{\rm d,a}$ & $q\ge2$, $2^a\le L<2^{a+1}$
& $q^{-n}((e+C_Jw_n)/2^{2a})^{p_n}$ & $E_nQ_nT_n$ (sum over $a$)\\
$\mathcal C_{\rm h,a}$ & $q=1$, $B\ge2$, $2^a\le L<2^{a+1}$
& $((e+C_Jw_n)/2^{2a})^{p_n}$ & $E_nT_n$ (sum over $a$)\\
\bottomrule
\end{tabularx}
\caption[Denominator and height contributions.]{The three arithmetic families of
Lemma~\ref{fs-lem:arithmetic-direct}, with $a\ge1$.  A one-parent column
cannot attain $L\ge2$ on support; all bounded-height denominator matrices
are included in the first row.  Each normalized sum must be multiplied by
$\vol(\mathcal M)\max(1,V_{\rm fib})^{k-1-\chi}$.}
\label{tab:arithmetic-budget-ledger}
\end{table}

\begin{lemma}[Arithmetic shell-wide margin]\label{fs-rem:arithmetic-margin}
\[
 \mathcal N_\star\left(\frac e4\right)^{n/2}
 =\left(\frac e3\right)^{n/2}=e^{-0.049306\ldots n},
 \qquad
 \mathcal N_\star2^{-n}
 =\left(\frac1{\sqrt3}\right)^n=e^{-0.549306\ldots n}.
\]
\end{lemma}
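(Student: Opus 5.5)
This is a direct computation from the definition $\mathcal N_\star=(4/3)^{n/2}$, with no probabilistic input. For the first identity, multiply the two $n/2$-th powers:
\[
 \mathcal N_\star\Bigl(\frac e4\Bigr)^{n/2}=\Bigl(\frac43\cdot\frac e4\Bigr)^{n/2}=\Bigl(\frac e3\Bigr)^{n/2}.
\]
It remains to evaluate the exponent $\tfrac12\log(e/3)=\tfrac12(1-\log3)$. With $\log 3=1.098612\ldots$ this gives $\tfrac12(-0.098612\ldots)=-0.049306\ldots$.

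For the second identity, write $2^{-n}=(1/4)^{n/2}$. Then
\[
 \mathcal N_\star 2^{-n}=\Bigl(\frac43\cdot\frac14\Bigr)^{n/2}=\Bigl(\frac13\Bigr)^{n/2}=\Bigl(\frac1{\sqrt3}\Bigr)^{n}.
\]
Its exponent is $-\tfrac12\log3=-0.549306\ldots$.

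There is no real obstacle here. The only step needing care is the decimal check that $\tfrac12\log 3=0.549306\ldots$, from which both displayed constants follow; they differ by exactly $\tfrac12$.

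The point of recording the lemma is how it will be used later. Both exponents are strictly negative, so the arithmetic remainder of Lemma~\ref{fs-lem:arithmetic-direct}, multiplied by the shell-wide count $\mathcal N_\star$, is still exponentially small. The $e^{O(k^2\log n)}$ and $o(1)$ losses are subexponential when $k=O(\log n)$, so they are absorbed into this margin.
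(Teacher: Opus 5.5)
Your computation is correct and matches the paper's proof, which simply states that both identities are immediate from $\mathcal N_\star=(4/3)^{n/2}$. Your evaluation of $\tfrac12\log 3=0.549306\ldots$ only makes the two displayed decimal constants explicit.
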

\begin{proof}
Immediate from $\mathcal N_\star=(4/3)^{n/2}$.
\end{proof}

\subsubsection{The \texorpdfstring{$16/27$}{16/27} geometric penalty}\label{fs-sec:angular}
\begin{lemma}[Gram determinant optimization]\label{fs-lem:gram-opt}
Let $s\ge3$ and let $G\succeq0$ be an $s\times s$ correlation matrix.  Put
$\mathbf e=(1,\ldots,1)^{\mathsf T}\in\R^s$.  If
$\mathbf e^{\mathsf T}G\mathbf e=1$, then
\begin{equation}\label{fs-eq:gram-opt}
 \det G\le\frac1s\left(1+\frac1s\right)^{s-1}\le\frac{16}{27},
\end{equation}
with strict inequality in the last comparison for $s>3$.
\end{lemma}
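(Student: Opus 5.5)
This is the same rotation-plus-Schur-complement argument as in Lemma~\ref{fs-lem:rayleigh-det}, with the constraint vector now $\mathbf e$, where $\|\mathbf e\|^2=s$. Choose an orthonormal basis whose first vector is $\mathbf e/\sqrt s$, and write $G$ in that basis as $\widetilde G=Q^{\mathsf T}GQ$. Its $(1,1)$ entry is
\[
 g=\frac{\mathbf e^{\mathsf T}G\mathbf e}{s}=\frac1s.
\]
The trace is invariant under this change of basis, so $\operatorname{tr}\widetilde G=s$ because $G$ has unit diagonal.

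The determinant then splits along the first basis vector:
\[
 \det G=g\cdot\det(\text{Schur complement}).
\]
If $g=0$ this is trivial; here $g=1/s>0$. The Schur complement $\widetilde G_{22}-\mathbf b\mathbf b^{\mathsf T}/g$ is positive semidefinite. Its trace is at most $\operatorname{tr}\widetilde G_{22}=s-1/s$, since subtracting $\mathbf b\mathbf b^{\mathsf T}/g$ only lowers the trace.

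Apply AM--GM to its $s-1$ nonnegative eigenvalues:
\[
 \det G\le\frac1s\left(\frac{s-1/s}{s-1}\right)^{s-1}=\frac1s\left(\frac{s+1}{s}\right)^{s-1}=\frac1s\left(1+\frac1s\right)^{s-1}.
\]
This is the first inequality.

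For the comparison with $16/27$, set
\[
 f(s)=-\log s+(s-1)\log(1+1/s).
\]
At $s=3$ we get $f(3)=\log\bigl(\tfrac13\cdot\tfrac{16}{9}\bigr)=\log\tfrac{16}{27}$, so the claim is that $f$ is strictly decreasing on $s\ge3$. Treat $s$ as a real variable. Then
\[
 f'(s)=-\frac1s+\log\Bigl(1+\frac1s\Bigr)-\frac{s-1}{s(s+1)}.
\]
Using $\log(1+x)<x$ gives
\[
 f'(s)<-\frac{s-1}{s(s+1)}<0 \quad\text{for } s>1.
\]
Hence $f(s)<f(3)$ for every $s>3$, which is the strict inequality. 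As a sanity check, the bound tends to $0$ like $e/s$, consistent with Lemma~\ref{fs-lem:rayleigh-det}.

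The only points to watch are the positivity $g=1/s>0$, which legitimises the Schur complement, and the monotonicity check; I expect no real obstacle. This is in effect the specialisation of Lemma~\ref{fs-lem:rayleigh-det} to $\tau=1$ and $\mathbf c=\mathbf e$, with the exact AM--GM bound kept rather than the relaxation to $e$.
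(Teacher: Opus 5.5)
Your proof is correct, but it takes a different route from the paper. The paper argues by symmetrization. The feasible set $\{G\succeq0,\ G_{ii}=1,\ \mathbf e^{\mathsf T}G\mathbf e=1\}$ is convex and permutation invariant, and $\log\det$ is concave on positive-definite matrices. So averaging $G$ over coordinate permutations cannot decrease $\log\det$. The averaged matrix must have the form $(1-\rho)I+\rho\,\mathbf e\mathbf e^{\mathsf T}$, and the constraint forces $\rho=-1/s$. Its eigenvalues $1/s$ and $1+1/s$ (the latter with multiplicity $s-1$) give the first bound, and singular $G$ are handled by a continuity remark.

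You instead specialize the Schur-complement and AM--GM argument of Lemma~\ref{fs-lem:rayleigh-det} to $\mathbf c=\mathbf e$, $\tau=1$. The paper itself implicitly does this in Lemma~\ref{fs-lem:spherical-penalty}: it gets $f_t(1/t)$ from the Rayleigh--determinant lemma and calls the present lemma only for the comparison with $16/27$.

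Your route is shorter, reuses an existing lemma, and covers singular $G$ automatically, with no appeal to log-concavity or continuity. The paper's route exhibits the maximizer explicitly, so the first bound is visibly attained. You can recover the same extremal matrix from the AM--GM equality case: $\mathbf b=0$ and a scalar Schur complement $(1+1/s)I$. Your derivative computation also proves the monotonicity in $s$ (hence the strict inequality for $s>3$), which the paper only asserts.
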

\begin{proof}
The feasible set is permutation invariant and $\log\det$ is concave on
positive-definite matrices.  Symmetrizing gives
$G_\rho=(1-\rho)I+\rho\mathbf e\mathbf e^{\mathsf T}$.  The constraint gives
$\rho=-1/s$.  The eigenvalue along $\mathbf e$ is $1/s$ and the other $s-1$
eigenvalues are $1+1/s$, proving the first bound.  The expression decreases
for integer $s\ge3$ and equals $16/27$ at $s=3$.  Singular matrices follow
by continuity.
\end{proof}

\begin{lemma}[Spherical multi-parent penalty]\label{fs-lem:spherical-penalty}
Fix $H>0$.  Let $n-O(\log n)\le d_\perp\le n$ and
$3\le t\le2H\log n+1$.  For independent uniform
$\mathbf u_1,\ldots,\mathbf u_t\in S^{d_\perp-1}$, signs
$\varepsilon_i\in\{\pm1\}$, and $\delta_n,\eta_n=o(1)$, let
$c_i=\varepsilon_i a_i$ with $|a_i-1|\le\eta_n$.  Then
\begin{equation}\label{fs-eq:spherical-penalty}
 \Prb\left[\left\|\sum_{i=1}^t c_i\mathbf u_i\right\|^2
                  \in[1-\delta_n,1+\delta_n]\right]
 \le e^{O_H(t^2\log n)}
 \left(\frac{16}{27}+O(\delta_n+\eta_n)\right)^{d_\perp/2}.
\end{equation}
The constants are uniform over the signs and the exact radial factors.
\end{lemma}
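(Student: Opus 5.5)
The plan follows the proof of Lemma~\ref{fs-lem:general-spherical-penalty}, replacing the Rayleigh--determinant bound by the sharper Gram optimization of Lemma~\ref{fs-lem:gram-opt}. Let $G=(\langle\mathbf u_i,\mathbf u_j\rangle)$ be the Gram matrix of the directions. By Lemma~\ref{lem:gram-density} with $s=t$, it has density proportional to $(\det G)^{(d_\perp-t-1)/2}$ on the correlation cone. The domain/normalizer ratio is at most $e^{O(t^2\log n)}$, uniformly since $t\le2H\log n+1$. The event is $\mathbf c^{\mathsf T}G\mathbf c\in[1-\delta_n,1+\delta_n]$. So it suffices to bound $\sup\det G$ over correlation matrices satisfying this constraint by $16/27+O(\delta_n+\eta_n)$. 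Raising that bound to the power $(d_\perp-t-1)/2$ and multiplying by the domain volume loss then gives the result. Replacing the exponent by $d_\perp/2$ costs a factor $(27/16)^{(t+1)/2}=e^{O(t)}$, which is absorbed.

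First I remove the signs and radial factors. Conjugating by $\Lambda=\operatorname{diag}(\varepsilon_i)$ maps $G$ to $\Lambda G\Lambda$, which is again a correlation matrix with the same determinant. We may therefore take $\varepsilon_i=1$, which gives uniformity over signs. Next write $c_i=a_i$. With $\mathbf e=(1,\ldots,1)^{\mathsf T}$ and $\tau=\mathbf c^{\mathsf T}G\mathbf c$, compare $\mathbf c^{\mathsf T}G\mathbf c$ with $\mathbf e^{\mathsf T}G\mathbf e$. Set $D_a=\operatorname{diag}(a_i)$, so $\mathbf c^{\mathsf T}G\mathbf c=\mathbf e^{\mathsf T}(D_aGD_a)\mathbf e$. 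The matrix $G'=D_aGD_a$ has diagonal $a_i^2=1+O(\eta_n)$ and determinant $\det G\prod a_i^2$. Renormalize $G'$ to a correlation matrix $\widetilde G=D'^{-1}G'D'^{-1}$ with $D'=\operatorname{diag}(a_i)$. This just returns $G$, so the better route is to keep $G$ itself and use the weighted constraint directly. For that I will generalize Lemma~\ref{fs-lem:gram-opt} to the constraint $\mathbf c^{\mathsf T}G\mathbf c=\tau$ with $c_i\in[1-\eta_n,1+\eta_n]$.

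For the perturbed optimization, the cleanest approach is the same Schur/AM--GM argument used in Lemma~\ref{fs-lem:rayleigh-det}, refined with the symmetric structure. Rotate so that $\mathbf c/\|\mathbf c\|$ is a basis vector; then one entry equals $g=\tau/\|\mathbf c\|^2=(1+O(\delta_n+\eta_n))/t$. The Schur complement has trace at most $\operatorname{tr}G-g$ minus a correction. The exact symmetric optimizer has the remaining eigenvalues $1+1/t$, which is sharper than the naive trace bound $t-g$ spread over $t-1$ eigenvalues. The clean way to get this is to note that $\operatorname{tr}G=t$ and that $\mathbf c/\|\mathbf c\|$ is orthogonal to the complement. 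AM--GM on the $t-1$ remaining eigenvalues of $G$ restricted to $\mathbf c^\perp$, whose trace is $t-(\text{Rayleigh quotient along }\mathbf c)$, combines with $\det G\le(\mathbf c$-Rayleigh quotient$)\cdot\det(G|_{\mathbf c^\perp})$ by Fischer's inequality. This gives
\[
\det G\le g\Bigl(\frac{t-g}{t-1}\Bigr)^{t-1},\qquad g=\frac{\tau}{\|\mathbf c\|^2}.
\]
With $g=(1+O(\delta_n+\eta_n))/t$, this equals $\frac1t(1+\frac1t)^{t-1}(1+O(\delta_n+\eta_n))$. The factor $(1+O(\delta_n+\eta_n))^{t-1}$ is where I expect the only real care: since $(t-g)/(t-1)=1+(1-g)/(t-1)$ and $g$ is perturbed only by relative $O(\delta_n+\eta_n)/t$, the perturbation inside the power is $O((\delta_n+\eta_n)/t^2)$ per factor. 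The total multiplicative error is therefore $1+O(\delta_n+\eta_n)$, uniformly in $t$. Lemma~\ref{fs-lem:gram-opt} gives $\frac1t(1+\frac1t)^{t-1}\le16/27$ for $t\ge3$, which yields the base $16/27+O(\delta_n+\eta_n)$. This step also recovers Lemma~\ref{fs-lem:gram-opt} without the symmetrization argument. Finally, I integrate the bound over the event exactly as in Lemma~\ref{fs-lem:general-spherical-penalty}, and the constants are visibly uniform in the signs and radial factors.
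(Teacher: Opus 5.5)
Your argument is correct and is essentially the paper's proof. It uses the Gram density of Lemma~\ref{lem:gram-density} with its $e^{O(t^2\log n)}$ normalizer loss, then the bound $\det G\le f_t(g)$ with $f_t(u)=u\bigl((t-u)/(t-1)\bigr)^{t-1}$ and $g=\mathbf c^{\mathsf T}G\mathbf c/\|\mathbf c\|^2$, and finally the comparison $f_t(1/t)\le16/27$ from Lemma~\ref{fs-lem:gram-opt}; your Fischer--AM--GM derivation of $\det G\le f_t(g)$ is exactly the first inequality of Lemma~\ref{fs-lem:rayleigh-det}.

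The only real difference is in the last step. The paper uses just the upper bound $g\le(1+O(\delta_n+\eta_n))/t$, together with monotonicity of $f_t$ on $[0,1]$ and $0\le f_t'\le e$. You instead expand multiplicatively around $g=1/t$ using the two-sided estimate, which does hold on the event. Your aside that the symmetric optimizer is ``sharper than the naive trace bound'' is mistaken: at $g=1/t$ the two coincide. The sign removal and the $D_aGD_a$ detour are also unnecessary, since the bound depends only on $\mathbf c^{\mathsf T}G\mathbf c$ and $\|\mathbf c\|^2$.
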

\begin{proof}
Let $G$ be the Gram matrix and set
$\lambda=c^{\mathsf T}Gc/\|c\|^2$.  On the event,
\[
 0\le\lambda\le\frac{1+\delta_n}{t(1-\eta_n)^2}
                   =\frac{1+O(\delta_n+\eta_n)}t<1.
\]
The first bound in Lemma~\ref{fs-lem:rayleigh-det} gives
$\det G\le f_t(\lambda)$, where
$f_t(u)=u((t-u)/(t-1))^{t-1}$.  This function is increasing on $[0,1]$
and $0\le f_t'(u)\le e$ there.  Consequently
\[
 \det G\le f_t(1/t)+O(\delta_n+\eta_n)
 \le16/27+O(\delta_n+\eta_n)
\]
by Lemma~\ref{fs-lem:gram-opt}.  Apply the exact Gram density and
normalizer of Lemma~\ref{lem:gram-density}.  Its exponent is
$(d_\perp-t-1)/2$; increasing it to $d_\perp/2$ costs $e^{O(t)}$, which
is included in the prefactor.  This proof keeps the exact radial
coefficients and does not replace a weighted vector sum by an unweighted
sum with an uncontrolled additive error.
\end{proof}

\begin{lemma}[Classification and fiber powers for the four kernels]
\label{lem:complete-diagram-audit}
Use the four kernels \eqref{eq:raw-kernel-ct}--\eqref{eq:raw-kernel-ae}.
For $F_s^{\rm ct}$ and $F_s^{\rm at}$ take any $\alpha\in J\Subset(0,2)$.
For $F_s^{\rm ce}$ assume either $\alpha=\gamma$ in the near-unit range
or $\Delta_{\rm lp}(\alpha)\ge C_{\rm nr}(J)n^{-2}$, with $C_{\rm nr}$
larger than the low-parent support-error constant.  For $F_s^{\rm ae}$
assume $|1-\alpha^2/2|\ge\delta>0$.
Table~\ref{tab:four-kernel-audit} and the following rules classify every
admissible matrix with nonzero integral.

\emph{Tuple and rank.}  Table~\ref{tab:four-kernel-audit} specifies the
actual ordered Rogers tuple.  If its matrix has rank $r$, put $\chi=1$
in the centered cases and $\chi=0$ in the affine cases.  The exact free-fiber
count is $f=r-\chi$.  The identity has $f=s$; every other matrix has
$f\le s-1$.  For affine kernels the notation $\mathcal I_D(F)$ includes
the external Euclidean mark integral after applying Rogers to the displayed
tuple only.

\emph{Height-one columns.}  At $q=1$, after ruling out entries of magnitude
at least two in the free rows, let $B_j\in\{0,\pm1\}^f$ be the free part
of a nonpivot column and let $t_j=|\operatorname{supp}B_j|$.  A partition
column is exactly the one-parent reuse listed in the table.  Otherwise
the first nonpartition column $j_*$ has $t_{j_*}\ge3$, and its exact
transverse coefficient vector is
\[
 c_*=(B_{ij_*}\rho_i/\rho_0)_{i\in\operatorname{supp}B_{j_*}},
 \qquad \|c_*\|^2=t_{j_*}(1+O_J(w_n)).
\]
Its parents are distinct free integration variables with independent
uniform transverse directions conditional on their scalars.

\emph{Arithmetic columns.}  If $q\ge2$ or some free-row entry has magnitude
at least two, assign the matrix to exactly one of
$\mathcal C_{\rm b},\mathcal C_{\rm d,a},\mathcal C_{\rm h,a}$ in
Lemma~\ref{fs-lem:arithmetic-direct}.  The norm $L(D)$ and integer height
$B(D)$ have distinct roles.  A height-attaining one-parent column is
unsupported in every bin with $L(D)\ge2$.

\emph{Fiber masses.}  At a fixed mark define
\[
 W(\mathbf m)=
 \begin{cases}
 \omega(\mathbf d)\int\mathbf1_S(\mathbf v)
              \mathbf1_S(\mathbf v-\mathbf d)d\mathbf v,&\mathrm{ct},\\
 \int\mathbf1_T(\mathbf v)\mathbf1_S(\mathbf x-\mathbf v)
                \omega(\mathbf v)d\mathbf v,&\mathrm{ce},\\
 \omega(\mathbf d)\int\mathbf1_S(\mathbf v)
              \mathbf1_S(\mathbf d+\mathbf v)d\mathbf v,&\mathrm{at},\\
 \int\mathbf1_S(\mathbf v)\mathbf1_T(\mathbf x-\mathbf v)
                \omega(\mathbf x-\mathbf v)d\mathbf v,&\mathrm{ae}.
 \end{cases}
\]
For every correction, its complete fiber factor is bounded by
$W(\mathbf m)^f\le\max(1,V_{\rm fib})^{s-1}$ whenever
$W(\mathbf m)\le V_{\rm fib}$.  Thus the final exponent is $s-1=k-2$
for centered kernels and $s-1=k-1$ for unfolded-affine kernels.
\end{lemma}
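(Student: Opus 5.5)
The plan is to treat Lemma~\ref{lem:complete-diagram-audit} as an assembly of facts already proved, checked kernel by kernel against Table~\ref{tab:four-kernel-audit}. I will verify its four paragraphs in order.

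\emph{Tuple and rank.} Lemma~\ref{lem:four-raw-kernels} already records the ordered tuple and the Rogers order, $k=s+1$ for centered kernels and $k=s$ for affine ones. For the centered kernels, column~$1$ is the marked pivot by Lemma~\ref{lem:rogers-pivot-facts}, so exactly $r-1$ rows are free. For the affine kernels, the mark is the external Euclidean integral, so all $r$ rows are free. This gives $f=r-\chi$. The identity has $r=k$, hence $f=s$. Every other matrix has $r\le k-1$ by Lemma~\ref{lem:rogers-pivot-facts}, which is exactly the statement of Lemma~\ref{fs-lem:missing-fiber}.

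\emph{Height-one columns.} At $q=1$ with free entries in $\{0,\pm1\}$, I apply Lemma~\ref{lem:four-kernel-relation-certificate} to each kernel under the stated parameter hypotheses. For ct and at this needs $\alpha\in J$. For ce it needs either $\alpha=\gamma$ or the low-parent gap; the hypothesis $\Delta_{\rm lp}(\alpha)\ge C_{\rm nr}n^{-2}$ dominates $C_J(t+1)w_n$ for $t\le2$. For ae it needs $|1-\alpha^2/2|\ge\delta$. In each case the lemma gives three things: a one-parent column is equality (or, for ct only, reflection); $t=0$ and $t=2$ are unsupported; and every other column has $t\ge3$. Proposition~\ref{prop:kernel-trichotomy-audit} and Lemma~\ref{lem:diagram-trichotomy-exhaustive} then show that the first nonpartition column has $t_{j_*}\ge3$. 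The identity $\|c_*\|^2=t_{j_*}(1+O_J(w_n))$ follows from Lemma~\ref{lem:uniform-scalar-certificate}, because $|B_{ij_*}|=1$ and $\rho_i/\rho_0=1+O_J(w_n)$. The distinctness of the parents, together with their conditionally independent uniform directions, comes from Lemma~\ref{fs-lem:first-offending} and Lemma~\ref{lem:fiber-polar}.

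\emph{Arithmetic columns.} The bins $\mathcal C_{\rm b},\mathcal C_{\rm d,a},\mathcal C_{\rm h,a}$ are disjoint and exhaustive by construction in Lemma~\ref{fs-lem:arithmetic-direct}: the split is first by $q$, then by $B(D)\ge2$, then by dyadic $L(D)$. The unsupported one-parent claim reuses the argument given there. For a single nonzero entry, the output transverse radius forces $|b_{ij}|/q=1+O_J(w_n)<2$.

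\emph{Fiber masses.} I factor the integrand as in \eqref{eq:pre-offending-factorization}. Each free pivot's retained factor $G_i$ is bounded by its pivot-column indicator times its weight, so $\int G_i\le W(\mathbf m)$. For ct and at, the mark weight $\omega(\mathbf d)^s$ is distributed with one factor per free fiber, and the leftover factors are at most $1$. For ce and ae, the weight is attached to each pivot directly. Hence the fiber factor is at most $W(\mathbf m)^f\le\max(1,V_{\rm fib})^{f}\le\max(1,V_{\rm fib})^{s-1}$. Converting to $k$ gives $k-2$ for centered kernels and $k-1$ for affine ones.

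The main obstacle is this last step for ct and at. Absorbing $\omega(\mathbf d)^s$ into only $f\le s-1$ fibers must not lose anything. It does not: the surplus power of $\omega$ is at most $1$, and Lemma~\ref{fs-lem:first-offending} keeps every pivot factor while dropping only dependent factors that lie in $[0,1]$.
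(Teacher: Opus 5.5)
Your proposal is correct and follows the paper's proof essentially step for step. Like the paper, you assemble the tuple/rank count from Lemmas~\ref{lem:four-raw-kernels}, \ref{lem:rogers-pivot-facts} and~\ref{fs-lem:missing-fiber}, the low-parent enumeration from Lemma~\ref{lem:four-kernel-relation-certificate}, and the arithmetic bins from Lemma~\ref{fs-lem:arithmetic-direct}, and you allocate one factor $\omega(\mathbf d)$ to each free pivot with leftover $\omega(\mathbf d)^{s-f}\le1$. The paper additionally states that a supported centered marked-row coefficient is fixed by its scalar equation with $|A|\le C_Jk$; you leave this implicit, inheriting it through your citation of Proposition~\ref{prop:kernel-trichotomy-audit}.
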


\begin{table}[t]
\centering\footnotesize
\setlength{\tabcolsep}{3pt}
\renewcommand{\arraystretch}{1.14}
\begin{tabularx}{\textwidth}{@{}l>{\raggedright\arraybackslash}p{.22\textwidth}
>{\raggedright\arraybackslash}p{.23\textwidth}Xl@{}}
\toprule
Kernel & Ordered tuple; mark & One free fiber & Partition column; scalar test & $f$\\
\midrule
ct & $(\mathbf d,\mathbf x_1,\ldots,\mathbf x_s)$;
$\mathbf d$ is column 1
& $\mathbf v\in S$, $\mathbf v-\mathbf d\in S$
& $\mathbf v$ or $\mathbf d-\mathbf v$;
$2A+\sum B_i=1$ & $r-1$\\
ce & $(\mathbf x,\mathbf d_1,\ldots,\mathbf d_s)$;
$\mathbf x$ is column 1
& $\mathbf v\in T$, $\mathbf x-\mathbf v\in S$
& $\mathbf v$; $|2A+\alpha^2(\sum B_i-1)|\le C_J(t+1)w_n$ & $r-1$\\
at & $(\mathbf c_1,\ldots,\mathbf c_s)$;
$\mathbf d$ external
& $\mathbf v\in S$, $\mathbf d+\mathbf v\in S$
& $\mathbf v$; $\sum B_i=1$ & $r$\\
ae & $(\mathbf c_1,\ldots,\mathbf c_s)$;
$\mathbf x$ external
& $\mathbf v\in S$, $\mathbf x-\mathbf v\in T$
& $\mathbf v$; $\sum B_i=1$ under the gap assumption & $r$\\
\bottomrule
\end{tabularx}
\caption[Four-kernel tuples and fibers.]{Exact tuples and partition
columns under the hypotheses of Lemma~\ref{lem:complete-diagram-audit}.
The centered marked row is excluded from $B$ and from the free-fiber count.
The affine mark is not a column of the Rogers matrix.}
\label{tab:four-kernel-audit}
\end{table}

\begin{proof}
The tuple and scalar assertions are Lemmas~\ref{lem:four-raw-kernels}
and~\ref{lem:uniform-scalar-certificate}; the exact count $r-\chi$ and
the rank bound are Lemma~\ref{fs-lem:missing-fiber}.
The zero-, one-, and two-parent cases are exactly the enumeration in
Lemma~\ref{lem:four-kernel-relation-certificate}: ct allows equality and
reflection, whereas ce, at, and ae allow only equality under the stated
conditions.  A supported centered marked coefficient is uniquely determined
by its scalar equation, since its allowed interval has length $O_J(kw_n)<1$,
and has magnitude at most $C_Jk$.

First test $q$, then the largest free-row integer entry, and then scan the
height-one nonpivot columns for partition reuse.  These successive tests
are mutually exclusive and exhaustive.  The preceding enumeration forces
$t_{j_*}\ge3$ in the only remaining nonpartition case.
Han's echelon condition says that all those parent pivots precede $j_*$.
Keep all pivot factors, absorb earlier partition reuses into their own
pivot factors, and delete all later dependent-column factors.  The
remaining angular measure is a product by
Lemma~\ref{fs-lem:first-offending}; no conditional independence is asserted
after imposing a coupling constraint.  The exact coefficient vector above
now gives the geometric penalty by Lemma~\ref{fs-lem:spherical-penalty}.
The arithmetic alternatives and their one-parent edge case have been
summed separately in Lemma~\ref{fs-lem:arithmetic-direct}.

Finally, in a target kernel the raw weight is $\omega(\mathbf d)^s$.
Allocate one factor $\omega(\mathbf d)$ to each of the $f$ free pivots;
the leftover factor is $\omega(\mathbf d)^{s-f}\le1$.  In an endpoint
kernel keep the weight of each pivot occurrence and drop all other
occurrence weights.  Repeated weights can only reduce its mass since
$0\le\omega\le1$.  All kept weights are scalar about the marked axis.
The exact product integration therefore gives $W(\mathbf m)^f$ in every
row, including when $W<1$.  Restoring the mark gives the asserted volume
and fiber power.
\end{proof}

\begin{proposition}[Uniform nonpartition integral bound]
\label{prop:uniform-diagram-budget}
Fix $J\Subset(0,2)$ and constants $c_0,K,H>0$.  There are a constant
$C=C(J,c_0,K,H)$, an integer $n_0$, and one deterministic sequence
$\varepsilon_n=\varepsilon_n(J,c_0,K,H)\downarrow0$ such that the following
holds for $n\ge n_0$, all $1\le k\le2H\log n+1$, all radii
\[
 c_0\Rstar\le R\le e^{K\log^2n}\Rstar,
\]
and all target/source ratios in $J$ satisfying the kernel-specific
conditions of Lemma~\ref{lem:complete-diagram-audit}.  For the affine
endpoint take a fixed gap $\delta>0$; $C,n_0$ may also depend on $\delta$.
If the common marked domain is
$\mathcal M$ and all free weighted fiber masses are at most $V_{\rm fib}$,
then all supported nonpartition matrices are covered by the following
two bounds.  Every geometric nonpartition diagram $D$ satisfies
\begin{equation}\label{eq:uniform-geometric-diagram}
 \mathcal I_D(F)\le
 \vol(\mathcal M)\max(1,V_{\rm fib})^{k-2}
 e^{Ck^2\log n}
 \left(\frac{16}{27}+\varepsilon_n\right)^{n/2}.
\end{equation}
Moreover, after enlarging $C$ once, the finite sum over all such geometric
diagrams obeys the same displayed right-hand side.  For fixed $k$ this family
is finite: $q=1$, every unmarked entry lies in $\{0,\pm1\}$, and the marked
entries are then bounded by the longitudinal equation.
The sum over all supported arithmetic/large-coefficient diagrams satisfies
\begin{equation}\label{eq:uniform-arithmetic-diagrams}
 \sum_{D\ {\rm arithmetic}}\mathcal I_D(F)\le
 \vol(\mathcal M)\max(1,V_{\rm fib})^{k-2}
 e^{Ck^2\log n}
 \left[
   \left(\frac e4+\varepsilon_n\right)^{n/2}+2^{-n}
 \right].
\end{equation}
The same $\varepsilon_n$ works simultaneously for target kernels, endpoint
kernels, and all raw orders $s\le2h$.  For an unfolded-affine kernel of
Rogers tuple order $k$, both right-hand sides remain valid with the factor
$\max(1,V_{\rm fib})^{k-2}$ replaced explicitly by
$\max(1,V_{\rm fib})^{k-1}$; no other term changes.
\end{proposition}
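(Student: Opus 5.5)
The proof assembles the ingredients already established, in three stages.

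\emph{Geometric diagrams.} Fix a supported geometric nonpartition matrix $D$. By Lemma~\ref{lem:complete-diagram-audit} it has $q=1$ and unmarked entries in $\{0,\pm1\}$, and its first nonpartition column $j_*$ combines $t_{j_*}\ge3$ distinct free pivots. The exact transverse coefficient vector $c_*$ of that column has entries $\pm(1+O_J(w_n))$. Lemma~\ref{lem:uniform-scalar-certificate} shows that, on output support, $\|\sum_i c_i\mathbf u_i\|^2=1+O_J(w_n)$, where the $\mathbf u_i$ lie on spheres of dimension $d_\perp=n-1$. Lemma~\ref{fs-lem:spherical-penalty}, with $\delta_n,\eta_n=O_J(w_n)$ and $t\le k\le2H\log n+1$, then bounds the conditional angular integral by
\[
\eta=e^{O(k^2\log n)}\bigl(\tfrac{16}{27}+O(w_n)\bigr)^{(n-1)/2}.
\]
Next, Lemma~\ref{fs-lem:first-offending} factors the retained integrand conditional on the mark. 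This gives $\mathcal I_D(F)\le\eta\,\vol(\mathcal M)\prod_i V_i$. By the fiber-mass clause of Lemma~\ref{lem:complete-diagram-audit}, the product is at most $\max(1,V_{\rm fib})^{f}\le\max(1,V_{\rm fib})^{k-2}$ for centered kernels, since $f\le s-1=k-2$. For affine kernels it is at most $\max(1,V_{\rm fib})^{k-1}$. Absorbing the shift from $(n-1)/2$ to $n/2$ and the $O(w_n)$ into $e^{Ck^2\log n}$ and $\varepsilon_n$ proves \eqref{eq:uniform-geometric-diagram}.

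\emph{Summing geometric diagrams.} I would count the matrices directly. There are at most $2^k$ pivot patterns and at most $3^{k^2}$ choices of unmarked entries. The marked entries are then determined, because the longitudinal equation pins $A$ inside an interval of length $O_J(kw_n)<1$. The total count is therefore $e^{O(k^2)}$, which is absorbed by enlarging $C$ once.

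\emph{Arithmetic diagrams.} This is Lemma~\ref{fs-lem:arithmetic-direct} applied with $\chi=1$ for centered kernels and $\chi=0$ for affine kernels. It gives exactly \eqref{eq:uniform-arithmetic-diagrams}, with the exponent $k-2$ or $k-1$ respectively; Lemma~\ref{lem:denominator-height-summation} supplies the uniform $\varepsilon_n=C'\log n/n$. Exhaustiveness, namely that every supported nonpartition matrix falls into one of the two classes, is Proposition~\ref{prop:kernel-trichotomy-audit} together with Lemma~\ref{lem:diagram-trichotomy-exhaustive}, under the stated kernel conditions. The affine-endpoint gap $\delta$ enters only through $C_J$ and $n_0$.

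\emph{Main obstacle: uniformity.} The step needing the most care is showing that all these bounds hold with one $C$, $n_0$ and $\varepsilon_n$ for every $R$ in $[c_0\Rstar,e^{K\log^2n}\Rstar]$, every $\alpha\in J$, and every $k\le2H\log n+1$. My approach is to check that every constant used so far (the $O_J(w_n)$ support errors, the Gram normalizer $\exp(O(s^2\log n))$, and the denominator and height sums) is scale-invariant in $R$, since the support relations are homogeneous. The range of $R$ then matters only through $\vol(\mathcal M)$ and $V_{\rm fib}$, and both are kept explicit. Finally, $\varepsilon_n$ is taken to be the maximum of the $O(w_n)$ and $O(\log n/n)$ contributions, which is deterministic and decreases to zero.
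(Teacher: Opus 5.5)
Your proposal is correct and follows the paper's own proof essentially step for step. It uses the same ingredients throughout. For the geometric bound, these are Lemma~\ref{lem:complete-diagram-audit}, Lemma~\ref{fs-lem:spherical-penalty} in transverse dimension $n-1$, and Lemma~\ref{fs-lem:first-offending} with $V_i=V_{\rm fib}$. The height-one matrices are enumerated in the same way. The arithmetic class is handled by Lemma~\ref{fs-lem:arithmetic-direct} with $\chi\in\{0,1\}$ together with Lemma~\ref{lem:denominator-height-summation}. A single deterministic $\varepsilon_n\asymp\log n/n$ is justified by the scale invariance of the support relations.

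The only difference is cosmetic. You treat each centered marked-row entry as uniquely determined, so your count has no $(C_Jk)^k$ factor. The paper itself records this in Lemma~\ref{lem:complete-diagram-audit}, and it only improves the constant.
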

\begin{proof}
Lemma~\ref{lem:complete-diagram-audit} and
Table~\ref{tab:four-kernel-audit} make the
classification and the exact fiber powers explicit.  We now collect the
uniform estimates; partition contributions are evaluated in
Section~\ref{sec:shell-moments}.

\emph{Step 1: the actual column and its product measure.}
Lemma~\ref{lem:four-raw-kernels} lists every kernel to be considered.
Lemma~\ref{lem:supported-han-kernel} identifies a Han matrix column with the
literal output vector in that kernel.  In the geometric class choose its
first nonpartition column $j_*$.  By
Lemma~\ref{fs-lem:first-offending}, all factors before $j_*$ split over
individual free pivots.  Conditional on their exact scalar coordinates,
the transverse directions entering $j_*$ consequently have product uniform
measure.  Every dependent-column factor after $j_*$ lies in $[0,1]$ and can
be dropped; every pivot factor is retained.

\emph{Step 2: one geometric diagram.}
Lemma~\ref{lem:complete-diagram-audit} says that $j_*$ combines
$t\ge3$ distinct free pivots.  By
Lemma~\ref{lem:uniform-scalar-certificate}, after division by the common transverse radius $\rho_0$, its exact
nonzero coefficients are signs times $1+O_J(w_n)$ and output support
forces the squared norm into $[1-C_Jw_n,1+C_Jw_n]$.
Lemma~\ref{fs-lem:spherical-penalty}, with transverse dimension $n-1$, now
gives
\[
 e^{C_1k^2\log n}
 \left(\frac{16}{27}+C_2kw_n\right)^{n/2}.
\]
The exponent shifts from $n-1$ to the Gram-density exponent are
$O(k)$ and are included in the displayed prefactor.
Equation~\eqref{eq:first-offending-complete-bound} integrates the scalar
measures and the mark.  Apply it with $V_i=V_{\rm fib}$ for every free
pivot.  A centered nonidentity diagram has at most $k-2$ unmarked free
pivots by Lemma~\ref{fs-lem:missing-fiber}, which gives
\eqref{eq:uniform-geometric-diagram} for one $D$.

\emph{Step 3: summing the geometric diagrams.}
There are at most $2^k$ pivot patterns.  Once a pattern is fixed, the
unmarked part of a height-one matrix has at most $k^2$ entries, each in
$\{0,\pm1\}$.  In a centered kernel the longitudinal support relation fixes each
marked-row entry and bounds it by $C_Jk$; in an affine kernel there is no
marked row.  Hence the number of supported geometric matrices is at most
\[
 2^k3^{k^2}(C_Jk)^k\le e^{C_3k^2\log(2k)}.
\]
The choice of $j_*$ is already determined by a matrix, and allowing another
factor $k$ only enlarges this bound.  This proves the summed geometric
estimate after increasing $C$.

\emph{Step 4: arithmetic diagrams.}
Lemma~\ref{fs-lem:arithmetic-direct} treats centered and affine
kernels in one statement, with exponent $k-1-\chi$.  Its three disjoint
contributions in \eqref{eq:arithmetic-master-sum} are $E_nQ_n$,
$E_nQ_nT_n$, and $E_nT_n$.  The denominator and height estimates of
Lemma~\ref{lem:denominator-height-summation} give
\eqref{eq:uniform-arithmetic-diagrams}, with exponent $k-2$ when
$\chi=1$ and $k-1$ when $\chi=0$.  The latter free-fiber count also
applies to Step 2 and proves the affine geometric estimate.

\emph{Step 5: one uniform error sequence.}
All uses of compactness occur in the four explicit scalar formulas of
Lemma~\ref{lem:uniform-scalar-certificate}.  Their constants depend only on
$J$; scaling by $R$ removes the radius, and the exact scalar Jacobians are
never compared pointwise.  In the height-one geometric class, uniformly for
$k\le2H\log n+1$, the accumulated coefficient and shell error is at most
$C_4kw_n$.  Arithmetic diagrams use the exact coefficients and uniform
relative squared-norm error $O_J(w_n)$ of
Lemma~\ref{lem:uniform-scalar-certificate}.  Gram normalization,
dimension shifts, pivot patterns, marked coefficients, and matrix
enumeration together cost at most $e^{C_5k^2\log n}$.  We may therefore take
the nonincreasing envelope of
\[
 \varepsilon_n=C_6\frac{\log n}{n};
\]
it dominates $C_4kw_n$, tends to zero, and is independent of
$D,k,R$, the raw order, and the chosen kernel.  The upper bound on $R$ is
used only to keep the later continuum fiber envelopes in their stated
range; the normalized diagram constants themselves do not depend on it.
This completes both estimates.  Every later $o(1)$ inside the bases
$16/27$ and $e/4$ refers to this common deterministic choice.
\end{proof}

\begin{corollary}[Shell-scale geometric penalty]\label{fs-cor:angular-penalty}
Fix $H>0$ and $1\le h\le H\log n$.  Consider a geometric nonpartition diagram under the hypotheses of
Proposition~\ref{prop:uniform-diagram-budget}, with at most $2h+1$
Rogers columns.  The conditional angular integral of the shell indicator
for its first nonpartition column, with respect to the product uniform
measure of Lemma~\ref{fs-lem:first-offending}, is at most
\begin{equation}\label{fs-eq:angular-penalty}
 \exp(O_H(h^2\log n))
 \left(\frac{16}{27}+o(1)\right)^{n/2},
\end{equation}
uniformly over the mark and every supported scalar tuple.  The full diagram
integral is bounded by restoring the mark integral and fiber masses in
\eqref{eq:first-offending-complete-bound}, as in
Proposition~\ref{prop:uniform-diagram-budget}.
\end{corollary}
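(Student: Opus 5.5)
This corollary is essentially a restatement of Step~2 in the proof of Proposition~\ref{prop:uniform-diagram-budget}, specialized to order $k\le 2h+1$; I would extract it directly. The plan is to fix a geometric nonpartition diagram $D$ with $k\le 2h+1\le 2H\log n+1$ columns. By Lemma~\ref{lem:complete-diagram-audit} its first nonpartition column $j_*$ has $q=1$ and free-row coefficients $B_{ij_*}\in\{0,\pm1\}$. The hypotheses of the corollary are those of Proposition~\ref{prop:uniform-diagram-budget}, so the kernel-specific parameter conditions hold. Lemma~\ref{lem:four-kernel-relation-certificate} (via Proposition~\ref{prop:kernel-trichotomy-audit}) therefore rules out zero, one and two distinct parents, and the column combines $t\ge3$ distinct free pivots, with $t\le k-1$.

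Next I would invoke Lemma~\ref{fs-lem:first-offending}. Retaining all pivot factors and earlier partition factors, the retained integrand factors over free pivots. Conditional on the mark and on every supported scalar tuple, the directions $(\mathbf u_i)_{i\in\mathcal I_*}$ are therefore independent and uniform on the transverse sphere $S^{n-2}\subset\mathbf e^\perp$, so $d_\perp=n-1$. By Lemma~\ref{lem:uniform-scalar-certificate}, dividing by the common transverse radius $\rho_0$, the output of column $j_*$ satisfies $\|\sum_i c_i\mathbf u_i\|^2\in[1-C_Jw_n,1+C_Jw_n]$. Here $c_i=B_{ij_*}\rho_i/\rho_0=\varepsilon_i a_i$ with $|a_i-1|\le C_Jw_n$, uniformly in the mark and in $R$. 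The longitudinal part of the output is fixed once the scalars are fixed, since the marked coefficient is determined, so the shell indicator of $j_*$ is dominated by the indicator of this transverse event.

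Applying Lemma~\ref{fs-lem:spherical-penalty} with $d_\perp=n-1$, $\delta_n,\eta_n=O_J(kw_n)=o(1)$ and $3\le t\le 2H\log n+1$ bounds the conditional angular integral by $e^{O_H(t^2\log n)}(16/27+O(kw_n))^{(n-1)/2}$. Converting the exponent $(n-1)/2$ to $n/2$ costs a constant factor. Since $t\le 2h+1$, the prefactor is $\exp(O_H(h^2\log n))$, and the $O(kw_n)$ is absorbed into the $o(1)$ of \eqref{fs-eq:angular-penalty}. Uniformity over signs, radial factors, mark and scalar tuple is part of Lemma~\ref{fs-lem:spherical-penalty}. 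The final sentence of the corollary is then just \eqref{eq:first-offending-complete-bound} with $\eta$ equal to this bound.

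The only delicate point, which I expect to be the main obstacle, is ensuring that the angular integral is over the \emph{product} measure and not a measure already conditioned by earlier couplings. This is exactly why one chooses the first offending column and relies on Han's echelon condition in Lemma~\ref{fs-lem:first-offending}: all parents of $j_*$ are earlier pivots, and earlier partition columns involve only one pivot each.
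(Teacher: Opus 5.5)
Your proposal is correct and follows the paper's own two-line argument. The first nonpartition column combines at least three free pivots, Lemma~\ref{fs-lem:first-offending} supplies the product uniform transverse measure on $S^{n-2}$, Lemma~\ref{fs-lem:spherical-penalty} is applied with $d_\perp=n-1$ and $o(1)$ scalar errors, and the mark and fiber masses are restored via \eqref{eq:first-offending-complete-bound}. You also spell out the normalization $c_i=B_{ij_*}\rho_i/\rho_0$ and the exponent shift, which the paper leaves implicit.
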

\begin{proof}
The first nonpartition column combines at least three free pivots.  In this
height-one geometric class, shell scalar errors accumulate to
$O(h w_n)=O_H(\log n/n^2)=o(1)$, while transverse dimension is $n-1$.
Apply Lemma~\ref{fs-lem:spherical-penalty} to its conditional angular
integral, and Lemma~\ref{fs-lem:first-offending} to the full integral.
\end{proof}

\section{High moments for marked shell-incidence counts}\label{sec:shell-moments}
\label{sec:rogers-shell}
This section specializes the diagram bounds of Section~\ref{sec:rogers-engine}
to target multiplicities and endpoint degrees.  We state its two principal
moment estimates first; their conversion into simultaneous pointwise
regularity is carried out in Section~\ref{sec:growing-rogers}.

\subsection{Uniform centered and affine moment bounds}

We first state the uniform bounds in their intrinsic fixed-shell form.
The endpoint population is always $\cL\cap S_R$; the parameter $\alpha$ only
selects the radial band in which its differences are observed.  For
$\alpha\in(0,2)$ let $T_{\alpha,R}:=S_{\alpha R}$ and put
\[
 K_R(\mathbf d):=\vol(S_R\cap(S_R+\mathbf d)),\qquad
 m_*:=\inf_{\mathbf d\in T_{\alpha,R}}K_R(\mathbf d),\qquad
 \omega(\mathbf d):=
 \begin{cases}m_*/K_R(\mathbf d),&\mathbf d\in T_{\alpha,R},\\
 0,&\mathbf d\notin T_{\alpha,R}.
 \end{cases}
\]
Write $P:=\vol(S_R)$ and $Q:=\vol(T_{\alpha,R})$.  Given fixed
$J\Subset(0,2)$ and constants $c_0,K,H>0$, fix the common sequence
$\varepsilon_n=\varepsilon_n(J,c_0,K,H)$ from
Proposition~\ref{prop:uniform-diagram-budget} and put
\begin{equation}\label{eq:diagram-correction-budget}
 \Xi_n:=\left(\frac{16}{27}+\varepsilon_n\right)^{n/2}
       +\left(\frac e4+\varepsilon_n\right)^{n/2}+2^{-n}.
\end{equation}
For $\mathbf x\in S_R$ define
\[
 a_R^\omega(\mathbf x):=\int_{T_{\alpha,R}}
 \mathbf 1_{S_R}(\mathbf x-\mathbf d)\omega(\mathbf d)\,d\mathbf d,
 \qquad a^*:=\sup_{\mathbf x\in S_R}a_R^\omega(\mathbf x).
\]
Only relations involving at most two free unmarked pivots must be excluded
before the three-parent Gram penalty applies.  We use the finite low-parent
gap $\Delta_{\rm lp}$ defined in~\eqref{eq:low-parent-gap}; its harmless
cutoff is $C_{\rm lp}=10$.

\begin{proposition}[Uniform marked shell moments]\label{thm:intrinsic-marked-moments}
Fix a compact interval $J\Subset(0,2)$ and constants $c_0,K,H>0$.
Uniformly for $\alpha\in J$, for
$c_0\le R/\Rstar\le\exp(K\log^2n)$, and for integers
$1\le h\le H\log n$, one has
\begin{align}
 &\E_{\cL}\sum_{\mathbf d\in\cL\cap T_{\alpha,R}}
 \left|\omega(\mathbf d)r_{\cL,R}(\mathbf d)-m_*\right|^{2h}\notag\\
 &\quad\le Qe^{O(h\log(2h))}m_*\max(1,m_*)^{h-1}
 +Qe^{O(h^2\log n)}\max(1,m_*)^{2h-1}\Xi_n.
 \label{eq:intrinsic-target-moment}
\end{align}
Let $C_{\rm nr}(J)$ be as in
Lemma~\ref{lem:generic-endpoint-low-parent}.  If in addition
$\Delta_{\rm lp}(\alpha)\ge C_{\rm nr}(J)n^{-2}$, then, with
\[
 D^\omega_{\cL,R,\alpha}(\mathbf x)
 :=\sum_{\mathbf d\in\cL\cap T_{\alpha,R}}
 \mathbf 1_{S_R}(\mathbf x-\mathbf d)\omega(\mathbf d),
\]
\begin{align}
 &\E_{\cL}\sum_{\mathbf x\in\cL\cap S_R}
 \left|D^\omega_{\cL,R,\alpha}(\mathbf x)
             -a_R^\omega(\mathbf x)\right|^{2h}\notag\\
 &\quad\le Pe^{O(h\log(2h))}a^*\max(1,a^*)^{h-1}
 +Pe^{O(h^2\log n)}\max(1,a^*)^{2h-1}\Xi_n.
 \label{eq:intrinsic-endpoint-moment}
\end{align}
All implicit constants are uniform on $J$.
\end{proposition}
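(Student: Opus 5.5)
I will prove the target estimate~\eqref{eq:intrinsic-target-moment}; the endpoint estimate~\eqref{eq:intrinsic-endpoint-moment} follows by the same route with the kernel $F_s^{\rm ce}$, the mark $\mathbf x\in S_R$, and fiber mass $a_R^\omega(\mathbf x)\le a^*$. The nonresonance hypothesis $\Delta_{\rm lp}(\alpha)\ge C_{\rm nr}(J)n^{-2}$ is used only to apply Proposition~\ref{prop:kernel-trichotomy-audit}(c).

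\emph{Binomial expansion.} Put $Y(\mathbf d)=\omega(\mathbf d)r_{\cL,R}(\mathbf d)$ and expand
\[
 |Y-m_*|^{2h}=\sum_{s=0}^{2h}\binom{2h}{s}(-m_*)^{2h-s}Y^s .
\]
By Lemma~\ref{lem:four-raw-kernels}, $\sum_{\mathbf d}Y(\mathbf d)^s$ is the Siegel transform of $F_s^{\rm ct}$, a Rogers tuple of order $k=s+1\le 2h+1$. This order is legal by Lemma~\ref{lem:growing-order-legal}, and the integrands are measurable by Lemma~\ref{lem:haar-kernel-measurable}. The $s=0$ term is handled by Siegel's formula, which gives $Q m_*^{2h}$. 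Applying Proposition~\ref{thm:rogers-general} to each raw moment, Lemma~\ref{lem:complete-diagram-audit} sorts the admissible matrices into partition diagrams (class A), geometric diagrams (class B) and arithmetic diagrams (class C).

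\emph{Corrections.} Classes B and C are bounded by Proposition~\ref{prop:uniform-diagram-budget} with $\mathcal M=T_{\alpha,R}$, so $\vol(\mathcal M)=Q$. Since $\omega K_R=m_*$ on $T_{\alpha,R}$, the fiber mass is $V_{\rm fib}=m_*$. Each $s$ then contributes at most $Q\max(1,m_*)^{s-1}e^{O(h^2\log n)}\Xi_n$. Multiplying by $\binom{2h}{s}m_*^{2h-s}\le 4^h\max(1,m_*)^{2h-s}$ and summing over $s$ gives the second term of~\eqref{eq:intrinsic-target-moment}. No cancellation is needed here, since each correction is nonnegative and can simply be bounded in absolute value.

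\emph{Partition terms.} Class-A diagrams are indexed by set partitions of the $s$ parent slots. In each block, the occurrences equal one free pivot or its reflection $\mathbf d-\mathbf v$ through the mark. If $\pi$ has $b$ blocks, then because reflection maps the lens to itself, the term equals
\[
 \int_{T}\omega^s K_R^{\,b}\cdot 2^{\#\text{reflection choices}}
\]
with the identifications counted correctly. The exact compensation comes from comparing with a model. Let $Z$ be the number of points of a Poisson process of intensity $K_R(\mathbf d)$ on the lens, taken together with its reflected copies, and consider the moments of $\omega Z$. Class A matches the factorial-moment expansion of this Poisson-type variable \emph{exactly}: equality and reflection reuse reproduce the Stirling partition sums. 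Because the same expansion holds for every $s$, the binomially alternating sum over $s$ equals
\[
 \int_T \E\bigl(\omega(\mathbf d)Z_{\mathbf d}-m_*\bigr)^{2h}\,d\mathbf d .
\]
By Lemma~\ref{lem:poisson-cumulants}, the centered moment has no singleton blocks. Its cumulants are $O(2^b\,\omega^bK_R)\le 2^bm_*$ (using $\omega\le1$), and there are at most $h$ blocks with $(2h)^{2h}$ partitions. This yields $Q e^{O(h\log 2h)}m_*\max(1,m_*)^{h-1}$. Reflection only changes constants, since a reflected pair behaves like a doubled Poisson point, so the cumulants stay of the same form.

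\emph{Main obstacle.} The hard step is proving the exact identity between the partition diagrams and the Poisson-moment expansion, including the reflection pairs and the $j_1=1$ pivot convention. The subtle point is that the centered moment collapses to a no-singleton sum. This holds only if every class-A contribution, together with the $m_*^{2h-s}$ factors, matches the compound Poisson moments exactly. An inequality is not enough, because the cancellations are exponential in size. I would first verify this identity at $h=1$ using Lemma~\ref{lem:thin-shell-count}, and then handle general $h$ by a generating-function argument over set partitions of $[s]$.
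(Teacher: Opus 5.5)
Your proposal is correct and follows the paper's proof essentially step for step: binomial expansion into raw marked Rogers moments, the trichotomy of Lemma~\ref{lem:complete-diagram-audit}, corrections through Proposition~\ref{prop:uniform-diagram-budget} with weighted fiber mass $\omega K_R=m_*$ (respectively $a^*$), and an exact match of the partition diagrams with a compound-Poisson model whose centered moment keeps only singleton-free partitions. The one thing to tighten is the model's normalization, and with it fixed the identity you flag as the main obstacle is exactly Lemma~\ref{fs-lem:target-affine-partitions}, where singleton blocks cancel by $\sum_j\binom aj(-1)^{a-j}=0$: the model must be $2\omega(\mathbf d)\operatorname{Pois}(K_R(\mathbf d)/2)$, with one Poisson point per unordered pair $\{\mathbf x,\mathbf d-\mathbf x\}$, cumulants $2^{b-1}\omega^bK_R$, and mean exactly $m_*$, since a full-mass process together with its reflections would have mean $2m_*$ and would spoil the singleton cancellation.
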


For a random affine lattice, keep the same shell, difference band, and
flattening weight.  For
$\mathbf d\in(\boldsymbol\tau+\cL)\cap T_{\alpha,R}$ and
$\mathbf x\in(\boldsymbol\tau+\cL)\cap S_R$, define
\begin{align}
 r^{\rm aff}_{\cL,\boldsymbol\tau,R}(\mathbf d)
 &:=\sum_{\mathbf c\in\cL}
   \mathbf1_{S_R}(\mathbf c)\mathbf1_{S_R}(\mathbf d+\mathbf c),
 \label{eq:affine-representation}\\
 D^{{\rm aff},\omega}_{\cL,\boldsymbol\tau,R,\alpha}(\mathbf x)
 &:=\sum_{\mathbf c\in\cL}
   \mathbf1_{S_R}(\mathbf c)\mathbf1_{T_{\alpha,R}}(\mathbf x-\mathbf c)
   \omega(\mathbf x-\mathbf c).
 \label{eq:affine-endpoint-degree}
\end{align}
The first variable counts representations
$\mathbf d=\mathbf x-\mathbf c$ with an affine endpoint
$\mathbf x\in(\boldsymbol\tau+\cL)\cap S_R$ and a centered endpoint
$\mathbf c\in\cL\cap S_R$.  Its flattened continuum mean is $m_*$.  The
continuum mean of the weighted endpoint degree is the same function
$a_R^\omega(\mathbf x)$ defined before
Proposition~\ref{thm:intrinsic-marked-moments}.

\begin{proposition}[Uniform random-affine shell moments]
\label{thm:affine-marked-moments}
Fix a compact interval $J\Subset(0,2)$ and constants $c_0,K,H>0$.  Uniformly
for $\alpha\in J$, for $c_0\le R/\Rstar\le\exp(K\log^2n)$, and for integers
$1\le h\le H\log n$, one has
\begin{align}
 &\E_{\cL,\boldsymbol\tau}
 \sum_{\mathbf d\in(\boldsymbol\tau+\cL)\cap T_{\alpha,R}}
 \left|\omega(\mathbf d)
 r^{\rm aff}_{\cL,\boldsymbol\tau,R}(\mathbf d)-m_*\right|^{2h}
 \notag\\
 &\quad\le
 Qe^{O(h\log(2h))}m_*\max(1,m_*)^{h-1}
 +Qe^{O(h^2\log n)}\max(1,m_*)^{2h-1}\Xi_n.
 \label{eq:affine-target-moment}
\end{align}
If, in addition,
\begin{equation}\label{eq:affine-endpoint-gap}
 \delta_J:=\inf_{\alpha\in J}\left|1-\frac{\alpha^2}{2}\right|>0,
\end{equation}
then
\begin{align}
 &\E_{\cL,\boldsymbol\tau}
 \sum_{\mathbf x\in(\boldsymbol\tau+\cL)\cap S_R}
 \left|D^{{\rm aff},\omega}_{\cL,\boldsymbol\tau,R,\alpha}(\mathbf x)
       -a_R^\omega(\mathbf x)\right|^{2h}\notag\\
 &\quad\le
 Pe^{O(h\log(2h))}a^*\max(1,a^*)^{h-1}
 +Pe^{O(h^2\log n)}\max(1,a^*)^{2h-1}\Xi_n.
 \label{eq:affine-endpoint-moment}
\end{align}
All implicit constants are uniform on $J$.  After unfolding, the largest
tuple presented to Rogers' formula has order $2h$, one less than in the
corresponding centered marked-target calculation.
\end{proposition}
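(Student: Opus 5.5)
The plan is to unfold the random shift first and then follow the centered proof of Proposition~\ref{thm:intrinsic-marked-moments}, column for column. The unfolding is exact. By Lemma~\ref{lem:four-raw-kernels} and the identity $\int_{\mathcal F}\sum_{\mathbf z\in\boldsymbol\tau+\cL}H(\mathbf z)\,d\boldsymbol\tau=\int_{\R^n}H$, each raw term in the expansion of the $2h$-th power becomes a Euclidean integral over the mark ($\mathbf d\in T_{\alpha,R}$, resp.\ $\mathbf x\in S_R$). Its integrand is the $\E_{\cL}$ of the Siegel transform of $F_s^{\rm at}$, resp.\ $F_s^{\rm ae}$, in the centered columns $\mathbf c_1,\dots,\mathbf c_s$ only. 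Measurability and Tonelli come from Lemma~\ref{lem:haar-kernel-measurable}, and Lemma~\ref{lem:growing-order-legal} keeps the Rogers order $s\le 2h$ in the legal range. Expanding $|X-m_*|^{2h}$ binomially gives
\[
\sum_{s=0}^{2h}\binom{2h}{s}(-m_*)^{2h-s}\,\E\!\sum X^s.
\]
Each $\E X^s$ is then written via Proposition~\ref{thm:rogers-general} as a sum of diagram integrals. These split into the identity diagram, partition diagrams (class (A)), and corrections (classes (B),(C)).

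For the main term, collect the identity and partition diagrams from all orders $s$. Proposition~\ref{prop:kernel-trichotomy-audit}(d) says the only affine partition column is literal equality, so no reflections appear. A partition of the $s$ occurrences into $b$ blocks therefore contributes exactly $\int_{\mathcal M}\omega(\mathbf m)^s W_0(\mathbf m)^b\,d\mathbf m$, where $W_0$ is the unweighted lens (or cap). Equivalently, conditional on the mark, this partition sum is the $s$-th moment of a Poisson variable $Z$ with intensity $\omega$ on the fiber, i.e.\ of $\int f\,d\Pi$ with $f=\omega(\mathbf d)\1_{\mathrm{fiber}}$ in the target case.

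Since $\omega W_0=m_*$ exactly on the target band, the mean of $Z$ equals the centering $m_*$. Reassembling the binomial sum over $s$ then gives exactly $\E(Z-\E Z)^{2h}$. By Lemma~\ref{lem:poisson-cumulants}, all cumulants of order at least two are $\int f^b\le m_*$ (using $\omega\le1$), so
\[
\E(Z-\E Z)^{2h}\le (2h)^{2h}\,m_*\max(1,m_*)^{h-1}.
\]
Integrating over the mark gives the first term of \eqref{eq:affine-target-moment} with volume $Q$. The endpoint case is identical with mean $a_R^\omega(\mathbf x)\le a^*$ and volume $P$. Here the gap \eqref{eq:affine-endpoint-gap} is exactly what Lemma~\ref{lem:four-kernel-relation-certificate}(iv) needs to exclude nonequality low-parent columns.

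For the corrections, apply Proposition~\ref{prop:uniform-diagram-budget} in its unfolded-affine form, with exponent $\max(1,V_{\rm fib})^{k-1}$ and $k=s\le2h$. Take $\vol(\mathcal M)=Q$ (resp.\ $P$). The weighted fiber mass is $V_{\rm fib}=m_*$ for the target, since $\omega(\mathbf d)\int\1_S(\mathbf v)\1_S(\mathbf d+\mathbf v)\,d\mathbf v=\omega K_R=m_*$ by central symmetry of the shell, and $V_{\rm fib}=a^*$ for the endpoint. This bounds each $\E X^s$ correction by $Q\max(1,m_*)^{s-1}e^{O(s^2\log n)}\Xi_n$. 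Multiplying by $\binom{2h}{s}m_*^{2h-s}\le 2^{2h}\max(1,m_*)^{2h-s}$ and summing over $s$ gives $Qe^{O(h^2\log n)}\max(1,m_*)^{2h-1}\Xi_n$. Note that the correction sign is irrelevant, since we bound it in absolute value: identity and partition diagrams are computed exactly and every other diagram is nonnegative.

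The step I expect to be delicate is the exact matching of the partition diagrams with Poisson moments. It must be checked that the affine partition integral really factors as $\omega^s W_0^b$ with no loss. This needs Lemma~\ref{fs-lem:first-offending}'s factorization together with the absence of reflection columns (which would otherwise double-count as in the centered target). It also needs the raw $s=0$ and low-order terms to line up with the binomial centering. If the matching is only approximate, I would instead bound the discrepancy diagram by diagram, using Lemma~\ref{lem:poisson-cumulants}'s no-singleton structure and absorbing it into $e^{O(h\log 2h)}$.
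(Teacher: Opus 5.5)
Your proposal is correct and takes essentially the same route as the paper. You unfold the affine mark via Lemma~\ref{lem:affine-fiber-unfold}. You then identify the height-one partition diagrams as pure equality reuses, whose block contributions are the cumulants of a weighted Poisson integral whose mean is exactly the centering. Finally, you bound every remaining diagram by the unfolded-affine case of Proposition~\ref{prop:uniform-diagram-budget}, with fiber envelope $m_*$ or $a^*$ and exponent $s-1$.

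The matching you flag as delicate is exact: it is Lemma~\ref{lem:affine-partition-identification}. Repeated equality columns only repeat idempotent indicators, and $c_D=1$ for these diagrams, so no fallback estimate is needed.
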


Str\"ombergsson--S\"odergren prove Gaussian and functional limits at
subexponential volume, as well as fixed-order moment convergence at
exponential volume below an order-dependent growth threshold
\cite[Theorems~1.3, 1.5, and~1.7]{StrombergssonSodergren2016}.
The bounds above concern individual marked lens and endpoint fibers at
moment order proportional to $\log n$, with one correction bound that
survives summation over every target in a complete shell.

We prove these two propositions below.  The near-unit geometry and partition
calculations come first, followed by the general centered classification and
the affine unfolding argument.  For the near-unit specialization put
\[
 R_\ell:=\sqrt{\frac43}\,e^{1/\ell}\Rstar.
\]

\subsection{Incidence geometry and continuum scales}\label{fs-sec:geometry}
For fixed relative difference length $\alpha\in(0,2)$ set
\[
 A(\alpha):=\alpha\sqrt{1-\frac{\alpha^2}{4}}.
\]
Note that $A(1)^n=(3/4)^{n/2}=\mathcal N_\star^{-1}$.
For $\mathbf x\in S_R$, write
\[
 a_R^{\wt}(\mathbf x):=
 \int_{T_R}\1_{S_R}(\mathbf x-\mathbf d)\wt_R(\mathbf d)\,d\mathbf d.
\]

\begin{lemma}[Near-unit difference factor and continuum scales]\label{fs-lem:continuum-scales}
Fix a constant $K>0$.  Uniformly for
$R_\ell\le R\le\exp(K\log^2n)\Rstar$, define
\[
 p_\gamma(R):=\frac1{\vol(S_R)^2}
 \int_{S_R}\int_{S_R}\1_{T_R}(\mathbf{x}-\mathbf{y})\,d\mathbf{x}\,d\mathbf{y}.
\]
Then
\begin{equation}\label{fs-eq:pairprob}
 p_\gamma(R)=\mathcal N_\star^{-1}\exp\!\left(
 -\frac{2n}{3\ell}+O\!\left(\frac n{\ell^2}+\log n\right)\right).
\end{equation}
Moreover, uniformly for $\mathbf{d}\in T_R$ and $\mathbf{x}\in S_R$,
\begin{align}
 \mu_R(\mathbf{d})&=\mu_*(R)(1+O(1/n)),\label{fs-eq:mu-uniform}\\
 a_R^{\wt}(\mathbf{x})&=\bar a_R(1+O(1/n)),\label{fs-eq:a-uniform}
\end{align}
where
\begin{align}
 \mu_*(R)&=\frac{\vol(S_R)^2}{\vol(T_R)}p_\gamma(R)(1+O(1/n)),\label{fs-eq:mu-scale}\\
 \bar a_R&:=\frac{\mu_*(R)\vol(T_R)}{\vol(S_R)}
 =(1+O(1/n))\vol(S_R)p_\gamma(R).\label{fs-eq:a-scale}
\end{align}
Finally,
\begin{align}
 \log\bar a_R&\ge\frac n{3\ell}-O\!\left(\frac n{\ell^2}+\log n\right),\label{fs-eq:a-min}\\
 \log\mu_*(R)&\ge\frac{4n}{3\ell}-O\!\left(\frac n{\ell^2}+\log n\right).\label{fs-eq:mu-min}
\end{align}
\end{lemma}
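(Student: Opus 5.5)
The plan is to reduce everything to the exact bipolar lens formula and the thin-shell coarea estimates of Lemma~\ref{lem:thin-shell-coarea}, applied with $\alpha=\gamma=1-1/\ell$. Since $\gamma\to1$, it lies in a fixed compact $J\Subset(0,2)$, so all constants there are uniform. The estimate~\eqref{eq:lens-coarea} is also uniform in $R$, so the upper bound $R\le\exp(K\log^2n)\Rstar$ plays no role beyond keeping us in the stated range.

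\emph{Step 1: the pair probability.} By Fubini,
\[
 \vol(S_R)^2p_\gamma(R)=\int_{T_R}\mu_R(\mathbf d)\,d\mathbf d .
\]
Every $\mathbf d\in T_R$ has $\|\mathbf d\|=\gamma R(1+O(w_n))$, so~\eqref{eq:lens-coarea} gives
\[
 \log\mu_R(\mathbf d)=n\log(R/\Rstar)+\tfrac n2\log\bigl(1-\gamma^2/4\bigr)+O(\log n).
\]
Expanding with $\gamma^2=1-2/\ell+1/\ell^2$,
\[
 \log\bigl(1-\gamma^2/4\bigr)=\log\tfrac34+\tfrac2{3\ell}+O(\ell^{-2}).
\]
Also $\log\vol(S_R)=n\log(R/\Rstar)+O(\log n)$ and $\vol(T_R)=\gamma^n\vol(S_R)$, with
\[
 n\log\gamma=-\tfrac n\ell+O(n/\ell^2).
\]
Multiplying out, the terms $n\log(R/\Rstar)$ cancel and the exponent is $-\tfrac n\ell+\tfrac n{3\ell}=-\tfrac{2n}{3\ell}$. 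This gives~\eqref{fs-eq:pairprob}, with $\mathcal N_\star^{-1}=(3/4)^{n/2}$.

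\emph{Step 2: uniformity over the thin bands.} The lens volume $\mu_R(\mathbf d)$ depends only on $\|\mathbf d\|$. By Lemma~\ref{lem:thin-shell-coarea} its logarithmic derivative in $\alpha$ is $O(n)$. Across $T_R$ the relative lag varies by $O(w_n)=O(n^{-2})$, so $\log\mu_R$ varies by $O(1/n)$; this is~\eqref{fs-eq:mu-uniform}. Consequently $\omega_R=1+O(1/n)$ on $T_R$, and $a_R^{\omega}(\mathbf x)=(1+O(1/n))\vol(C_{\mathbf x})$. The endpoint-cap part of Lemma~\ref{lem:thin-shell-coarea} gives $\log\vol(C_{\mathbf x})$ a derivative $O(n)$ as $\|\mathbf x\|/R$ ranges over an interval of length $2w_n$. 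Hence $a_R^\omega$ is constant up to a factor $1+O(1/n)$ on $S_R$. The exact Fubini identity
\[
 \int_{S_R}a_R^\omega=\int_{T_R}\omega_R\,\mu_R=\mu_*(R)\vol(T_R)
\]
identifies that constant with $\bar a_R$, proving~\eqref{fs-eq:a-uniform}. Likewise, $\int_{T_R}\mu_R=\mu_*(R)\vol(T_R)(1+O(1/n))$ combined with Step~1 gives~\eqref{fs-eq:mu-scale}, and then~\eqref{fs-eq:a-scale} follows by division.

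\emph{Step 3: lower bounds.} Since $R\ge R_\ell$, we have
\[
 \log\vol(S_R)\ge\tfrac n2\log\tfrac43+\tfrac n\ell-O(\log n).
\]
Adding $\log p_\gamma$ from Step~1 gives $\log\bar a_R\ge\tfrac n\ell-\tfrac{2n}{3\ell}-O(n/\ell^2+\log n)$, which is~\eqref{fs-eq:a-min}. Then
\[
 \mu_*(R)=\bar a_R\,\vol(S_R)/\vol(T_R)=\bar a_R\,\gamma^{-n},
\]
which adds $\tfrac n\ell+O(n/\ell^2)$ and yields~\eqref{fs-eq:mu-min}.

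The main obstacle is Step~2. The $O(\log n)$ remainder in~\eqref{eq:lens-coarea} is far too coarse to give a $1+O(1/n)$ comparison, so one must use the derivative bounds of Lemma~\ref{lem:thin-shell-coarea}, which come from differentiating the exact integrand rather than the asymptotic remainder. One must also check that the weight $\omega_R$ only perturbs the cap volume by $1+O(1/n)$.
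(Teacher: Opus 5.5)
Your proof is correct and follows essentially the same route as the paper: the Fubini identity $\int_{T_R}\mu_R=\vol(S_R)^2p_\gamma(R)$, the $O(n)$ logarithmic-derivative bounds of Lemma~\ref{lem:thin-shell-coarea} across relative width $w_n$ for the two $1+O(1/n)$ uniformity statements, and substitution at $R=R_\ell$ with monotonicity in $R$ for the lower bounds. The differences are cosmetic. You obtain $p_\gamma$ from the lens asymptotic~\eqref{eq:lens-coarea} together with the exact scaling $\vol(T_R)=\gamma^n\vol(S_R)$, whereas the paper uses the spherical inner-product density directly. You also remove the weight via $\omega_R=1+O(1/n)$ and the unweighted cap bound, whereas the paper differentiates the weighted cap integral directly.
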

\begin{proof}
Condition on source radii $r_1,r_2=R(1+O(w_n))$.  If $u=\langle\mathbf{x},\mathbf{y}\rangle/(r_1r_2)$, then the spherical density of $u$ is a polynomial factor times $(1-u^2)^{(n-3)/2}$.  Requiring $\|\mathbf{x}-\mathbf{y}\|=\gamma R(1+O(w_n))$ forces
\[
 u=1-\frac{\gamma^2}{2}+O(w_n)
\]
and an $O(w_n)$ interval in $u$.  Hence, including the target radial Jacobian, the exponential factor is
\[
 \left(\gamma\sqrt{1-\frac{\gamma^2}{4}}\right)^n=A(\gamma)^n,
\]
up to $e^{O(\log n)}$.  Since
\[
 \log A(1-1/\ell)=\log A(1)-\frac{2}{3\ell}+O(\ell^{-2}),
\]
we obtain~\eqref{fs-eq:pairprob}.

Double counting gives
\[
 \int_{T_R}\mu_R(\mathbf{d})\,d\mathbf{d}=\vol(S_R)^2p_\gamma(R).
\]
By Lemma~\ref{lem:thin-shell-coarea}, the logarithm of the equal-radius lens volume has derivative $O(n)$ on the compact ratio range in question.  Across the relative target width $w_n=n^{-2}$, $\mu_R(\mathbf{d})$ therefore varies by $1+O(nw_n)=1+O(1/n)$, proving~\eqref{fs-eq:mu-uniform} and~\eqref{fs-eq:mu-scale}.  Similarly
\[
 \int_{S_R}a_R^{\wt}(\mathbf{x})\,d\mathbf{x}
 =\int_{T_R}\wt_R(\mathbf{d})\mu_R(\mathbf{d})\,d\mathbf{d}
 =\mu_*(R)\vol(T_R).
\]
Rotational symmetry makes $a_R^{\wt}(\mathbf{x})$ a function only of $\|\mathbf{x}\|$.  Lemma~\ref{lem:thin-shell-coarea}, together with $\wt_R=1+O(1/n)$ from~\eqref{fs-eq:mu-uniform}, gives an $O(n)$ logarithmic derivative for the weighted cap integral on the present compact ratio range, while the endpoint shell has relative width $w_n=n^{-2}$.  Hence $a_R^{\wt}(\mathbf{x})$ varies by $1+O(nw_n)=1+O(1/n)$ across $S_R$, giving~\eqref{fs-eq:a-uniform}; the displayed integral identity then gives~\eqref{fs-eq:a-scale}.
Indeed, in the exact bipolar formula the weight depends on the target radius
but not on the separation variable $\|\mathbf x\|$; multiplying the positive
integrand by this fixed nonnegative weight leaves the pointwise logarithmic
derivative bound unchanged.

Finally
\[
 \log\vol(S_R)=n\log(R/\Rstar)+O(\log n),
\qquad
 \log\frac{\vol(T_R)}{\vol(S_R)}=n\log\gamma+O(1/n),
\]
and at $R=R_\ell$,
\[
 \log\vol(S_R)=\frac12\log(4/3)n+\frac n\ell+O(\log n),
\qquad
 n\log\gamma=-\frac n\ell+O(n/\ell^2).
\]
Substitution into~\eqref{fs-eq:pairprob}--\eqref{fs-eq:a-scale} gives~\eqref{fs-eq:a-min}--\eqref{fs-eq:mu-min}.  Larger $R$ multiplies the relevant shell/fiber volumes by the same positive radial scale factor and only improves these lower bounds.
\end{proof}

\subsection{Target multiplicities: exact partition term and corrections}\label{fs-sec:target-moment}
Fix a difference-band query and abbreviate $S=S_R$, $T=T_R$, $\mu_*=\mu_*(R)$ and $\wt=\wt_R$.  For $\mathbf{d}\in\cL\cap T$ define
\[
 X_\cL^{\wt}(\mathbf{d})
 =\wt(\mathbf{d})\sum_{\mathbf{x}\in\cL}\1_S(\mathbf{x})\1_S(\mathbf{x}-\mathbf{d}),
\]
and
\[
 \mathfrak M_T:=\E_\cL\sum_{\mathbf{d}\in\cL\cap T}
 |X_\cL^{\wt}(\mathbf{d})-\mu_*|^{2h}.
\]

\begin{lemma}[Growing marked target moment]\label{fs-thm:target-high-moment}
For an absolute $C>0$,
\begin{align}
 \mathfrak M_T\le{}&\vol(T)e^{Ch\log(2h)}\mu_*\max(1,\mu_*)^{h-1}\notag\\
 &+\vol(T)e^{Ch^2\log n}\max(1,\mu_*)^{2h-1}
 \left(\frac{16}{27}+o(1)\right)^{n/2}\notag\\
 &+\vol(T)e^{Ch^2\log n}\max(1,\mu_*)^{2h-1}
 \left[\left(\frac e4+o(1)\right)^{n/2}+2^{-n}\right].\label{fs-eq:target-high-moment}
\end{align}
\end{lemma}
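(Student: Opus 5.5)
Expand the centered $2h$-th power binomially and reduce to raw marked moments. Writing $X=X^{\wt}_\cL(\mathbf d)$,
\[
 \mathfrak M_T=\sum_{s=0}^{2h}\binom{2h}{s}(-\mu_*)^{2h-s}
 \E_\cL\sum_{\mathbf d\in\cL\cap T}X(\mathbf d)^s .
\]
By Lemma~\ref{lem:four-raw-kernels}, each raw term with $s\ge1$ is the Siegel transform of the kernel $F_s^{\rm ct}$ (with $\alpha=\gamma$ and $\omega=\wt$). The $s=0$ term is Siegel's $\vol(T)$ times $\mu_*^{2h}$. Apply Proposition~\ref{thm:rogers-general} to each $F_s^{\rm ct}$ at order $k=s+1\le2h+1$, which is legal by Lemma~\ref{lem:growing-order-legal}. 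Then split the resulting sum over admissible matrices into the three classes of Definition~\ref{fs-def:diagram-classes}; Proposition~\ref{prop:kernel-trichotomy-audit}(a) shows these classes are exhaustive and disjoint.

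\emph{Correction terms.} The geometric and arithmetic classes are handled by Proposition~\ref{prop:uniform-diagram-budget} with marked domain $\mathcal M=T$. For the fiber envelope, note that the weighted lens fiber mass equals $\wt(\mathbf d)\mu_R(\mathbf d)=\mu_*$ exactly, so $V_{\rm fib}=\mu_*$. For each $s$ this gives
\[
 \vol(T)\max(1,\mu_*)^{s-1}e^{Cs^2\log n}\,\Xi_n .
\]
Multiply by the binomial weight and by $\mu_*^{2h-s}\le\max(1,\mu_*)^{2h-s}$, then sum over $s$. This costs $2^{2h}$ and a factor $\max(1,\mu_*)^{2h-1}$, which produces the second and third lines of \eqref{fs-eq:target-high-moment}.

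\emph{Partition terms.} The partition diagrams with $q=1$ correspond to set partitions of the $s$ parent occurrences. Each block either literally repeats a pivot or reflects it, $\mathbf x\mapsto\mathbf d-\mathbf x$. The reflection maps the lens fiber to itself. So a diagram with $f$ blocks and $2^{(\text{reflections})}$ sign choices contributes exactly
\[
 \int_T \wt(\mathbf d)^s\mu_R(\mathbf d)^f\,d\mathbf d .
\]
This is the same quantity one obtains, up to these combinatorial multiplicities, from a Poisson model. In that model, conditional on the mark, the parent count $Y$ is Poisson with mean $\mu_R(\mathbf d)$, and the reflection pairing doubles the occurrences. The correct comparison is with the mixed moment $\E(\wt Y-\mu_*)^{2h}$, where $Y$ counts pairs $\{\mathbf x,\mathbf d-\mathbf x\}$ with multiplicity $2$.

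I would make this precise by resumming the binomial expansion of the partition part. Since $\wt\mu_R=\mu_*$ identically, the first cumulant cancels exactly against $\mu_*$. Lemma~\ref{lem:poisson-cumulants}, applied to $Z=\wt\cdot 2\,\mathrm{Poi}(\mu_R/2)$ or an equivalent bookkeeping, then leaves a sum over no-singleton partitions. Each such partition has at most $h$ blocks, and each block contributes $O(2^{|B|})\,\wt^{|B|}\mu_R\le 2^{|B|}\mu_*$, because $\wt\le1$. The number of partitions is at most $(2h)^{2h}$. Integrating over $T$ gives the first line,
\[
 \vol(T)\,e^{Ch\log(2h)}\mu_*\max(1,\mu_*)^{h-1}.
\]

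\textbf{Main obstacle.} I expect the hardest step to be verifying that the partition diagrams reproduce exactly the Poisson moment structure. This requires handling the reflection partitions (rather than only equality), and making sure that the diagram where a block identifies $\mathbf x$ with $\mathbf d-\mathbf x$ at the same point is not double counted against the pivot constraint. I would check this by explicitly matching reflection/equality patterns of Han matrices to ordered set partitions with a binary label per non-first element. I would then confirm that the total weight matches the cumulant formula, using Lemma~\ref{lem:poisson-cumulants} only as a combinatorial identity.
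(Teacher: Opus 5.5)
Your proposal is correct and follows the paper's proof essentially step for step. Both arguments expand the centered power binomially into raw $F_s^{\rm ct}$ moments and apply the trichotomy. Both bound the corrections by Proposition~\ref{prop:uniform-diagram-budget} (equivalently Lemma~\ref{fs-lem:target-corrections}) with weighted fiber mass $\wt(\mathbf d)\mu_R(\mathbf d)=\mu_*$. Both handle the partition terms through the model $2\wt(\mathbf d)\operatorname{Pois}(\mu_R(\mathbf d)/2)$ of Lemmas~\ref{fs-lem:target-affine-partitions}--\ref{fs-lem:target-main}, where the fixed-point-free involution $\mathbf x\mapsto\mathbf d-\mathbf x$ resolves your double-counting concern. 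One intermediate phrase describes the parent count as Poisson with mean $\mu_R(\mathbf d)$, which is not accurate. The model you actually use, reflection pairs counted with multiplicity two, is the correct one.
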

We prove this bound after Lemmas~\ref{fs-lem:target-relations}--%
\ref{fs-lem:target-corrections}, which identify and bound its raw-moment terms.

\begin{lemma}[Target $q=1$ relation classification]\label{fs-lem:target-relations}
Let a supported $q=1$ dependent parent column with nonmarked coefficients in $\{0,\pm1\}$ be
\[
 \mathbf{x}'=A\mathbf{d}+\sum_{j=1}^tB_j\mathbf{x}_j,
 \qquad A\in\Z,\quad B_j\in\{\pm1\}.
\]
Then
\begin{equation}\label{fs-eq:target-longitudinal}
 2A+\sum_{j=1}^tB_j=1.
\end{equation}
If $t=1$, the only possibilities are equality $\mathbf{x}'=\mathbf{x}_1$ and reflection $\mathbf{x}'=\mathbf{d}-\mathbf{x}_1$; $t=2$ is impossible; $t=0$ is impossible.  Hence every supported nonpartition column of this coefficient type has $t\ge3$.
\end{lemma}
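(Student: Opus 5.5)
The plan is to take the longitudinal coordinate along the mark $\mathbf d$ and use that it is linear in the column entries. Every supported vector in the target kernel $F_s^{\rm ct}$, whether a pivot parent $\mathbf x_j$ or the dependent output $\mathbf x'$, lies in $S$ and has $\mathbf x-\mathbf d\in S$. By the polarization identity
\[
 2\langle\mathbf x,\mathbf d\rangle=\|\mathbf x\|^2+\|\mathbf d\|^2-\|\mathbf x-\mathbf d\|^2 ,
\]
each such vector satisfies $\langle\mathbf x,\mathbf d\rangle/\|\mathbf d\|^2=\tfrac12+O_J(w_n)$. This uses $\|\mathbf d\|=\gamma R(1+O(w_n))$ with $\gamma$ bounded away from $0$ and $2$, which is exactly the first row of Lemma~\ref{lem:uniform-scalar-certificate}.

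Next I take the inner product of the relation $\mathbf x'=A\mathbf d+\sum_jB_j\mathbf x_j$ with $\mathbf d/\|\mathbf d\|^2$. This gives
\[
 \tfrac12+O(w_n)=A+\sum_jB_j\bigl(\tfrac12+O(w_n)\bigr).
\]
Multiplying by two, the quantity $2A+\sum_jB_j-1$ is an integer of absolute value $O_J((t+1)w_n)$. Since $t\le k=O(\log n)$ and $w_n=n^{-2}$, this is $o(1)$, so the integer is zero. That proves \eqref{fs-eq:target-longitudinal}; it is the same argument as Lemma~\ref{lem:four-kernel-relation-certificate}(i), specialized to $\alpha=\gamma$.

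The case analysis then goes by $t$.
\begin{itemize}
\item If $t=0$, the relation forces $2A=1$, which has no integer solution. (Alternatively, $\mathbf x'=A\mathbf d$ has zero transverse radius, contradicting the transverse radius $\asymp R$ from Lemma~\ref{lem:uniform-scalar-certificate}.)
\item If $t=1$, then $2A+B_1=1$ with $B_1=\pm1$. This gives $(A,B_1)=(0,1)$, the equality $\mathbf x'=\mathbf x_1$, or $(A,B_1)=(1,-1)$, the reflection $\mathbf x'=\mathbf d-\mathbf x_1$.
\item If $t=2$, then $B_1+B_2\in\{-2,0,2\}$ is even, so $2A+B_1+B_2$ is even and cannot equal $1$.
\end{itemize}
Hence any supported column of this coefficient type that is not a one-parent equality or reflection has $t\ge3$.

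The only delicate step is keeping the error uniform. I will make sure the $O(w_n)$ constants depend only on $J$ and not on $R$, which holds after scaling by $R$. I will also make sure the accumulated error, at most $(t+1)C_Jw_n$, stays below $1/2$ for every $t\le 2H\log n+1$ once $n$ is large. Both follow from the compactness of the ratio range, so I do not expect a real obstacle.
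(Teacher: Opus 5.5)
Your proposal is correct and matches the paper's argument. The paper proves this lemma by invoking part~(i) of Lemma~\ref{lem:four-kernel-relation-certificate} at $\alpha=\gamma$ with $k=2h+1=O(\log n)$. That part is proved exactly as you do it: the longitudinal estimate $\langle\mathbf x,\mathbf d\rangle/\|\mathbf d\|^2=\tfrac12+O_J(w_n)$, linearity, and integer rounding, followed by the same direct enumeration of $t=0,1,2$.
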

\begin{proof}
Apply part~\textup{(i)} of
Lemma~\ref{lem:four-kernel-relation-certificate} to the centered target
kernel, with $\alpha=\gamma$ and $k=2h+1=O(\log n)$.
Its integer longitudinal identity is~\eqref{fs-eq:target-longitudinal},
and its zero-, one-, and two-parent classification gives the remaining
assertions.
\end{proof}

\begin{lemma}[Target partition identification]\label{fs-lem:target-affine-partitions}
Fix the marked target $\mathbf{d}$.  The supported $q=1$ diagrams in which every dependent parent column contains at most one nonzero nonmarked coefficient are in bijection with set partitions of the occurrence slots, together with equality/reflection choices within each block.  Their Rogers coefficient is one.  Their complete raw-moment contribution is exactly the moment contribution of
\begin{equation}\label{fs-eq:target-poisson-variable}
 Y_{\mathbf{d}}=2\wt(\mathbf{d})Z_{\mathbf{d}},
 \qquad Z_{\mathbf{d}}\sim\operatorname{Pois}(\mu_R(\mathbf{d})/2).
\end{equation}
More explicitly, write $\mu=\mu_R(\mathbf d)$, $w=\omega(\mathbf d)$,
and let $\mathfrak P_m(\mathbf d)$ be the partition contribution to the
algebraic $m$-th moment centered at $w\mu$.  For every $1\le m\le2h$,
\begin{equation}\label{eq:target-centered-partition-bijection}
 \mathfrak P_m(\mathbf d)
 =w^m\!\sum_{\substack{\pi\in\mathcal P([m])\\
                          |B|\ge2\ \forall B\in\pi}}
       2^{m-|\pi|}\mu^{|\pi|}.
\end{equation}
In particular, the exponent of $\mu$ is at most $h$ when $m=2h$.
\end{lemma}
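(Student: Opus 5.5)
The plan is to work at a fixed mark $\mathbf d$ and compare two expansions. On one side is the Rogers expansion of the raw moments $\E_\cL\sum_{\mathbf d\in\cL\cap T}(\wt(\mathbf d)r_{\cL,R}(\mathbf d))^j$. On the other is the moment expansion of a Poisson variable.

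For each $1\le j\le m$, the raw $j$-th power is the Siegel transform of $F_j^{\rm ct}$ from Lemma~\ref{lem:four-raw-kernels}. The ordered tuple is $(\mathbf d,\mathbf x_1,\ldots,\mathbf x_j)$, with $\mathbf d$ in column $1$, and Proposition~\ref{thm:rogers-general} applies to it. Restrict to supported $q=1$ matrices in which every dependent column has at most one nonzero nonmarked coefficient. By Lemma~\ref{fs-lem:target-relations}, each such column is either $\mathbf x'=\mathbf x_i$ (coefficients $(A,B)=(0,1)$) or $\mathbf x'=\mathbf d-\mathbf x_i$ (coefficients $(1,-1)$) for some earlier free pivot $\mathbf x_i$. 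Zero-parent columns are excluded.

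\emph{Step 1: the bijection.} Given such a matrix, group the occurrence slots $\{1,\ldots,j\}$ according to which free pivot row their column refers to. This gives a set partition $\pi$ of $[j]$. Han's echelon condition forces the pivot of each block to be its smallest slot. Each of the remaining $|B|-1$ slots of a block $B$ carries an independent equality/reflection label. Conversely, a partition together with labels defines a unique integer matrix. This matrix has the required pivot columns $\mathbf e_1,\ldots,\mathbf e_r$ with $j_1=1$ for the mark, has nonzero columns, and has gcd one because it contains identity pivots. So it is admissible, and by \eqref{eq:rogers-denom} its coefficient is $c_D=1$. Different data give different matrices, so the correspondence is a bijection. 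Hence the number of diagrams attached to $\pi$ is $\prod_{B\in\pi}2^{|B|-1}=2^{j-|\pi|}$.

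\emph{Step 2: evaluating each diagram.} Fix $\mathbf d$ and a block pivot $\mathbf v$. The fiber $\{\mathbf v\in S:\mathbf v-\mathbf d\in S\}$ is invariant under $\mathbf v\mapsto\mathbf d-\mathbf v$. Therefore equality columns and reflection columns impose exactly the indicator already carried by the pivot. The pivot integral then equals $\mu_R(\mathbf d)$, and the weight contributes $\wt(\mathbf d)^j$. Summing over the labels, the raw partition contribution at order $j$ is
\[
 \wt^j\sum_{\pi\in\mathcal P([j])}2^{j-|\pi|}\mu^{|\pi|}
 =(2\wt)^j\sum_{\pi}\left(\tfrac{\mu}{2}\right)^{|\pi|}.
\]
This equals $\E Y_{\mathbf d}^j$ for $Y_{\mathbf d}=2\wt Z_{\mathbf d}$ and $Z_{\mathbf d}\sim\operatorname{Pois}(\mu/2)$, since Poisson moments are the Touchard polynomials $\E Z^j=\sum_\pi(\mu/2)^{|\pi|}$. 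This proves \eqref{fs-eq:target-poisson-variable}.

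\emph{Step 3: centering.} The center $\wt\mu$ is a deterministic function of the mark. Expand $(X-\wt\mu)^m$ binomially inside the $\mathbf d$-integral and apply Step 2 to each raw order $j\le m$. The partition part then becomes $\E(Y_{\mathbf d}-\wt\mu)^m$, and $\wt\mu=\E Y_{\mathbf d}$. By Lemma~\ref{lem:poisson-cumulants}, the cumulants are $\kappa_b(Y)=(2\wt)^b\mu/2=\wt^b2^{b-1}\mu$. The no-singleton formula \eqref{eq:centered-cumulant-partitions} then gives \eqref{eq:target-centered-partition-bijection}. With no singleton blocks, $|\pi|\le m/2$, so $|\pi|\le h$ when $m=2h$.

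The step I expect to need the most care is Step 1. One must check that the Han normalization (mark as first pivot, echelon zeros, gcd) matches the partition data exactly, with no overcount and no missed diagram. One must also check that equality and reflection remain distinct matrices even though they integrate identically.
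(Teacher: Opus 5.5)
Your proposal follows the paper's proof closely. The shared ingredients are: block pivots at the first slot of each block, equality/reflection labels on the remaining slots (the reflection column carries marked-row entry $1$, so the two labels give distinct matrices), invariance of the lens under $\mathbf v\mapsto\mathbf d-\mathbf v$, and the identification of block weights $2^{b-1}\omega^b\mu$ with the cumulants of $2\omega\operatorname{Pois}(\mu/2)$.

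There is one step you skip. The statement only assumes that each dependent column has at most one nonzero \emph{nonmarked} coefficient. It does not assume that this coefficient lies in $\{\pm1\}$. Lemma~\ref{fs-lem:target-relations}, which you cite to conclude that the column is $\mathbf x_i$ or $\mathbf d-\mathbf x_i$, has $B_j\in\{\pm1\}$ as a hypothesis. So a column $\mathbf x'=A\mathbf d+B\mathbf x_i$ with $|B|\ge2$ is not yet excluded.

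The longitudinal relation cannot exclude it by itself. For example, $(A,B)=(-1,3)$ satisfies $2A+B=1$, and the support error in Lemma~\ref{lem:supported-han-kernel} grows like $(1+|B|)w_n$ in any case.

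The paper closes this with the transverse component. Projecting onto $\mathbf d^\perp$ gives $\mathbf x'_\perp=B\,(\mathbf x_i)_\perp$. By Lemma~\ref{lem:uniform-scalar-certificate}, both transverse lengths equal $R\sqrt{1-\alpha^2/4}\,(1+O(w_n))$ on support. Hence $|B|=1+O(w_n)$, and since $B$ is an integer, $|B|=1$. Only after this does the one-parent classification apply. Add this line before your Step~1 and the bijection is complete.

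Your centering step writes the partition part as $\E(Y_{\mathbf d}-\E Y_{\mathbf d})^m$ and then invokes the no-singleton cumulant formula of Lemma~\ref{lem:poisson-cumulants}. The paper instead counts directly: each singleton slot is supplied either by a raw one-point block or by the deterministic centering term, and the resulting coefficient $\sum_{j=0}^a\binom aj(-1)^{a-j}$ vanishes unless $a=0$. Your route is slightly more economical, and the two arguments are equivalent.
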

\begin{proof}
For a one-parent column $\mathbf{x}'=A\mathbf{d}+B\mathbf{x}$,
orthogonal projection to $\mathbf{d}^{\perp}$ gives
$\mathbf{x}'_{\perp}=B\mathbf{x}_{\perp}$.  Both transverse lengths
are $\sqrt{R^2-\|\mathbf{d}\|^2/4}\,(1+O(w_n))$, uniformly in the
fixed difference band.  Hence the integer $B$ has $|B|=1$ for all
sufficiently large $n$; a column with no parent is unsupported.
Lemma~\ref{fs-lem:target-relations} now shows that the column is exactly
$\mathbf{x}$ or $\mathbf{d}-\mathbf{x}$.
The involution $\mathbf{x}\mapsto\mathbf{d}-\mathbf{x}$ preserves the lens and has no fixed point because $\|\mathbf{d}\|/2<R(1-w_n)$.  Thus every free parent pivot specifies one unordered pair $\{\mathbf{x},\mathbf{d}-\mathbf{x}\}$ and every later occurrence in the same block chooses one of its two orientations.  Conversely every such signed block pattern is admissible.  This is the shell-specific marked version of the signed partition matrices in Han~\cite[Proposition~2.4]{Han2024}; $q=1$ gives $c_D=1$ by~\eqref{eq:rogers-coeff}.  A block of size $b$ contributes $2^{b-1}\wt(\mathbf{d})^b\mu_R(\mathbf{d})$, exactly the $b$-th cumulant of~\eqref{fs-eq:target-poisson-variable}.

For completeness, here is the marked-row reconstruction which is essential
for the reflected orientation.  Normalize the first occurrence in each block
to have sign $+1$.  If occurrence $j$ belongs to parent row $p(j)$ and has
orientation $\varepsilon_j\in\{\pm1\}$, set
\begin{equation}\label{eq:target-marked-reconstruction}
 B_{ij}=\varepsilon_j\mathbf1_{\{i=p(j)\}},
 \qquad A_j=\frac{1-\varepsilon_j}{2}.
\end{equation}
Then $A_j\in\{0,1\}$ and
$2A_j+\sum_iB_{ij}=1$.  For $\varepsilon_j=+1$ this is the equality column
$(A_j,B_{p(j),j})=(0,1)$; for $\varepsilon_j=-1$ it is the reflection column
$(1,-1)$ and hence \emph{both} its marked and parent entries are nonzero.
Conversely the one-parent classification in
Lemma~\ref{fs-lem:target-relations} recovers exactly
\eqref{eq:target-marked-reconstruction}.  Thus the one-parent condition is a
condition on the \emph{unmarked} submatrix; adding the marked row by
\eqref{eq:target-marked-reconstruction} is a bijection, not an omission of the
reflection coefficient.  The $b-1$ nonpivot orientations in a block give
exactly $2^{b-1}$ choices.  A raw partition $\pi$ therefore contributes
$2^{m-|\pi|}w^m\mu^{|\pi|}$.  In the centered expansion each singleton
can be supplied either by its raw one-point block or by the deterministic
subtraction, both of magnitude $w\mu$.  A partition with $a$ singleton
slots has the resulting coefficient
$\sum_{j=0}^a\binom aj(-1)^{a-j}$, which vanishes for $a>0$ and equals
one for $a=0$.  This proves
\eqref{eq:target-centered-partition-bijection}; a partition without
singletons has at most $h$ blocks when $m=2h$.
\end{proof}

\subsubsection{A fourth-moment example}
Fix $\mathbf d$, and abbreviate $\mu=\mu_R(\mathbf d)$ and
$w=\omega(\mathbf d)$ in this paragraph only.  The ordered tuple is
$(\mathbf d,\mathbf x_1,\mathbf x_2,\mathbf x_3,\mathbf x_4)$.
Choose the least occurrence of each block as its pivot; later slots in that
block are either the pivot or its reflection, using
\eqref{eq:target-marked-reconstruction}.  There are no further choices of
marked-row entries.  For example the partition $\{1,3\}\mid\{2,4\}$ has
the four matrices
\[
 D_{\varepsilon_3,\varepsilon_4}=
 \begin{pmatrix}
 1&0&0&(1-\varepsilon_3)/2&(1-\varepsilon_4)/2\\
 0&1&0&\varepsilon_3&0\\
 0&0&1&0&\varepsilon_4
 \end{pmatrix},\qquad \varepsilon_3,\varepsilon_4\in\{\pm1\}.
\]
Each has coefficient one and integral $w^4\mu^2$ at the fixed mark.
The complete raw fourth-moment partition table is
\[
\begin{array}{c|r|r|l}
 \text{block sizes}&\text{partitions}&\text{orientations per partition}
    &\text{total contribution}\\ \hline
 1+1+1+1&1&1&w^4\mu^4\\
 2+1+1&6&2&12w^4\mu^3\\
 2+2&3&4&12w^4\mu^2\\
 3+1&4&4&16w^4\mu^2\\
 4&1&8&8w^4\mu
\end{array}.
\]
Centering is performed after the raw Rogers expansions.  In the binomial
expansion about $w\mu$, any full partition with $a$ singleton slots has
coefficient $\sum_{j=0}^a\binom aj(-1)^{a-j}$: each such slot can come
either from a raw singleton or from the deterministic centering term,
both of mass $w\mu$.  This is zero unless $a=0$.  Hence the surviving
partitions are exactly
\[
 \{1,2\}\mid\{3,4\},\quad
 \{1,3\}\mid\{2,4\},\quad
 \{1,4\}\mid\{2,3\},\quad \{1,2,3,4\},
\]
and their complete contribution is
\[
 3(2w^2\mu)^2+8w^4\mu
 =12w^4\mu^2+8w^4\mu.
\]
This is an identity for the partition contribution, not for the entire
lattice moment.  Every raw correction with $s\le4$ occurrences has at
most $s-1$ free fibers; multiplying by its centering factor of degree
$4-s$ leaves the common upper power $\max(1,\mu_*)^3$.  In general the
same two arguments give at most $h$ blocks for the partition contribution
and the upper power $2h-1$ for corrections to the centered $2h$-th moment.

\begin{lemma}[Target partition contribution]\label{fs-lem:target-main}
The total target partition contribution satisfies
\begin{equation}\label{fs-eq:target-main-int}
 \mathfrak M_T^{\rm part}
 \le\vol(T)\exp(C h\log(2h))\mu_*\max(1,\mu_*)^{h-1}
\end{equation}
for an absolute $C$.
\end{lemma}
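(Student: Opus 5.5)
The plan is to integrate the exact pointwise partition identity of Lemma~\ref{fs-lem:target-affine-partitions} over the marked target and bound it crudely. By that lemma (with $m=2h$) and Siegel's formula applied to the mark, once the partition diagrams are isolated by Rogers' formula the total partition contribution becomes
\[
 \mathfrak M_T^{\rm part}
 =\int_T \mathfrak P_{2h}(\mathbf d)\,d\mathbf d
 =\int_T \omega(\mathbf d)^{2h}\sum_{\substack{\pi\in\mathcal P([2h])\\ |B|\ge2\ \forall B\in\pi}}2^{2h-|\pi|}\mu_R(\mathbf d)^{|\pi|}\,d\mathbf d .
\]
The Rogers coefficient of every partition diagram is one, and the marked column contributes exactly the Lebesgue measure on $T$. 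I would check this by noting that the marked column is the first pivot, and that each block's free pivot integrates over the lens fiber of $\mathbf d$ with mass $\mu_R(\mathbf d)$.

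The key simplification is the flattening identity $\omega(\mathbf d)\mu_R(\mathbf d)=\mu_*$ on $T$. For a partition with $b=|\pi|$ blocks we have $1\le b\le h$, and the summand is
\[
 \omega^{2h}\mu^{b}=\omega^{2h-b}(\omega\mu)^b=\omega^{2h-b}\mu_*^{b}\le\mu_*^{b},
\]
using $0<\omega\le1$ and $2h-b\ge0$. Next, $\mu_*^b\le\mu_*\max(1,\mu_*)^{h-1}$ for $1\le b\le h$. This holds in both cases: if $\mu_*\ge1$ the power is monotone in $b$, and if $\mu_*<1$ then $\mu_*^b\le\mu_*$.

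It then remains to count the weights. Lemma~\ref{lem:poisson-cumulants} gives at most $(2h)^{2h}$ singleton-free partitions, and each orientation factor satisfies $2^{2h-|\pi|}\le 4^h$. Hence the pointwise bound is $(2h)^{2h}4^h\mu_*\max(1,\mu_*)^{h-1}\le e^{Ch\log(2h)}\mu_*\max(1,\mu_*)^{h-1}$, and integrating over $T$ gives the factor $\vol(T)$.

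The main obstacle is not this calculation but justifying the identification. The centered $2h$-th moment mixes raw moments of orders $s\le2h$ with deterministic centering powers, so one has to confirm that the partition part of that combination is exactly $\mathfrak P_{2h}(\mathbf d)$. Concretely, singleton cancellation must occur at the level of the marked lattice sum and not only for a fixed $\mathbf d$. I would argue that each raw $s$-occurrence partition diagram, after the mark is integrated by Rogers' formula, gives $\int_T$ of the pointwise expression. The same holds for the centering terms, which via Siegel equal $\int_T (w\mu)^{2h-s}\cdot(\cdots)$, so the binomial cancellation of Lemma~\ref{fs-lem:target-affine-partitions} can be carried out under the integral. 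All non-partition raw terms are set aside for the correction lemmas.
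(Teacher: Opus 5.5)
Your proposal is correct and matches the paper's proof: the paper writes the pointwise partition term as a product of Poisson-proxy cumulants $\kappa_b(Y_{\mathbf d})=2^{b-1}\omega(\mathbf d)^{b-1}\mu_*\le2^{b-1}\mu_*$. It then bounds each of the at most $(2h)^{2h}$ singleton-free partitions with $j\le h$ blocks by $2^{2h-j}\mu_*^j\le2^{2h}\mu_*\max(1,\mu_*)^{h-1}$ and integrates over $T$, which is your flattening-plus-counting computation. Your remark that the singleton cancellation can be done under the mark integral, because $\mu_*=\omega\mu_R$ is constant on $T$, is how Lemma~\ref{fs-lem:target-affine-partitions} and the proof of Lemma~\ref{fs-thm:target-high-moment} combine the raw terms.
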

\begin{proof}
For~\eqref{fs-eq:target-poisson-variable},
\[
 \E Y_{\mathbf{d}}=\wt(\mathbf{d})\mu_R(\mathbf{d})=\mu_*,
\]
and for every $b\ge2$,
\[
 \kappa_b(Y_{\mathbf{d}})
 =2^{b-1}\wt(\mathbf{d})^b\mu_R(\mathbf{d})
 =2^{b-1}\wt(\mathbf{d})^{b-1}\mu_*
 \le2^{b-1}\mu_*.
\]
By Lemma~\ref{lem:poisson-cumulants}, the centered $2h$-th moment contains
exactly set partitions of $[2h]$ with no singleton block.  A partition with
$j$ blocks has $1\le j\le h$ and its cumulant product is at most
\[
 2^{2h-j}\mu_*^j
 \le 2^{2h}\mu_*\max(1,\mu_*)^{h-1}.
\]
There are at most $(2h)^{2h}$ such partitions, and
$2^{2h}(2h)^{2h}\le e^{4h\log(2h)}$ for every integer $h\ge1$.
Integration over $T$ proves the bound, also when $\mu_*\le1$.
In particular, at $h=1$ the exact partition variance is
$2\wt(\mathbf d)^2\mu_R(\mathbf d)=2\wt(\mathbf d)\mu_*$;
the factor $e^{Ch\log(2h)}$ includes this order-two constant.
\end{proof}

\begin{lemma}[Target correction diagrams]\label{fs-lem:target-corrections}
For every raw $s$-occurrence target moment, $1\le s\le2h$, the geometric
nonpartition contribution is at most
\begin{equation}\label{fs-eq:target-r2-raw-stmt}
 \vol(T)\exp(O(h^2\log n))\max(1,\mu_*)^{s-1}
 \left(\frac{16}{27}+o(1)\right)^{n/2},
\end{equation}
and the arithmetic/large-coefficient contribution is at most
\begin{equation}\label{fs-eq:target-r1-raw-stmt}
 \vol(T)\exp(O(h^2\log n))\max(1,\mu_*)^{s-1}
 \left[2^{-n}+\left(\frac e4+o(1)\right)^{n/2}\right].
\end{equation}
\end{lemma}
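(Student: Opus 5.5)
The plan is to specialize Proposition~\ref{prop:uniform-diagram-budget} to the centered target kernel $F_s^{\rm ct}$ of \eqref{eq:raw-kernel-ct}, with $\alpha=\gamma=(\ell-1)/\ell$, $T=T_R$ and $\omega=\omega_R$. The marked domain is $\mathcal M=T$. The raw $s$-occurrence moment $\E_\cL\sum_{\mathbf d\in\cL\cap T}(\omega(\mathbf d)r_{\cL,R}(\mathbf d))^s$ is, by Lemma~\ref{lem:four-raw-kernels}, the lattice sum of $F_s^{\rm ct}$ at Rogers order $k=s+1\le 2h+1\le 2H'\log n+1$. Lemma~\ref{lem:growing-order-legal} places this inside the exact range of Proposition~\ref{thm:rogers-general}. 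The interval $J$ can be any fixed compact subinterval of $(0,2)$ containing $[1/2,1]$, since $\gamma\to1$. The radius range $R_\ell\le R\le e^{K\log^2n}\Rstar$ lies in the range $c_0\Rstar\le R$ with $c_0=1$. For the centered target kernel there are no extra parameter conditions.

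The key input to check is the uniform free-fiber envelope. By Lemma~\ref{lem:complete-diagram-audit}, the ct fiber mass at a fixed mark is
\[
 W(\mathbf d)=\omega(\mathbf d)\int\mathbf1_S(\mathbf v)\mathbf1_S(\mathbf v-\mathbf d)\,d\mathbf v=\omega(\mathbf d)\mu_R(\mathbf d)=\mu_*
\]
exactly, by the definition of the flattening weight. So we may take $V_{\rm fib}=\mu_*$. The weight allocation in that lemma assigns one factor $\omega(\mathbf d)$ to each of the $f\le s-1$ free pivots (Lemma~\ref{fs-lem:missing-fiber}) and leaves $\omega^{s-f}\le1$. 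Hence the complete fiber factor is at most $\max(1,\mu_*)^{s-1}$. This is sharper than the generic exponent $k-2=s-1$, and in fact equal to it, so nothing is lost.

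The two bounds then follow by applying \eqref{eq:uniform-geometric-diagram}, summed over all geometric diagrams, and \eqref{eq:uniform-arithmetic-diagrams}. These give $\vol(T)\max(1,\mu_*)^{s-1}e^{Ck^2\log n}$ times $(16/27+\varepsilon_n)^{n/2}$, respectively $(e/4+\varepsilon_n)^{n/2}+2^{-n}$. Since $k\le2h+1$, we have $e^{Ck^2\log n}=e^{O(h^2\log n)}$. The $\varepsilon_n$ is the common deterministic sequence, which tends to zero and can be written $o(1)$.

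Two points still need confirming.

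\textbf{Exhaustiveness of the classification.} The partition diagrams must be exactly those treated in Lemma~\ref{fs-lem:target-affine-partitions}, and every other supported matrix must fall into class (B) or class (C). This follows from Lemma~\ref{fs-lem:target-relations}, which rules out $t=0$ and $t=2$ and forces one-parent columns to be equality or reflection. Proposition~\ref{prop:kernel-trichotomy-audit}(a) then completes the trichotomy. A large entry only in the marked row is fixed by \eqref{fs-eq:target-longitudinal} and creates no new class.

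\textbf{Uniformity of constants (the main obstacle).} The constants must be uniform in $s\le 2h$ and in $h$. I would handle this by invoking Step~5 of Proposition~\ref{prop:uniform-diagram-budget}, which states that a single $\varepsilon_n$ and a single $C$ serve all raw orders. I would also note that the scalar certificate constants of Lemma~\ref{lem:uniform-scalar-certificate} depend only on $J$, so the moving ratio $\gamma=\gamma_n$ causes no loss.
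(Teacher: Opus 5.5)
Your proposal is correct and is essentially the paper's argument. The paper's proof unpacks the same ingredients you reach through Proposition~\ref{prop:uniform-diagram-budget}: the three-parent penalty of Corollary~\ref{fs-cor:angular-penalty} with a $3^{O((s+1)^2)}$ count of signed matrices, Lemma~\ref{fs-lem:arithmetic-direct} for the arithmetic family, and Lemma~\ref{lem:diagram-trichotomy-exhaustive} together with Lemma~\ref{fs-lem:target-relations} for exhaustiveness. Your choice $V_{\rm fib}=\omega(\mathbf d)\mu_R(\mathbf d)=\mu_*$ is exactly the paper's missing-fiber weight inequality $\omega(\mathbf d)^s\mu_R(\mathbf d)^{r-1}\le\mu_*^{s-1}$.
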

\begin{proof}
Fix $\mathbf{d}$.  By Lemma~\ref{lem:fiber-polar}, conditional on scalar coordinates, each free parent has an independent uniform transverse direction in $\mathbf{d}^{\perp}$.  Lemma~\ref{fs-lem:target-relations} says the first nonpartition $q=1$, $\{0,\pm1\}$ column combines $t\ge3$ such directions; Corollary~\ref{fs-cor:angular-penalty} gives the factor in~\eqref{fs-eq:target-r2-raw-stmt}.  There are at most $3^{O((s+1)^2)}=\exp(O(h^2))$ such signed matrices.

A rank-$r$ correction has at most $r-1\le s-1$ free parent pivots.  By~\eqref{fs-eq:mu-min}, $\mu_R(\mathbf{d})\ge\mu_*>1$ for all sufficiently large $n$, uniformly in the radius range.  The raw moment carries the common factor $\wt(\mathbf{d})^s$.  Using $\wt=\mu_*/\mu_R$,
\[
 \wt(\mathbf{d})^s\mu_R(\mathbf{d})^{r-1}
 \le\wt^s\mu_R^{s-1}
 =\wt\,\mu_*^{s-1}\le\mu_*^{s-1}.
\]
This is the exact missing-fiber weight calculation; no unweighted $\mu_{\max}$ substitution is needed.

For arithmetic diagrams apply Lemma~\ref{fs-lem:arithmetic-direct} with marked domain $T$ and parent-fiber mass $\mu_R(\mathbf{d})$, then use the same weight inequality.  This gives~\eqref{fs-eq:target-r1-raw-stmt}.  Lemma~\ref{lem:diagram-trichotomy-exhaustive}, using the relation classification of Lemma~\ref{fs-lem:target-relations}, shows that these cases exhaust all nonpartition admissible matrices.
\end{proof}

\begin{proof}[Proof of Lemma~\ref{fs-thm:target-high-moment}]
Expand
\[
 (X_\cL^{\wt}(\mathbf{d})-\mu_*)^{2h}
 =\sum_{s=0}^{2h}\binom{2h}{s}(-\mu_*)^{2h-s}
   X_\cL^{\wt}(\mathbf{d})^s.
\]
The partition diagrams, including the full-rank identity terms, combine exactly into Lemma~\ref{fs-lem:target-main}.  For every nonpartition term take absolute values and use Lemma~\ref{fs-lem:target-corrections}.  Its fiber power $\mu_*^{s-1}$ times the centering power $\mu_*^{2h-s}$ is exactly $\mu_*^{2h-1}$.  The $2h+1$ binomial terms cost only $e^{O(h)}$.
\end{proof}

\subsection{Endpoint degrees: Diophantine nonresonance and moments}\label{fs-sec:endpoint-moment}
For $\mathbf{x}\in\cL\cap S$, write
\[
 Y_\cL^{\wt}(\mathbf{x})
 =\sum_{\mathbf{d}\in\cL}\1_T(\mathbf{d})\1_S(\mathbf{x}-\mathbf{d})\wt(\mathbf{d}),
\]
and
\[
 \mathfrak M_S:=\E_\cL\sum_{\mathbf{x}\in\cL\cap S}
 |Y_\cL^{\wt}(\mathbf{x})-a_R^{\wt}(\mathbf{x})|^{2h}.
\]

\begin{lemma}[Growing marked endpoint moment]\label{fs-thm:endpoint-high-moment}
For an absolute $C>0$,
\begin{align}
 \mathfrak M_S\le{}&\vol(S)e^{Ch\log(2h)}\bar a_R\max(1,\bar a_R)^{h-1}\notag\\
 &+\vol(S)e^{Ch^2\log n}\max(1,\bar a_R)^{2h-1}
 \left(\frac{16}{27}+o(1)\right)^{n/2}\notag\\
 &+\vol(S)e^{Ch^2\log n}\max(1,\bar a_R)^{2h-1}
 \left[\left(\frac e4+o(1)\right)^{n/2}+2^{-n}\right].\label{fs-eq:endpoint-high-moment}
\end{align}
\end{lemma}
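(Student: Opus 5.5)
The plan mirrors the proof of Lemma~\ref{fs-thm:target-high-moment}, now using the centered endpoint kernel $F_s^{\rm ce}$ with the mark $\mathbf x\in S$ in column~$1$ and free pivots $\mathbf d_j$ in the cap fiber $\{\mathbf d\in T:\mathbf x-\mathbf d\in S\}$.

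\textbf{Binomial expansion.} First expand
\[
 (Y^{\wt}_\cL(\mathbf x)-a_R^{\wt}(\mathbf x))^{2h}
 =\sum_{s=0}^{2h}\binom{2h}{s}(-a_R^{\wt}(\mathbf x))^{2h-s}
 Y^{\wt}_\cL(\mathbf x)^s.
\]
Each raw term $\sum_{\mathbf x}a_R^{\wt}(\mathbf x)^{2h-s}Y^{\wt}_\cL(\mathbf x)^s$ is the Siegel transform of $F_s^{\rm ce}$ multiplied by a radial function of the mark. Apply Proposition~\ref{thm:rogers-general} at order $k=s+1\le 2h+1$, which is legal by Lemma~\ref{lem:growing-order-legal}. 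By~\eqref{fs-eq:a-uniform}, I may replace $a_R^{\wt}(\mathbf x)$ by $\bar a_R(1+O(1/n))$ in the centering factors wherever only upper bounds are needed. The partition terms, however, must be combined with the exact centering $a_R^{\wt}(\mathbf x)$.

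\textbf{Relation classification.} Next I classify the supported height-one columns. This is part~(ii) of Lemma~\ref{lem:four-kernel-relation-certificate} with $\alpha=\gamma=(\ell-1)/\ell$. Multiplying the relation by $\ell^2$ and using coprimality of $\ell^2$ and $(\ell-1)^2$ forces $A=0$ and $\sum_j B_j=1$. Hence equality is the only one-parent column, and $t=0,2$ are impossible.

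Consequently there is no reflection here, in contrast with the target case. The partition diagrams biject with set partitions of the $s$ occurrence slots, each with coefficient $c_D=1$. A block of size $b$ contributes
\[
 \int_T \1_S(\mathbf x-\mathbf d)\wt(\mathbf d)^b\,d\mathbf d\le a_R^{\wt}(\mathbf x),
\]
since $0<\wt\le1$.

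These partition contributions are therefore the moments of a Poisson functional whose mean is exactly $a_R^{\wt}(\mathbf x)$ and whose cumulants are bounded by $a_R^{\wt}(\mathbf x)$. The singleton-cancellation argument of Lemma~\ref{fs-lem:target-affine-partitions} applies because singleton blocks have mass exactly $a_R^{\wt}(\mathbf x)$. Lemma~\ref{lem:poisson-cumulants} then bounds the centered partition part by
\[
 e^{Ch\log(2h)}\,a_R^{\wt}(\mathbf x)\max\bigl(1,a_R^{\wt}(\mathbf x)\bigr)^{h-1}.
\]
Finally, integrate over $\mathbf x\in S$ and use~\eqref{fs-eq:a-uniform}. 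This gives the first line of~\eqref{fs-eq:endpoint-high-moment}.

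\textbf{Corrections.} Nonpartition diagrams are either geometric or arithmetic by Proposition~\ref{prop:kernel-trichotomy-audit}(b). I apply Proposition~\ref{prop:uniform-diagram-budget} with marked domain $\mathcal M=S$ and weighted fiber mass
\[
 V_{\rm fib}=\sup_{\mathbf x} a_R^{\wt}(\mathbf x)=\bar a_R(1+O(1/n)).
\]
This gives the penalties $(16/27+o(1))^{n/2}$ and $(e/4+o(1))^{n/2}+2^{-n}$, each with fiber power $\max(1,\bar a_R)^{s-1}$ by Lemma~\ref{fs-lem:missing-fiber}. Multiplying by the centering power $\bar a_R^{2h-s}$ yields $\max(1,\bar a_R)^{2h-1}$. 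The factors $(1+O(1/n))^{2h}$ and the $2h+1$ binomial coefficients cost only $e^{O(h)}$, which is absorbed.

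\textbf{Main obstacle.} I expect the hardest step to be the partition/centering bookkeeping when the cumulants are only bounded by $a_R^{\wt}(\mathbf x)$ rather than equal to it. The cancellation requires that the singleton mass equal the centering term exactly, which it does pointwise in $\mathbf x$. The nonsingleton blocks need only the upper bound, since every surviving term is nonnegative. I will verify this explicitly.

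A second point to check is the nonresonance step: the error $O(\ell^2 k w_n)$ must be $o(1)$ throughout the whole range $k\le 2H\ell+1$, so that the integer $2A\ell^2+(\ell-1)^2 z$ is forced to vanish.
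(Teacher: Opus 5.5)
Your proposal is correct and follows essentially the same route as the paper. You expand into raw $F_s^{\rm ce}$ moments, use the near-unit Diophantine step to force equality-only partition columns, bound the exactly centered partition part by Poisson cumulants integrated via \eqref{fs-eq:a-uniform}, and handle corrections by the weighted missing-fiber count with the uniform $16/27$, $e/4$, $2^{-n}$ bounds. The only cosmetic difference is that you invoke Proposition~\ref{prop:uniform-diagram-budget} directly with $V_{\rm fib}=\sup_{\mathbf x}a_R^{\wt}(\mathbf x)$, whereas the paper packages the same estimate as Lemma~\ref{fs-lem:endpoint-corrections}.
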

We prove this bound after Lemmas~\ref{fs-lem:endpoint-nonresonance}--%
\ref{fs-lem:endpoint-corrections}, which supply its partition and correction terms.

\begin{lemma}[Endpoint Diophantine nonresonance]\label{fs-lem:endpoint-nonresonance}
For a supported $q=1$ dependent target column
\[
 \mathbf{d}'=A\mathbf{x}+\sum_{j=1}^tB_j\mathbf{d}_j,
 \qquad A\in\Z,\quad B_j\in\{\pm1\},
\]
with all nonmarked coefficients in $\{0,\pm1\}$, support forces
\begin{equation}\label{fs-eq:endpoint-exact-relation}
 A=0,\qquad\sum_{j=1}^tB_j=1.
\end{equation}
Thus equality is the only one-parent relation; $t=0$ and $t=2$ are impossible; every nonpartition relation has $t\ge3$.
\end{lemma}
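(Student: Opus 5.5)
The plan is to read this off part~\textup{(ii)} of Lemma~\ref{lem:four-kernel-relation-certificate} in the near-unit specialization $\alpha=\gamma=(\ell-1)/\ell$, much as Lemma~\ref{fs-lem:target-relations} was read off part~\textup{(i)}. I will still write out the scalar argument, because the Diophantine step is what distinguishes the endpoint case. The marked endpoint is $\mathbf x\in S_R$. Every supported difference column, namely the pivots $\mathbf d_j$ and the output $\mathbf d'$, lies in $T_R=S_{\gamma R}$ and satisfies $\mathbf x-\mathbf d\in S_R$.

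\textbf{Step 1: longitudinal coordinate.} The identity $2\langle\mathbf x,\mathbf d\rangle=\|\mathbf x\|^2+\|\mathbf d\|^2-\|\mathbf x-\mathbf d\|^2$ gives
\[
 \frac{2\langle\mathbf x,\mathbf d\rangle}{\|\mathbf x\|^2}=\gamma^2+O(w_n)
\]
for each such $\mathbf d$. The constant here is uniform, because $\gamma\in J=[1/2,1]$, say. Now take the inner product of the relation $\mathbf d'=A\mathbf x+\sum_jB_j\mathbf d_j$ with $2\mathbf x/\|\mathbf x\|^2$. This yields
\[
 \bigl|2A+\gamma^2(z+1)-\gamma^2\bigr|\le C(t+1)w_n,
 \qquad z:=\sum_jB_j-1 .
\]

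\textbf{Step 2: rounding.} Multiply the inequality by $\ell^2$. The quantity $2A\ell^2+(\ell-1)^2z$ is an integer within $O(\ell^2(t+1)n^{-2})$ of zero. Since $t\le k\le2h+1=O(\log n)$ and $\ell=\lceil\log n\rceil$, this error is $O(\log^3n/n^2)=o(1)$, so the integer is exactly zero. From $2A\ell^2=-(\ell-1)^2z$ and $\gcd(\ell^2,(\ell-1)^2)=1$, it follows that $\ell^2\mid z$. But $|z|\le t+1\le2h+2<\ell^2$ for large $n$, when $h\le H\log n$. Hence $z=0$, and then $A=0$.

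\textbf{Step 3: small $t$.} When $t=0$, the relation $\sum B_j=1$ is impossible. When $t=2$, the sum $B_1+B_2\in\{0,\pm2\}$ cannot equal $1$, by parity. When $t=1$, we get $B_1=1$ and $A=0$, so $\mathbf d'=\mathbf d_1$, which is equality. There is no reflection analogue here: the would-be reflection $\mathbf x-\mathbf d_1$ has $A=1$ and $B_1=-1$, which fails $A=0$. Consistently with this, that vector lies in $S_R$ rather than $T_R$. Therefore every nonpartition column must have $t\ge3$.

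The main obstacle is Step~2. It requires uniformity: the accumulated support error $O((t+1)w_n)$ must stay $o(\ell^{-2})$ across the whole growing range of $t$, and the bound $|z|<\ell^2$ is needed for the coprimality argument. Both hold with room to spare because $w_n=n^{-2}$ while $k$ and $\ell$ are only logarithmic. This is exactly why the rational band $\gamma=(\ell-1)/\ell$ was chosen.
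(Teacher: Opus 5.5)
Your proof is correct and follows the paper's route. The paper simply invokes part~(ii) of Lemma~\ref{lem:four-kernel-relation-certificate} at $\alpha=\gamma$, and that lemma's proof is exactly your argument: the longitudinal-coordinate relation, multiplication by $\ell^2$, coprimality of $\ell^2$ and $(\ell-1)^2$ together with $|z|<\ell^2$, and the same parity enumeration for $t\le2$.
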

\begin{proof}
Part~\textup{(ii)} of
Lemma~\ref{lem:four-kernel-relation-certificate}, at $\alpha=\gamma$, gives
\begin{equation}\label{fs-eq:endpoint-approx-relation}
 \left|2A+\gamma^2\left(\sum_jB_j-1\right)\right|
 =O((t+1)w_n).
\end{equation}
The near-unit case of that lemma applies since $k=2h+1=O(\ell)$ and
forces~\eqref{fs-eq:endpoint-exact-relation}.  Its small-parent
classification also gives the stated alternatives for $t=0,1,2$.
\end{proof}

\begin{lemma}[Endpoint partition identification]\label{fs-lem:endpoint-partitions}
Fix the marked endpoint $\mathbf{x}$.  Among supported $q=1$ diagrams with nonmarked coefficients in $\{0,\pm1\}$, every dependent target column involving at most one free target pivot is equality.  Hence diagrams with no multi-target column are exactly ordinary set-partition diagrams, with coefficient one.
\end{lemma}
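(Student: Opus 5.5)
The plan is to mirror the proof of Lemma~\ref{fs-lem:target-affine-partitions}, replacing the target classification by the endpoint nonresonance of Lemma~\ref{fs-lem:endpoint-nonresonance}. Fix the marked endpoint $\mathbf x$ in column $1$ and a supported $q=1$ diagram whose nonmarked entries lie in $\{0,\pm1\}$. Consider a dependent target column $\mathbf d'=A\mathbf x+\sum_jB_j\mathbf d_j$ with at most one free target pivot, so $t\le1$.

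\emph{Case $t=0$.} The column would be $\mathbf d'=A\mathbf x$. Lemma~\ref{fs-lem:endpoint-nonresonance} rules this out directly. Alternatively, its transverse component about the marked axis vanishes, whereas Lemma~\ref{lem:uniform-scalar-certificate} (row ce) requires transverse radius $\gamma\sqrt{1-\gamma^2/4}\,R(1+O(w_n))>0$. Either way the column is unsupported.

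\emph{Case $t=1$.} First handle the coefficient. Projecting onto $\mathbf x^\perp$ gives $\mathbf d'_\perp=B_1\mathbf d_{1,\perp}$. Both transverse lengths agree up to $1+O(w_n)$, so $|B_1|=1$; this also covers an a priori larger integer, although here $B_1\in\{\pm1\}$ is already assumed. Relation \eqref{fs-eq:endpoint-exact-relation} then forces $A=0$ and $B_1=1$, so $\mathbf d'=\mathbf d_1$ is literal equality. The key difference from the target case is that no reflection survives. The would-be reflection $\mathbf x-\mathbf d_1$ has longitudinal coordinate $1-\gamma^2/2\ne\gamma^2/2$, and this mismatch is exactly the Diophantine identity with $\ell^2$ multiplied through, already used in Lemma~\ref{lem:four-kernel-relation-certificate}(ii).

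\emph{Reconstruction.} Next I will show that diagrams in which every dependent column is an equality are in bijection with set partitions of the $s$ occurrence slots. Each block is opened by its least slot, which is a Rogers pivot; every later slot in the block copies that pivot, with marked entry $0$ and a single unmarked entry $1$. Han's echelon conditions (ii)--(iii) hold for this matrix. The gcd condition holds because of the identity pivot entries. Since $q=1$, the coefficient is $c_D=1$ by \eqref{eq:rogers-denom}.

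\emph{Converse.} Every set partition yields such an admissible matrix. Its integral at the fixed mark is $\prod_{B\in\pi}\int\mathbf1_T\mathbf1_S(\mathbf x-\cdot)\omega^{|B|}$, which is the cumulant product of a Poisson functional. Weights $\omega^{|B|}$ arise because repeated occurrences carry repeated weights. This is what the subsequent partition-moment lemma will use.

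\emph{Main obstacle.} The delicate point is making sure the reflection alternative and the $t=0$ column are genuinely excluded uniformly in $h\le H\log n$. I will take this entirely from the near-unit Diophantine argument, where coprimality of $\ell^2$ and $(\ell-1)^2$ and the bound $|z|\le k+1<\ell^2$ are already in hand. A secondary check is that the marked row is uniquely determined, namely $A=0$. This ensures the bijection neither over- nor undercounts matrices, and so no fourth marked-row-only family appears, consistent with Proposition~\ref{prop:kernel-trichotomy-audit}(b).
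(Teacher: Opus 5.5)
Your proposal is correct and follows essentially the same route as the paper. Lemma~\ref{fs-lem:endpoint-nonresonance} forces $A=0$ and $\sum_jB_j=1$, so a column with at most one free target pivot is a single $+1$ unit reuse; the first-slot-as-pivot construction then gives the bijection with set partitions, with $c_D=1$ from $q=1$. Your extra transverse-projection argument for $|B_1|=1$ and for $t=0$ is redundant given that lemma, but harmless.
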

\begin{proof}
Lemma~\ref{fs-lem:endpoint-nonresonance} forces $A=0$ and the sum of nonzero nonmarked coefficients to equal one.  With at most one such coefficient, it must be a single $+1$.  Conversely equality is admissible.  The resulting one-nonzero-entry-per-column matrices are the partition matrices of Han~\cite[Proposition~2.4]{Han2024}, specialized to the present support.

More explicitly, order the occurrence slots $1,\ldots,s$.  Given a set
partition $\pi$, make the first slot of every block a pivot and let every
later slot in that block be the corresponding unit column.  Han's echelon
condition holds because the block pivot precedes every reuse; the marked-row
entry is zero in every target column, and $q=1$ gives Rogers coefficient
one.  Conversely, the first pivot occurrence of each distinct free target
defines a block and the preceding classification forces every later column
in that block to be its unit reuse.  These operations are inverse, including
the full-rank identity diagram, which corresponds to the all-singleton
partition.  A block $B$ therefore contributes exactly
\[
 \int_{C_{\mathbf x}}\omega(\mathbf d)^{|B|}\,d\mathbf d,
\]
the $|B|$-th cumulant of the Poisson integral used below.  Thus centering
removes precisely the singleton blocks and no diagram is lost or counted
twice.
\end{proof}

\begin{lemma}[Endpoint partition contribution]\label{fs-lem:endpoint-main}
The complete endpoint partition contribution satisfies
\begin{equation}\label{fs-eq:endpoint-main}
 \mathfrak M_S^{\rm part}
 \le\vol(S)\exp(C h\log(2h))\bar a_R\max(1,\bar a_R)^{h-1}.
\end{equation}
\end{lemma}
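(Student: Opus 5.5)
The plan mirrors the proof of Lemma~\ref{fs-lem:target-main}, using the Poisson model identified in Lemma~\ref{fs-lem:endpoint-partitions}. Fix the marked endpoint $\mathbf x\in S$. By that lemma, the partition diagrams of the raw $s$-occurrence endpoint moments, all with Rogers coefficient one, are in bijection with ordinary set partitions of the occurrence slots. A block $B$ contributes exactly $\int_{C_{\mathbf x}}\wt(\mathbf d)^{|B|}\,d\mathbf d$. I will identify the complete partition contribution with the moments of the Poisson integral
\[
 Y_{\mathbf x}:=\int\wt\,d\Pi_{\mathbf x},
\]
where $\Pi_{\mathbf x}$ is a Poisson random measure of intensity $\mathbf 1_T(\mathbf d)\mathbf 1_S(\mathbf x-\mathbf d)\,d\mathbf d$. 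By \eqref{eq:poisson-cumulants}, $\kappa_b(Y_{\mathbf x})=\int_{C_{\mathbf x}}\wt^b$, and in particular $\E Y_{\mathbf x}=a_R^{\wt}(\mathbf x)$.

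Next, the centering. Expand $(Y-a_R^{\wt}(\mathbf x))^{2h}$ binomially, exactly as in the target case. A singleton slot is supplied either by a raw one-point block or by the deterministic subtraction, both of mass $a_R^{\wt}(\mathbf x)$, so the alternating sum kills every partition with a singleton. The centered partition contribution at $\mathbf x$ is therefore precisely \eqref{eq:centered-cumulant-partitions}: a sum over partitions of $[2h]$ with no singleton block, each with at most $h$ blocks, and there are at most $(2h)^{2h}$ of them.

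Then I bound each cumulant. Since $0\le\wt\le1$, for $b\ge2$ we have $\kappa_b\le\int_{C_{\mathbf x}}\wt=a_R^{\wt}(\mathbf x)\le(1+O(1/n))\bar a_R$ by \eqref{fs-eq:a-uniform}. A partition with $j\le h$ blocks then has cumulant product at most $(2\bar a_R)^j\le 2^h\bar a_R\max(1,\bar a_R)^{h-1}$. Summing over at most $(2h)^{2h}$ partitions gives $e^{Ch\log(2h)}\bar a_R\max(1,\bar a_R)^{h-1}$, uniformly in $\mathbf x$. Finally, the outer lattice sum over the mark $\mathbf x\in\cL\cap S$ in the diagram expansion becomes integration over $S$, since the mark column is the first Rogers pivot and partition diagrams have $q=1$. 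This yields the factor $\vol(S)$ and proves \eqref{fs-eq:endpoint-main}.

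The main obstacle is bookkeeping rather than analysis. One must check that the partition diagrams from the raw expansions at all orders $s\le 2h$, each multiplied by its centering power $(-a_R^{\wt}(\mathbf x))^{2h-s}$, recombine exactly into the Poisson centered moment. In particular, $a_R^{\wt}(\mathbf x)$ depends on $\mathbf x$, so the centering must be done pointwise in the mark before integrating over $S$, and not with $\bar a_R$. This works because the deterministic centering uses exactly the same $\mathbf x$-dependent first cumulant as the raw singleton blocks. The bijection of Lemma~\ref{fs-lem:endpoint-partitions} then makes the identity exact, with no diagram lost or double counted.
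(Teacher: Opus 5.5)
Your proposal is correct and follows essentially the same route as the paper: you use the Poisson integral on $C_{\mathbf x}$ with cumulants $\int_{C_{\mathbf x}}\omega^b$, identify the partition diagrams via Lemma~\ref{fs-lem:endpoint-partitions}, delete singleton blocks by the centered cumulant formula, count at most $h$ blocks and at most $(2h)^{2h}$ partitions, and then integrate over the mark. The only cosmetic difference is that the paper keeps one factor $a_R^{\omega}(\mathbf x)$ and uses the exact identity $\int_S a_R^{\omega}(\mathbf x)\,d\mathbf x=\vol(S)\bar a_R$, invoking \eqref{fs-eq:a-uniform} only for $\max(1,a)^{h-1}$; you instead apply the pointwise bound $a_R^{\omega}(\mathbf x)\le2\bar a_R$ to every factor, which costs an extra $2^h$ that is harmlessly absorbed into $e^{Ch\log(2h)}$.
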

\begin{proof}
For fixed continuum $\mathbf{x}$, let $\Pi$ be a Poisson random measure on
\[
 C_{\mathbf{x}}:=\{\mathbf{d}\in T:\mathbf{x}-\mathbf{d}\in S\}
\]
with Lebesgue intensity and define
\[
 Z_{\mathbf{x}}:=\int_{C_{\mathbf{x}}}\wt(\mathbf{d})\,\Pi(d\mathbf{d}).
\]
Then
\[
 \E Z_{\mathbf{x}}=a_R^{\wt}(\mathbf{x}),
 \qquad
 \kappa_b(Z_{\mathbf{x}})=\int_{C_{\mathbf{x}}}\wt(\mathbf{d})^b\,d\mathbf{d}
 \le a_R^{\wt}(\mathbf{x})\quad(b\ge2).
\]
By Lemma~\ref{fs-lem:endpoint-partitions}, these are exactly the partition
diagrams in the raw moments.  Lemma~\ref{lem:poisson-cumulants} deletes
singleton blocks after centering.  For every $h\ge1$, the remaining
partitions have at most $h$ blocks and their number is at most $(2h)^{2h}$.
Writing $a=a_R^{\wt}(\mathbf x)$, each cumulant product is at most
$a\max(1,a)^{h-1}$.  By~\eqref{fs-eq:a-uniform},
\[
 \max(1,a)^{h-1}
 \le e^{O(h/n)}\max(1,\bar a_R)^{h-1},
 \qquad
 \int_S a_R^{\wt}(\mathbf x)\,d\mathbf x=\vol(S)\bar a_R.
\]
Integration and absorption of $(2h)^{2h}e^{O(h/n)}$ into
$e^{Ch\log(2h)}$ prove~\eqref{fs-eq:endpoint-main}, including $h=1$.
\end{proof}

\begin{lemma}[Endpoint correction diagrams]\label{fs-lem:endpoint-corrections}
For every raw $s$-occurrence endpoint moment, $1\le s\le2h$, the geometric
nonpartition contribution is at most
\begin{equation}\label{fs-eq:endpoint-geom-raw}
 \vol(S)e^{O(h^2\log n)}\max(1,\bar a_R)^{s-1}
 \left(\frac{16}{27}+o(1)\right)^{n/2},
\end{equation}
and the arithmetic/large-coefficient contribution is at most
\begin{equation}\label{fs-eq:endpoint-arith-raw}
 \vol(S)e^{O(h^2\log n)}\max(1,\bar a_R)^{s-1}
 \left[2^{-n}+\left(\frac e4+o(1)\right)^{n/2}\right].
\end{equation}
\end{lemma}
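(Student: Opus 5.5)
The plan is to mirror the proof of Lemma~\ref{fs-lem:target-corrections}, replacing the target kernel $F_s^{\rm ct}$ by the centered endpoint kernel $F_s^{\rm ce}$ with mark $\mathbf x\in S$. First I fix the marked endpoint $\mathbf x$ and invoke Lemma~\ref{lem:fiber-polar}. Conditional on their scalar coordinates, the free target pivots $\mathbf d_i$ then have independent uniform transverse directions in $\mathbf x^\perp$. The transverse radius is $\gamma R\sqrt{1-\gamma^2/4}\,(1+O(w_n))$, bounded below by a multiple of $R$ by Lemma~\ref{lem:uniform-scalar-certificate} (row ce with $\alpha=\gamma\in J$).

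For the geometric class I split on the first nonpartition column. Lemma~\ref{fs-lem:endpoint-nonresonance} shows that the first such $q=1$, $\{0,\pm1\}$ column has $A=0$ and combines $t\ge3$ distinct free target pivots. Lemma~\ref{fs-lem:first-offending} then factors all earlier columns over individual pivots. Corollary~\ref{fs-cor:angular-penalty} gives the conditional angular factor $e^{O(h^2\log n)}(16/27+o(1))^{n/2}$. The number of signed matrices is $\exp(O(h^2))$, since the marked entry is forced to be $0$. Integrating over the mark gives the factor $\vol(S)$.

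The fiber power is where this case differs from the target case. The raw endpoint moment carries one weight $\omega(\mathbf d_j)$ per occurrence. As in Lemma~\ref{lem:complete-diagram-audit}, I keep one weight per pivot occurrence and drop the repeated ones, which is allowed because $\omega\le1$. The free fiber mass is then exactly $a_R^{\omega}(\mathbf x)$, which by \eqref{fs-eq:a-uniform} is at most $\bar a_R(1+O(1/n))$. A nonidentity diagram has at most $s-1$ free pivots by Lemma~\ref{fs-lem:missing-fiber}. The resulting factor is $(\bar a_R(1+O(1/n)))^{s-1}\le e^{O(h/n)}\max(1,\bar a_R)^{s-1}$, and the $e^{O(h/n)}$ is absorbed into the prefactor. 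By \eqref{fs-eq:a-min}, $\bar a_R>1$ for large $n$, so the maximum is harmless. This proves \eqref{fs-eq:endpoint-geom-raw}.

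For the arithmetic class I apply Lemma~\ref{fs-lem:arithmetic-direct} with $\chi=1$, marked domain $\mathcal M=S$, $k=s+1\le 2h+1$, and $V_{\rm fib}=\bar a_R(1+O(1/n))$. Its exponent $k-2=s-1$ gives \eqref{fs-eq:endpoint-arith-raw}, with $e^{O(k^2\log n)}=e^{O(h^2\log n)}$. Finally, exhaustiveness follows from Lemma~\ref{lem:diagram-trichotomy-exhaustive}, using the near-unit case~(b) of Proposition~\ref{prop:kernel-trichotomy-audit}.

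The main obstacle is checking that the near-unit Diophantine step really applies uniformly. It needs $\alpha=\gamma=(\ell-1)/\ell$ and $k\le2H\ell+1$, so that multiplying by $\ell^2$ leaves an integer error of $o(1)$. I would also confirm that the pointwise bound on weighted fiber masses holds uniformly over all $\mathbf x\in S$, and not only on average.
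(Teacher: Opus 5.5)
Your proposal is correct and follows the paper's proof essentially step for step. You fix the endpoint mark, obtain the $16/27$ factor from Lemma~\ref{fs-lem:endpoint-nonresonance} and Corollary~\ref{fs-cor:angular-penalty}, and bound each reused weight $\omega(\mathbf d)^m$ by $\omega(\mathbf d)$, so that the at most $s-1$ free fibers contribute $a_R^{\omega}(\mathbf x)^{s-1}$ controlled by \eqref{fs-eq:a-uniform}; you then feed these weighted cap masses into Lemma~\ref{fs-lem:arithmetic-direct} and take exhaustiveness from Lemma~\ref{lem:diagram-trichotomy-exhaustive}. The two uniformity points you flag are handled exactly by part~(ii) of Lemma~\ref{lem:four-kernel-relation-certificate}, which gives the near-unit rounding after multiplying by $\ell^2$, and by the pointwise-in-$\mathbf x$ form of \eqref{fs-eq:a-uniform}.
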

\begin{proof}
Fix $\mathbf{x}$.  Every valid target has transverse decomposition relative to $\mathbf{x}$ with a common transverse radius up to $1+O(w_n)$.  Lemma~\ref{fs-lem:endpoint-nonresonance} says the first genuine signed nonpartition relation contains $t\ge3$ free targets, and Corollary~\ref{fs-cor:angular-penalty} gives the $16/27$ factor.

The exact weighted fiber accounting is as follows.  If a free target pivot $\mathbf{d}$ is reused $m\ge1$ times in a raw moment, its weight is $\wt(\mathbf{d})^m\le\wt(\mathbf{d})$.  Hence its complete scalar/fiber integral is at most
\[
 \int_{C_{\mathbf{x}}}\wt(\mathbf{d})\,d\mathbf{d}=a_R^{\wt}(\mathbf{x}).
\]
By Lemma~\ref{fs-lem:missing-fiber}, a correction has at most $s-1$ such pivots.  This gives $a_R^{\wt}(\mathbf{x})^{s-1}$ exactly, rather than relying on an unweighted cap-volume approximation.  Use~\eqref{fs-eq:a-uniform} after integrating $\mathbf{x}$.

For the arithmetic family, apply Lemma~\ref{fs-lem:arithmetic-direct} with these weighted cap masses.  This yields~\eqref{fs-eq:endpoint-arith-raw}.  Exhaustiveness follows from Lemma~\ref{lem:diagram-trichotomy-exhaustive} together with Lemma~\ref{fs-lem:endpoint-nonresonance}.
\end{proof}

\begin{proof}[Proof of Lemma~\ref{fs-thm:endpoint-high-moment}]
Expand the centered $2h$-th power into raw moments.  Lemma~\ref{fs-lem:endpoint-main} combines all partition diagrams, including full rank, into the first term.  For every correction take absolute values and use Lemma~\ref{fs-lem:endpoint-corrections}.  A raw $s$-occurrence correction has power $\bar a_R^{s-1}$, and the remaining centering factor is $\bar a_R^{2h-s}$, leaving exactly $\bar a_R^{2h-1}$.  The binomial sum costs $e^{O(h)}$.
\end{proof}

\subsection{Fixed-ratio moment stability}\label{fs-sec:terminal}
The marked target proof is stable when the relative difference radius varies
in a compact nondegenerate interval.  The following version records one ratio
that will arise naturally in later additive applications.

\begin{lemma}[Fixed-ratio target moment]\label{fs-lem:fixed-ratio-moment}
There exists $\varepsilon_0>0$ such that, uniformly for $R=(\sqrt{4/3}+o(1))\Rstar$ and target micro-shells of relative half-width $w_n$ with
\[
 \left|\alpha-\frac{\sqrt3}{2}\right|\le\varepsilon_0,
\]
the target centered-moment bound of Lemma~\ref{fs-thm:target-high-moment} remains valid after replacing $T_R$ and $\mu_*(R)$ by the corresponding fixed-ratio target micro-shell and lens mean.  The assertion holds uniformly for every integer moment parameter $1\le h\le\ell$.
\end{lemma}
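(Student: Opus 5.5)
We re-run the proof of Lemma~\ref{fs-thm:target-high-moment} with $\gamma$ replaced by a fixed-ratio band $\alpha$ near $\sqrt3/2$, checking that each ingredient used only the centered target kernel $F_s^{\rm ct}$ and compactness of the ratio range, never the rational form $\gamma=(\ell-1)/\ell$. Choose $\varepsilon_0<\sqrt3/2$, so that the interval $J:=[\sqrt3/2-\varepsilon_0,\sqrt3/2+\varepsilon_0]$ is a compact subset of $(0,2)$, and take $c_0$ below $\sqrt{4/3}$, so that $R=(\sqrt{4/3}+o(1))\Rstar$ lies in the radius range of Proposition~\ref{prop:uniform-diagram-budget} for large $n$. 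The key observation is that part~(i) of Lemma~\ref{lem:four-kernel-relation-certificate} and part~(a) of Proposition~\ref{prop:kernel-trichotomy-audit} hold for every $\alpha\in J$ with no Diophantine condition. The target relation $2A+\sum B_j=1$ is an exact integer identity. Hence Lemma~\ref{fs-lem:target-relations} and the trichotomy of Lemma~\ref{lem:diagram-trichotomy-exhaustive} carry over verbatim.

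The steps are as follows. First, the partition identification of Lemma~\ref{fs-lem:target-affine-partitions} only needs the involution $\mathbf x\mapsto\mathbf d-\mathbf x$ to be fixed-point free, i.e.\ $\alpha<2$, and the transverse radius to be bounded below, which is the first row of Lemma~\ref{lem:uniform-scalar-certificate}. With $\omega=m_*/K_R$ on the new micro-shell, the Poisson cumulant computation of Lemma~\ref{fs-lem:target-main} then gives the term $\vol(T)e^{Ch\log(2h)}m_*\max(1,m_*)^{h-1}$. That argument never used a lower bound on $\mu_*$. Second, the corrections are handled by Proposition~\ref{prop:uniform-diagram-budget} with $k=s+1\le 2\ell+1$, marked domain $T$, and weighted fiber mass $V_{\rm fib}$. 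We redo the weight allocation $\omega^s\mu_R^{r-1}\le \omega\mu_*^{s-1}$ of Lemma~\ref{fs-lem:target-corrections} exactly. If $m_*\le1$, we instead bound by $\max(1,m_*)^{s-1}$, which is how the claimed form is stated. Third, the binomial recombination proceeds exactly as before.

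The step needing real care is the replacement of~\eqref{fs-eq:mu-uniform}: we must check that the lens volume varies by only $1+O(1/n)$ across the fixed-ratio micro-shell. This follows from the $O(n)$ logarithmic derivative in Lemma~\ref{lem:thin-shell-coarea}, uniformly on $J$, multiplied by the relative width $w_n$. We also need $\omega\in[1-O(1/n),1]$. The constants supplied by Lemma~\ref{lem:thin-shell-coarea} and Proposition~\ref{prop:uniform-diagram-budget} depend on $J$ only, and $J$ is fixed once $\varepsilon_0$ is. This gives uniformity over all $\alpha$ in the window, all $R$ of the stated form, and all $1\le h\le\ell$.

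The main obstacle I anticipate is purely bookkeeping. Near $\alpha=\sqrt3/2$ at $c\approx 2/\sqrt3$, the lens exponent $g_c(\alpha)=\log c+\tfrac12\log(1-\alpha^2/4)$ is positive: it equals $\tfrac12\log(4/3)+\tfrac12\log(13/16)=\tfrac12\log(13/12)>0$. So $m_*$ is exponentially large, $\max(1,m_*)=m_*$, and the structure of the bound is unchanged. I would verify this positivity explicitly, possibly shrinking $\varepsilon_0$ so that $g_c$ stays positive on $J$. That is the only place where the specific ratio $\sqrt3/2$ enters.
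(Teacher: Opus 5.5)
Your proposal is correct and follows essentially the same route as the paper: the height-one target identity $2A+\sum_jB_j=1$ does not depend on $\alpha$, so the partition and geometric classes are unchanged. Compactness of the ratio window keeps the transverse radii nondegenerate, so the $16/27$, $e/4$ and $2^{-n}$ correction estimates and the Poisson partition bound apply uniformly for $1\le h\le\ell$. Your extra checks, the $1+O(1/n)$ uniformity of the flattening weight and the positivity of $g_c$ on the window, are harmless but not needed for this moment bound, since the weighted estimates use only $\omega\mu_R=m_*$ and the $\max(1,\cdot)$ envelope.
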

\begin{proof}
For height-one equal-radius parent constraints, the longitudinal relation
in Lemma~\ref{fs-lem:target-relations} is $2A+\sum B_j=1$, independently
of $\alpha$.  Hence the partition and geometric classes are unchanged.
On a compact neighborhood of $\sqrt3/2$, transverse fiber radii are
nondegenerate, so Lemma~\ref{fs-lem:arithmetic-direct} also applies uniformly.
The three correction bases persist, and the partition estimate in
Lemma~\ref{fs-lem:target-main} applies for every $h\ge1$ and positive
flattened mean.
\end{proof}

\begin{corollary}[Centered terminal fixed-ratio pointwise regularity]
\label{cor:centered-terminal-lens}
Let $R=(\sqrt{4/3}+o(1))\Rstar$, and let $A$ be a predetermined union of at
most $e^{o(n)}$ relative-width-$n^{-2}$ micro-shells whose radii are
$(1+o(1))\Rstar$, uniformly over the union.  With probability
$1-e^{-\Omega(n)}$ over $\cL$, simultaneously for every
$\mathbf d\in\cL\cap A$,
\begin{equation}\label{eq:centered-terminal-lens}
 r_{\cL,R}(\mathbf d)\ge\frac{\mu_R(\mathbf d)}{2},
 \qquad
 \mu_R(\mathbf d)\ge
 \left(\sqrt{\frac{13}{12}}+o(1)\right)^n.
\end{equation}
\end{corollary}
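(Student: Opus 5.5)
First I will settle the deterministic lens bound. Set $R=(\sqrt{4/3}+o(1))\Rstar$, so $c=\sqrt{4/3}+o(1)$, and take $\|\mathbf d\|=(1+o(1))\Rstar$, so $\alpha=\|\mathbf d\|/R=\sqrt3/2+o(1)$. Lemma~\ref{lem:thin-shell-coarea} then gives
\[
 \tfrac1n\log\mu_R(\mathbf d)=\log c+\tfrac12\log(1-\alpha^2/4)+o(1)
 =\tfrac12\log\tfrac43+\tfrac12\log\tfrac{13}{16}+o(1)
 =\tfrac12\log\tfrac{13}{12}+o(1).
\]
This holds uniformly over the union, because every radius there is $(1+o(1))\Rstar$ and $\alpha$ stays in a compact neighbourhood of $\sqrt3/2$. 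That proves the second inequality in \eqref{eq:centered-terminal-lens}. The same lemma's $O(n)$ logarithmic derivative shows that $\mu_R$ varies only by a factor $1+O(1/n)$ across each micro-shell. Hence the flattening weight $\omega$ on that micro-shell lies in $[1-O(1/n),1]$, and $m_*=\mu_*$ is of size $(13/12)^{n/2+o(n)}$.

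Next I will treat one micro-shell $T$ at radius $\alpha R$ with $|\alpha-\sqrt3/2|\le\varepsilon_0$. I apply Lemma~\ref{fs-lem:fixed-ratio-moment} with a fixed moment parameter $h$, to be chosen. The event $r_{\cL,R}(\mathbf d)<\mu_R(\mathbf d)/2$ implies $|\omega(\mathbf d)r_{\cL,R}(\mathbf d)-\mu_*|\ge\mu_*/2$. Markov's inequality applied to the summed moment $\mathfrak M_T$ therefore bounds the probability that some $\mathbf d\in\cL\cap T$ fails by
\[
 (2/\mu_*)^{2h}\,\mathfrak M_T .
\]
I will compare the terms of \eqref{fs-eq:target-high-moment} one at a time.
\begin{itemize}
\item The partition term gives $\vol(T)e^{O(1)}\mu_*^{-h}$.
\item The correction terms give $\vol(T)e^{O(\log n)}\mu_*^{-1}\Xi_n$.
\end{itemize}
Here $\vol(T)=e^{o(n)}$ because the radius of $T$ is about $\Rstar$. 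Also $\mu_*^{-1}\le(12/13)^{n/2+o(n)}$, and $\Xi_n\le(16/27+o(1))^{n/2}$ up to the $e/4$ and $2^{-n}$ terms, all of which are exponentially small. The correction contribution is thus $e^{-\Omega(n)}$ for every fixed $h$, and even for $h$ as large as $\ell$. For the partition contribution, $h=1$ already yields $e^{o(n)}(12/13)^{n/2}=e^{-\Omega(n)}$. If larger margins are wanted, one may take $h=\ell$ instead.

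Finally I take a union bound over the at most $e^{o(n)}$ micro-shells. Each failure probability has the form $e^{o(n)}e^{-an}$ with $a>0$ uniform over the union, so the total failure probability is $e^{-\Omega(n)}$.

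The main obstacle is uniformity. The $o(1)$ terms in $R$ and in the micro-shell radii must be absorbed both into the Lens exponent and into the uniform constants of Lemma~\ref{fs-lem:fixed-ratio-moment}. I will handle this by fixing a compact $\alpha$-window $J$ around $\sqrt3/2$ of width below $\varepsilon_0$ and a compact range of $R/\Rstar$, and then invoking the uniform statements of Proposition~\ref{thm:intrinsic-marked-moments} on that window. The target kernel needs no nonresonance condition, so the only remaining issue is to check that the $e^{o(n)}$ count of shells and the polynomial prefactors do not eat into the margin $(12/13)^{n/2}$.
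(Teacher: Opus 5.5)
Your proof is correct and follows the paper's argument: the coarea estimate gives the lens exponent $\frac12\log\frac{13}{12}$, and the fixed-ratio target moment bound at $h=1$, combined with the exact identity $\omega\mu_R=\mu_*$, turns a violation into a deviation of at least $\mu_*/2$, so Markov gives failure $e^{o(n)}(12/13)^{n/2}$. The only difference is cosmetic. The paper sums the second moments over all (possibly overlapping) micro-shells and applies Markov once, whereas you apply Markov per shell and then take a union bound over the $e^{o(n)}$ shells; the two are equivalent here.
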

\begin{proof}
Keep the original, possibly overlapping, micro-shells $T_j$ and use
$m_{*,j}$ and
$\omega_j$ for their respective flattened means and weights.  Their
target/source ratios satisfy $\alpha_j=\sqrt3/2+o(1)$ uniformly.
Lemma~\ref{fs-lem:fixed-ratio-moment} with $h=1$ and
Lemma~\ref{fs-thm:target-high-moment}, summed over $j$, give
\[
 \E\sum_j\sum_{\mathbf d\in\cL\cap T_j}
 \bigl|\omega_j(\mathbf d)r_{\cL,R}(\mathbf d)-m_{*,j}\bigr|^2
 \le e^{o(n)}\vol(A)\mu_A,
\]
where
\[
 \mu_A=\left(\sqrt{\frac43}
        \sqrt{1-\frac3{16}}+o(1)\right)^n
 =\left(\sqrt{\frac{13}{12}}+o(1)\right)^n.
\]
If there are $M=e^{o(n)}$ constituent shells, then
$\sum_j\vol(T_j)\le M\vol(A)$, so the predetermined overlap
multiplicity is included in the displayed $e^{o(n)}$ factor.  Here
$\vol(A)=e^{o(n)}$, and the correction terms are smaller by one of the
three strict exponential bases in the preceding marked-moment bounds.  Since
on $T_j$
$\omega_j(\mathbf d)\mu_R(\mathbf d)=m_{*,j}$ exactly, any violation
of the first inequality in~\eqref{eq:centered-terminal-lens} contributes at
least $m_{*,j}^2/4$ to the displayed nonnegative sum.  Markov's
inequality therefore bounds the probability of any violation by
$e^{o(n)}/\mu_A=e^{-\Omega(n)}$.  The second inequality is the uniform
coarea estimate~\eqref{eq:lens-coarea} at
$R/\Rstar=\sqrt{4/3}+o(1)$ and $\alpha=\sqrt3/2+o(1)$.
\end{proof}

\subsection{Proof of the uniform centered moment bound}

\begin{lemma}[Uniform generic endpoint low-parent classification]
\label{lem:generic-endpoint-low-parent}
Fix $J\Subset(0,2)$.  There is a constant $C_{\rm nr}=C_{\rm nr}(J)>0$
such that the following holds for all sufficiently large $n$.  Let
$\alpha\in J$ satisfy
\begin{equation}\label{eq:generic-endpoint-gap}
 \Delta_{\rm lp}(\alpha)\ge C_{\rm nr}n^{-2}.
\end{equation}
If a supported $q=1$ height-one target column has the form
\[
 \mathbf d'=A\mathbf x+\sum_{j=1}^tB_j\mathbf d_j,
 \qquad B_j\in\{\pm1\},\qquad 0\le t\le2,
\]
then $t=1$, $A=0$, $B_1=1$, and $\mathbf d'=\mathbf d_1$.
Consequently every supported height-one nonpartition column combines at
least three distinct free target pivots.
\end{lemma}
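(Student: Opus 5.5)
The proof starts from the approximate longitudinal relation in part~\textup{(ii)} of Lemma~\ref{lem:four-kernel-relation-certificate}, which was derived through Lemma~\ref{lem:supported-han-kernel} for arbitrary $\alpha\in J$. Concretely, I would fix the marked endpoint $\mathbf x$ and project every supported target onto the marked axis. Since $2\langle\mathbf x,\mathbf d\rangle/\|\mathbf x\|^2=\alpha^2+O_J(w_n)$ for every supported pivot $\mathbf d_j$ and for the output $\mathbf d'$, linearity gives
\[
 \Bigl|2A+\alpha^2 z\Bigr|\le C_J(t+1)w_n,\qquad z:=\sum_{j=1}^tB_j-1 .
\]
With $t\le2$ and $B_j\in\{\pm1\}$, we have $|z|\le3$. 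Bounding $A$ is then immediate: $|2A|\le\alpha^2|z|+o(1)\le 3\sup_J\alpha^2+o(1)<12+o(1)$, since $J\Subset(0,2)$ gives $\alpha^2<4$. Hence $|A|\le5\le C_{\rm lp}=10$ for large $n$, so $(A,z)$ lies in the finite window over which $\Delta_{\rm lp}$ is minimized in~\eqref{eq:low-parent-gap}.

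Next I would choose $C_{\rm nr}(J)>3C_J$, taking $C_J$ from the support-error bound above with $t+1\le3$. If $(A,z)\ne(0,0)$, then $|2A+\alpha^2z|\ge\Delta_{\rm lp}(\alpha)\ge C_{\rm nr}n^{-2}>3C_Jw_n$. This contradicts the displayed inequality, so $A=0$ and $\sum_jB_j=1$.

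The remaining cases follow by parity and enumeration. For $t=0$ the sum is $0\ne1$. For $t=2$ the sum of two signs is even, so it cannot equal $1$. For $t=1$ we get $B_1=1$, and $A=0$ gives $\mathbf d'=\mathbf d_1$.

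The consequence is then exactly the exhaustiveness argument of Lemma~\ref{lem:diagram-trichotomy-exhaustive}. Columns with at most two distinct free unmarked pivots are either the equality partition column or unsupported, so a nonpartition height-one column must have $t\ge3$. One point needs attention here: $t$ counts distinct pivots. A column with a repeated pivot would carry a coefficient of magnitude at least $2$ in that row, and such columns were already placed in the arithmetic class. So the height-one hypothesis gives distinct pivots with coefficients $\pm1$.

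The main obstacle is making the constant in the support error explicit and uniform, so that $C_{\rm nr}$ depends only on $J$. It has to come from the longitudinal coordinates, which are uniform in $R$ by scaling, and not from any $n$-dependent quantity. I would verify this directly from the identity $\|\mathbf z\|^2-\|\mathbf z-\mathbf m\|^2=2\langle\mathbf z,\mathbf m\rangle-\|\mathbf m\|^2$ with all norms equal to $R(1+O(w_n))$ or $\alpha R(1+O(w_n))$.
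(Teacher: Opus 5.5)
Your proposal is correct and is essentially the paper's own proof. You take the endpoint longitudinal relation $|2A+\alpha^2z|\le C_J(t+1)w_n$ from part~(ii) of Lemma~\ref{lem:four-kernel-relation-certificate}, place $(A,z)$ in the window $|A|\le C_{\rm lp}$, $|z|\le3$, choose $C_{\rm nr}>3C_J$ so that the gap forces $(A,z)=(0,0)$, and finish with the same parity enumeration for $t=0,1,2$. Your explicit bound $|A|\le5$ and your check that the support-error constant depends only on $J$ simply spell out steps the paper cites from that earlier lemma.
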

\begin{proof}
Part~\textup{(ii)} of
Lemma~\ref{lem:four-kernel-relation-certificate} gives
\[
 |2A+\alpha^2z|\le C_J(t+1)n^{-2},
 \qquad z=\sum_jB_j-1,
\]
and shows that $|A|\le C_{\rm lp}$ and $|z|\le3$ when $t\le2$.
Choose $C_{\rm nr}>3C_J$.  The gap hypothesis forces $(A,z)=(0,0)$,
and the small-$t$ enumeration in the same lemma leaves only
$t=1$, $B_1=1$.
\end{proof}

\begin{proof}[Proof of Proposition~\ref{thm:intrinsic-marked-moments}]
The marked target occupies the first Rogers column.  For a height-one
column the equal-radius shell identities give $2A+\sum_jB_j=1$ after
rounding the $O(hn^{-2})$ support error, independently of $\alpha$;
see Lemma~\ref{lem:four-kernel-relation-certificate}.  Hence the
partition diagrams are precisely the equality/reflection blocks from
Lemma~\ref{fs-lem:target-affine-partitions}.  The calculation in
Lemma~\ref{fs-lem:target-main} bounds their centered contribution by
$e^{O(h\log(2h))}m_*\max(1,m_*)^{h-1}$ for every $h\ge1$.

The complete classification and Proposition~\ref{prop:uniform-diagram-budget} bound
all nonpartition matrices by the three terms of $\Xi_n$.  There are at
most $s-1$ free fibers in an $s$-occurrence raw correction.  If there are
$r-1\le s-1$ free target fibers and $\mu=\mu_R(\mathbf d)$, then
\[
 \omega(\mathbf d)^s\mu^{r-1}
 =m_*^{r-1}\omega(\mathbf d)^{s-r+1}
 \le \max(1,m_*)^{s-1}.
\]
For endpoint fibers, the product of at most $s-1$ weighted cap masses is
likewise at most $\max(1,a^*)^{s-1}$.  Multiplication by the centering factor,
using $m_*,a^*\le\max(1,m_*),\max(1,a^*)$, and summation over
$s\le2h$ gives \eqref{eq:intrinsic-target-moment}.

For endpoints, Lemma~\ref{lem:generic-endpoint-low-parent} is applied only to
columns containing at most two free target pivots.  It shows that equality is
the only supported one-parent relation and that two-parent relations are
unsupported.  A relation containing three or more pivots need not satisfy
$A=z=0$; it is already a geometric nonpartition relation and is controlled by
the $16/27$ Gram penalty.  This is precisely the exhaustive split of
Lemma~\ref{lem:diagram-trichotomy-exhaustive}.  The partition calculation
therefore applies with $a^*$ in place of $m_*$, and the unchanged geometric
and arithmetic estimates give \eqref{eq:intrinsic-endpoint-moment}.
Compactness of $J$ keeps all transverse radii away from zero and all coarea
Jacobians within fixed bounds.  Finally $2h+1=O(\log n)$ lies in the exact
Rogers range by Lemma~\ref{lem:growing-order-legal}, and all matrix-counting
and Gram-normalization losses are $e^{O(h^2\log n)}$.
\end{proof}

\begin{remark}[Why only a finite gap is required]
The support error in a height-one column with $t$ free parents is
$O_J((t+1)n^{-2})$.
Using an $O(h)$-height Diophantine gap to force $A=z=0$ for every column would
be unnecessarily strong and is generally false for three-parent resonances.
The proof excludes only $t\le2$ and sends every $t\ge3$ relation to the Gram
estimate.  Condition~\eqref{eq:generic-endpoint-gap} consequently has no
hidden dependence on the moment order and is valid uniformly down to $h=1$.
Its exceptional set is contained in $O(n^{-2})$-neighborhoods of the finite
zero set listed after~\eqref{eq:low-parent-gap}.  Hence every fixed ratio
outside that set satisfies the condition for all sufficiently large $n$.
In particular, $\alpha=1$ and $\alpha=2/\sqrt3$ are excluded by this generic
endpoint condition.  The later same-shell coverage and energy results use
the target estimate~\eqref{eq:intrinsic-target-moment}, which holds at both
ratios without this gap assumption.
\end{remark}

\subsection{Random affine lattices and unfolded marked moments}
\label{sec:affine-transfer}

The centered moment bound marks a point of $\cL$.  In an inhomogeneous problem the
distinguished point instead belongs to a coset $\boldsymbol\tau+\cL$.  For an
independent Haar shift this change does not require a separate Rogers formula:
the affine point can be unfolded to Lebesgue measure before the remaining
centered lattice variables are averaged.  General affine higher-moment formulas are available in
\cite[Theorem~2.12]{AlamGhoshHan2024}.  The unfolding identity below is
classical; it identifies the averaged variable and the removed Rogers
column.  The additional assertion in Proposition~\ref{thm:affine-marked-moments}
is uniform control of these marked shell fibers at growing moment order,
including the endpoint nonresonance condition.

\begin{lemma}[Affine fiber unfolding]\label{lem:affine-fiber-unfold}
Fix a unimodular lattice $\cL$.  Suppose that
$F:\R^n\times(\R^n)^k\to\R$ is Borel measurable in its first variable on
every lattice slice, and define
\[
 H(\mathbf z):=
 \sum_{\mathbf v_1,\ldots,\mathbf v_k\in\cL\setminus\{\mathbf0\}}
 F(\mathbf z,\mathbf v_1,\ldots,\mathbf v_k).
\]
If $F\ge0$, the following equality holds as an equality of extended
nonnegative integrals.  If $F$ is signed, assume instead the discrete-slice
absolute-integrability condition
\begin{equation}\label{eq:affine-slice-integrability}
 \int_{\R^n}
 \sum_{\mathbf v_1,\ldots,\mathbf v_k\in\cL\setminus\{\mathbf0\}}
 |F(\mathbf z,\mathbf v_1,\ldots,\mathbf v_k)|\,d\mathbf z<\infty.
\end{equation}
In the nonnegative case the identity is extended-valued; under
\eqref{eq:affine-slice-integrability} both sides are finite.  In either case,
\begin{align}
 &\E_{\boldsymbol\tau\mid\cL}
 \sum_{\mathbf z\in\boldsymbol\tau+\cL}
 \sum_{\mathbf v_1,\ldots,\mathbf v_k\in\cL\setminus\{\mathbf0\}}
 F(\mathbf z,\mathbf v_1,\ldots,\mathbf v_k)\notag\\
 &\qquad=
 \int_{\R^n}
 \sum_{\mathbf v_1,\ldots,\mathbf v_k\in\cL\setminus\{\mathbf0\}}
 F(\mathbf z,\mathbf v_1,\ldots,\mathbf v_k)\,d\mathbf z.
 \label{eq:affine-fiber-unfold}
\end{align}
For jointly Borel raw kernels satisfying the usual Siegel--Rogers
hypotheses, the remaining lattice variables may consequently be averaged
by those formulas.
\end{lemma}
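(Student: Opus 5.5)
The plan is to reduce \eqref{eq:affine-fiber-unfold} to the classical unfolding identity for a single function on the torus, applied to the lattice-sum function $H$. First I will check measurability. By Lemma~\ref{lem:haar-kernel-measurable} (or directly, since $F$ is Borel on each slice and the sum over $(\mathbf v_1,\ldots,\mathbf v_k)$ is countable), the function $H:\R^n\to[0,\infty]$ is Borel in the nonnegative case. In the signed case, the same holds for the absolutely convergent version $H^{|\cdot|}(\mathbf z):=\sum|F(\mathbf z,\mathbf v_1,\ldots,\mathbf v_k)|$. The inner sum does not depend on $\boldsymbol\tau$, so the left side equals $\E_{\boldsymbol\tau\mid\cL}\sum_{\mathbf z\in\boldsymbol\tau+\cL}H(\mathbf z)$.

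Next, fix a Borel fundamental domain $\mathcal F$ of $\cL$, for instance a half-open parallelepiped, with $\vol(\mathcal F)=\det\cL=1$. Haar probability on $\R^n/\cL$ is then Lebesgue measure on $\mathcal F$. The translates $\mathcal F+\mathbf v$ with $\mathbf v\in\cL$ tile $\R^n$ disjointly, and each $\mathbf z\in\R^n$ has a unique decomposition $\mathbf z=\boldsymbol\tau+\mathbf v$ with $\boldsymbol\tau\in\mathcal F$ and $\mathbf v\in\cL$. Tonelli for the countable sum over $\mathbf v$ gives
\[
 \int_{\mathcal F}\sum_{\mathbf v\in\cL}H(\boldsymbol\tau+\mathbf v)\,d\boldsymbol\tau
 =\sum_{\mathbf v\in\cL}\int_{\mathcal F+\mathbf v}H(\mathbf z)\,d\mathbf z
 =\int_{\R^n}H(\mathbf z)\,d\mathbf z,
\]
using translation invariance of Lebesgue measure and the disjoint tiling. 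This is the nonnegative, extended-valued identity. Independence of the representative $\boldsymbol\tau$ is automatic, because $\sum_{\mathbf z\in\boldsymbol\tau+\cL}$ is $\cL$-periodic in $\boldsymbol\tau$.

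For signed $F$, I will apply the nonnegative case to $F^+$ and $F^-$ separately. Hypothesis \eqref{eq:affine-slice-integrability} makes both right-hand sides finite, and by the nonnegative identity the left-hand sides are finite too. Hence the double sums converge absolutely for almost every $\boldsymbol\tau$, and subtracting the two identities gives \eqref{eq:affine-fiber-unfold}. The final sentence of the statement follows by inserting the unfolded integral into Proposition~\ref{thm:rogers-general} and using Tonelli/Fubini to exchange $\E_\cL$ with $\int d\mathbf z$. This is legitimate for nonnegative kernels by Lemma~\ref{lem:haar-kernel-measurable}.

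I expect the only real obstacle to be the measure-theoretic bookkeeping. Specifically, I need to confirm that the sum over $\boldsymbol\tau+\cL$ and the inner lattice sum may be interchanged and regarded as a single countable sum of nonnegative Borel functions of $\boldsymbol\tau$. Taking $\mathcal F$ half-open, so that the tiling is genuinely disjoint, handles this; there is no analytic difficulty beyond that.
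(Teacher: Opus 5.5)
Your proposal is correct and follows the paper's own proof: both fix a volume-one fundamental domain $\mathcal F$, apply Tonelli to the countable orbit sum to obtain $\int_{\mathcal F}\sum_{\mathbf u\in\cL}H(\boldsymbol\tau+\mathbf u)\,d\boldsymbol\tau=\int_{\R^n}H(\mathbf z)\,d\mathbf z$, and use the slice-integrability hypothesis to handle signed $F$. The only difference is cosmetic: you subtract the nonnegative identities for $F^+$ and $F^-$, whereas the paper applies the nonnegative case to $|F|$ and then invokes Fubini, and the two are equivalent.
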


\begin{proof}
Let $\mathcal F$ be a measurable fundamental domain of $\cL$, chosen with
volume one.  Tonelli's theorem applied to the countable orbit sum gives, in
the nonnegative case,
\[
 \int_{\mathcal F}\sum_{\mathbf u\in\cL}
 H(\boldsymbol\tau+\mathbf u)\,d\boldsymbol\tau
 =\sum_{\mathbf u\in\cL}\int_{\mathcal F+\mathbf u}
 H(\mathbf z)\,d\mathbf z
 =\int_{\R^n}H(\mathbf z)\,d\mathbf z.
\]
For signed $F$, condition~\eqref{eq:affine-slice-integrability} applies the
same nonnegative calculation to $|F|$.  Hence $H$ and its periodization are
absolutely integrable, and Fubini gives the signed identity.  Notice that
ordinary Lebesgue integrability of $F$ on the continuous product
$\R^n\times(\R^n)^k$ would not suffice: it does not control the values of
$F$ on the discrete lattice slices.  The raw-moment applications below use
the nonnegative branch.
\end{proof}

\begin{corollary}[Affine--centered Campbell identity]
\label{cor:affine-centered-campbell}
For every nonnegative Borel function, or absolutely integrable Borel
function, $G:\R^n\times\R^n\to\R$ that vanishes when its second coordinate
is zero,
\begin{equation}\label{eq:affine-centered-campbell}
 \E_{\cL,\boldsymbol\tau}
 \sum_{\mathbf z\in\boldsymbol\tau+\cL}
 \sum_{\mathbf v\in\cL\setminus\{\mathbf0\}}G(\mathbf z,\mathbf v)
 =\int_{\R^n}\int_{\R^n}G(\mathbf z,\mathbf v)
 \,d\mathbf v\,d\mathbf z.
\end{equation}
\end{corollary}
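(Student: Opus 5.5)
The identity is the case $k=1$ of Lemma~\ref{lem:affine-fiber-unfold}, followed by Siegel's formula applied in the second variable. In outline: condition on $\cL$, unfold the affine point $\mathbf z$ to Lebesgue measure, and then average the remaining single centered lattice sum over $\cL$ using Proposition~\ref{thm:siegel}.

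\emph{Nonnegative case.} Suppose first that $G\ge0$ is Borel. Apply Lemma~\ref{lem:affine-fiber-unfold} with $k=1$ and $F(\mathbf z,\mathbf v)=G(\mathbf z,\mathbf v)$, for fixed $\cL$. This gives
\[
 \E_{\boldsymbol\tau\mid\cL}\sum_{\mathbf z\in\boldsymbol\tau+\cL}
 \sum_{\mathbf v\in\cL\setminus\{\mathbf0\}}G(\mathbf z,\mathbf v)
 =\int_{\R^n}\sum_{\mathbf v\in\cL\setminus\{\mathbf0\}}G(\mathbf z,\mathbf v)\,d\mathbf z .
\]
Next take $\E_\cL$. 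By Lemma~\ref{lem:haar-kernel-measurable}, the map $(\cL,\mathbf z)\mapsto\sum_{\mathbf v}G(\mathbf z,\mathbf v)$ is jointly measurable, since it pulls back to a countable sum of Borel functions of $(g,\mathbf z)$. Tonelli therefore lets us exchange $\E_\cL$ with $\int d\mathbf z$. For each fixed $\mathbf z$, Proposition~\ref{thm:siegel} applied to $f=G(\mathbf z,\cdot)$ gives
\[
 \E_\cL\sum_{\mathbf v\in\cL\setminus\{0\}}G(\mathbf z,\mathbf v)=\int G(\mathbf z,\mathbf v)\,d\mathbf v .
\]
Proposition~\ref{thm:siegel} is stated for integrable $f$, but the nonnegative extended-valued version follows by monotone convergence over truncations $\min(f,N)\mathbf 1_{B_N}$. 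Its hypothesis $f(\mathbf 0)=0$ holds because $G$ vanishes when its second coordinate is zero.

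\emph{Signed case.} Now let $G$ be absolutely integrable. Apply the nonnegative identity to $|G|$. Its right-hand side is $\|G\|_{L^1}<\infty$, so the left-hand side is finite as well. Hence the double sum is absolutely convergent for almost every $(\cL,\boldsymbol\tau)$. It is also integrable on $\widetilde X_n$. Splitting $G=G^+-G^-$ and subtracting the two finite nonnegative identities gives~\eqref{eq:affine-centered-campbell}.

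\emph{Main obstacle.} The only step that needs care is this integrability bookkeeping. As the proof of Lemma~\ref{lem:affine-fiber-unfold} remarks, Lebesgue integrability of $G$ alone does not control the discrete slices for a fixed lattice. The plan handles this by averaging over $\cL$ before invoking Fubini. Concretely, the slice condition~\eqref{eq:affine-slice-integrability} is only needed for almost every $\cL$, and it follows from the finiteness of the averaged $|G|$ identity.
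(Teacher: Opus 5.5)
Your proposal is correct and follows the paper's own argument. In the nonnegative case you unfold the affine point with Lemma~\ref{lem:affine-fiber-unfold} at $k=1$ and then apply Proposition~\ref{thm:siegel} to the remaining centered column. In the signed case you apply the nonnegative identity to $|G|$ to get finiteness and then subtract the identities for the positive and negative parts. Your additional details make explicit two steps the paper leaves implicit: joint measurability, which justifies the Tonelli exchange of $\E_\cL$ and $\int d\mathbf z$, and the monotone-convergence extension of Siegel's formula to nonnegative sections $G(\mathbf z,\cdot)$ that need not be integrable.
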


\begin{proof}
For $G\ge0$, apply the nonnegative branch of
Lemma~\ref{lem:affine-fiber-unfold} and then
Proposition~\ref{thm:siegel} to the remaining centered column.  In the signed
case, apply this argument first to $|G|$.  The right-hand side is finite by
absolute integrability, so Fubini implies
\eqref{eq:affine-slice-integrability} for almost every lattice; the signed
identity follows by subtraction of positive and negative parts.
\end{proof}

\begin{corollary}[Affine point-count moments]
\label{cor:affine-count-moments}
For every bounded Borel set $A\subset\R^n$, put
$X_A:=|(\boldsymbol\tau+\cL)\cap A|$.  Then
\begin{equation}\label{eq:affine-count-moments}
 \E X_A=\vol(A),\qquad
 \E[X_A(X_A-1)]=\vol(A)^2,\qquad
 \Var X_A=\vol(A).
\end{equation}
\end{corollary}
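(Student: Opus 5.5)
The plan is to expand $X_A$ and $X_A(X_A-1)$ as sums over affine points and reduce each to the unfolding identity of Lemma~\ref{lem:affine-fiber-unfold}, or to its consequence Corollary~\ref{cor:affine-centered-campbell}. All kernels are nonnegative indicators, so only the nonnegative branch is needed. Boundedness of $A$ guarantees that every quantity below is finite.

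\emph{First moment.} Write
\[
 X_A=\sum_{\mathbf z\in\boldsymbol\tau+\cL}\1_A(\mathbf z).
\]
Apply Lemma~\ref{lem:affine-fiber-unfold} with $k=0$, that is, with no centered columns. The unfolding computation over a fundamental domain $\mathcal F$ works verbatim in that case. It gives, for every fixed $\cL$,
\[
 \E_{\boldsymbol\tau\mid\cL}X_A=\int_{\R^n}\1_A=\vol(A).
\]
Averaging over $\cL$ gives $\E X_A=\vol(A)$; in fact the identity already holds conditionally on $\cL$.

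\emph{Factorial second moment.} Ordered pairs of distinct affine points $(\mathbf z,\mathbf z')$ correspond bijectively to pairs $(\mathbf z,\mathbf v)$ with $\mathbf z\in\boldsymbol\tau+\cL$ and $\mathbf v:=\mathbf z'-\mathbf z\in\cL\setminus\{0\}$. Hence
\[
 X_A(X_A-1)=\sum_{\mathbf z\in\boldsymbol\tau+\cL}\ \sum_{\mathbf v\in\cL\setminus\{0\}}\1_A(\mathbf z)\1_A(\mathbf z+\mathbf v).
\]
The kernel $G(\mathbf z,\mathbf v)=\1_A(\mathbf z)\1_A(\mathbf z+\mathbf v)\1_{\mathbf v\ne0}$ is nonnegative and Borel, and it vanishes at $\mathbf v=0$. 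Corollary~\ref{cor:affine-centered-campbell} therefore gives
\[
 \E X_A(X_A-1)=\iint\1_A(\mathbf z)\1_A(\mathbf z+\mathbf v)\,d\mathbf v\,d\mathbf z.
\]
Substituting $\mathbf w=\mathbf z+\mathbf v$ and using Tonelli turns the right-hand side into $\vol(A)^2$. The removal of $\mathbf v=0$ costs nothing, since $\{0\}$ is Lebesgue-null.

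\emph{Variance.} Combine the two moments:
\[
 \Var X_A=\E X_A(X_A-1)+\E X_A-(\E X_A)^2=\vol(A)^2+\vol(A)-\vol(A)^2=\vol(A).
\]

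\emph{Main obstacle.} The only delicate point is the bijection used for the factorial moment. One must check that the diagonal $\mathbf z'=\mathbf z$ is exactly the term $\mathbf v=0$ that is excluded. One must also check that this exclusion matches the zero-column convention in Corollary~\ref{cor:affine-centered-campbell}, and that measurability holds by Lemma~\ref{lem:haar-kernel-measurable}. Beyond this check, the argument uses no Rogers corrections: the second factorial moment involves only a single centered column, so Siegel's formula suffices.
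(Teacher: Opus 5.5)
Your proposal is correct and follows the paper's own proof. The paper takes the mean from the unfolding lemma with no centered column, and obtains the factorial second moment by writing the second affine point as $\mathbf z+\mathbf v$ with $\mathbf v\in\cL\setminus\{\mathbf 0\}$ and then applying unfolding followed by Siegel's formula, which is exactly what Corollary~\ref{cor:affine-centered-campbell} packages. The variance then follows from $X_A^2=X_A(X_A-1)+X_A$.
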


\begin{proof}
The mean is Lemma~\ref{lem:affine-fiber-unfold} with no centered column.  For
the factorial second moment, write the second affine point as
$\mathbf z+\mathbf v$, where $\mathbf v\in\cL\setminus\{\mathbf0\}$.
After unfolding $\mathbf z$, Proposition~\ref{thm:siegel} gives
\[
 \int_{\R^n}\int_{\R^n}
 \mathbf1_A(\mathbf z)\mathbf1_A(\mathbf z+\mathbf v)
 \,d\mathbf v\,d\mathbf z=\vol(A)^2.
\]
The variance identity follows from
$X_A^2=X_A(X_A-1)+X_A$.  These identities also appear in
Athreya~\cite[Lemmas~3--4]{Athreya2015}.
\end{proof}

\begin{lemma}[Affine partition diagrams after unfolding]
\label{lem:affine-partition-identification}
Fix $\alpha\in J\Subset(0,2)$ and the unfolded Euclidean mark, and take
$n$ sufficiently large.  For the affine target kernel, all supported
height-one diagrams without a multi-parent column are in bijection with set
partitions of the $s$ centered-source occurrences; a block of size $b$
contributes
\begin{equation}\label{eq:affine-target-block-cumulant}
 \omega(\mathbf d)^b K_R(\mathbf d).
\end{equation}
For the affine endpoint kernel assume in addition
$\inf_{\alpha\in J}|1-\alpha^2/2|>0$, with the lower bound on $n$ also
depending on this gap.  Then the analogous diagrams are in bijection with
set partitions of the $s$ centered-source occurrences, and a block of size
$b$ contributes
\begin{equation}\label{eq:affine-endpoint-block-cumulant}
 \int_{\{\mathbf c\in S_R:\,\mathbf x-\mathbf c\in T_{\alpha,R}\}}
       \omega(\mathbf x-\mathbf c)^b\,d\mathbf c.
\end{equation}
These are respectively the cumulants of
$\omega(\mathbf d)\operatorname{Pois}(K_R(\mathbf d))$ and of the
corresponding weighted Poisson integral.  In particular, the full-rank
identity is the all-singleton partition and centering deletes exactly the
singleton blocks.
\end{lemma}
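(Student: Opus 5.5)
The argument mirrors the centered partition identifications (Lemmas~\ref{fs-lem:target-affine-partitions} and~\ref{fs-lem:endpoint-partitions}). The only structural change is that, after unfolding, the mark is Euclidean and is not a Rogers row. Every Rogers column is therefore a centered source occurrence $\mathbf c_j$, and all $r$ pivots are free fibers (Lemma~\ref{fs-lem:missing-fiber} with $\chi=0$).

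\textbf{Step 1: one-parent columns.} A supported height-one diagram without a multi-parent column has $q=1$. Each nonpivot column has the form $\mathbf c'=B\mathbf c_i$ with a single nonzero integer $B$, since zero columns are excluded by condition~(i) of Proposition~\ref{thm:rogers-general}. To see that $|B|=1$, project onto the orthogonal complement of the mark. By Lemma~\ref{lem:uniform-scalar-certificate}, input and output share the transverse radius $\rho_0(1+O_J(w_n))$, which is bounded below by a positive multiple of $R$, so $|B|=1+O_J(w_n)$ and hence $|B|=1$ for large $n$. To fix the sign, use parts~(iii) and~(iv) of Lemma~\ref{lem:four-kernel-relation-certificate}. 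In the target case, $\sum B_j=1$ forces $B=+1$. In the endpoint case, the gap $|1-\alpha^2/2|\ge\delta$ forces the same conclusion once $C_J\cdot 2w_n<\delta$; this is where the lower bound on $n$ depends on the gap.

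So every dependent column is a literal repetition $\mathbf c'=\mathbf c_i$, and there is no reflection. The reason is visible in the scalar table of Lemma~\ref{lem:uniform-scalar-certificate}: $-\mathbf c$ has longitudinal coordinate $+\alpha/2$, not $-\alpha/2$, so it is unsupported. The missing marked row removes the translation that produced $\mathbf d-\mathbf x$ in the centered case.

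\textbf{Step 2: the bijection.} Given a set partition of the $s$ occurrence slots, make the first slot of each block its pivot and every later slot the unit column of that pivot. This satisfies Han's echelon and gcd conditions, has $q=1$, and therefore has $c_D=1$. Conversely, Step~1 recovers the partition from the matrix, so the two maps are inverse. The all-singleton partition gives the identity matrix.

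\textbf{Step 3: block contributions.} Integrate each block's pivot over its fiber, with the Euclidean mark fixed.
\begin{itemize}
\item In the target kernel, all $s$ weight factors $\omega(\mathbf d)$ depend only on the mark. Distributing $\omega(\mathbf d)^b$ to a block of size $b$, its pivot integral is $\int\mathbf 1_S(\mathbf c)\mathbf 1_S(\mathbf d+\mathbf c)\,d\mathbf c$. This equals $K_R(\mathbf d)$ by the substitution $\mathbf c\mapsto-\mathbf c$ and central symmetry of $S_R$, which gives \eqref{eq:affine-target-block-cumulant}.
\item In the endpoint kernel, the $b$ repeated factors $\omega(\mathbf x-\mathbf c)$ multiply, which gives \eqref{eq:affine-endpoint-block-cumulant}.
\end{itemize}
By \eqref{eq:poisson-cumulants}, these are exactly the $b$-th cumulants of $\omega(\mathbf d)\operatorname{Pois}(K_R(\mathbf d))$ and of the weighted Poisson integral over the endpoint cap. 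Since each partition's raw contribution is the product of its block cumulants, the moment--cumulant formula identifies the total partition contribution with the Poisson moment. As in Lemma~\ref{fs-lem:target-affine-partitions}, singleton blocks have mass equal to the centering constant, and the alternating binomial sum deletes exactly those.

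\textbf{Main obstacle.} The delicate point is Step~1's exclusion of sign $-1$ and of $|B|\ge2$ uniformly over all $q$. The statement restricts to $q=1$ and height one, with larger heights belonging to the arithmetic class, so only the transverse-radius comparison and the sign relations are needed. Both constants depend only on $J$ and, in the endpoint case, on $\delta$.
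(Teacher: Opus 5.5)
Your proposal is correct and follows the paper's proof essentially step for step. Parts~(iii) and~(iv) of Lemma~\ref{lem:four-kernel-relation-certificate} force every one-parent column to be a $+1$ unit reuse, with the gap used only in the endpoint case. The first-occurrence-as-pivot construction then gives the bijection with $c_D=1$, and integrating each block's pivot over its fiber yields the Poisson cumulants.

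Two of your additions go beyond the paper. Your transverse-radius check that $|B|=1$ is redundant under the height-one hypothesis, but harmless. Your explicit substitution identifying the affine lens integral with $K_R(\mathbf d)$ fills in a step the paper states without comment.
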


\begin{proof}
After unfolding, the Euclidean mark is not a Rogers column.  By parts~(iii)
and~(iv) of Lemma~\ref{lem:four-kernel-relation-certificate}, under the
nonresonance hypothesis in the endpoint case, a supported one-parent column
is exactly a $+1$ unit reuse of an earlier centered pivot.  Thus a partition
$\pi$ produces a unique echelon matrix by taking the first occurrence in
each block as a pivot and every later occurrence as its unit column.
Conversely, the pivot and unit-reuse pattern of any such matrix recovers
$\pi$.  The two constructions are inverse and $q=1$ makes every coefficient
$c_D$ equal to one.

For a target block all repeated columns equal the same centered point
$\mathbf c$ in the lens
$\{\mathbf c\in S_R:\mathbf d+\mathbf c\in S_R\}$.  Integrating that pivot
gives $K_R(\mathbf d)$ and the repeated global weight is
$\omega(\mathbf d)^b$, proving
\eqref{eq:affine-target-block-cumulant}.  For an endpoint block the pivot is
integrated over the displayed cap and each reuse carries its identical
factor $\omega(\mathbf x-\mathbf c)$, proving
\eqref{eq:affine-endpoint-block-cumulant}.  The Poisson cumulant identity is
Lemma~\ref{lem:poisson-cumulants}.  Under the stated hypotheses the
height-one support relation permits only coefficient $+1$.  The endpoint gap is
essential here: at $\alpha=\sqrt2$ and $\|\mathbf x\|=\|\mathbf c\|=R$
with $\langle\mathbf x,\mathbf c\rangle=0$, both $\mathbf c$ and
$-\mathbf c$ satisfy the endpoint kernel.  Such a reflected one-parent
diagram is excluded precisely by the gap assumption.
\end{proof}

\begin{proof}[Proof of Proposition~\ref{thm:affine-marked-moments}]
Expand either centered power into raw moments and apply
Lemma~\ref{lem:affine-fiber-unfold} to the affine mark.  Conditional on the
resulting Euclidean mark, an $s$-occurrence raw moment contains only $s$
centered lattice columns and is exactly $F_s^{\rm at}$ or
$F_s^{\rm ae}$ from~\eqref{eq:raw-kernel-at}--\eqref{eq:raw-kernel-ae}.

We first classify height-one relations.  If $\mathbf d$ is the affine target
and $\mathbf c$ is a valid centered source point, the two equal-radius source
constraints give, uniformly for $\alpha\in J$,
\begin{equation}\label{eq:affine-target-longitudinal}
 \frac{2\langle\mathbf c,\mathbf d\rangle}{\|\mathbf d\|^2}
 =-1+O(w_n).
\end{equation}
Suppose that another supported centered column satisfies
$\mathbf c'=\sum_{j=1}^tB_j\mathbf c_j$ with
$B_j\in\{\pm1\}$.  Applying
\eqref{eq:affine-target-longitudinal} to $\mathbf c'$ and using linearity
shows that
$|\sum_jB_j-1|=O(hw_n)$.  The left side is an integer, so it vanishes for
all sufficiently large $n$.

For an affine endpoint $\mathbf x$, a valid centered source point instead
satisfies
\begin{equation}\label{eq:affine-endpoint-longitudinal}
 \frac{\langle\mathbf x,\mathbf c\rangle}{\|\mathbf x\|^2}
 =1-\frac{\alpha^2}{2}+O(w_n).
\end{equation}
The same substitution for $\mathbf c'$ gives
\[
 \left(1-\frac{\alpha^2}{2}\right)
 \left(\sum_jB_j-1\right)=O(hw_n).
\]
Under~\eqref{eq:affine-endpoint-gap}, this again forces
$\sum_jB_j=1$.  In either calculation equality is the only one-parent
relation, two parents are impossible, and every signed nonpartition column
combines at least three independent transverse pivots.

It follows from Lemma~\ref{lem:affine-partition-identification} that the
partition diagrams are ordinary equality partitions, including the
full-rank identity term.  In the endpoint case this invokes the gap in
\eqref{eq:affine-endpoint-gap}, which excludes the reuse
$\mathbf c\mapsto-\mathbf c$.  The block
cumulants~\eqref{eq:affine-target-block-cumulant}--%
\eqref{eq:affine-endpoint-block-cumulant}, together with
Lemma~\ref{lem:poisson-cumulants}, give precisely the first term in each
displayed bound, including the $\max(1,\cdot)$ envelope when the continuum
mean is below one.  Explicitly, for every $b\ge2$,
\[
 \omega(\mathbf d)^bK_R(\mathbf d)
 =\omega(\mathbf d)^{b-1}m_*\le m_*,
 \qquad
 \int\omega(\mathbf x-\mathbf c)^b d\mathbf c
 \le a_R^\omega(\mathbf x).
\]
Every centered $2h$-moment partition has at most $h$ nonsingleton blocks,
and there are at most $(2h)^{2h}$ such partitions.  This gives the stated
Poisson envelopes with $e^{O(h\log(2h))}$ uniformly for $h\ge1$.

At the first signed nonpartition column,
Corollary~\ref{fs-cor:angular-penalty} gives the $16/27$ factor.  For the
remaining arithmetic class, the proof of
Lemma~\ref{fs-lem:arithmetic-direct} applies with the affine marked column
deleted.  A nonidentity $s$-column diagram has rank at most $s-1$: at $q=1$
the only full-rank matrix is the identity partition term, while at $q\ge2$
the putative full-rank matrix $qI_s$ violates the primitivity condition.
Thus the missing-fiber argument leaves at most $s-1$ free fibers.  The same
coefficient count and Gram-determinant estimate yield
\[
 e^{O(h^2\log n)}
 \left[\left(\frac e4+o(1)\right)^{n/2}+2^{-n}\right]
\]
times the corresponding fiber envelope.  Multiplication by the remaining
centering factors leaves
$\max(1,m_*)^{2h-1}$ or $\max(1,a^*)^{2h-1}$, and the binomial sum costs
$e^{O(h)}$.  Integrating the unfolded mark proves both assertions.
\end{proof}

\begin{remark}[The affine endpoint resonance]
The target-multiplicity estimate has no analogue of the centered
low-parent Diophantine condition.  For endpoint degrees, however,
$\alpha=\sqrt2$ is a genuine orthogonality resonance in
\eqref{eq:affine-endpoint-longitudinal}; the proposition requires the
parameter range to stay a fixed distance from that value.  The near-unit ratio
$\alpha=1-1/\lceil\log n\rceil$ and the terminal ratio
$\alpha=\sqrt3/2+o(1)$ used below remain uniformly nonresonant.
\end{remark}

\section{From high moments to uniform shell-incidence regularity}\label{sec:growing-rogers}
This section turns the integrated bounds of Section
\ref{sec:shell-moments} into simultaneous estimates for every lattice point
in a fixed endpoint shell and every lattice difference in the prescribed
radial band.  We first give the near-critical specialization
$T_R=S_{(1-1/\ell)R}$, for which the endpoint degree requires the sharpest
bookkeeping.  The same-shell theorem $T_R=S_R$ is stated intrinsically in
Section~\ref{sec:additive-phases}.  Apart from the shell-count estimate, the
remaining steps are continuum exponent calculations, Markov's inequality,
and union bounds.  Recall
\[
 R_\ell=\sqrt{\frac43}\,e^{1/\ell}\Rstar.
\]

\subsection{Near-critical difference-band theorem and proof map}
For $\mathbf{d}\in T_R$ and $\mathbf{x}\in S_R$, define
\[
 r_\cL^{\wt}(\mathbf{d}):=\wt_R(\mathbf{d})
 \#\{\mathbf{x}\in\cL\cap S_R:\mathbf{x}-\mathbf{d}\in S_R\},
\]
\[
 D_\cL^{\wt}(\mathbf{x}):=\sum_{\mathbf{d}\in\cL\cap T_R}
 \1_{S_R}(\mathbf{x}-\mathbf{d})\wt_R(\mathbf{d}),
\]
and
\[
 a_R^{\wt}(\mathbf{x}):=\int_{T_R}\1_{S_R}(\mathbf{x}-\mathbf{d})\wt_R(\mathbf{d})\,d\mathbf{d}.
\]
The variables $r_\cL^\wt$ and $D_\cL^\wt$ are respectively
$X_\cL^\wt$ and $Y_\cL^\wt$ from Section~\ref{sec:shell-moments};
$D_\cL^\wt$ is also the specialization
$D^\omega_{\cL,R,\gamma}$ of the general weighted endpoint count.

\begin{theorem}[Near-critical fixed-shell regularity]\label{fs-thm:main-pointwise}
Fix a constant $K>0$ and a predetermined family of at most $\exp(o(n))$
fixed-shell difference queries $(S_R,T_R)$ satisfying
\[
 R_\ell\le R\le\exp(K\log^2n)\Rstar.
\]
With probability $1-e^{-\Omega(n)}$ over $\cL\sim X_n$, simultaneously for every query,
\begin{enumerate}[label=(\roman*),leftmargin=*]
\item every $\mathbf{d}\in\cL\cap T_R$ satisfies
\[
 \frac12\mu_*(R)\le r_\cL^{\wt}(\mathbf{d})\le\frac32\mu_*(R);
\]
\item every $\mathbf{x}\in\cL\cap S_R$ satisfies
\[
 \frac12a_R^{\wt}(\mathbf{x})\le D_\cL^{\wt}(\mathbf{x})\le\frac32a_R^{\wt}(\mathbf{x});
\]
\item $|\cL\cap S_R|=(1+o(1))\vol(S_R)$ and $|\cL\cap T_R|=(1+o(1))\vol(T_R)$.
\end{enumerate}
Uniformly over the family,
\begin{align*}
 \log\mu_*(R)
 &\ge\frac{4n}{3\ell}-O\!\left(\frac n{\ell^2}+\log n\right),\\
 \inf_{\mathbf{x}\in S_R}\log a_R^{\wt}(\mathbf{x})
 &\ge\frac n{3\ell}-O\!\left(\frac n{\ell^2}+\log n\right).
\end{align*}
\end{theorem}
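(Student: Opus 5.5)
The plan is to convert the integrated growing-order moment bounds of Lemmas~\ref{fs-thm:target-high-moment} and~\ref{fs-thm:endpoint-high-moment} into simultaneous pointwise statements. We use Markov's inequality at order $2h=2\ell$ and then a union bound over the $e^{o(n)}$ queries. The two displayed lower bounds on $\log\mu_*(R)$ and $\inf_{\mathbf x}\log a_R^{\wt}(\mathbf x)$ are deterministic. They are exactly \eqref{fs-eq:mu-min} and \eqref{fs-eq:a-min}, together with \eqref{fs-eq:a-uniform}, which lets us pass from $\bar a_R$ to the pointwise infimum at the cost of a factor $1+O(1/n)$. We record them first, since they are the inputs to the Markov step.

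For item~(i), fix a query. A violation at some $\mathbf d\in\cL\cap T_R$ forces $|r^{\wt}_\cL(\mathbf d)-\mu_*|\ge\mu_*/2$, and this contributes at least $(\mu_*/2)^{2h}$ to the nonnegative sum defining $\mathfrak M_T$. By Markov's inequality, the probability of any violation is therefore at most $2^{2h}\mathfrak M_T/\mu_*^{2h}$. With $h=\ell$ and $\mu_*>1$, the partition term gives
\[
 \vol(T)\,e^{O(\ell\log\ell)}\mu_*^{-h}
 \le\exp\Bigl(n\log c+O(\log n)+O(\ell\log\ell)-\ell\cdot\tfrac{4n}{3\ell}+O(n/\ell)\Bigr).
\]
Here $\vol(T)\le\vol(S_R)=e^{n\log(R/\Rstar)+O(\log n)}$. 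The factor $\mu_*^{-\ell}$ contributes roughly $e^{-4n/3}$, and this beats $\vol(T)$ as long as $\log(R/\Rstar)$ stays below about $4/3$. For larger $R$ we use the fact that $\log\mu_*$ grows like $n\log(R/\Rstar)$ times a positive constant, and $\ell$ copies of that growth dominate the single factor $\vol(T)$. So in all cases the partition term contributes $e^{-\Omega(n)}$.

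The correction terms contribute $\vol(T)e^{O(\ell^2\log n)}\mu_*^{-1}\Xi_n$. We bound $\vol(T)/\mu_*$ by $\vol(S_R)^{-1}p_\gamma^{-1}$ times $\vol(T)^2/\vol(S_R)^2\le1$, using \eqref{fs-eq:mu-scale}. This is at most $\mathcal N_\star e^{O(n/\ell)}$, so Lemma~\ref{fs-rem:arithmetic-margin} and $16/27<3/4$ yield $e^{-\Omega(n)}$, since $\mathcal N_\star(16/27)^{n/2}=(64/81)^{n/2}$. The margin $e^{-0.049n}$ in the $e/4$ term is the tightest one, but it still survives the $e^{O(\log^3 n)}$ and $e^{O(n/\ell)}$ losses.

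Item~(ii) runs in parallel using $\mathfrak M_S$. Markov's inequality gives $\vol(S)\bar a_R^{-\ell}e^{O(\ell\log\ell)}$ for the partition part. This is small because $\ell\log\bar a_R\ge n/3-O(n/\ell)$, while at $R=R_\ell$ we have $\log\vol(S)=\tfrac12\log(4/3)n+O(n/\ell)\approx0.144n<n/3$. The correction part is $\vol(S)\bar a_R^{-1}\Xi_n e^{O(\ell^2\log n)}$, and here $\vol(S)/\bar a_R=p_\gamma^{-1}(1+o(1))\le\mathcal N_\star e^{O(n/\ell)}$ again by \eqref{fs-eq:a-scale}. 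For larger $R$ I expect $\log\bar a_R$ to increase at least as fast as $\log\vol(S)/\ell$. Concretely, $\bar a_R\approx\vol(S)p_\gamma$ scales like $\vol(S)$, so $\vol(S)\bar a_R^{-\ell}$ only improves.

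Item~(iii) follows from Lemma~\ref{lem:thin-shell-count} and Chebyshev's inequality. The variance is at most $3\vol$, so the deviation probability for a relative error $\varepsilon_n=\vol^{-1/3}$ is at most $3\vol^{-1/3}=e^{-\Omega(n/\ell)}$. To reach $e^{-\Omega(n)}$ instead, I will apply the order-$2\ell$ Rogers moment to the count itself; this is a partition-only computation with the same correction bounds. Alternatively, it suffices to take a relative error $\vol^{-1/4}$ and a fixed-order moment; I will check that this meets the $e^{-\Omega(n)}$ requirement.

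The main obstacle is uniformity across the whole radius range up to $e^{K\log^2 n}\Rstar$. There, $\vol(T)$ is huge, and the partition-term exponent must be shown to dominate it for every $R$, not only near $R_\ell$. I would handle this by writing $\log\mu_*=\log\vol(S_R)+\log p_\gamma+\log\vol(S_R)-\log\vol(T)$ explicitly and checking that $\ell\log\mu_*-\log\vol(T)\ge\Omega(n)$ uniformly, using $\ell\to\infty$.
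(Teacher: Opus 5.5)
Your proposal is essentially the paper's proof. It uses order $h=\ell$ and a shell-wide Markov step on the integrated moments of Lemmas~\ref{fs-thm:target-high-moment} and~\ref{fs-thm:endpoint-high-moment}. It evaluates the partition exponents and the correction prefactors $\vol(T)/\mu_*$ and $\vol(S)/\bar a_R=(1+O(1/n))p_\gamma^{-1}$ at $R=R_\ell$. It observes that $\vol(S_R)$, $\bar a_R$, $\vol(T_R)$ and $\mu_*(R)$ all scale by $u^n$ under $R\mapsto uR$, so the correction prefactors are invariant and the partition prefactors gain $u^{n(1-h)}\le1$. It ends with a union bound over the $e^{o(n)}$ queries. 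Two slips should be fixed.

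\emph{Item~(iii).} You misjudge the exponent. Throughout the range, $\vol(S_R)\ge\vol(T_R)\ge(4/3)^{n/2}e^{-O(n/\ell^2+\log n)}$. So your own Chebyshev bound $3\vol^{-1/3}$ is already $e^{-\Omega(n)}$, not $e^{-\Omega(n/\ell)}$. The paper simply uses relative tolerance $n^{-1}$ and failure $O(n^2/\vol)$. The detour through an order-$2\ell$ moment of the bare count is unnecessary. It would also not be ``partition-only'': for an unmarked shell count, two-parent columns such as $\mathbf x_3=\mathbf x_1-\mathbf x_2$ are supported.

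\emph{The target correction prefactor.} Equation~\eqref{fs-eq:mu-scale} gives $\vol(T)/\mu_*=(1+O(1/n))\,p_\gamma^{-1}\vol(T)^2/\vol(S_R)^2$, without your extra factor $\vol(S_R)^{-1}$. The bound $\mathcal N_\star e^{O(n/\ell)}$ you conclude is still correct. In fact the prefactor is $\mathcal N_\star e^{-4n/(3\ell)+o(n/\ell)}$, so the $16/27$, $e/4$ and $2^{-n}$ margins go through exactly as you say.
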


S\"odergren studies short-vector lengths and angles
\cite{Sodergren2011,SodergrenAngles2011}; Kim treats observation sets of
volume proportional to $n$ \cite{Kim2016}, and Holm treats increasingly
many shortest vectors, with their number growing more slowly than $n^{1/6}$
\cite{Holm2022}.  The simultaneous object here is instead every marked
target and endpoint in one complete exponential shell.  Relative to
unmarked counting limits, the additional feature is the individual lens
fiber and the explicit tradeoff between moment order and distance from the
coverage edge in Corollary~\ref{cor:moment-buffer}.

The proof combines the continuum scales of
Lemma~\ref{fs-lem:continuum-scales}, the target and endpoint moments of
Lemmas~\ref{fs-thm:target-high-moment} and~\ref{fs-thm:endpoint-high-moment},
and the shell count in Lemma~\ref{lem:thin-shell-count}.
Here the choice $h=\ell$ compensates for the shrinking radius buffer.
For fixed $c>2/\sqrt3$, the proof of Theorem~\ref{thm:coverage}
instead uses a fixed $h>\log c/\log(c\sqrt3/2)$.

\subsection{Shell-wide Markov step}\label{fs-sec:pointwise}
\begin{proof}[Proof of Theorem~\ref{fs-thm:main-pointwise}]
Take $h=\ell=\lceil\log n\rceil$.  Fix one query and write $S=S_R$, $T=T_R$.  For endpoints, if any $\mathbf{x}\in\cL\cap S$ violates the stated factor-$3/2$ bound, then
\[
 \sum_{\mathbf{x}\in\cL\cap S}|Y_\cL^{\wt}(\mathbf{x})-a_R^{\wt}(\mathbf{x})|^{2h}
 \ge2^{-2h}\inf_{\mathbf{x}\in S}a_R^{\wt}(\mathbf{x})^{2h}.
\]
Using Lemma~\ref{fs-thm:endpoint-high-moment} and~\eqref{fs-eq:a-uniform}, Markov gives
\begin{align}
 \Prb[\text{some endpoint is bad}]
 \le{}&\vol(S)e^{Ch\log(2h)}\bar a_R^{-h}\notag\\
 &+\frac{\vol(S)}{\bar a_R}e^{Ch^2\log n}
 \left[\left(\frac{16}{27}+o(1)\right)^{n/2}\right.\notag\\
 &\hspace{36mm}\left. +\left(\frac e4+o(1)\right)^{n/2}+2^{-n}\right].\label{fs-eq:endpoint-failure}
\end{align}
The harmless factor $2^{2h}=e^{O(h)}$ is absorbed in the displayed overheads.

At the worst radius $R=R_\ell$,
\[
 \log\vol(S)=\frac12\log(4/3)n+\frac n\ell+O(\log n),
 \qquad
 h\log\bar a_R=\frac n3+O(n/\ell).
\]
Thus the first term has logarithm
\[
 \left(\frac12\log\frac43-\frac13\right)n+o(n)
 =-0.18949\ldots n+o(n).
\]
Furthermore
\[
 \frac{\vol(S)}{\bar a_R}
 =\mathcal N_\star
   \exp\left(\frac{2n}{3\ell}+O(n/\ell^2+\log n)\right).
\]
Therefore the three correction exponents are respectively
\[
 n\log\frac89+o(n)=-0.11778\ldots n+o(n),
\]
\[
 \frac n2\log\frac e3+o(n)=-0.049306\ldots n+o(n),
\]
and
\[
 -\frac n2\log3+o(n)=-0.549306\ldots n+o(n).
\]
The overhead $e^{O(h^2\log n)}=e^{O(\log^3n)}=e^{o(n)}$ cannot change
any sign.  Under $R\mapsto uR$ with $u\ge1$, both $\vol(S_R)$ and
$\bar a_R$ are multiplied by $u^n$, since the relative band and its
flattening weight are unchanged.  Thus the correction prefactor
$\vol(S_R)/\bar a_R$ is invariant, while the partition prefactor
$\vol(S_R)\bar a_R^{-h}$ is multiplied by $u^{n(1-h)}\le1$.
The endpoint failure bound is therefore uniform in $R$; the same scaling
applies to $\vol(T_R)$ and $\mu_*(R)$ below.

For targets, Lemma~\ref{fs-thm:target-high-moment} gives the same Markov
expression with $\vol(T),\mu_*$ in place of
$\vol(S),\bar a_R$.  At $R_\ell$,
\[
 \log\mu_*=\frac{4n}{3\ell}+O(n/\ell^2+\log n),
 \qquad
 \log\vol(T)=\frac12\log(4/3)n+O(n/\ell^2+\log n).
\]
Consequently its partition term has logarithm
\[
 \log\vol(T)-h\log\mu_*+O(h\log(2h))
 =\left(\frac12\log\frac43-\frac43\right)n+o(n),
\]
which is strictly smaller than the endpoint partition exponent.  Its
correction prefactor is
\[
 \frac{\vol(T)}{\mu_*}
 =\mathcal N_\star\exp\left(-\frac{4n}{3\ell}
                      +O(n/\ell^2+\log n)\right).
\]
After multiplication by the $16/27$, $e/4$, and $2^{-n}$ terms, the three
leading exponential constants are therefore the same strict negative
constants displayed for endpoints, with an additional favorable
$e^{-4n/(3\ell)+o(n/\ell)}$ factor.  Thus target failure is also
$e^{-\Omega(n)}$.

For shell populations, Lemma~\ref{lem:thin-shell-count} and Chebyshev with relative tolerance $n^{-1}$ give failure
\[
 O\!\left(\frac{n^2}{\vol(S)}\right)+O\!\left(\frac{n^2}{\vol(T)}\right)=e^{-\Omega(n)},
\]
because the smallest endpoint and difference bands have exponential volume.
The moment sums have already accounted for every target and endpoint in
each query, so no additional union over those lattice points is needed.
The family contains only $\exp(o(n))$ predetermined queries, and a union
over this family preserves $1-e^{-\Omega(n)}$ success probability.
\end{proof}

\begin{corollary}[The near-critical difference band reaches the critical radius]
\label{cor:critical-child}
For the smallest permitted endpoint radius
\[
 R_\ell=\sqrt{\frac43}\,e^{1/\ell}\Rstar,
\]
the target radius satisfies
\begin{align*}
 \gamma R_\ell
 &=\sqrt{\frac43}\,
   \exp\!\left(\frac1\ell+\log\left(1-\frac1\ell\right)\right)\Rstar\\
 &=\sqrt{\frac43}\,
   \exp\!\left(-\frac1{2\ell^2}+O(\ell^{-3})\right)\Rstar
  =\left(\sqrt{\frac43}+o(1)\right)\Rstar.
\end{align*}
Consequently Theorem~\ref{fs-thm:main-pointwise} certifies the complete
fixed-shell incidence pattern for differences down to the critical
$\sqrt{4/3}$ scale, up to a vanishing relative error.
\end{corollary}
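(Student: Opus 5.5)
The statement to prove is Corollary~\ref{cor:critical-child}. It has two parts: an elementary asymptotic identity for $\gamma R_\ell$, and the observation that Theorem~\ref{fs-thm:main-pointwise} applies at the radius $R=R_\ell$. The plan is to verify the identity by a direct Taylor expansion and then read off the consequence.

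\emph{Step 1: the identity.} With $\gamma=1-1/\ell$ and $R_\ell=\sqrt{4/3}\,e^{1/\ell}\Rstar$, write
\[
 \gamma e^{1/\ell}=\exp\bigl(1/\ell+\log(1-1/\ell)\bigr).
\]
Since $\ell=\lceil\log n\rceil\to\infty$, we have $1/\ell<1$ for large $n$, and the series $\log(1-x)=-x-x^2/2-x^3/3-\cdots$ holds at $x=1/\ell$. The linear terms cancel, leaving
\[
 1/\ell+\log(1-1/\ell)=-\tfrac1{2\ell^2}+O(\ell^{-3}).
\]
The remainder is controlled by the geometric tail bound $\sum_{j\ge3}x^j/j\le x^3/(1-x)$. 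Exponentiating gives $\exp(-1/(2\ell^2)+O(\ell^{-3}))=1+O(\ell^{-2})=1+o(1)$. Multiplying by $\sqrt{4/3}\,\Rstar$ then yields $\gamma R_\ell=(\sqrt{4/3}+o(1))\Rstar$, and this $o(1)$ is in fact $O(1/\log^2 n)$.

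\emph{Step 2: the consequence.} The single query $(S_{R_\ell},T_{R_\ell})$ lies in the range $R_\ell\le R\le\exp(K\log^2n)\Rstar$ allowed by Theorem~\ref{fs-thm:main-pointwise}. It is a predetermined family of size one, which is trivially at most $\exp(o(n))$. By definition $T_{R_\ell}=S_{\gamma R_\ell}$, so this is exactly the difference band at radius $(\sqrt{4/3}+o(1))\Rstar$ with relative half-width $w_n$. Items (i)--(iii) of that theorem therefore hold with probability $1-e^{-\Omega(n)}$ for every lattice difference in this band and every endpoint in $S_{R_\ell}$. The endpoint radius itself is $\sqrt{4/3}(1+O(1/\ell))\Rstar$, which is also the critical scale up to vanishing relative error.

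I expect no genuine obstacle. The only point that needs care is the interpretation of ``certifies the complete fixed-shell incidence pattern'': it should mean the two-sided factor-$3/2$ control of the flattened counts $r^\omega_\cL$ and $D^\omega_\cL$. Because $\omega_R=1+O(1/n)$ by \eqref{fs-eq:mu-uniform}, the flattened counts are equivalent to the raw counts up to a factor $1+O(1/n)$. Together with the positivity lower bounds $\log\mu_*\ge 4n/(3\ell)-o(n/\ell)$ and $\log a^\omega_R\ge n/(3\ell)-o(n/\ell)$ from that theorem, these counts are exponentially large, so every difference in the band is represented. No further moment estimates are needed beyond those in Theorem~\ref{fs-thm:main-pointwise}.
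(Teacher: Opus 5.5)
Your proposal is correct and follows the paper's proof. Both use the expansion $\log(1-1/\ell)=-1/\ell-1/(2\ell^2)+O(\ell^{-3})$, and both observe that $T_{R_\ell}=S_{\gamma R_\ell}$ only selects difference radii within the single fixed-shell query at $R_\ell$, so Theorem~\ref{fs-thm:main-pointwise} applies directly without treating $\gamma R_\ell$ as a new endpoint shell; your remark on removing the flattening weight is exactly Corollary~\ref{cor:centered-unweighted-interface}.
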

\begin{proof}
Use
$\log(1-1/\ell)=-1/\ell-1/(2\ell^2)+O(\ell^{-3})$.
The regularity is a property of the autocorrelation of the fixed shell at
$R_\ell$; $T_R$ only selects the difference radii.  It need not be treated
as a new endpoint shell.
\end{proof}

\subsection{A quantitative moment--buffer tradeoff}

The choice $h=\ell$ and source buffer $e^{1/\ell}$ is convenient but not
canonical.  The following variant isolates exactly what the partition term
requires.

\begin{corollary}[Moment--buffer tradeoff]\label{cor:moment-buffer}
Fix constants $\kappa>0$ and $\beta$ satisfying
\begin{equation}\label{eq:moment-buffer-condition}
 \kappa\left(\beta-\frac23\right)
 >\log\sqrt{\frac43}.
\end{equation}
Put
\[
 h_\kappa:=\lceil\kappa\ell\rceil,
 \qquad
 R_{\beta}:=\sqrt{\frac43}\,e^{\beta/\ell}\Rstar.
\]
If the centered moments throughout the proof are taken at order
$2h_\kappa$, then conclusions \textup{(i)--(iii)} of
Theorem~\ref{fs-thm:main-pointwise} hold simultaneously for every query
in any predetermined $\exp(o(n))$ family of difference-band queries with
endpoint radii $R_{\beta}\le R\le\exp(K\log^2n)\Rstar$, for fixed $K>0$,
with probability $1-e^{-\Omega(n)}$.  Uniformly over the family, the
continuum lower bounds are
\begin{align*}
 \log\mu_*(R)
 &\ge\frac{(\beta+1/3)n}{\ell}
   -O\!\left(\frac n{\ell^2}+\log n\right),\\
 \inf_{\mathbf{x}\in S_R}\log a_R^{\wt}(\mathbf{x})
 &\ge\frac{(\beta-2/3)n}{\ell}
   -O\!\left(\frac n{\ell^2}+\log n\right).
\end{align*}
The largest Rogers tuple has order
$2h_\kappa+1=O(\log n)$, and the last target radius is still
\[
 \gamma R_{\beta}
 =\sqrt{\frac43}\,
   \exp\!\left(\frac{\beta-1}{\ell}
              -\frac1{2\ell^2}+O(\ell^{-3})\right)\Rstar
 =\left(\sqrt{\frac43}+o(1)\right)\Rstar.
\]
For $\kappa=1$, any
$\beta>2/3+\log\sqrt{4/3}=0.81050\ldots$ works; the main theorem uses
the round choice $\beta=1$.
\end{corollary}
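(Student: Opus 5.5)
The plan is to rerun the proof of Theorem~\ref{fs-thm:main-pointwise} verbatim, with $h=\ell$ replaced by $h_\kappa=\lceil\kappa\ell\rceil$ and $R_\ell$ replaced by $R_\beta$, and to track only the places where the exponent bookkeeping changes. First, redo the continuum scales of Lemma~\ref{fs-lem:continuum-scales} at $R=R_\beta$. We have $\log\vol(S_R)=\tfrac12\log\tfrac43\,n+\beta n/\ell+O(\log n)$ and $\log\vol(T_R)=\log\vol(S_R)-n/\ell+O(n/\ell^2)$. Combining these with \eqref{fs-eq:pairprob}, \eqref{fs-eq:mu-scale} and \eqref{fs-eq:a-scale} gives the lower bounds
\[
\log\bar a_R=\frac{(\beta-2/3)n}{\ell}+O\!\left(\frac n{\ell^2}+\log n\right),
\qquad
\log\mu_*=\frac{(\beta+1/3)n}{\ell}+O\!\left(\frac n{\ell^2}+\log n\right).
\]
Larger $R$ only multiplies the relevant volumes by $u^n$, exactly as in the scaling remark in the proof of Theorem~\ref{fs-thm:main-pointwise}. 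The displayed radius $\gamma R_\beta$ is the same Taylor expansion as in Corollary~\ref{cor:critical-child}.

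Next, I check that the moment machinery is legal at order $2h_\kappa$. Since $h_\kappa\le H\log n$ with $H=\kappa+1$, Lemma~\ref{lem:growing-order-legal} applies. The near-unit endpoint nonresonance in Lemma~\ref{fs-lem:endpoint-nonresonance} needs $k=2h_\kappa+1<\ell^2$, which holds because this $k$ is $O(\ell)$. Lemmas~\ref{fs-thm:target-high-moment} and~\ref{fs-thm:endpoint-high-moment} therefore hold with $h=h_\kappa$, with overheads $e^{O(\log^3 n)}=e^{o(n)}$.

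The main step is the Markov computation, and the partition term is the only delicate one. For endpoints it is
\[
\vol(S)e^{Ch\log(2h)}\bar a_R^{-h},
\]
whose logarithm is
\[
\tfrac12\log\tfrac43\,n-\kappa(\beta-\tfrac23)n+o(n).
\]
This is negative precisely under \eqref{eq:moment-buffer-condition}, which is why the condition takes that form. I should also confirm that $\beta>2/3$ is forced, since $\kappa>0$ and the right side of \eqref{eq:moment-buffer-condition} is positive. That guarantees $\bar a_R\to\infty$, so the factor-$1/2$ violation lower bound $2^{-2h}\bar a^{2h}$ is meaningful.

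The target partition term has exponent $\tfrac12\log\tfrac43 n-\kappa(\beta+\tfrac13)n+o(n)$ up to the $O(n/\ell^2)$ volume correction, so it is even smaller. The correction terms carry prefactors $\vol(S)/\bar a_R$ and $\vol(T)/\mu_*$. These equal $\mathcal N_\star e^{O(n/\ell)}$, independent of $h$ except for the overhead, so the strictly negative constants $\log(8/9)$, $\tfrac12\log(e/3)$ and $-\tfrac12\log 3$ from Lemma~\ref{fs-rem:arithmetic-margin} persist.

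Finally, part (iii) follows by Chebyshev exactly as before, and a union over the $e^{o(n)}$ queries finishes the argument. For $\kappa=1$ the condition reads $\beta>2/3+\log\sqrt{4/3}$. I expect no real obstacle beyond confirming that every $o(n)$ error, in particular $h\cdot O(n/\ell^2+\log n)=O(n/\ell)$, stays sublinear.
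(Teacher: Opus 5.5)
Your proposal is correct and follows essentially the same route as the paper: recompute $\log P_R$, $\log\bar a_R$ and $\log\mu_*(R)$ at $R_\beta$, note that the endpoint partition exponent $[\log\sqrt{4/3}-\kappa(\beta-2/3)]n+o(n)$ is exactly what \eqref{eq:moment-buffer-condition} makes negative, and observe that the target partition term is smaller still. The paper likewise checks that the correction prefactors $\mathcal N_\star e^{O(n/\ell)}$ preserve the three margins, and it rechecks the Rogers range and the endpoint Diophantine step $|z|=O(h_\kappa)<\ell^2$. Your extra remark that the condition forces $\beta>2/3$, so that $\bar a_R\to\infty$ and $\max(1,\bar a_R)=\bar a_R$ in the partition envelope, is a useful detail the paper leaves implicit.
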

\begin{proof}
At $R=R_{\beta}$, write $P_R=\vol(S_R)$ and $Q_R=\vol(T_R)$.
The continuum calculation gives
\begin{align*}
 \log P_R
 &=n\log\sqrt{\frac43}+\frac{\beta n}{\ell}+O(\log n),\\
 \log p_\gamma(R)
 &=-n\log\sqrt{\frac43}-\frac{2n}{3\ell}
   +O(n/\ell^2+\log n),\\
 \log \bar a_R
 &=\frac{(\beta-2/3)n}{\ell}
   +O(n/\ell^2+\log n),\\
 \log \mu_*(R)
 &=\frac{(\beta+1/3)n}{\ell}
   +O(n/\ell^2+\log n).
\end{align*}
Consequently the logarithm of the endpoint partition contribution after
the shell-wide Markov step is
\[
 \log\frac{P_R}{\bar a_R^{h_\kappa}}
 =\left[\log\sqrt{\frac43}
   -\kappa\left(\beta-\frac23\right)\right]n+o(n),
\]
which is strictly negative by~\eqref{eq:moment-buffer-condition}.  The target
partition exponent is
\[
 \log\frac{Q_R}{\mu_*(R)^{h_\kappa}}
 =\left[\log\sqrt{\frac43}
   -\kappa\left(\beta+\frac13\right)\right]n+o(n)
\]
and is smaller still.  The correction terms do not impose a new condition:
\[
 \frac{P_R}{\bar a_R}=p_\gamma(R)^{-1}(1+o(1))
 =\mathcal N_\star
   \exp\!\left(\frac{2n}{3\ell}+O(n/\ell^2+\log n)\right),
\]
so their strict $16/27$, $e/4$, and $2^{-n}$ margins are unchanged.

All uniformity checks survive the replacement $h\mapsto h_\kappa$.
Indeed $h_\kappa=O(\ell)$, so the Rogers validity conditions still hold,
the matrix-counting and Gram-density loss remains
$\exp(O(h_\kappa^2\log n))=e^{o(n)}$, and in the endpoint
Diophantine step $|z|=O(h_\kappa)<\ell^2$.  The displayed target-radius
formula follows by expanding $\log(1-1/\ell)$.
\end{proof}

\medskip
\begin{corollary}[Simultaneous nearby-radius queries]\label{fs-cor:slow-schedule}
For any predetermined sequence $R_{i+1}=\gamma R_i$ of $O(\log^3n)$
endpoint radii in the range of Theorem~\ref{fs-thm:main-pointwise}, all
pointwise representation, endpoint, and shell-count conclusions hold
simultaneously with probability $1-e^{-\Omega(n)}$.
\end{corollary}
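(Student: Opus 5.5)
The plan is to reduce Corollary~\ref{fs-cor:slow-schedule} to Theorem~\ref{fs-thm:main-pointwise} by treating the whole schedule as one predetermined query family. The radii $R_0,R_1=\gamma R_0,\ldots$ are fixed before sampling $\cL$, since they depend only on $n$ and $R_0$. Each endpoint radius $R_i$ defines one fixed-shell difference query $(S_{R_i},T_{R_i})$ with $T_{R_i}=S_{\gamma R_i}$. The schedule therefore gives a family of $O(\log^3n)=\exp(o(n))$ queries. The hypothesis puts every $R_i$ in $[R_\ell,\exp(K\log^2n)\Rstar]$ for one fixed $K$, so Theorem~\ref{fs-thm:main-pointwise} applies directly. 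It yields conclusions (i)--(iii) simultaneously for all queries, with a single exceptional event of probability $e^{-\Omega(n)}$.

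The one point to check is that the constant hidden in $e^{-\Omega(n)}$ does not degrade with the position of $R_i$ in the schedule. The proof of Theorem~\ref{fs-thm:main-pointwise} already gives this uniformity. Under $R\mapsto uR$ with $u\ge1$, the correction prefactors $\vol(S_R)/\bar a_R$ and $\vol(T_R)/\mu_*(R)$ are invariant, and the partition prefactors only shrink. Hence every query's failure bound is at most the worst-case bound at $R=R_\ell$, which has a strictly negative exponent. The Chebyshev shell-count bounds $O(n^2/\vol(S_{R_i}))$ also improve as $R_i$ increases.

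A union bound over the $O(\log^3n)$ queries multiplies the failure probability by $e^{O(\log\log n)}$, which leaves it $e^{-\Omega(n)}$.

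No further union over lattice points is needed, since each query's moment sum already covers every target and endpoint. The same argument extends to the lower tails of the schedule. The main obstacle is therefore only bookkeeping: confirming that all schedule radii lie in the admissible range. If a predetermined schedule started near the top of that range and decreased, one must simply check that it never falls below $R_\ell$, which is part of the hypothesis.
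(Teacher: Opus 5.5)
Your proposal is correct and follows the paper's proof, which simply notes that the schedule is a predetermined family of $O(\log^3 n)=\exp(o(n))$ fixed-shell queries in the admissible radius range and invokes Theorem~\ref{fs-thm:main-pointwise}. Your extra checks on uniformity in $R$ and the additional union bound are harmless but redundant, since that theorem already states its conclusions uniformly and simultaneously over any such family.
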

\begin{proof}
There are $O(\log^3n)=\exp(o(n))$ fixed-shell queries.
\end{proof}

\begin{corollary}[Centered unweighted complete-shell interface]
\label{cor:centered-unweighted-interface}
For a fixed-shell difference query $(S_R,T_R)$ define
\[
 D_{\cL,R}(\mathbf x):=
 \sum_{\mathbf d\in\cL\cap T_R}\1_{S_R}(\mathbf x-\mathbf d),
 \qquad
 a_R(\mathbf x):=
 \int_{T_R}\1_{S_R}(\mathbf x-\mathbf d)\,d\mathbf d.
\]
Under the hypotheses of Theorem~\ref{fs-thm:main-pointwise}, with the same
probability and simultaneously over the entire predetermined family,
\begin{align}
 \frac12\mu_R(\mathbf d)
 &\le r_{\cL,R}(\mathbf d)
 \le\frac32\mu_R(\mathbf d)
 &&(\mathbf d\in\cL\cap T_R),\label{eq:unweighted-target-interface}\\
 \left(\frac12-o(1)\right)a_R(\mathbf x)
 &\le D_{\cL,R}(\mathbf x)
 \le\left(\frac32+o(1)\right)a_R(\mathbf x)
 &&(\mathbf x\in\cL\cap S_R).\label{eq:unweighted-endpoint-interface}
\end{align}
Both shell populations also have their continuum sizes up to a factor
$1+o(1)$.  In particular, for all sufficiently large $n$ the endpoint
constants in~\eqref{eq:unweighted-endpoint-interface} may be replaced by the
fully explicit constants $1/3$ and $2$.
\end{corollary}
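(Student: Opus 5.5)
The plan is to read both displays off Theorem~\ref{fs-thm:main-pointwise} by removing the flattening weight, working on the event of probability $1-e^{-\Omega(n)}$ given there.

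\emph{Target bound.} On $T_R$ we have the exact identity $\wt_R(\mathbf d)\mu_R(\mathbf d)=\mu_*(R)$, so that
\[
 r_\cL^{\wt}(\mathbf d)=\frac{\mu_*(R)}{\mu_R(\mathbf d)}\,r_{\cL,R}(\mathbf d).
\]
Multiplying conclusion~(i) by $\mu_R(\mathbf d)/\mu_*(R)>0$ gives \eqref{eq:unweighted-target-interface} exactly, with no loss.

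\emph{Endpoint bound.} By \eqref{fs-eq:mu-uniform} the weight satisfies $\wt_R(\mathbf d)=1+O(1/n)$ uniformly on $T_R$, and it lies in $(0,1]$. This gives two sandwiches,
\[
 D_\cL^{\wt}(\mathbf x)\le D_{\cL,R}(\mathbf x)\le(1+O(1/n))\,D_\cL^{\wt}(\mathbf x),
\]
and the same with $a_R^{\wt}(\mathbf x)$ and $a_R(\mathbf x)$ in place of the two counts. Combining these with conclusion~(ii) yields \eqref{eq:unweighted-endpoint-interface} with errors $O(1/n)=o(1)$. Both the $O(1/n)$ constant and the constant in \eqref{fs-eq:mu-uniform} are uniform over the radius range, so this holds simultaneously over the whole predetermined family. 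The shell-population statement is conclusion~(iii) verbatim. For $n$ large we have $(1/2-o(1))\ge1/3$ and $(3/2+o(1))\le2$, which gives the explicit constants.

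\emph{Main obstacle.} The only point needing care is the uniformity of \eqref{fs-eq:mu-uniform}. Its proof relies on the coarea derivative bound of Lemma~\ref{lem:thin-shell-coarea} on a compact ratio range, and on the fact that the relative width is the same for every $R$ in the range. So the $O(1/n)$ is uniform in $R$, and no further union bound is needed beyond the one already made in Theorem~\ref{fs-thm:main-pointwise}.
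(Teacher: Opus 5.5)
Your proposal is correct and follows the paper's proof essentially step for step. You divide the target bounds by the flattening weight using the exact identity $\omega_R(\mathbf d)\mu_R(\mathbf d)=\mu_*(R)$, and you use the uniform estimate $\omega_R=1+O(1/n)$ from \eqref{fs-eq:mu-uniform} to pass between the weighted and unweighted endpoint counts and means; your explicit one-sided sandwich via $0<\omega_R\le1$ just spells out the paper's ``positivity'' step.
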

\begin{proof}
The flattening identity is exact:
\[
 \omega(\mathbf d)\mu_R(\mathbf d)=\mu_*(R).
\]
Dividing the target inequalities of
Theorem~\ref{fs-thm:main-pointwise} by $\omega(\mathbf d)$ therefore proves
\eqref{eq:unweighted-target-interface} without loss.  By
\eqref{fs-eq:mu-uniform}, uniformly on $T_R$,
$\omega(\mathbf d)=1+O(1/n)$.  Positivity then gives
\[
 D_{\cL}^{\wt}(\mathbf x)
  =(1+O(1/n))D_{\cL,R}(\mathbf x),\qquad
 a_R^{\omega}(\mathbf x)=(1+O(1/n))a_R(\mathbf x),
\]
uniformly in $\mathbf x$.  Substitution into the endpoint inequalities of
Theorem~\ref{fs-thm:main-pointwise} proves
\eqref{eq:unweighted-endpoint-interface}; its shell-count assertion is
unchanged.
\end{proof}

\subsection{Critical-ball target bands}

\begin{proposition}[Critical-ball target regularity]\label{thm:critical-ball-lenses}
Put $R=\Rell$, $\Rcrit:=\gamma\Rell$, and let
\[
 \mathcal B_{\rm crit}
 :=\left\{\mathbf d:
 (1-1/\ell)\Rstar\le\|\mathbf d\|\le\Rcrit\right\}.
\]
With probability $1-e^{-\Omega(n)}$ over $\cL$, simultaneously for every
$\mathbf d\in\cL\cap\mathcal B_{\rm crit}$,
\begin{equation}
 r_{\cL,R}(\mathbf d)
 \ge\frac12\mu_R(\mathbf d),
 \qquad
 \log\mu_R(\mathbf d)
 \ge\frac{4n}{3\ell}
 -O\!\left(\frac n{\ell^2}+\log n\right).
 \label{eq:critical-ball-lens-lower}
\end{equation}
\end{proposition}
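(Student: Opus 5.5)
The plan is to cover the critical annulus $\mathcal B_{\rm crit}$ by a predetermined family of thin target micro-shells, all with the same endpoint shell $S_{\Rell}$. Each micro-shell is handled by the centered target moment bound, and the family is then combined by a union bound exactly as in the proof of Theorem~\ref{fs-thm:main-pointwise}.

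\emph{Step 1: the cover.} The relative lag $\alpha=\|\mathbf d\|/\Rell$ ranges over
\[
 \bigl[(1-1/\ell)\Rstar/\Rell,\;\gamma\bigr]
 =\bigl[\sqrt{3/4}\,(1-O(1/\ell)),\;1-1/\ell\bigr],
\]
which lies in a fixed compact $J\Subset(0,2)$. Cover this interval by the micro-shells $T_j:=S_{\alpha_jR}$ of relative half-width $w_n=n^{-2}$. This needs $O(n^2)=e^{o(n)}$ shells, and their centers $\alpha_j$ are fixed before sampling. Every $\mathbf d\in\cL\cap\mathcal B_{\rm crit}$ then lies in some $T_j$.

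\emph{Step 2: moments and Markov.} For each $j$, apply the target estimate \eqref{eq:intrinsic-target-moment} of Proposition~\ref{thm:intrinsic-marked-moments} with $\alpha=\alpha_j$ and $h=\ell$. This needs no endpoint gap condition. The flattening identity $\omega_j\mu_R=m_{*,j}$ on $T_j$ shows that a violation $r_{\cL,R}(\mathbf d)<\mu_R(\mathbf d)/2$ contributes at least $(m_{*,j}/2)^{2h}$ to the moment sum. Markov then bounds the failure probability for shell $j$ by
\[
 \vol(T_j)e^{O(h\log 2h)}m_{*,j}^{-h}
 +\frac{\vol(T_j)}{m_{*,j}}e^{O(h^2\log n)}\Xi_n .
\]

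\emph{Step 3: continuum exponents.} By the coarea formula \eqref{eq:lens-coarea}, with $g(\alpha)=\log\sqrt{4/3}+\tfrac1\ell+\tfrac12\log(1-\alpha^2/4)$,
\[
 \log m_{*,j}=n\,g(\alpha_j)+O(\log n).
\]
The function $g$ is decreasing in $\alpha$, so its minimum is at the top endpoint $\alpha=\gamma$. There Lemma~\ref{fs-lem:continuum-scales} already gives $\log\mu_*\ge 4n/(3\ell)-O(n/\ell^2+\log n)$, which proves the second inequality of \eqref{eq:critical-ball-lens-lower}, since $\mu_R(\mathbf d)\ge m_{*,j}$. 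Smaller $\alpha$ only increases the lens: at $\alpha\approx\sqrt3/2$ one gets $\mu\approx(13/12)^{n/2}$.

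With $h=\ell$ the partition term has logarithm at most $\log\vol(T_j)-\tfrac{4n}{3}+o(n)$. Here $\log\vol(T_j)\le n\log\sqrt{4/3}+O(n/\ell)$, so the partition term is $e^{-\Omega(n)}$. For the correction term,
\[
 \log\frac{\vol(T_j)}{m_{*,j}}
 =n\bigl[\log\alpha_j-\tfrac12\log(1-\alpha_j^2/4)\bigr]+O(n/\ell).
\]
This is increasing in $\alpha$ and at $\alpha=\gamma$ equals $\log\mathcal N_\star$ up to $O(n/\ell)$. The margins of Lemma~\ref{fs-rem:arithmetic-margin} and the $16/27$ bound, namely $\mathcal N_\star(16/27)^{n/2}=(8/9)^n$, therefore make every correction term $e^{-\Omega(n)}$ uniformly in $j$.

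\emph{Step 4: union bound.} Summing over the $e^{o(n)}$ shells gives total failure probability $e^{-\Omega(n)}$.

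The main obstacle is Step 3: checking that the worst correction ratio $\vol(T)/m_*$ over the whole annulus is still $\mathcal N_\star e^{O(n/\ell)}$, so that the strict exponential margins absorb the $e^{O(n/\ell)}$ buffer losses. The monotonicity of $\alpha\mapsto\log\alpha-\tfrac12\log(1-\alpha^2/4)$ handles this, and the proof should verify it explicitly.
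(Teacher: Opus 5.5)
Your proposal is correct and follows essentially the same route as the paper. Both arguments use an $O(n^2)$ cover of $\mathcal B_{\rm crit}$ by full-width target micro-shells with the endpoint shell fixed at $\Rell$, the $h=\ell$ target moment bound with per-shell flattening, the shell-wide Markov step (partition exponent $\tfrac12\log\tfrac43-\tfrac43$, correction prefactor at most $\mathcal N_\star e^{O(n/\ell)}$), and a union bound. The only differences are cosmetic: you cite the uniform-in-$\alpha$ target half of Proposition~\ref{thm:intrinsic-marked-moments} directly, where the paper argues that the near-unit target lemma extends uniformly to the larger ratio family; and your monotonicity check of $\log\alpha-\tfrac12\log(1-\alpha^2/4)$ makes explicit the paper's claim of a common worst-case correction prefactor.
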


Zhang--Yang~\cite{ZhangYang2026} give a dimension-uniform tail bound for a
supremum of normalized ball counts over the radius.  The assertion here
controls the lens of every individual lattice target throughout a prescribed
critical band.  The deterministic cover below transfers the marked shell
estimates to that band without omitting boundary points.

We use the following deterministic micro-shell cover.
Let $[a,b]$ lie in a fixed compact subinterval of $(0,\infty)$; its endpoints
may depend on $n$.  For $R>0$ put
\[
 \vartheta_n:=\frac{1+w_n}{1-w_n},\qquad
 N:=\max\!\left\{1,\left\lceil
          \frac{\log(b/a)}{\log\vartheta_n}\right\rceil\right\},\qquad
 \rho_j:=\frac{aR}{1-w_n}\vartheta_n^j\quad(0\le j<N).
\]
The standard closed shell $S_{\rho_j}$ has radial interval
$[aR\vartheta_n^j,aR\vartheta_n^{j+1}]$.  Consequently $N=O(n^2)$ and
\begin{equation}\label{eq:standard-shell-cover}
 \{aR\le\|\mathbf d\|\le bR\}
 \subseteq\bigcup_{j<N}S_{\rho_j}
 \subseteq\{aR\le\|\mathbf d\|\le bR\vartheta_n\},
 \qquad \sum_{j<N}\1_{S_{\rho_j}}\le2.
\end{equation}
Indeed adjacent shells meet only on their common boundary sphere.  All
shells have the full prescribed relative half-width $w_n$, and the cover
contains every boundary point.  Their radii stay within $O(w_nR)$ of the
original band, while bounded overlap permits summing nonnegative integral
estimates.  We use this deterministic cover in both the centered and affine
applications below.

\begin{proof}[Proof of Proposition~\ref{thm:critical-ball-lenses}]
Cover $\mathcal B_{\rm crit}$ by the $O(n^2)$ standard closed shells of
\eqref{eq:standard-shell-cover}, taking
$a=(1-1/\ell)\Rstar/R$ and $b=\gamma$.  Their radii divided by $R$, as
well as the pointwise ratios $\alpha=\|\mathbf d\|/R$ throughout the
cover, lie in a fixed compact subinterval of $(0,2)$ and are at most
$\gamma+O(w_n)$.  The target proof in
Lemma~\ref{fs-thm:target-high-moment} and its supporting lemmas
is uniform over this larger family.  Indeed the height-one longitudinal identity
$2A+\sum B_j=1$ is independent of $\alpha$; the transverse radii remain
nondegenerate; and the $16/27$, $e/4$, and $2^{-n}$ estimates are uniform
on compact ratio intervals.  Flatten the lens mean separately in each
micro-shell with $\wt=\mu_*/\mu$.

No exact monotonicity assertion for intersections of two annuli is needed.
The uniform coarea estimate gives, with
$\alpha=\|\mathbf d\|/R$,
\[
 \log\mu_R(\mathbf d)
 =n\log(R/\Rstar)+\frac n2\log(1-\alpha^2/4)+O(\log n).
\]
The displayed main term decreases in $\alpha\in(0,2)$ and its error is
uniform.  Since $\alpha\le\gamma+O(w_n)$,
\eqref{fs-eq:mu-min} gives the second assertion in
\eqref{eq:critical-ball-lens-lower}.  For every target
micro-shell, its volume is at most
\[
 \exp\!\left(\tfrac12\log(4/3)n+o(n)\right)
 =\mathcal N_\star2^{o(n)}.
\]
After the shell-wide Markov step, the partition logarithm is therefore at
most
\[
 \frac12\log(4/3)n-\frac43n+o(n)<-\Omega(n).
\]
Every correction term has the common worst-case prefactor
$\mathcal N_\star\exp(O(n/\ell))$ and hence retains exactly the three strict margins
computed in the proof of Theorem~\ref{fs-thm:main-pointwise}.  Union-bounding
over $O(n^2)$ micro-shells proves
$\wt(\mathbf d)r_{\cL,R}(\mathbf d)\ge\mu_*/2$.  Since
$\wt(\mathbf d)\mu_R(\mathbf d)=\mu_*$ exactly, division by the positive
weight gives the displayed unweighted factor $1/2$ with no additional
slack.
\end{proof}

\begin{corollary}[Intrinsic critical-ball difference coverage]
\label{cor:critical-ball-coverage}
With $R=R_\ell$ and $R_{\mathrm{crit}}=\gamma R_\ell$, with probability
$1-e^{-\Omega(n/\log n)}$,
\begin{equation}\label{eq:critical-ball-inclusion}
 (\cL\cap B_{R_{\mathrm{crit}}})\setminus\{0\}
 \subseteq(\cL\cap S_R)-(\cL\cap S_R).
\end{equation}
The failure rate includes the event of a nonzero vector below
$(1-1/\ell)\Rstar$; the bandwise incidence estimate alone has exponentially
small failure in $n$.
Every target on the left lying in the band of
Proposition~\ref{thm:critical-ball-lenses} in fact has
$\exp(\Omega(n/\log n))$ ordered representations.
\end{corollary}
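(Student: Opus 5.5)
Split the nonzero lattice vectors in $B_{\Rcrit}$ into two ranges. A \emph{short range} consists of $0<\|\mathbf d\|<(1-1/\ell)\Rstar$. The \emph{band} is $\mathcal B_{\rm crit}$ of Proposition~\ref{thm:critical-ball-lenses}. Since $\gamma R_\ell=(\sqrt{4/3}+o(1))\Rstar$ by Corollary~\ref{cor:critical-child}, the union of the two ranges covers $(\cL\cap B_{\Rcrit})\setminus\{0\}$. I will prove that, with the stated probability, the short range contains no lattice points. I will also prove that every band point has many representations. Inclusion~\eqref{eq:critical-ball-inclusion} then follows, because $r_{\cL,R}(\mathbf d)\ge1$ means exactly that $\mathbf d=\mathbf x-\mathbf y$ with $\mathbf x,\mathbf y\in\cL\cap S_R$.

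\textbf{Band step.} Apply Proposition~\ref{thm:critical-ball-lenses} directly. With probability $1-e^{-\Omega(n)}$, every $\mathbf d\in\cL\cap\mathcal B_{\rm crit}$ satisfies $r_{\cL,R}(\mathbf d)\ge\mu_R(\mathbf d)/2$ and $\log\mu_R(\mathbf d)\ge 4n/(3\ell)-O(n/\ell^2+\log n)$. Since $\ell=\lceil\log n\rceil$, the lens volume is $\exp(\Omega(n/\log n))$. This gives at least one representation of each band target, and in fact the claimed $\exp(\Omega(n/\log n))$ ordered representations. It is also the sense in which the bandwise failure probability is exponentially small in $n$.

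\textbf{Short-range step.} This step sets the weaker failure rate. Let $E$ be the event that $\cL$ has a nonzero point of norm below $(1-1/\ell)\Rstar$. By Siegel (Proposition~\ref{thm:siegel}) and Markov,
\[
\Prb(E)\le\vol\bigl(B_{(1-1/\ell)\Rstar}\bigr)=(1-1/\ell)^n=\exp\bigl(-n/\ell+O(n/\ell^2)\bigr)=e^{-\Omega(n/\log n)},
\]
using $\vol(B_{\Rstar})=1$. Adding the two failure probabilities gives the total bound $1-e^{-\Omega(n/\log n)}$.

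The only delicate point is checking that the two ranges exhaust the ball without a gap. The band of Proposition~\ref{thm:critical-ball-lenses} starts exactly at $(1-1/\ell)\Rstar$ and ends at $\Rcrit$, and its micro-shell cover~\eqref{eq:standard-shell-cover} includes the boundary spheres, so nothing is lost at either end. I therefore expect no real obstacle: the substantive work is already contained in Proposition~\ref{thm:critical-ball-lenses}. The short-range event cannot be improved to an exponentially small bound by this argument, because the Siegel estimate is essentially sharp. This is why the statement separates the two failure rates.
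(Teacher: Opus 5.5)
Your proposal is correct and follows the paper's proof essentially verbatim: the paper also applies Proposition~\ref{thm:critical-ball-lenses} to all targets with $(1-1/\ell)\Rstar\le\|\mathbf d\|\le\Rcrit$, and uses Siegel plus Markov to bound the probability of a nonzero lattice vector below $(1-1/\ell)\Rstar$ by $(1-1/\ell)^n=\exp(-n/\ell+O(n/\ell^2))$. As in your argument, the $\exp(\Omega(n/\log n))$ representation count is then read off from \eqref{eq:critical-ball-lens-lower}.
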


\begin{proof}
Proposition~\ref{thm:critical-ball-lenses} treats all lattice targets of length at
least $(1-1/\ell)\Rstar$ and at most $R_{\mathrm{crit}}$.  By Siegel and
Markov,
\[
 \Prb\bigl[(\cL\setminus\{0\})\cap
 B_{(1-1/\ell)\Rstar}\ne\varnothing\bigr]
 \le(1-1/\ell)^n
 =\exp\!\left(-\frac n\ell+O\!\left(\frac n{\ell^2}\right)\right).
\]
Outside this event there are no omitted nonzero targets.  The representation
lower bound follows from \eqref{eq:critical-ball-lens-lower}.
\end{proof}
\subsection{Random-coset full-shell regularity and coverage}

For the near-unit affine-difference band $T_R=S_{\gamma R}$, define the unweighted affine
endpoint degree
\begin{equation}\label{eq:affine-unweighted-degree}
 D^{\rm aff}_{\cL,\boldsymbol\tau,R}(\mathbf x)
 :=\sum_{\mathbf c\in\cL}
   \mathbf1_{S_R}(\mathbf c)\mathbf1_{T_R}(\mathbf x-\mathbf c),
 \qquad \mathbf x\in(\boldsymbol\tau+\cL)\cap S_R,
\end{equation}
and its Euclidean mean
\begin{equation}\label{eq:affine-unweighted-mean}
 a_R(\mathbf x):=\int_{S_R}\mathbf1_{T_R}(\mathbf x-\mathbf c)
 \,d\mathbf c.
\end{equation}

\begin{proposition}[Affine fixed-shell pointwise regularity]
\label{thm:affine-pointwise}
Fix $K>0$ and a predetermined family of at most $\exp(o(n))$ fixed-shell
difference queries $(S_R,T_R)$ satisfying
\[
 R_\ell\le R\le\exp(K\log^2n)\Rstar.
\]
With probability $1-e^{-\Omega(n)}$ over
$(\cL,\boldsymbol\tau)\sim\widetilde X_n$, simultaneously for every
query,
\begin{align}
 \frac12\mu_R(\mathbf d)
 &\le r^{\rm aff}_{\cL,\boldsymbol\tau,R}(\mathbf d)
 \le\frac32\mu_R(\mathbf d)
 &&\forall\mathbf d\in(\boldsymbol\tau+\cL)\cap T_R,
 \label{eq:affine-pointwise-target}\\
 \frac12a_R(\mathbf x)
 &\le D^{\rm aff}_{\cL,\boldsymbol\tau,R}(\mathbf x)
 \le\frac32a_R(\mathbf x)
 &&\forall\mathbf x\in(\boldsymbol\tau+\cL)\cap S_R.
 \label{eq:affine-pointwise-endpoint}
\end{align}
Moreover, every affine endpoint and difference shell in the family has population
$1+o(1)$ times its Euclidean volume.  Uniformly over the family,
\begin{align}
 \inf_{\mathbf d\in T_R}\log\mu_R(\mathbf d)
 &\ge\frac{4n}{3\ell}
   -O\!\left(\frac n{\ell^2}+\log n\right),
 \label{eq:affine-mu-lower}\\
 \inf_{\mathbf x\in S_R}\log a_R(\mathbf x)
 &\ge\frac n{3\ell}
   -O\!\left(\frac n{\ell^2}+\log n\right).
 \label{eq:affine-a-lower}
\end{align}
The largest tuple presented to the centered Rogers formula has order $2h$.
\end{proposition}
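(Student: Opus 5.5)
We follow the proof of Theorem~\ref{fs-thm:main-pointwise} line for line, with the affine moment bounds of Proposition~\ref{thm:affine-marked-moments} in place of the centered Lemmas~\ref{fs-thm:target-high-moment} and~\ref{fs-thm:endpoint-high-moment}. For a single query we take $J$ a small compact neighbourhood of $\gamma$ (uniformly $\gamma\to1$), moment half-order $h=\ell$, and the flattening weight $\omega=\wt_R$ on $T_R$. The target moment \eqref{eq:affine-target-moment} needs no extra hypothesis. The endpoint moment \eqref{eq:affine-endpoint-moment} needs the gap \eqref{eq:affine-endpoint-gap}, and this holds because $|1-\gamma^2/2|\to1/2$, away from the resonance $\alpha=\sqrt2$. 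Both tuple orders presented to Rogers are $2h$, which gives the final sentence of the statement.

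The continuum scales are identical to the centered case. The flattened target mean is $m_*=\mu_*(R)$, and the endpoint mean $a_R^\omega(\mathbf x)$ equals the centered $a_R^\wt$ with the substitution $\mathbf c=\mathbf x-\mathbf d$. Hence Lemma~\ref{fs-lem:continuum-scales} gives \eqref{eq:affine-mu-lower}--\eqref{eq:affine-a-lower} at $R=R_\ell$ and, by the scaling remark, for larger $R$. It also gives $\omega=1+O(1/n)$ and $a^*=\bar a_R(1+O(1/n))$.

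We then run the shell-wide Markov step. A violation of the factor-$1/2$ or $3/2$ window at any affine endpoint forces the summed $2h$-th centered moment to be at least $2^{-2h}\inf a^{2h}$, and the analogous statement holds for targets. The failure probability is therefore bounded by the same expression \eqref{fs-eq:endpoint-failure}, with $P$ or $Q$, $a^*$ or $m_*$, and the budget $\Xi_n$. The partition exponents $(\tfrac12\log\tfrac43-\tfrac13)n$ and $(\tfrac12\log\tfrac43-\tfrac43)n$ and the three correction exponents $n\log\tfrac89$, $\tfrac n2\log\tfrac e3$, $-\tfrac n2\log3$ carry over verbatim. This is because the prefactors $P/\bar a_R$ and $Q/\mu_*$ are the same continuum quantities. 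Moreover $e^{O(h^2\log n)}=e^{o(n)}$.

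To pass from flattened to unweighted counts we argue as in Corollary~\ref{cor:centered-unweighted-interface}. For targets the identity $\omega\mu_R=m_*$ is exact. For endpoints $\omega=1+O(1/n)$, so the slack $o(1)$ is absorbed by running the Markov step with a slightly tighter window, say $(\tfrac12+\tfrac1n,\tfrac32-\tfrac1n)$. This costs nothing exponentially.

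Population counts come from Corollary~\ref{cor:affine-count-moments}: $\Var X_A=\vol(A)$. Chebyshev with relative tolerance $1/n$ then fails with probability $O(n^2/\vol(A))=e^{-\Omega(n)}$, since $\vol(S_R)$ and $\vol(T_R)$ are at least $\mathcal N_\star e^{-O(n/\ell)}$. A union bound over the $e^{o(n)}$ predetermined queries finishes the proof, with all probabilities taken jointly over $(\cL,\boldsymbol\tau)$.

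The main obstacle is checking that Proposition~\ref{thm:affine-marked-moments} applies uniformly along the whole family. Its constants are stated for a fixed compact $J$ and fixed $\delta_J$, whereas here $\alpha=\gamma=1-1/\ell$ varies with $n$. I would fix one $J=[1/2,3/2]$ once and for all, so that $\delta_J\ge 1/8$, and note that the diagram budget of Proposition~\ref{prop:uniform-diagram-budget} is uniform on $J$ and in $R$. I would also confirm that the affine target kernel has no low-parent Diophantine issue, which Lemma~\ref{lem:four-kernel-relation-certificate}(iii) guarantees.
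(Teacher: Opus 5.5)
Your argument is the paper's proof: Proposition~\ref{thm:affine-marked-moments} with $h=\ell$ on a fixed interval $J$ containing $\gamma$, the same shell-wide Markov exponents as in Theorem~\ref{fs-thm:main-pointwise}, removal of the flattening weight, Chebyshev via Corollary~\ref{cor:affine-count-moments}, and a union bound over the family. There is one concrete error, in the step you flag as the main obstacle. The interval $J=[1/2,3/2]$ contains the resonance $\sqrt2\approx1.414$, so $\delta_J=\inf_{\alpha\in J}|1-\alpha^2/2|=0$, not $\ge1/8$. You evaluated $|1-\alpha^2/2|$ only at the endpoints, but $1-\alpha^2/2$ changes sign inside $J$. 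With this $J$ the hypothesis \eqref{eq:affine-endpoint-gap} fails, so the endpoint bound \eqref{eq:affine-endpoint-moment} is not available.

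The paper instead takes $J\Subset(0,\sqrt2)$. For example, $J=[1/2,5/4]$ contains $\gamma$ for all large $n$ and has $\delta_J=7/32$. With that change the rest of your argument goes through.

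A smaller point concerns the window $(\tfrac12+\tfrac1n,\tfrac32-\tfrac1n)$. A margin of exactly $1/n$ is not guaranteed to absorb a weight error $\omega=1+O(1/n)$ whose implied constant is unspecified. Use a fixed deviation threshold slightly below $1/2$, as the paper does, or a margin $C'/n$ with $C'$ large. Either choice still costs only $e^{O(h)}$ in the Markov step.
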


\begin{proof}
Choose a fixed compact interval $J\Subset(0,\sqrt2)$ containing
$\gamma=1-1/\ell$ for all sufficiently large $n$, and take $h=\ell$ in
Proposition~\ref{thm:affine-marked-moments}.  Its target and endpoint bounds both
apply.  Repeating the shell-wide Markov calculation in the proof of
Theorem~\ref{fs-thm:main-pointwise} gives the same partition exponents.  The
common correction contribution is at most
\[
 \mathcal N_\star
 \exp\!\left(O(n/\ell)+O(\ell^2\log n)\right)\Xi_n
 =e^{-\Omega(n)}.
\]
The subexponential overhead does not consume the smallest correction margin
$\tfrac12\log(3/e)$.  The radial variation estimates
\eqref{fs-eq:mu-uniform}--\eqref{fs-eq:a-uniform} imply
$\omega=1+O(1/n)$ and identify the unweighted means.  As in the centered
proof, use a fixed deviation threshold slightly below $1/2$ before removing
the weight; this yields
\eqref{eq:affine-pointwise-target}--\eqref{eq:affine-pointwise-endpoint}.

For an endpoint or difference shell $A$, Corollary~\ref{cor:affine-count-moments}
and Chebyshev's inequality give, at relative tolerance $n^{-1}$,
\[
 \Prb\bigl[\bigl|| (\boldsymbol\tau+\cL)\cap A|-\vol(A)\bigr|
              >n^{-1}\vol(A)\bigr]
 \le\frac{n^2}{\vol(A)}=e^{-\Omega(n)}.
\]
The smallest queried shell already has exponential volume.  A union bound
over the predetermined family proves all simultaneous claims.  Finally,
\eqref{eq:affine-mu-lower}--\eqref{eq:affine-a-lower} are the continuum
bounds of Lemma~\ref{fs-lem:continuum-scales}.
\end{proof}

The next elementary statement locates the inhomogeneous analogue of the
shortest-vector edge.  Its first- and second-moment proof belongs to the
classical affine void-probability framework; compare
Athreya~\cite[Theorem~1]{Athreya2015}.  The shell regularity above adds
simultaneous control of every affine target and endpoint for one
independent random coset, in the stated nonresonant range.

\begin{proposition}[Nearest-point tails in a random affine lattice]
\label{prop:affine-minimum}
Set
\[
 \rho_{\rm aff}(\cL,\boldsymbol\tau)
 :=\min_{\mathbf z\in\boldsymbol\tau+\cL}\|\mathbf z\|,
 \qquad
 U_{\rm aff}:=\kappa_n\rho_{\rm aff}(\cL,\boldsymbol\tau)^n.
\]
For every $u,t>0$,
\begin{equation}\label{eq:affine-minimum-tails}
 \Prb[U_{\rm aff}\le u]\le u,
 \qquad
 \Prb[U_{\rm aff}>t]\le\frac1t.
\end{equation}
In particular,
\begin{equation}\label{eq:affine-minimum-localization}
 \Prb\!\left[
 \left(1-\frac1\ell\right)\Rstar
 \le\rho_{\rm aff}(\cL,\boldsymbol\tau)
 \le\left(1+\frac1\ell\right)\Rstar\right]
 \ge1-e^{-\Omega(n/\log n)}.
\end{equation}
\end{proposition}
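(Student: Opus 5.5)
The plan is to run a first-moment (Markov) argument for the lower tail and a second-moment (Chebyshev/Paley--Zygmund) argument for the upper tail. Both apply to the affine count $X_A=|(\boldsymbol\tau+\cL)\cap A|$ with $A=B_r$ a centered ball, using the exact identities of Corollary~\ref{cor:affine-count-moments}. Write $r_u:=(u/\kappa_n)^{1/n}$, so that $\vol(B_{r_u})=u$ and $\{U_{\rm aff}\le u\}=\{\rho_{\rm aff}\le r_u\}=\{X_{B_{r_u}}\ge1\}$, since the minimum over the discrete affine set is attained.

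For the lower tail, Markov and $\E X_A=\vol(A)$ give $\Prb[X_{B_{r_u}}\ge1]\le\E X_{B_{r_u}}=u$. For the upper tail, $\{U_{\rm aff}>t\}=\{X_{B_{r_t}}=0\}$. With $X=X_{B_{r_t}}$ we have $\E X=t$ and $\Var X=t$, so Chebyshev gives
\[
 \Prb[X=0]\le\Prb[|X-t|\ge t]\le\frac{\Var X}{t^2}=\frac1t.
\]
This proves \eqref{eq:affine-minimum-tails} for all $u,t>0$. Measurability of $U_{\rm aff}$ is routine, as in Lemma~\ref{lem:haar-kernel-measurable}.

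For \eqref{eq:affine-minimum-localization}, take $u=(1-1/\ell)^n$ and $t=(1+1/\ell)^n$. These correspond exactly to the radii $(1\mp1/\ell)\Rstar$, because $\kappa_n(\Rstar)^n=1$. The failure probability is then at most
\[
 (1-1/\ell)^n+(1+1/\ell)^{-n}=2\exp\bigl(-n/\ell+O(n/\ell^2)\bigr),
\]
which is $e^{-\Omega(n/\log n)}$ since $\ell=\lceil\log n\rceil$.

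The only step needing care is the variance identity $\Var X_A=\vol(A)$, with no Rogers correction terms. This is where the independent Haar shift enters. Unfolding the affine mark by Lemma~\ref{lem:affine-fiber-unfold} reduces the factorial second moment to a single Siegel integral over the nonzero difference $\mathbf v\in\cL$. Here $\mathbf v=\mathbf 0$ corresponds to the diagonal. The arithmetic rank-one terms that appear in the centered case (Lemma~\ref{lem:thin-shell-count}) never arise, because the second affine point is parametrized linearly by one centered column. This identity is already recorded in Corollary~\ref{cor:affine-count-moments}, so I expect no genuine obstacle beyond checking the boundary conventions (closed balls, and $\le$ versus $<$ in the tail events). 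These do not affect the bounds.
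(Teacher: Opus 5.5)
Your proposal is correct and matches the paper's proof: Markov with $\E X=\vol(B)$ for the lower tail, and Chebyshev with $\E X=\Var X=\vol(B)$ from Corollary~\ref{cor:affine-count-moments} for the upper tail. You then take $u=(1-1/\ell)^n$ and $t=(1+1/\ell)^n$ for the localization, exactly as the paper does, and your closed-ball event identifications are accurate.
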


\begin{proof}
Let $B$ be the ball of volume $v$ and put
$X=|(\boldsymbol\tau+\cL)\cap B|$.  The first bound follows from
$\Prb[X\ge1]\le\E X=v$.  By
Corollary~\ref{cor:affine-count-moments}, $\E X=\Var X=v$; hence
\[
 \Prb[X=0]\le
 \Prb[|X-v|\ge v]\le\frac{\Var X}{v^2}=\frac1v.
\]
Take $v=u$ and $v=t$, respectively.  For the final assertion use
$u=(1-1/\ell)^n$ and $t=(1+1/\ell)^n$ and expand their logarithms.
\end{proof}

\begin{proposition}[Affine critical-band regularity]
\label{thm:affine-critical-ball-lenses}
Put $R=\Rell$, $\Rcrit=\gamma\Rell$, and
\[
 \mathcal B_{\rm crit}:=
 \left\{\mathbf d:
 (1-1/\ell)\Rstar\le\|\mathbf d\|\le\Rcrit\right\}.
\]
With probability $1-e^{-\Omega(n)}$ over
$(\cL,\boldsymbol\tau)\sim\widetilde X_n$, simultaneously for every
$\mathbf d\in(\boldsymbol\tau+\cL)\cap\mathcal B_{\rm crit}$,
\begin{equation}\label{eq:affine-critical-band-lower}
 r^{\rm aff}_{\cL,\boldsymbol\tau,R}(\mathbf d)
 \ge\frac12\mu_R(\mathbf d),
 \qquad
 \log\mu_R(\mathbf d)
 \ge\frac{4n}{3\ell}
 -O\!\left(\frac n{\ell^2}+\log n\right).
\end{equation}
\end{proposition}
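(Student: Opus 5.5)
The plan is to mirror the proof of Proposition~\ref{thm:critical-ball-lenses}, replacing the centered target moment by the affine target moment of Proposition~\ref{thm:affine-marked-moments}. That affine bound needs no endpoint gap for targets, and it holds uniformly for ratios $\alpha$ in any compact $J\Subset(0,2)$.

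\emph{Step 1: cover the band.} Cover $\mathcal B_{\rm crit}$ by the $O(n^2)$ standard closed micro-shells $S_{\rho_j}$ of \eqref{eq:standard-shell-cover}, with $a=(1-1/\ell)\Rstar/R$ and $b=\gamma$. The ratios $\rho_j/R$ and the pointwise ratios $\|\mathbf d\|/R$ then lie in a fixed compact interval $J$. Since $\Rstar/\Rell\to\sqrt{3/4}$, this interval sits around $[\sqrt3/2,1]$ and stays inside $(0,2)$. The cover is deterministic and independent of $(\cL,\boldsymbol\tau)$, so every affine point in the band lies in some micro-shell.

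\emph{Step 2: moments and Markov on each micro-shell.} For each $j$, take $T_j=S_{\rho_j}$ with its own flattening weight $\omega_j=m_{*,j}/K_R$. Apply \eqref{eq:affine-target-moment} with $h=\ell$ and $R=\Rell$. If some $\mathbf d\in(\boldsymbol\tau+\cL)\cap T_j$ has $r^{\rm aff}(\mathbf d)<\mu_R(\mathbf d)/2$, then, by the exact identity $\omega_j\mu_R=m_{*,j}$, this point alone contributes at least $2^{-2h}m_{*,j}^{2h}$ to the summed moment. Markov's inequality then bounds the failure probability by two terms:
\[
 \mathrm{vol}(T_j)e^{O(h\log(2h))}m_{*,j}^{-h}
 \quad\text{and}\quad
 \frac{\mathrm{vol}(T_j)}{m_{*,j}}\,e^{O(h^2\log n)}\,\Xi_n .
\]

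\emph{Step 3: exponents.} The coarea formula \eqref{eq:lens-coarea} shows that $\log\mu_R$ is decreasing in $\alpha$ up to $O(\log n)$. Since $\alpha\le\gamma+O(w_n)$, this gives $\log m_{*,j}\ge 4n/(3\ell)-O(n/\ell^2+\log n)$, exactly as in \eqref{fs-eq:mu-min}; this is also the second assertion of \eqref{eq:affine-critical-band-lower}. The volume satisfies $\mathrm{vol}(T_j)\le\mathcal N_\star e^{o(n)}$, so the partition term has logarithm at most $\tfrac12\log(4/3)n-\tfrac43 n+o(n)$.

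The correction prefactor $\mathrm{vol}(T_j)/m_{*,j}$ is at most $\mathcal N_\star e^{O(n/\ell)}$. Multiplying by $\Xi_n$ leaves the three strict margins from the proof of Theorem~\ref{fs-thm:main-pointwise}:
\[
 \log\tfrac89,\qquad \tfrac12\log\tfrac e3,\qquad -\tfrac12\log 3 .
\]
The overhead $e^{O(\ell^2\log n)}=e^{o(n)}$ cannot change these signs.

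\emph{Step 4: union bound.} Union-bounding over the $O(n^2)$ micro-shells gives the conclusion with probability $1-e^{-\Omega(n)}$.

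The main obstacle is uniformity. The affine moment bound must hold with one constant and one $\varepsilon_n$ across all micro-shells, including those near the lower end $\alpha\approx\sqrt3/2$, which lie below $\gamma$. I would justify this by noting two facts. First, the affine target relation $\sum B_j=1$ in part (iii) of Lemma~\ref{lem:four-kernel-relation-certificate} is independent of $\alpha$. Second, the transverse radii in Lemma~\ref{lem:uniform-scalar-certificate} are nondegenerate on $J$. No resonance issue arises, because only the target kernel is used and not the endpoint kernel.
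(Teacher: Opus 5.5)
Your proposal is correct and follows essentially the same route as the paper. The paper's proof also covers the band by the $O(n^2)$ standard micro-shells and applies the target half of Proposition~\ref{thm:affine-marked-moments} with $h=\ell$, using its uniformity in the ratio and the absence of any endpoint gap. It then repeats the shell-wide Markov estimate of Proposition~\ref{thm:critical-ball-lenses} with the same three margins, obtains the lens lower bound from the coarea formula, and divides by the flattening weight via $\omega\mu_R=m_*$.
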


\begin{proof}
Use the same $O(n^2)$ standard closed shells from
\eqref{eq:standard-shell-cover} as in the centered proof.  They cover
$\mathcal B_{\rm crit}$ pointwise, and all their ratios to $R$ lie in a
fixed compact subset of $(0,2)$ with upper endpoint $\gamma+O(w_n)$.
Apply the target half of
Proposition~\ref{thm:affine-marked-moments} with $h=\ell$ to each shell; no
endpoint gap is needed.  The longitudinal identity
\eqref{eq:affine-target-longitudinal} is uniform in the ratio, while all
transverse radii remain nondegenerate.  The same shell-wide Markov estimate
as in Proposition~\ref{thm:critical-ball-lenses} gives failure
$e^{-\Omega(n)}$, including all three terms of $\Xi_n$.

The uniform coarea identity
\[
 \log\mu_R(\mathbf d)
 =n\log(R/\Rstar)+\frac n2\log(1-\alpha^2/4)+O(\log n)
\]
has a decreasing main term in $\alpha=\|\mathbf d\|/R$.  Its uniform lower
bound for $\alpha\le\gamma+O(w_n)$ is the logarithmic estimate in
\eqref{eq:affine-critical-band-lower}; no exact monotonicity of annular lens
volumes is asserted.  Finally
$\omega(\mathbf d)\mu_R(\mathbf d)=m_*$, so division by the positive
flattening weight gives the unweighted factor $1/2$ exactly.
\end{proof}

\begin{corollary}[Random-coset critical-ball difference coverage]
\label{cor:affine-critical-ball-coverage}
With $R=\Rell$ and $R_{\rm crit}=\gamma\Rell$, with probability
$1-e^{-\Omega(n/\log n)}$ over $\widetilde X_n$,
\begin{equation}\label{eq:affine-critical-ball-inclusion}
 (\boldsymbol\tau+\cL)\cap B_{R_{\rm crit}}
 \subseteq
 \bigl((\boldsymbol\tau+\cL)\cap S_R\bigr)
       -(\cL\cap S_R).
\end{equation}
The failure rate includes the event of a coset point below
$(1-1/\ell)\Rstar$; the bandwise incidence estimate alone has exponentially
small failure in $n$.
Every coset point in the critical band has
$\exp(\Omega(n/\log n))$ ordered affine--centered representations.
\end{corollary}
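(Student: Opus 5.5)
The proof will follow the centered argument of Corollary~\ref{cor:critical-ball-coverage} almost verbatim, with Proposition~\ref{thm:affine-critical-ball-lenses} in place of Proposition~\ref{thm:critical-ball-lenses} and Proposition~\ref{prop:affine-minimum} in place of the Siegel--Markov bound. The plan is to split the left side of \eqref{eq:affine-critical-ball-inclusion} into coset points of norm below $(1-1/\ell)\Rstar$ and coset points in the band $\mathcal B_{\rm crit}$. On a good event the first part will be empty, and the second part will be handled pointwise by the lens lower bound.

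\emph{Step 1 (no short coset points).} Use the first tail in \eqref{eq:affine-minimum-tails} with $u=(1-1/\ell)^n$. This gives
\[
 \Prb\bigl[\rho_{\rm aff}(\cL,\boldsymbol\tau)<(1-1/\ell)\Rstar\bigr]
 \le(1-1/\ell)^n=\exp\bigl(-n/\ell+O(n/\ell^2)\bigr)=e^{-\Omega(n/\log n)}.
\]
This is the term that degrades the failure rate from $e^{-\Omega(n)}$ to $e^{-\Omega(n/\log n)}$. Unlike the centered case, we do not need to remove the origin, because it is not a special point of the coset. Outside this event, every point of $(\boldsymbol\tau+\cL)\cap B_{R_{\rm crit}}$ lies in $\mathcal B_{\rm crit}$.

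\emph{Step 2 (band targets).} On the event of Proposition~\ref{thm:affine-critical-ball-lenses}, which fails with probability $e^{-\Omega(n)}$, every $\mathbf d\in(\boldsymbol\tau+\cL)\cap\mathcal B_{\rm crit}$ satisfies
\[
 r^{\rm aff}_{\cL,\boldsymbol\tau,R}(\mathbf d)\ge\tfrac12\mu_R(\mathbf d)\ge\tfrac12\exp\bigl(4n/(3\ell)-O(n/\ell^2+\log n)\bigr).
\]
This quantity is $\exp(\Omega(n/\log n))$, which is in particular at least one. By \eqref{eq:affine-representation}, a positive count yields some $\mathbf c\in\cL\cap S_R$ with $\mathbf d+\mathbf c\in S_R$. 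Since $\mathbf d+\mathbf c\in\boldsymbol\tau+\cL$, we can write $\mathbf d=(\mathbf d+\mathbf c)-\mathbf c$, which lies in $((\boldsymbol\tau+\cL)\cap S_R)-(\cL\cap S_R)$. The number of ordered affine--centered representations is exactly $r^{\rm aff}$, so the quantitative claim follows at the same time.

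\emph{Step 3 (union bound).} The total failure probability is at most $e^{-\Omega(n/\log n)}+e^{-\Omega(n)}$, which is $e^{-\Omega(n/\log n)}$. No step here is a real obstacle, since all the work is in Proposition~\ref{thm:affine-critical-ball-lenses}. The one point to check carefully is the sign convention: the representation must be $\mathbf d=\mathbf x-\mathbf c$ with an affine endpoint $\mathbf x=\mathbf d+\mathbf c$, matching the fiber $\mathbf d+\mathbf c\in S$ of $F_s^{\rm at}$.
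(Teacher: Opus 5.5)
Your proposal is correct and follows the paper's proof essentially step for step. The first tail of Proposition~\ref{prop:affine-minimum} with $u=(1-1/\ell)^n$ removes coset points below $(1-1/\ell)\Rstar$ at cost $e^{-\Omega(n/\log n)}$, Proposition~\ref{thm:affine-critical-ball-lenses} covers the band $\mathcal B_{\rm crit}$ with failure $e^{-\Omega(n)}$, and the identity $\mathbf d=(\mathbf d+\mathbf c)-\mathbf c$ gives both the inclusion and the $\exp(\Omega(n/\log n))$ multiplicity. Your sign check against \eqref{eq:affine-representation} and your remark that no origin needs to be excluded are both right.
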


\begin{proof}
By Proposition~\ref{prop:affine-minimum}, except with probability
$e^{-\Omega(n/\log n)}$ the random coset contains no point below
$(1-1/\ell)\Rstar$.  Proposition~\ref{thm:affine-critical-ball-lenses} treats all
remaining points up to $R_{\rm crit}$.  For every representation counted in
\eqref{eq:affine-critical-band-lower},
$\mathbf d=(\mathbf d+\mathbf c)-\mathbf c$, with
$\mathbf d+\mathbf c\in(\boldsymbol\tau+\cL)\cap S_R$ and
$\mathbf c\in\cL\cap S_R$, proving the inclusion.  The exponential
multiplicity follows from the same lower bound.
\end{proof}

The following fixed-ratio form is useful when an inhomogeneous nearest point
must be exposed as a difference at the terminal $\sqrt{4/3}$ shell.

\begin{lemma}[Affine terminal fixed-shift regularity]
\label{lem:affine-terminal-lens}
Let $R=(\sqrt{4/3}+o(1))\Rstar$, and let $A$ be a predetermined union of at
most $e^{o(n)}$ relative-width-$n^{-2}$ micro-shells, all at radii
$(1+o(1))\Rstar$, uniformly over the union.  With probability $1-e^{-\Omega(n)}$ over
$\widetilde X_n$, simultaneously for every
$\mathbf s\in(\boldsymbol\tau+\cL)\cap A$,
\begin{equation}\label{eq:affine-terminal-lens}
 \#\{\mathbf b\in\cL\cap S_R:\mathbf s+\mathbf b\in S_R\}
 \ge\frac12\left(\sqrt{\frac{13}{12}}+o(1)\right)^n.
\end{equation}
Only affine unfolding and Rogers order at most two are used.
\end{lemma}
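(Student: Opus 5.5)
The plan is to repeat the proof of Corollary~\ref{cor:centered-terminal-lens} for the affine target kernel, using only second moments. For $\mathbf s\in(\boldsymbol\tau+\cL)\cap A$, the count in \eqref{eq:affine-terminal-lens} is $\#\{\mathbf b\in\cL\cap S_R:\mathbf s+\mathbf b\in S_R\}$. This is exactly $r^{\rm aff}_{\cL,\boldsymbol\tau,R}(\mathbf s)$ from \eqref{eq:affine-representation}, with $\mathbf c=\mathbf b$. We then proceed in three steps.

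\emph{Step 1 (flattening).} Keep each constituent micro-shell $T_j$ of $A$ separately, with ratio $\alpha_j=\|\mathbf s\|/R=\sqrt3/2+o(1)$ uniformly. On $T_j$ define the flattened mean $m_{*,j}=\inf_{T_j}K_R$ and the weight $\omega_j=m_{*,j}/K_R$, so that $\omega_jK_R=m_{*,j}$ exactly. By Lemma~\ref{lem:thin-shell-coarea}, $K_R$ varies by only $1+O(1/n)$ on $T_j$. Moreover, \eqref{eq:lens-coarea} at $c=\sqrt{4/3}+o(1)$ and $\alpha=\sqrt3/2+o(1)$ gives
\[
 K_R(\mathbf s)=\Bigl(\sqrt{\tfrac43}\sqrt{1-\tfrac3{16}}+o(1)\Bigr)^n
 =\Bigl(\sqrt{\tfrac{13}{12}}+o(1)\Bigr)^n .
\]

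\emph{Step 2 (second moment with $h=1$).} We compute $\E\sum_{\mathbf s}|\omega_j r^{\rm aff}(\mathbf s)-m_{*,j}|^2$ directly; this is why only Rogers order at most two appears. Unfold the affine mark with Lemma~\ref{lem:affine-fiber-unfold}. The first raw moment is then Siegel's formula applied to the single centered column, and it gives $\int_{T_j}\omega_jK_R=m_{*,j}\vol(T_j)$. The second raw moment is Rogers' formula with $k=2$ applied to $(\mathbf c_1,\mathbf c_2)$.
\begin{itemize}
\item The identity diagram gives $m_{*,j}^2\vol(T_j)$, which cancels against the centering term.
\item The rank-one diagram $\mathbf c_2=\mathbf c_1$ gives $\int\omega_j^2K_R\le m_{*,j}\vol(T_j)$.
\item The reflection $\mathbf c_2=-\mathbf c_1$ is unsupported. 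It would need both $\langle\mathbf c,\mathbf s\rangle/\|\mathbf s\|^2=-1/2+O(w_n)$ and its negative to hold, which is exactly the affine target relation \eqref{eq:affine-target-longitudinal}.
\item The other rank-one diagrams $\mathbf c_2=(a/q)\mathbf c_1$ need $|a|/q=1+O(w_n)$, so the same relation forces $a/q=1$. Those with $q\ge2$ would in any case be suppressed by $q^{-n}$, as in the proof of Lemma~\ref{lem:thin-shell-count}.
\end{itemize}
Hence the centered second moment is at most $(1+e^{-\Omega(n)})\,m_{*,j}\vol(T_j)$. This is also the $h=1$ case of \eqref{eq:affine-target-moment}, valid for any fixed compact $J\ni\sqrt3/2$. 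Either route works; the direct computation makes the claim ``Rogers order at most two'' evident.

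\emph{Step 3 (Markov and union bound).} Sum over the at most $e^{o(n)}$ shells. The overlap multiplicity is absorbed as in the centered corollary, and $\vol(A)=e^{o(n)}$ because the radii are $(1+o(1))\Rstar$. A violation $\omega_jr^{\rm aff}(\mathbf s)<m_{*,j}/2$ contributes at least $m_{*,j}^2/4$ to the sum. Markov's inequality therefore bounds the failure probability by
\[
 e^{o(n)}\,\frac{\max_j m_{*,j}\vol(T_j)}{\min_j m_{*,j}^2}
 \le e^{o(n)}\Bigl(\sqrt{\tfrac{13}{12}}+o(1)\Bigr)^{-n}
 =e^{-\Omega(n)} .
\]
Here we used that all $m_{*,j}$ agree to within $e^{o(n)}$. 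On the complementary event, dividing by $\omega_j\le1$ gives
\[
 r^{\rm aff}(\mathbf s)\ge \tfrac12K_R(\mathbf s)\ge\tfrac12\Bigl(\sqrt{\tfrac{13}{12}}+o(1)\Bigr)^n ,
\]
which is \eqref{eq:affine-terminal-lens}.

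The main obstacle is the careful treatment of the rank-one Rogers diagrams in Step 2, specifically confirming that no reflection partition survives. The sign of the affine longitudinal coordinate, $-1/2$ rather than $0$, excludes the reflection, in contrast to the endpoint resonance at $\alpha=\sqrt2$ noted in the remark on the affine endpoint resonance. The remaining work is to check that the $o(1)$ uniformity in $R$ and $\alpha$ still fits within the fixed compact interval $J$.
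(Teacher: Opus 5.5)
Your proposal is correct and follows the paper's own proof: you flatten each micro-shell separately, apply the $h=1$ affine target second moment (the paper simply cites \eqref{eq:affine-target-moment}, while you also rederive it from the order-two Rogers formula after unfolding), and then take one Markov step using envelopes that agree up to $e^{o(n)}$. One small wording fix: in your rank-one analysis the longitudinal relation only gives $a/q>0$ with $a/q=1+O(w_n)$, not $a/q=1$, but the $q^{-n}$ suppression you invoke for $q\ge2$ (with $O(1+q/n^2)$ numerators per $q$, as in Lemma~\ref{lem:thin-shell-count}) already disposes of those diagrams.
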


\begin{proof}
The empty union is immediate.  Otherwise write $A=\bigcup_{j=1}^M T_j$
with $M=e^{o(n)}$, keeping the original, possibly overlapping micro-shells.
Define the two envelopes
\[
 \mu_{\min}:=\inf_{\mathbf s\in A}\mu_R(\mathbf s),\qquad
 \mu_{\max}:=\sup_{\mathbf s\in A}\mu_R(\mathbf s).
\]
The target/source ratios are $\sqrt3/2+o(1)$ uniformly, so
Lemma~\ref{lem:thin-shell-coarea} gives
\[
 \log\mu_{\min}=an+o(n),\qquad
 \log\mu_{\max}=an+o(n),\qquad
 a:=\frac12\log\frac{13}{12}>0.
\]
Also $\vol(A)=e^{o(n)}$.  For each $T_j$, put
$m_{*,j}:=\inf_{\mathbf s\in T_j}\mu_R(\mathbf s)$ and
$\omega_j(\mathbf s):=m_{*,j}/\mu_R(\mathbf s)$.
Apply~\eqref{eq:affine-target-moment} with $h=1$ on each micro-shell.
Since $\mu_{\min}\le m_{*,j}\le\mu_{\max}$ and $\mu_{\min}>1$ for
large $n$, the nonnegative sum
\[
 Z:=\sum_j\sum_{\mathbf s\in(\boldsymbol\tau+\cL)\cap T_j}
 \left|\omega_j(\mathbf s)
 r^{\rm aff}_{\cL,\boldsymbol\tau,R}(\mathbf s)-m_{*,j}\right|^2
\]
satisfies
\[
 \E Z\le e^{o(n)}\mu_{\max}\sum_j\vol(T_j)
 \le e^{o(n)}\vol(A)\mu_{\max}.
\]
Here the second inequality uses $\sum_j\vol(T_j)\le M\vol(A)$,
so all overlap is absorbed into the subexponential factor.
If any target has
$r^{\rm aff}_{\cL,\boldsymbol\tau,R}(\mathbf s)<\mu_R(\mathbf s)/2$,
then the exact identity
$\omega_j(\mathbf s)\mu_R(\mathbf s)=m_{*,j}$ on a containing shell
forces $Z\ge m_{*,j}^2/4\ge\mu_{\min}^2/4$.  Thus
\[
 \Prb[\text{some affine target is bad}]
 \le\frac{4\E Z}{\mu_{\min}^2}
 \le e^{o(n)}\vol(A)\frac{\mu_{\max}}{\mu_{\min}^2}
 =e^{-an+o(n)}.
\]
On the complementary event every count is at least $\mu_{\min}/2$,
which proves~\eqref{eq:affine-terminal-lens}.  Only the common exponential
rate of the two envelopes is used; no relative comparison between distinct
micro-shells is required.
\end{proof}

\section{The sharp same-shell coverage transition}
\label{sec:additive-phases}

We now apply the marked theory to intrinsic additive statistics of one random
lattice shell.  Fix $c>1$, set $R=c\Rstar$, and abbreviate
\[
 A_{\cL}(c):=\cL\cap S_R,\qquad N:=|A_{\cL}(c)|,
 \qquad P:=\vol(S_R)=c^{n+o(n)},
\]
where all $o(n)$ terms may depend on a fixed $c$.  For logarithms of
cardinalities and additive energies we use $\log0=-\infty$; identities
involving division by $N$ are restricted to $N>0$.  By
Lemma~\ref{lem:thin-shell-count}, this event has probability
$1-e^{-\Omega_c(n)}$.  Recall the continuum lens
kernel
\[
 K_R(\mathbf d):=\vol(S_R\cap(S_R+\mathbf d)).
\]
Define the represented fraction
\[
 \Theta_{\cL}(c):=
 \frac{|\{\mathbf d\in A_{\cL}(c):r_{\cL,R}(\mathbf d)>0\}|}
 {|A_{\cL}(c)|}\quad\text{when }|A_{\cL}(c)|>0,
 \qquad \Theta_{\cL}(c):=0\quad\text{otherwise}.
\]

\begin{theorem}[Sharp same-shell autocorrelation transition]
\label{thm:coverage}
For every fixed $c>1$ with $c\ne2/\sqrt3$, with probability
$1-e^{-\Omega_c(n)}$,
\[
 \Theta_{\cL}(c)\le e^{-\Omega_c(n)}\quad(1<c<2/\sqrt3),
 \qquad
 \Theta_{\cL}(c)=1\quad(c>2/\sqrt3).
\]
Above the threshold one has, simultaneously for every
$\mathbf d\in A_{\cL}(c)$, the pointwise lens comparison
\begin{equation}\label{eq:same-shell-lens-comparison}
 \left(1-\frac1{\log n}\right)K_R(\mathbf d)
 \le r_{\cL,R}(\mathbf d)
 \le\left(1+\frac1{\log n}\right)K_R(\mathbf d),
\end{equation}
and hence the exact inclusion
\begin{equation}\label{eq:exact-difference-inclusion}
 A_{\cL}(c)\subseteq A_{\cL}(c)-A_{\cL}(c).
\end{equation}
Moreover, for every $\mathbf x\in A_{\cL}(c)$,
\begin{equation}\label{eq:same-shell-endpoint-identity}
 \#\{\mathbf d\in A_{\cL}(c):
          \mathbf x-\mathbf d\in A_{\cL}(c)\}
 =r_{\cL,R}(\mathbf x),
\end{equation}
so the same comparison controls every same-shell endpoint degree.
\end{theorem}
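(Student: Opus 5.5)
The plan is to prove the two regimes by different moment arguments. Below the threshold a first-moment (Siegel/Rogers order two) count suffices. Above it we use the fixed-order marked target moment of Proposition~\ref{thm:intrinsic-marked-moments} at $\alpha=1$, followed by the shell-wide Markov step. Throughout, Lemma~\ref{lem:thin-shell-count} and Chebyshev give $N=(1+o(1))P$, in particular $N>0$, with probability $1-e^{-\Omega_c(n)}$. The basic exponent is $K_R(\mathbf d)=\exp(n\log(c\sqrt3/2)+O(\log n))$ for $\|\mathbf d\|=R(1+O(w_n))$, by \eqref{eq:lens-coarea} with $\alpha=1$.

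\emph{Below the threshold.} The number of represented targets is at most $\sum_{\mathbf d\in A_\cL(c)}r_{\cL,R}(\mathbf d)$, which counts pairs $(\mathbf x,\mathbf y)\in(\cL\cap S_R)^2$ with $\mathbf x-\mathbf y\in S_R$. Apply Proposition~\ref{thm:rogers-general} with $k=2$ to $F(\mathbf x,\mathbf y)=\1_{S_R}(\mathbf x)\1_{S_R}(\mathbf y)\1_{S_R}(\mathbf x-\mathbf y)$. The identity diagram gives $\int_{S_R}K_R(\mathbf d)\,d\mathbf d=P\,e^{n\log(c\sqrt3/2)+O(\log n)}$. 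A rank-one diagram has $\mathbf y=(a/q)\mathbf x$ with $|a|/q=1+O(w_n)$, so $\|\mathbf x-\mathbf y\|\in\{O(w_nR),\,2R(1+O(w_n))\}$. Neither value lies in $S_R$, so every rank-one diagram is unsupported. Markov then gives
\[
\Prb\bigl[\#\{\text{represented}\}\ge e^{-\delta n}P\bigr]\le e^{\delta n+n\log(c\sqrt3/2)+o(n)}.
\]
Choosing $0<\delta<\tfrac12\log(4/(3c^2))$ and dividing by $N\ge P/2$ yields $\Theta_\cL(c)\le e^{-\Omega_c(n)}$.

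\emph{Above the threshold.} Take $J$ to be a compact neighbourhood of $1$ and $\alpha=1$, so that $T_{1,R}=S_R$ and $m_*=\inf_{S_R}K_R$. Choose a fixed integer $h>\log c/\log(c\sqrt3/2)$. The target bound \eqref{eq:intrinsic-target-moment} requires no nonresonance gap. Suppose some $\mathbf d\in A_\cL(c)$ has $|\omega(\mathbf d)r_{\cL,R}(\mathbf d)-m_*|>m_*/(2\log n)$. Then Markov bounds the failure probability by
\[
(2\log n)^{2h}\Bigl[P\,e^{O(1)}m_*^{-h}+P\,m_*^{-1}e^{O(\log n)}\Xi_n\Bigr].
\]
The first term is $c^{n}(c\sqrt3/2)^{-hn}e^{o(n)}=e^{-\Omega_c(n)}$ by the choice of $h$. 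In the second, $P/m_*=(4/3)^{n/2}e^{O(\log n)}$ independently of $c$. The three pieces of $\Xi_n$ then give the strict margins $(64/81)^{n/2}$, $(e/3)^{n/2}$ and $3^{-n/2}$, exactly as in Lemma~\ref{fs-rem:arithmetic-margin}.

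To remove the flattening weight, observe that the derivative bound of Lemma~\ref{lem:thin-shell-coarea} across relative width $w_n=n^{-2}$ gives $\omega=1+O(1/n)$ on $S_R$. Dividing by $\omega$ therefore converts the $1/(2\log n)$ deviation into \eqref{eq:same-shell-lens-comparison} for large $n$. Positivity of $K_R$ then gives $r_{\cL,R}(\mathbf d)>0$ for every shell target, which is \eqref{eq:exact-difference-inclusion} and $\Theta_\cL(c)=1$.

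For \eqref{eq:same-shell-endpoint-identity}, map $\mathbf d\mapsto\mathbf z:=\mathbf x-\mathbf d$. The condition $\mathbf d,\mathbf x-\mathbf d\in A_\cL(c)$ becomes $\mathbf z\in A_\cL(c)$ and $\mathbf z-\mathbf x=-\mathbf d\in A_\cL(c)$, using $A_\cL(c)=-A_\cL(c)$. The count is therefore exactly $r_{\cL,R}(\mathbf x)$, and \eqref{eq:same-shell-lens-comparison} applies to it.

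The main obstacle is the above-threshold Markov step. One must confirm that the uniform correction budget of Proposition~\ref{prop:uniform-diagram-budget} really applies at the resonant ratio $\alpha=1$. It should, because only the target kernel is used, and there the height-one relation $2A+\sum B_j=1$ is independent of $\alpha$. One must also check that all correction margins are strict after multiplication by the $c$-independent prefactor $(4/3)^{n/2}$, while the partition term is beaten only through the choice of $h$.
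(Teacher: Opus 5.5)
Your proposal is correct and follows the paper's proof essentially step for step. Below $2/\sqrt3$ you use the represented-target first moment together with Markov and the shell count. Above it you use the $\alpha=1$ target half of Proposition~\ref{thm:intrinsic-marked-moments} with a fixed $h>\log c/\log(c\sqrt3/2)$, followed by the shell-wide Markov step with the same three correction margins, and central symmetry gives the endpoint identity.

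There are two minor differences. You observe directly that every rank-one order-two diagram is unsupported, so the first moment is exactly $\int_{S_R}K_R$; the paper instead bounds those terms through the uniform nonpartition budget. Also, removing $\omega$ needs no $\omega=1+O(1/n)$ estimate, because the identity $\omega K_R=m_*$ makes the division exact.
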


The quantifier is over all target differences in one sampled shell.  In
comparison with the short-vector laws discussed in
Section~\ref{sec:growing-rogers}, the conclusion gives relative lens
multiplicities for the complete exponential population and a sharp
same-shell support threshold.  Its proof uses the target half of
Proposition~\ref{thm:intrinsic-marked-moments}; no generic endpoint gap is
needed at the same-shell ratio.

We first collect the continuum estimates, also used for the transitions in
Sections~\ref{sec:energy} and~\ref{sec:convolution}, and then prove the
coverage statement from the represented-target first moment and the marked
high moments.

\subsection{Continuum lens asymptotics}

The following lemma collects the Laplace calculations behind the three
thresholds.

\begin{lemma}[Lens, angular, and energy scales]\label{lem:additive-continuum}
Uniformly when $\|\mathbf d\|=\alpha R(1+O(n^{-2}))$ and $\alpha$ ranges in a
compact subset of $(0,2)$,
\begin{equation}\label{eq:additive-lens}
 K_R(\mathbf d)=
 \exp\!\left(n\log c+\frac n2\log\!\left(1-\frac{\alpha^2}{4}\right)
                 +O(\log n)\right).
\end{equation}
Moreover,
\begin{equation}\label{eq:kernel-mass}
 \int_{\R^n}K_R(\mathbf d)\,d\mathbf d=P^2.
\end{equation}
If $\mathbf X,\mathbf Y$ are independent uniform continuum points in $S_R$,
then $\|\mathbf X-\mathbf Y\|/R$ lies outside any fixed neighborhood of
$\sqrt2$ with probability $e^{-\Omega(n)}$.  Finally,
\begin{align}
 I_2(R)&:=\int_{\R^n}K_R(\mathbf d)^2d\mathbf d
 =\exp\!\left(n\log\frac{4c^3}{3\sqrt3}+O(\log n)\right),
 \label{eq:I2}\\
 I_4(R)&:=\int_{\R^n}K_R(\mathbf d)^4d\mathbf d
 =\exp\!\left(n\log\frac{32c^5}{25\sqrt5}+O(\log n)\right).
 \label{eq:I4}
\end{align}
For $p\in\{2,4\}$ put $\alpha_p:=2/\sqrt{p+1}$ and
$I_p(R):=\int_{\R^n}K_R(\mathbf d)^p\,d\mathbf d$.
If $J\Subset(0,2)$ is a fixed interval with $\alpha_p$ in its interior,
then for some $\delta_{p,J}>0$ and all sufficiently large $n$,
\begin{equation}\label{eq:lens-laplace-tail}
 \int_{\{\|\mathbf d\|/R\notin J\}}K_R(\mathbf d)^p\,d\mathbf d
 \le e^{-\delta_{p,J}n}I_p(R).
\end{equation}
\end{lemma}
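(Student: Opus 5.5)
Here is how I would prove Lemma~\ref{lem:additive-continuum}. The plan is to read every assertion off the exact bipolar formula~\eqref{eq:exact-lens-bipolar} and the thin-shell coarea estimate of Lemma~\ref{lem:thin-shell-coarea}, and then apply a one-dimensional Laplace argument in the relative lag $\alpha$.

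\emph{Lens formula and kernel mass.} Formula~\eqref{eq:additive-lens} is exactly~\eqref{eq:lens-coarea} with $R/\Rstar=c$, so nothing new is needed there. The mass identity~\eqref{eq:kernel-mass} is Tonelli applied to
\[
 \int K_R(\mathbf d)\,d\mathbf d
 =\iint \mathbf 1_{S_R}(\mathbf x)\mathbf 1_{S_R}(\mathbf x-\mathbf d)\,d\mathbf x\,d\mathbf d
 =P^2 .
\]

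\emph{Concentration near $\sqrt2$.} I would condition on the two radii, both equal to $R(1+O(w_n))$. The cosine $u$ of the angle between the two points then has density proportional to $(1-u^2)^{(n-3)/2}$. Given the radii, $\|\mathbf X-\mathbf Y\|^2/R^2=2-2u+O(w_n)$. Since $\Prb[|u|>\eta]\le e^{-\Omega_\eta(n)}$ by the standard spherical cap bound, this gives the claim.

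\emph{Energy integrals.} Polar coordinates in $\mathbf d$ give
\[
 I_p=n\kappa_n\int_0^{2R(1+w_n)}K_R(r)^p\,r^{n-1}\,dr .
\]
Substitute $r=\alpha R$ and use $n\kappa_nR^n=nc^n$. On a compact $J\Subset(0,2)$, the integrand is
\[
 \exp\Bigl(n\bigl[(p+1)\log c+\log\alpha+\tfrac p2\log(1-\alpha^2/4)\bigr]+O(\log n)\Bigr).
\]
Differentiating the bracket in $\alpha$ gives $1/\alpha-p\alpha/(4-\alpha^2)=0$, hence $\alpha^2=4/(p+1)$, i.e.\ $\alpha_p=2/\sqrt{p+1}$. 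This critical point is a strict maximum of a strictly concave function of $\log\alpha$.
\begin{itemize}
\item For $p=2$: $\alpha^2=4/3$, so the exponent is $3\log c+\tfrac12\log\tfrac43+\log\tfrac23=\log\tfrac{4c^3}{3\sqrt3}$.
\item For $p=4$: $\alpha^2=4/5$, so the exponent is $5\log c+\tfrac12\log\tfrac45+2\log\tfrac45=\log\tfrac{32c^5}{25\sqrt5}$.
\end{itemize}
Integrating over $\alpha\in J$ costs only $O(\log n)$, which gives matching upper and lower bounds on the interior contribution.

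\emph{Tail bound~\eqref{eq:lens-laplace-tail}.} This is the main obstacle, because the coarea estimate is only uniform on compact subsets of $(0,2)$. Near $\alpha\to0$ and $\alpha\to2$ I need a cruder but global bound. My plan is the pointwise inequality $K_R(\mathbf d)\le P$ for every $\mathbf d$. Combined with a direct bound from~\eqref{eq:exact-lens-bipolar}, namely $\rho\le R(1+w_n)\sqrt{1-\alpha_-^2/4}$ where $\alpha_-$ is a slightly shrunken lag, this gives
\[
 K_R(\mathbf d)\le e^{O(\log n)}c^n(1-\alpha^2/4)_+^{\,n/2}
\]
uniformly for all $\alpha\ge\epsilon$, including up to $2$. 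For the region $\alpha<\epsilon$, the trivial bound $K\le P$ together with the small volume $(\epsilon R)^n\kappa_n$ of that ball gives a contribution of size $e^{n[(p+1)\log c+\log\epsilon+O(1)]}$. This is exponentially smaller than $I_p$ once $\epsilon$ is small. On $[\epsilon,2]\setminus J$, the global upper bound has exponent strictly below the maximum at $\alpha_p$ by strict unimodality, so the contribution is again exponentially smaller. Choosing $\epsilon$ below $\inf J$ and taking $\delta_{p,J}$ as half the resulting exponent gap finishes the tail bound.
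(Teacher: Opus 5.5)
Your proposal is correct and follows essentially the paper's proof: the coarea estimate for the lens, Fubini for the mass, the $(1-u^2)^{(n-3)/2}$ inner-product density for concentration at $\sqrt2$, and a Laplace argument in $\alpha$ with maximizer $\alpha_p=2/\sqrt{p+1}$, backed by a cruder global lens bound for the tails. The only difference is that the paper uses the single bound $K_R(\mathbf d)\le\vol(B_b\cap(B_b+\mathbf d))\le 2b\kappa_{n-1}(b^2-\|\mathbf d\|^2/4)_+^{(n-1)/2}$ with $b=R(1+w_n)$, which handles small lags and lags up to $2b$ at once and so needs no separate $K_R\le P$ step; your bipolar bound is fine if you keep it as $(1-\alpha_-^2/4)_+^{(n-3)/2}$ with $\alpha_-=\|\mathbf d\|/b$, because the displayed $(1-\alpha^2/4)_+^{n/2}$ form fails for $\|\mathbf d\|$ near and just beyond $2R$, though this is cosmetic and does not affect the exponent gap.
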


\begin{proof}
Equation~\eqref{eq:additive-lens} is the coarea estimate
\eqref{eq:lens-coarea} with $R/\Rstar=c$.  Fubini gives
\[
 \int K_R(\mathbf d)d\mathbf d
 =\int_{S_R}\int_{\R^n}\mathbf1_{S_R}(\mathbf x-\mathbf d)
 \,d\mathbf d\,d\mathbf x=P^2.
\]
For two independent radial directions, their inner product has density
proportional to $(1-u^2)^{(n-3)/2}$; it is exponentially concentrated near
$u=0$, which is equivalent to relative difference length $\sqrt2$.

We next give a global bound, including separations near zero and the
support boundary where the compact coarea asymptotic is not uniform.
Put $b=R(1+w_n)$, $r=\|\mathbf d\|$, and let $\kappa_m$ be the unit-ball
volume in $\R^m$.  The intersection $B_b\cap(B_b+\mathbf d)$ has axial
length at most $2b$.  At axial coordinate $t$, the squared transverse radius
is at most $b^2-\max\{t^2,(t-r)^2\}\le b^2-r^2/4$.  Since $S_R\subset B_b$,
\begin{equation}\label{eq:global-ball-lens-bound}
 K_R(\mathbf d)
 \le\vol(B_b\cap(B_b+\mathbf d))
 \le2b\kappa_{n-1}(b^2-r^2/4)_+^{(n-1)/2}.
\end{equation}
For $p=2,4$, polar coordinates with $u=r/b$ therefore give
\begin{align}
 I_p(R)&\le
 2^p n\kappa_n\kappa_{n-1}^p b^{(p+1)n}
 \int_0^2\left[u(1-u^2/4)^{p/2}\right]^{n-1}\,du.
 \label{eq:global-laplace-upper}
\end{align}
Here $\kappa_n(\Rstar)^n=1$,
$\kappa_{n-1}/\kappa_n=e^{O(\log n)}$, and
$(1+w_n)^{(p+1)n}=e^{O(1/n)}$.  Thus the prefactor equals
$c^{(p+1)n}e^{O(\log n)}$.

The function $\phi_p(u):=u(1-u^2/4)^{p/2}$ is continuous on $[0,2]$,
vanishes at both endpoints, and has its unique maximum at
$\alpha_p=2/\sqrt{p+1}$.  Its maximum is
\[
 M_p=\frac2{\sqrt{p+1}}\left(\frac p{p+1}\right)^{p/2},
 \qquad M_2=\frac4{3\sqrt3},\quad M_4=\frac{32}{25\sqrt5}.
\]
Bounding the integral in~\eqref{eq:global-laplace-upper} by $2M_p^{n-1}$
gives the global upper estimate.  For the lower estimate, restrict the
polar integral in $\alpha=r/R$ to
$|\alpha-\alpha_p|\le n^{-1}$.  On this compact interval
\eqref{eq:additive-lens} gives
\[
 I_p(R)\ge
 c^{(p+1)n}e^{-O(\log n)}
 \int_{|\alpha-\alpha_p|\le n^{-1}}
        \frac{\phi_p(\alpha)^n}{\alpha}\,d\alpha
 =c^{(p+1)n}M_p^n e^{-O(\log n)}.
\]
The last equality uses the smooth nondegenerate maximum and an interval of
length $2/n$.  The upper and lower bounds prove
\eqref{eq:I2}--\eqref{eq:I4}, with the stated logarithmic errors.

Finally choose a closed neighborhood $J_0$ of $\alpha_p$ strictly inside
$J$.  Since $r/R=(1+w_n)u$, for large $n$ the condition $r/R\notin J$
implies $u\notin J_0$.  On that set
$\phi_p(u)\le M_p e^{-\eta_{p,J}}$ for some $\eta_{p,J}>0$.
Use this restricted maximum in~\eqref{eq:global-laplace-upper} and divide
by the lower bound just proved.  The resulting factor is
$e^{-\eta_{p,J}n+O(\log n)}$, which proves
\eqref{eq:lens-laplace-tail} with any fixed
$0<\delta_{p,J}<\eta_{p,J}$.
\end{proof}

\begin{remark}[Dependence on the shell width]\label{rem:shell-width}
The shell and lens prefactors make the role of the shell width explicit.
Temporarily let $w=w_n>0$ satisfy $nw=o(1)$.
Uniformly for $\alpha=\|\mathbf d\|/R$ in a fixed compact set
$J\Subset(0,2)$, the shell-volume formula and the bipolar formula give
\begin{align}
 P&=2nw\,c^n\bigl(1+O((nw)^2)\bigr),\notag\\
 K_R(\mathbf d)
 &=\frac{4(n-1)\kappa_{n-1}}{\alpha\kappa_n}\,
 w^2c^n(1-\alpha^2/4)^{(n-3)/2}
 \bigl(1+O_J(nw)\bigr).
 \label{eq:width-prefactors}
\end{align}
Indeed, the integration rectangle has area $4w^2R^2$, and the positive
bipolar integrand varies by a factor $1+O_J(nw)$ on it.
Since $\kappa_{n-1}/\kappa_n=\Theta(\sqrt n)$, every fixed choice
$w=n^{-1-\delta}$ with $\delta>0$ changes these logarithmic volume
estimates by only $O_\delta(\log n)$.
For an exponentially smaller width $w=e^{-\varepsilon n}$ with fixed
$\varepsilon>0$, the factor $w^2$ subtracts $2\varepsilon$ from the
continuum lens exponent. Its same-shell zero then occurs at
$c=(2/\sqrt3)e^{2\varepsilon}$.
The probabilistic statements in this paper retain the choice $w_n=n^{-2}$.
\end{remark}

\subsection{Proof of the coverage transition}

\begin{lemma}[First moment of represented targets]\label{lem:represented-first}
For fixed $c>1$, let
$m_c=\exp(n\log(c\sqrt3/2)+o(n))$ be the same-shell lens scale.  If
$Z_{\cL}=|\{\mathbf d\in A_{\cL}(c):r_{\cL,R}(\mathbf d)>0\}|$, then
\begin{equation}\label{eq:represented-first}
 \E Z_{\cL}\le P\bigl(m_c+e^{o(n)}\Xi_n\bigr)=Pm_ce^{o(n)},
\end{equation}
where $\Xi_n$ is defined in \eqref{eq:diagram-correction-budget}.
\end{lemma}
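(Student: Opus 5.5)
The plan is to bound $\mathbf 1[r_{\cL,R}(\mathbf d)>0]\le r_{\cL,R}(\mathbf d)$, so that
\[
 \E Z_{\cL}\le\E\sum_{\mathbf d\in\cL\cap S_R}r_{\cL,R}(\mathbf d)
 =\E\sum_{\mathbf d,\mathbf x\in\cL\setminus\{0\}}
   \1_{S_R}(\mathbf d)\1_{S_R}(\mathbf x)\1_{S_R}(\mathbf x-\mathbf d).
\]
We then evaluate this raw first moment with Rogers' formula (Proposition~\ref{thm:rogers-general}) at order $k=2$, which is the $s=1$ case of the centered target kernel $F_1^{\rm ct}$ with $\alpha=1$ and $\omega\equiv1$. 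For $k=2$ every admissible matrix has rank $1$ or $2$, and we treat the two cases separately.

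\textbf{Full-rank identity diagram.} This term gives $\int_{S_R}K_R(\mathbf d)\,d\mathbf d$. By the coarea estimate \eqref{eq:additive-lens} at $\alpha=1+O(n^{-2})$, $K_R(\mathbf d)=m_c e^{O(\log n)}$ uniformly on $S_R$, with $m_c=c^n(3/4)^{n/2}e^{o(n)}$. The identity term is therefore $P m_c e^{o(n)}$, and this is the main term $Pm_c$ after absorbing the $e^{o(n)}$ factor.

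\textbf{Rank-one diagrams.} Here $\mathbf x=(a/q)\mathbf d$, and the constraints $\|\mathbf d\|,\|\mathbf x\|,\|\mathbf x-\mathbf d\|\approx R$ force $|a/q|=1+O(w_n)$ and $|a/q-1|=1+O(w_n)$. This is impossible for large $n$, since it would need $a/q\approx-1$ and $|a/q-1|\approx2$. So rank-one diagrams are unsupported. This matches Lemma~\ref{lem:four-kernel-relation-certificate}(i) with $t=0$ excluded, and the statement that centered rank-one diagrams carry no free transverse row.

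\textbf{Reconciling with the stated form.} The exact first moment thus equals the identity term, so $\E Z_{\cL}\le Pm_ce^{o(n)}$. If one prefers to match the displayed form $P(m_c+e^{o(n)}\Xi_n)$, one can instead quote Proposition~\ref{thm:intrinsic-marked-moments}, or equivalently Proposition~\ref{prop:uniform-diagram-budget} with $\mathcal M=S_R$, $k=2$ and $V_{\rm fib}=\max_{\mathbf d}K_R(\mathbf d)$. That bounds every nonidentity diagram by $Pe^{O(\log n)}\Xi_n$, while the partition term is the identity term above. Since $m_c$ is fixed by $c$ and $\Xi_n\le1$, the sum is $Pm_ce^{o(n)}$ whenever $c\sqrt3/2\ge1$. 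For $c<2/\sqrt3$ we read $m_c$ as the exponential scale $e^{n\log(c\sqrt3/2)+o(n)}$, and the $\Xi_n$ term is in fact absent by the rank-one argument. This gives \eqref{eq:represented-first} in either reading.

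The main obstacle is only bookkeeping: checking that the $O(\log n)$ errors in the coarea estimate are uniform over the whole shell $S_R$ (they are, since $\alpha$ stays in a compact set around $1$), and that the zero-column convention holds because $S_R$ avoids the origin.
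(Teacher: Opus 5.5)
Your argument is correct. It shares the paper's skeleton: bound $\mathbf 1_{\{r>0\}}\le r$, apply Rogers at order two, and read off the identity term $\int_{S_R}K_R$. It handles the nonidentity diagrams differently, and more elementarily.

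The paper does not check whether the rank-one diagrams are supported. It feeds them into the general budget of Proposition~\ref{prop:uniform-diagram-budget}, which gives $Pe^{o(n)}\Xi_n$. It then needs the numerical comparison $c\sqrt3/2>\sqrt3/2>\sqrt e/2$ (the largest base in $\Xi_n$) to absorb that term into $Pm_ce^{o(n)}$.

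You instead observe directly that $\mathbf x=(a/q)\mathbf d$ cannot meet all three shell constraints for any $q$. The condition $|a/q|\approx1$ forces $a/q\approx\pm1$, so $|a/q-1|\approx0$ or $2$, never $\approx1$. Your sentence saying this is slightly garbled, but the conclusion is right. It gives the exact identity $\E\sum_{\mathbf d\in\cL\cap S_R}r_{\cL,R}(\mathbf d)=\int_{S_R}K_R(\mathbf d)\,d\mathbf d$.

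This is sharper than the paper's bound because there is no correction term carrying an $e^{o(n)}$ loss. For example, it immediately yields the polynomial-precision bound $\E Z_{\cL}/P=O(n^{-5/2})$ at $c=2/\sqrt3$ used in Remark~\ref{rem:coverage-critical-window}. The paper's route instead buys uniformity with the rest of its machinery, at the cost of the extra numerical check.

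One small correction: your hedge ``whenever $c\sqrt3/2\ge1$'' is unnecessary. The comparison above gives $\Xi_n\le m_ce^{-\Omega(n)}$ for every fixed $c>1$, so the displayed equality in \eqref{eq:represented-first} holds throughout. In any case, your direct bound $\E Z_{\cL}\le\int_{S_R}K_R=Pm_ce^{O(\log n)}$ already delivers both sides of the display without it.
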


\begin{proof}
Since $\mathbf1_{\{r>0\}}\le r$, expand
$\sum_{\mathbf d\in\cL\cap S_R}r_{\cL,R}(\mathbf d)$ as a marked raw
one-occurrence Rogers sum.  The full-rank term is
$\int_{S_R}K_R(\mathbf d)d\mathbf d=Pm_c e^{o(n)}$.  By the
missing-fiber principle a nonidentity diagram has no free unmarked fiber at
this order.  The geometric and arithmetic estimates of
Proposition~\ref{prop:uniform-diagram-budget} therefore contribute at most
$Pe^{o(n)}\Xi_n$.  Finally,
\[
 c\frac{\sqrt3}{2}>\frac{\sqrt e}{2}
\]
for every fixed $c>1$, so each correction is exponentially smaller than
$m_c$; this proves the last equality.
\end{proof}

\begin{proof}[Proof of Theorem~\ref{thm:coverage}]
Let $m_*:=\inf_{\mathbf d\in S_R}K_R(\mathbf d)$ and
$\omega(\mathbf d):=m_*/K_R(\mathbf d)$, and write
$m_c=\exp(n\log(c\sqrt3/2)+o(n))$ for their common exponential scale.  Suppose first that
$c>2/\sqrt3$, so $m_c=e^{\Omega_c(n)}$.  Put $\delta_c:=\log(c\sqrt3/2)>0$ and choose any fixed integer
$h>\log c/\delta_c$.  Apply
Proposition~\ref{thm:intrinsic-marked-moments} with $\alpha=1$ and relative
deviation $\zeta_n:=1/\log n$.
The Markov threshold $(\zeta_nm_*)^{2h}$ costs
$\zeta_n^{-2h}=(\log n)^{2h}=e^{o(n)}$.
Since $\log m_*=\delta_cn+O(\log n)$, the partition contribution to the
probability of any violation is
\begin{equation}\label{eq:same-shell-partition-failure}
 \zeta_n^{-2h}Pe^{O(h\log(2h))}m_*^{-h}
 =\exp\bigl((\log c-h\delta_c)n+o(n)\bigr)
 =e^{-\Omega_c(n)}.
\end{equation}
The correction term has prefactor $P/m_*=(2/\sqrt3)^{n+o(n)}$.
At fixed dimension this ratio is independent of $R$, since both volumes
scale as $R^n$; thus the correction margins do not deteriorate as $c$
increases.  Multiplying the three parts of $\Xi_n$ gives respectively
\[
 \left(\frac89\right)^n,
 \qquad\left(\sqrt{\frac e3}\right)^n,
 \qquad\left(\frac1{\sqrt3}\right)^n.
\]
All bases are strictly below one.  Hence, except on an event of probability
$e^{-\Omega_c(n)}$, every $\mathbf d\in\cL\cap S_R$ satisfies
\[
 |\omega(\mathbf d)r_{\cL,R}(\mathbf d)-m_*|
 \le\frac{m_*}{\log n}.
\]
Since $\omega(\mathbf d)K_R(\mathbf d)=m_*$ exactly, division by
$\omega(\mathbf d)$ proves \eqref{eq:same-shell-lens-comparison}; positivity
then proves \eqref{eq:exact-difference-inclusion}.  Finally
$A_{\cL}(c)=-A_{\cL}(c)$, so the conditions
$\mathbf x-\mathbf d\in A_{\cL}(c)$ and
$\mathbf d-\mathbf x\in A_{\cL}(c)$ are equivalent.  This proves
\eqref{eq:same-shell-endpoint-identity} directly from the definition of
$r_{\cL,R}$.

If $1<c<2/\sqrt3$, then $m_c=e^{-\Omega_c(n)}$.
Lemma~\ref{lem:represented-first} and Markov's inequality give
$Z_{\cL}\le P\sqrt{m_c}$ outside an event of exponentially small
probability.  Lemma~\ref{lem:thin-shell-count} gives
$|A_{\cL}(c)|=(1+o(1))P$ with the same quality of probability.  Therefore
$\Theta_{\cL}(c)\le\sqrt{m_c}(1+o(1))=e^{-\Omega_c(n)}$.
\end{proof}

\begin{remark}[Why the same-shell ratio needs no endpoint gap]
At $\alpha=1$ the generic centered endpoint condition
\eqref{eq:generic-endpoint-gap} is not available: the same-shell geometry has
additional low-parent identities.  This creates no gap in
Theorem~\ref{thm:coverage}.  Its proof uses only the target-multiplicity half
of Proposition~\ref{thm:intrinsic-marked-moments}; central symmetry then converts
that estimate into the exact endpoint identity
\eqref{eq:same-shell-endpoint-identity}.
\end{remark}

\begin{corollary}[Affine same-shell autocorrelation]
\label{cor:affine-same-shell}
Fix $c>2/\sqrt3$ and put $R=c\Rstar$.  With probability
$1-e^{-\Omega_c(n)}$ over
$(\cL,\boldsymbol\tau)\sim\widetilde X_n$, simultaneously for every
$\mathbf d\in(\boldsymbol\tau+\cL)\cap S_R$,
\begin{equation}\label{eq:affine-same-shell-lens}
 \left(1-\frac1{\log n}\right)K_R(\mathbf d)
 \le r^{\rm aff}_{\cL,\boldsymbol\tau,R}(\mathbf d)
 \le\left(1+\frac1{\log n}\right)K_R(\mathbf d).
\end{equation}
Consequently
\begin{equation}\label{eq:affine-same-shell-inclusion}
 (\boldsymbol\tau+\cL)\cap S_R
 \subseteq
 \bigl((\boldsymbol\tau+\cL)\cap S_R\bigr)-(\cL\cap S_R).
\end{equation}
For every $\mathbf x\in(\boldsymbol\tau+\cL)\cap S_R$, the affine endpoint
degree
\[
 D^{\rm aff}_{R,1}(\mathbf x)
 :=\#\{\mathbf c\in\cL\cap S_R:\mathbf x-\mathbf c\in S_R\}
\]
satisfies, uniformly in $\mathbf x$,
\[
 D^{\rm aff}_{R,1}(\mathbf x)
 =\bigl(1+O(1/\log n)\bigr)K_R(\mathbf x).
\]
\end{corollary}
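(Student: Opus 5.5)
The proof will repeat the centered argument of Theorem~\ref{thm:coverage}, using the affine target half of Proposition~\ref{thm:affine-marked-moments} with $\alpha=1$ in place of the centered target bound. Take $J$ to be a small compact neighbourhood of $1$ in $(0,2)$, set $m_*:=\inf_{\mathbf d\in S_R}K_R(\mathbf d)$ and $\omega:=m_*/K_R$, and put $\delta_c:=\log(c\sqrt3/2)>0$. By \eqref{eq:additive-lens}, $\log m_*=\delta_c n+O(\log n)$, and the lens varies by only $1+O(1/n)$ across the thin shell. The target half of Proposition~\ref{thm:affine-marked-moments} needs no gap hypothesis, so it applies at $\alpha=1$. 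I will use it with a fixed integer $h>\log c/\delta_c$.

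The Markov step runs on the joint space $\widetilde X_n$. If some affine target $\mathbf d\in(\boldsymbol\tau+\cL)\cap S_R$ violates \eqref{eq:affine-same-shell-lens}, then, since $\omega(\mathbf d)K_R(\mathbf d)=m_*$ exactly, the summed $2h$-th moment in \eqref{eq:affine-target-moment} is at least $(m_*/\log n)^{2h}$. Markov's inequality then bounds the failure probability by two terms:
\[
 (\log n)^{2h}\Bigl[Pe^{O(1)}m_*^{-h}+\frac{P}{m_*}e^{O(\log n)}\Xi_n\Bigr].
\]
The partition term is $\exp((\log c-h\delta_c)n+o(n))$, which is exponentially small by the choice of $h$. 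The correction prefactor $P/m_*=(2/\sqrt3)^{n+o(n)}$ does not depend on $c$. Multiplying it by the three parts of $\Xi_n$ gives bases $8/9$, $\sqrt{e/3}$ and $1/\sqrt3$, all below one, exactly as in the centered proof. This yields \eqref{eq:affine-same-shell-lens}.

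The inclusion \eqref{eq:affine-same-shell-inclusion} follows from positivity of $r^{\rm aff}$. Each counted $\mathbf c$ gives $\mathbf d=(\mathbf d+\mathbf c)-\mathbf c$ with $\mathbf d+\mathbf c\in(\boldsymbol\tau+\cL)\cap S_R$, since $\mathbf d+\mathbf c$ lies in the coset.

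For the endpoint degree I would not invoke the affine endpoint moment. Its gap condition $|1-\alpha^2/2|\ge\delta$ does hold at $\alpha=1$, but a symmetry argument is simpler and avoids a second moment computation. For an affine $\mathbf x$, substitute $\mathbf c\mapsto-\mathbf c$, which preserves $\cL\cap S_R$:
\[
 D^{\rm aff}_{R,1}(\mathbf x)=\#\{\mathbf c\in\cL\cap S_R:\mathbf x+\mathbf c\in S_R\}=r^{\rm aff}_{\cL,\boldsymbol\tau,R}(\mathbf x).
\]
Hence the target estimate already covers every endpoint, with $K_R(\mathbf x)$ as the prediction and relative error $O(1/\log n)$.

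The only step needing care is checking that the constants in Proposition~\ref{thm:affine-marked-moments} are uniform at fixed $c$, that is, that $c_0\le c$ and $R/\Rstar=c$ lie in the permitted range. It also needs $h$ fixed, so that the $(\log n)^{2h}$ and $e^{O(h^2\log n)}$ overheads stay $e^{o(n)}$.
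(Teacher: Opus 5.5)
Your proof is correct. For the lens comparison \eqref{eq:affine-same-shell-lens} and the inclusion \eqref{eq:affine-same-shell-inclusion} it is the paper's argument: the target half of Proposition~\ref{thm:affine-marked-moments} at $\alpha=1$ with a fixed $h>\log c/\log(c\sqrt3/2)$, then the Markov calculation from Theorem~\ref{thm:coverage}. That calculation carries the $(\log n)^{2h}$ overhead and keeps the three margins $8/9$, $\sqrt{e/3}$, $1/\sqrt3$.

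The genuine difference is in the endpoint degree.

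\emph{The paper's route.} It invokes the endpoint half of the proposition. This requires checking the affine gap $|1-\alpha^2/2|=1/2$, identifying the continuum cap with $K_R(\mathbf x)$ via $\mathbf y=\mathbf x-\mathbf c$, and then removing the $1+O(1/n)$ flattening weight.

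\emph{Your route.} You use $\cL\cap S_R=-(\cL\cap S_R)$ to get the exact identity $D^{\rm aff}_{R,1}(\mathbf x)=r^{\rm aff}_{\cL,\boldsymbol\tau,R}(\mathbf x)$. This is legitimate because at $\alpha=1$ every $\mathbf x\in(\boldsymbol\tau+\cL)\cap S_R$ is itself one of the targets already controlled. It is the affine analogue of the paper's own centered identity \eqref{eq:same-shell-endpoint-identity}. It needs no second moment estimate and no extra failure event, and it gives the explicit relative error $1/\log n$ rather than $O(1/\log n)$.

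\emph{What the paper's route buys.} Your identity is special to the same-shell ratio. The endpoint moment bound controls affine partner counts in any band $T_{\alpha,R}$ with $\alpha$ bounded away from $\sqrt2$. For $\alpha\neq1$ the endpoint degree fixes a triangle side of length $R$ rather than $\alpha R$, so no such symmetry reduction to a target count is available there.
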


\begin{proof}
Apply Proposition~\ref{thm:affine-marked-moments} with $\alpha=1$ and any
fixed integer $h>\log c/\log(c\sqrt3/2)$.  The affine endpoint gap is
$|1-\alpha^2/2|=1/2$, while both the target lens mean and the endpoint cap
mean have exponential scale $\exp(n\log(c\sqrt3/2)+o(n))$.  The maximal Markov
calculation in the proof of Theorem~\ref{thm:coverage} therefore applies to
both estimates with deviation $1/\log n$: its additional
$(\log n)^{2h}=e^{o(n)}$ Markov cost leaves the same three strict
correction margins.  Dividing the
target estimate by the flattening weight gives
\eqref{eq:affine-same-shell-lens}; each represented target satisfies
$\mathbf d=(\mathbf d+\mathbf c)-\mathbf c$, which gives
\eqref{eq:affine-same-shell-inclusion}.  For endpoints, the continuum cap
equals $K_R(\mathbf x)$ by the substitution
$\mathbf y=\mathbf x-\mathbf c$, and removing the uniformly
$1+O(1/n)$ flattening weight gives the final relative error.
\end{proof}

\begin{remark}[Near-critical window]\label{rem:coverage-critical-window}
If
$c=(2/\sqrt3)\exp(\beta/\log n)$, then
$\log m_c=\beta n/\log n+o(n/\log n)$.  The proof above handles each fixed
$\beta>0$ after increasing the constant in $h\asymp\log n$, and each fixed
$\beta<0$ is on the sparse side.
At the exact radius $c=2/\sqrt3$, \eqref{eq:width-prefactors} with
$w=n^{-2}$ gives $K_R(\mathbf d)=\Theta(n^{-5/2})$ uniformly for
$\mathbf d\in S_R$.  An integer representation count cannot be relatively
close to this positive vanishing mean.  Moreover, the raw first-moment
calculation in Lemma~\ref{lem:represented-first}, retaining this prefactor
and its exponentially small remainder, gives
$\E Z_{\cL}/P=O(n^{-5/2})$.  Together with the shell count and Markov this
implies $\Theta_{\cL}(2/\sqrt3)\to0$ in probability.
The uniform dense-side argument fails here because its mean no longer
supplies exponential decay in the Markov bound.  Resolving a finer
same-shell critical window requires the polynomial prefactors and a
correspondingly finer analysis of the distribution of the parent counts.
\end{remark}

\section{Additive energy and collision entropy}
\label{sec:energy}

Fix $c>1$ and $R=c\Rstar$, with the notation of
Section~\ref{sec:additive-phases}.  The next result compares the forced
diagonal energy with the squared-lens contribution.

Define the ordered additive energy
\begin{equation}\label{eq:additive-energy-def}
 \mathcal E(A_{\cL}(c))
 :=\#\{(\mathbf x_1,\mathbf x_2,\mathbf x_3,\mathbf x_4)
       \in A_{\cL}(c)^4:
 \mathbf x_1-\mathbf x_2=\mathbf x_3-\mathbf x_4\}
 =\sum_{\mathbf d\in\cL}r_{\cL,R}(\mathbf d)^2.
\end{equation}

\begin{theorem}[Additive-energy exponent]\label{thm:energy}
For every fixed $c>1$, with probability tending to one,
\begin{equation}\label{eq:energy-exponent}
 \frac1n\log\mathcal E(A_{\cL}(c))
 =\max\left\{2\log c,
 3\log c+\log\frac4{3\sqrt3}\right\}+o(1).
\end{equation}
The continuum off-diagonal term becomes exponentially dominant for
$c>c_E$, where
\[
 c_E=\frac{3\sqrt3}{4}=1.299038\ldots.
\]
The exponent formula includes $c=c_E$.
More precisely, let $L(c)$ denote the maximum in
\eqref{eq:energy-exponent}.  For every fixed $\varepsilon>0$ there is
$b=b(c,\varepsilon)>0$ such that
\[
 \Prb\!\left[\mathcal E(A_{\cL}(c))\notin
 [e^{n(L(c)-\varepsilon)},e^{n(L(c)+\varepsilon)}]\right]\le e^{-bn}
\]
for all sufficiently large $n$.
\end{theorem}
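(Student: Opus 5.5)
I will prove matching lower and upper bounds for $\mathcal E=\sum_{\mathbf d\in\cL}r_{\cL,R}(\mathbf d)^2$, each holding with probability $1-e^{-\Omega(n)}$. The first step is to split off the diagonal. The zero difference gives $r_{\cL,R}(\mathbf 0)=N$, hence a contribution $N^2=(1+o(1))P^2=c^{2n+o(n)}$ by Lemma~\ref{lem:thin-shell-count}. This already yields the lower bound $e^{n(2\log c-\varepsilon)}$. For the lens term, I restrict to differences $\mathbf d$ whose relative length lies in a micro-shell around $\alpha_2=2/\sqrt3$. Here $K_R(\mathbf d)=\exp(n(\log c+\tfrac12\log\tfrac23)+O(\log n))$, which is exponentially large precisely when $c>\sqrt{3/2}$. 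In the regime where $3\log c+\log\frac4{3\sqrt3}>2\log c$, i.e.\ $c>c_E>\sqrt{3/2}$, I apply the target half of Proposition~\ref{thm:intrinsic-marked-moments} at $\alpha=\alpha_2$ with $h=1$ and Markov. This gives $\sum_{\mathbf d\in\cL\cap T}|\omega r-m_*|^2\le Q\,m_*e^{o(n)}$, together with a correction of size $Q\,m_*\,(P/m_*)\,\Xi_n$. Since $P/m_*\cdot\Xi_n$ is exponentially small (as in the coverage proof, now with the ratio $P/m_*=(c/K)^{n}$ at the new $\alpha$), Cauchy--Schwarz shows that $\sum_{\mathbf d\in T}r^2\ge(1-o(1))|\cL\cap T|m_*^2$. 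Chebyshev via Lemma~\ref{lem:thin-shell-count} gives $|\cL\cap T|=(1+o(1))Q$. Hence $\mathcal E\ge Q m_*^2e^{-o(n)}$, and the polar integral of $Qm_*^2$ reproduces $I_2(R)$ from \eqref{eq:I2} up to $e^{o(n)}$, matching the lower bound $3\log c+\log\frac4{3\sqrt3}$. When $c\le c_E$, only the diagonal lower bound is needed.

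For the upper bound, I take expectations directly, since the statement allows an $e^{\varepsilon n}$ slack: Markov on $\E\mathcal E$ suffices. I write $\mathcal E=N^2+\sum_{\mathbf d\ne0}r(\mathbf d)^2$ and bound $\E N^2\le P^2+3P$. The remaining term is a raw two-occurrence marked target moment, a Rogers sum of order $3$ for the kernel $F_2^{\rm ct}$ with $\omega\equiv1$. Here I do not want the compact-$\alpha$ restriction, so I argue directly with Proposition~\ref{thm:rogers-general} at $k=3$. The identity diagram gives $\int K_R^2=I_2(R)$. The partition diagrams in which $\mathbf x_2=\mathbf x_1$ or $\mathbf x_2=\mathbf d-\mathbf x_1$ give $2\int K_R=2P^2$ by \eqref{eq:kernel-mass}. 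The remaining rank-$\le2$ diagrams are of three kinds: those with $\mathbf d$ a rational multiple of a parent, those where $\mathbf d=\mathbf x_1-\mathbf x_2$ (a genuine three-term relation), and those with $q\ge2$. In the three-term case, the constraint $\mathbf x_2-\mathbf d$ in the shell costs nothing beyond the volume, giving $\le P^2$; these are exactly the ``forced coincidences'' also captured by the symmetry $\mathbf x_1-\mathbf x_2=\mathbf x_3-\mathbf x_4\Leftrightarrow\mathbf x_1-\mathbf x_3=\mathbf x_2-\mathbf x_4$. Thus every diagram contributes at most $O(P^2)+I_2(R)$ or is smaller by $2^{-n}$-type factors. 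For the rank-one and arithmetic cases, I drop indicators and use $c_D\le q^{-n}$ with a bounded count of coefficients, as in Lemma~\ref{lem:thin-shell-count}. This yields $\E\mathcal E\le e^{O(\log n)}(P^2+I_2(R))$, so Markov with threshold $e^{\varepsilon n}$ gives the upper bound $e^{n(L(c)+\varepsilon)}$.

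I expect the main obstacle to be the upper-bound diagram accounting at order $3$ with $\mathbf d$ ranging over all of $\R^n$, including $\alpha$ near $0$ and $2$. There the compact-$J$ machinery of Section~\ref{sec:rogers-engine} does not apply directly, so I will have to use the global ball-lens bound \eqref{eq:global-ball-lens-bound} and enumerate the few rank-$\le2$ matrices by hand. Beyond that, the lower bound near the lens-active regime requires checking that the correction ratio $(P/m_*)\Xi_n$ stays exponentially small at $\alpha_2$ for $c>c_E$. At $c_E$ the two exponents coincide, and the diagonal lower bound alone suffices.
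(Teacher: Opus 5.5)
\textbf{The upper bound has a genuine gap.} In your parametrization $(\mathbf d,\mathbf x_1,\mathbf x_2)$ the marked column $\mathbf d$ carries no shell indicator of its own. The three kinds of diagrams you list therefore do not exhaust the supported rank-$\le 2$ diagrams.

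Take pivots $\mathbf d,\mathbf x_1$ and dependent column $\mathbf x_2=(A\mathbf d+B\mathbf x_1)/q$.
\begin{itemize}
\item For every integer $A$, the $q=1$ column $\mathbf x_2=\mathbf x_1+A\mathbf d$ is supported on a positive-measure set. Take $\mathbf d$ nearly orthogonal to $\mathbf x_1$ with $\|\mathbf d\|\lesssim R/(n(|A|+1))$.
\item When $\|\mathbf d\|$ is within $O(w_nR)$ of $2R$, the lens has transverse radius $O(\sqrt{w_n}R)$. Then every column with $2A+B=1$ is supported, with $|B|$ arbitrarily large.
\end{itemize}
So infinitely many admissible matrices with $c_D=1$ survive. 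For each $q\ge2$, infinitely many numerators survive as well, for example $\mathbf x_2=\mathbf x_1+(a/q)\mathbf d$.

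Bounding each diagram by $P^2$ does not sum over these families. Neither does dropping indicators and using $c_D\le q^{-n}$ with a ``bounded count of coefficients''. There are also not ``few'' matrices to enumerate by hand. The integer rounding that kills such relations needs $\alpha$ in a compact subset of $(0,2)$, and that is exactly what fails in the degenerate ranges you flagged. What you need is a per-diagram bound that decays summably in both the coefficient height and $q$.

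The paper gets this in Lemma~\ref{lem:energy-remainder}. It swaps $\mathbf x_3,\mathbf x_4$ and sums the kernel $\prod_{j\le3}\mathbf 1_{S_R}(\mathbf x_j)\,\mathbf 1_{S_R}(\mathbf x_1-\mathbf x_2+\mathbf x_3)$, in which every Rogers column is itself a shell point. Equivalently, apply the unimodular substitution $(\mathbf d,\mathbf x_1,\mathbf x_2)\mapsto(\mathbf x_1,\mathbf x_1-\mathbf d,\mathbf x_2)$. In that form:
\begin{itemize}
\item every rank-$\le2$ pivot integral is at most $P^2$;
\item a one-parent column of height at least two is unsupported;
\item a multi-parent column of height $H\ge2$ costs $((e+o(1))/H^2)^{n/2}$ by Lemma~\ref{fs-lem:general-spherical-penalty}.
\end{itemize}
Summing over dyadic height bins and over $q$ then yields $\E\mathcal E\le I_2(R)+e^{o(n)}P^2$. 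Without such a summation, your claimed bound $\E\mathcal E\le e^{O(\log n)}(P^2+I_2(R))$ is not established.

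\textbf{Your lower bound is correct} and is a leaner route than the paper's. The paper uses a fixed window around $2/\sqrt3$, together with Lemma~\ref{lem:bandwise-l2-approximation} and the Chebyshev concentration of $\sum K_R^2$ from Lemma~\ref{lem:squared-kernel-concentration}. You instead use a single micro-shell at $\alpha_2=2/\sqrt3$, where $K_R$ is essentially constant. There the $h=1$ target bound, the shell count and the reverse triangle inequality already give $\mathcal E\ge Qm_*^2e^{-O(\log n)}=e^{n(3\log c+\log\frac4{3\sqrt3})+O(\log n)}$. The paper's version buys the sharper $\mathcal E\ge I_2(R)(1-e^{-\Omega_c(n)})$, which the exponent statement does not need.

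One misquote needs fixing. At $h=1$ the correction term in \eqref{eq:intrinsic-target-moment} is $Qe^{O(\log n)}m_*\Xi_n$, with no factor $P/m_*$. Your supporting claim that $(P/m_*)\Xi_n$ is exponentially small is false at $\alpha_2$: there $P/m_*=(3/2)^{n/2}e^{O(\log n)}$, and $(3/2)(e/4)=3e/8>1$. The correctly quoted correction is smaller than the partition term, so the lower-bound argument survives.
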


The Euclidean constant $4/(3\sqrt3)=4\sqrt3/9$ already occurs in continuum
ball energy \cite{Shao2013}.  Long's Theorem~1.1 compares normalized energy
of integer-lattice points in Euclidean balls with that continuum rate when
the radius divided by the square root of the dimension tends to infinity
\cite{Long2026}.  The contribution here is the quenched exponent for thin
Haar shells at $R=c\Rstar$ and its competition with forced lattice
diagonals.  The isolated continuum constant is inherited.

Kaminaga's Gibbs-ensemble results concern soft quadratic-energy weights,
including sign-class edge statistics and primitive-direction thermal
concentration \cite{Kaminaga2026}.  These are different observables from
the hard-shell incidence and energy transition here.

The upper bound follows from a fixed-order remainder estimate.  For the
lower bound above the threshold, concentration of the squared lens kernel
is combined with an integrated approximation to the parent counts.  We
first prove the two second-moment estimates shared by the energy proof
and the total-variation comparison in Section~\ref{sec:convolution}.

\subsection{Shared second-moment estimates}

\begin{lemma}[An $L^2$ Rogers variance bound]
\label{lem:l2-rogers-variance}
Let $f:\R^n\to\R$ be bounded, compactly supported, and square integrable,
and redefine $f(0)=0$ if necessary.  For all sufficiently large $n$,
\begin{equation}\label{eq:l2-rogers-variance}
 \Var_{\cL}\left(\sum_{\mathbf v\in\cL\setminus\{0\}}f(\mathbf v)\right)
 \le\bigl(2+O(2^{-n/2})\bigr)
       \int_{\R^n}|f(\mathbf x)|^2d\mathbf x.
\end{equation}
\end{lemma}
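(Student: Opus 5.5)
We will apply Rogers' formula (Proposition~\ref{thm:rogers-general}) with $k=2$ to $F(\mathbf x,\mathbf y)=f(\mathbf x)f(\mathbf y)$, after first reducing to the case $f\ge0$. For signed $f$ we write $f=f_+-f_-$ and note that $\Var(X)=\E(X-\E X)^2$ is a quadratic form. The cleanest route is to use the exact identity for the second moment in the signed case. Since $f$ is bounded, compactly supported and integrable, the absolute kernel $|f|\otimes|f|$ has finite Rogers sum. Each bilinear piece $\E\sum f_\pm(\mathbf x)f_\pm(\mathbf y)$ therefore satisfies the formula separately, and the identity extends to $f\otimes f$ by linearity. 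Rogers then gives
\[
 \E\Bigl(\sum f\Bigr)^2=\Bigl(\int f\Bigr)^2
 +\sum_{q\ge1}\sum_{\substack{a\ne0\\ \gcd(a,q)=1}} c_{a,q}\int f(\mathbf x)f(\tfrac aq\mathbf x)\,d\mathbf x ,
\]
where the rank-one matrices are $(q,a)$ with $c_{a,q}=q^{-n}$, exactly as in the proof of Lemma~\ref{lem:thin-shell-count}. Subtracting the Siegel mean squared (Proposition~\ref{thm:siegel}) removes the identity diagram.

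The $q=1$, $a=\pm1$ terms give $\int f^2+\int f(\mathbf x)f(-\mathbf x)\,d\mathbf x\le2\int f^2$ by Cauchy--Schwarz. Each remaining term is bounded by Cauchy--Schwarz together with the change of variables $\mathbf y=(a/q)\mathbf x$:
\[
 \Bigl|\int f(\mathbf x)f(\tfrac aq\mathbf x)\,d\mathbf x\Bigr|
 \le\|f\|_2\,\|f(\tfrac aq\cdot)\|_2=(q/|a|)^{n/2}\|f\|_2^2 .
\]
The rank-one contribution other than $a=\pm1$, $q=1$ is therefore at most
\[
 \|f\|_2^2\sum_{q\ge1}\sum_{\substack{a\ne0,\ \gcd(a,q)=1\\ (a,q)\ne(\pm1,1)}}q^{-n}(q/|a|)^{n/2}
 =\|f\|_2^2\sum q^{-n/2}|a|^{-n/2}.
\]

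The main point is to verify that this double series is $O(2^{-n/2})$ without any support assumption relating $a$ and $q$; the bounds used here do not involve the support of $f$ at all. Every excluded pair has $\max(q,|a|)\ge2$, so each summand is at most $(q|a|)^{-n/2}$ with $q|a|\ge2$. The sum therefore equals
\[
 2\Bigl(\sum_{q,|a|\ge1}(q|a|)^{-n/2}-1\Bigr)\le2\bigl(\zeta(n/2)^2-1\bigr)=O(2^{-n/2})
\]
for large $n$, because $\zeta(n/2)=1+2^{-n/2}+O(3^{-n/2})$. Adding the main term $2\|f\|_2^2$ yields \eqref{eq:l2-rogers-variance}. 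The only delicate point is justifying the signed use of Rogers' formula. To handle it, we check absolute convergence by running the same Cauchy--Schwarz and zeta bound for $|f|$, so that Fubini applies to each diagram.
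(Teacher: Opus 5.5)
Your proposal is correct and follows the paper's proof essentially line for line. Both apply order-two Rogers to $f\otimes f$ through its positive and negative parts, cancel the identity diagram against the squared Siegel mean, and bound each rank-one term by Cauchy--Schwarz with the dilation $\mathbf y=(a/q)\mathbf x$, giving $(q/|a|)^{n/2}\|f\|_2^2$. Both then drop coprimality and sum to $2\zeta(n/2)^2\|f\|_2^2$. One cosmetic slip: the sum over coprime pairs is \emph{at most}, not equal to, $2\bigl(\sum_{q,m\ge1}(qm)^{-n/2}-1\bigr)$, and that expression \emph{equals} $2(\zeta(n/2)^2-1)$, so your equality and inequality are in swapped positions.
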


\begin{proof}
Apply the order-two Rogers formula to $f(\mathbf x)f(\mathbf y)$, first to
its positive and negative parts if $f$ is signed.  The full-rank term is the
squared mean and cancels in the variance.  Rank one has
$\mathbf y=(a/q)\mathbf x$, with $q\ge1$, $a\ne0$, $(a,q)=1$, and
coefficient $q^{-n}$.  Cauchy--Schwarz and the substitution
$\mathbf z=(a/q)\mathbf x$ give
\[
 \left|\int f(\mathbf x)f((a/q)\mathbf x)d\mathbf x\right|
 \le\left(\frac q{|a|}\right)^{n/2}\|f\|_2^2.
\]
Therefore, even after dropping coprimality,
\begin{align*}
 \Var(\widehat f)
 &\le2\|f\|_2^2
   \sum_{q\ge1}\sum_{a\ge1}(qa)^{-n/2}\\
 &=2\zeta(n/2)^2\|f\|_2^2
  =\bigl(2+O(2^{-n/2})\bigr)\|f\|_2^2.
\end{align*}
Here $\zeta$ denotes the Riemann zeta function.
This enumeration includes both signs of $a$ and every rank-one diagram, so
there is no unsummed dilation remainder.
\end{proof}

\begin{lemma}[Bandwise integrated $L^2$ approximation]
\label{lem:bandwise-l2-approximation}
Fix $c>1$ and a compact interval $I\Subset(0,2)$ such that
\begin{equation}\label{eq:dense-window-condition}
 \inf_{\alpha\in I}c\sqrt{1-\alpha^2/4}>1.
\end{equation}
Put $R=c\Rstar$ and
$T_I:=\{\mathbf d:\|\mathbf d\|/R\in I\}$.  Then
\begin{equation}\label{eq:bandwise-l2-expectation}
 \E_{\cL}\sum_{\mathbf d\in\cL\cap T_I}
 |r_{\cL,R}(\mathbf d)-K_R(\mathbf d)|^2
 \le e^{o(n)}P^2.
\end{equation}
Consequently, for every fixed $\tau>0$,
\begin{equation}\label{eq:bandwise-l2-tail}
 \Prb\left[
  \sum_{\mathbf d\in\cL\cap T_I}
  |r_{\cL,R}(\mathbf d)-K_R(\mathbf d)|^2
  >e^{\tau n}P^2\right]
 \le e^{-\tau n+o(n)}.
\end{equation}
Both assertions remain valid for a predetermined union of a fixed number of
such intervals.
\end{lemma}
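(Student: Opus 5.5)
The plan is to reduce the bandwise estimate to the target half of Proposition~\ref{thm:intrinsic-marked-moments} at order $h=1$, applied separately on each micro-shell of the standard cover, and then to recombine the micro-shells using the kernel-mass identity~\eqref{eq:kernel-mass}.

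\textbf{Step 1: cover the band.} Cover $T_I$ by the $N=O(n^2)$ standard closed shells $T_j:=S_{\rho_j}$ of~\eqref{eq:standard-shell-cover}, taking $[a,b]=I$. Every ratio $\rho_j/R$, and every pointwise ratio $\|\mathbf d\|/R$ on the cover, lies in a fixed compact $J\Subset(0,2)$ slightly larger than $I$. Shrink this enlargement so that~\eqref{eq:dense-window-condition} still holds on $J$; this is possible by continuity. Because the cover has multiplicity at most two and the summand is nonnegative,
\[
 \sum_{\mathbf d\in\cL\cap T_I}|r_{\cL,R}(\mathbf d)-K_R(\mathbf d)|^2
 \le\sum_{j<N}\ \sum_{\mathbf d\in\cL\cap T_j}|r_{\cL,R}(\mathbf d)-K_R(\mathbf d)|^2 .
\]
The same bound covers a predetermined union of a fixed number of intervals, by adding the resulting bounds.

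\textbf{Step 2: one micro-shell.} On $T_j$ put $m_{*,j}:=\inf_{T_j}K_R$ and $\omega_j:=m_{*,j}/K_R$. Since the logarithmic derivative of the lens volume in $\alpha$ is $O(n)$ (Lemma~\ref{lem:thin-shell-coarea}) and the relative width is $n^{-2}$, we have $\omega_j=1+O(1/n)$ uniformly. The exact identity $\omega_jK_R=m_{*,j}$ then gives
\[
 |r_{\cL,R}(\mathbf d)-K_R(\mathbf d)|^2
 =\omega_j(\mathbf d)^{-2}\,|\omega_j(\mathbf d)r_{\cL,R}(\mathbf d)-m_{*,j}|^2
 \le(1+O(1/n))\,|\omega_j(\mathbf d)r_{\cL,R}(\mathbf d)-m_{*,j}|^2 .
\]
Now apply~\eqref{eq:intrinsic-target-moment} with $h=1$ and $\alpha=\rho_j/R\in J$. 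The target half needs no endpoint gap, and its constants are uniform on $J$ and on $c_0\le c\le c$. This bounds the expectation on $T_j$ by
\[
 Q_j\bigl(e^{O(1)}m_{*,j}+e^{O(\log n)}\max(1,m_{*,j})\,\Xi_n\bigr),\qquad Q_j:=\vol(T_j).
\]
By~\eqref{eq:additive-lens} and~\eqref{eq:dense-window-condition}, $\log m_{*,j}\ge n\inf_J\log\bigl(c\sqrt{1-\alpha^2/4}\bigr)-O(\log n)$, which is $\Omega(n)$. Hence $\max(1,m_{*,j})=m_{*,j}$, and the right-hand side is at most $e^{O(\log n)}Q_jm_{*,j}$, since $\Xi_n<1$.

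\textbf{Step 3: recombine.} Since $m_{*,j}\le K_R$ on $T_j$, we have $Q_jm_{*,j}\le\int_{T_j}K_R$. Bounded overlap and~\eqref{eq:kernel-mass} give
\[
 \sum_jQ_jm_{*,j}\le2\int_{\R^n}K_R=2P^2 .
\]
Combining Steps 1--3 yields~\eqref{eq:bandwise-l2-expectation}, with $e^{o(n)}=e^{O(\log n)}$. The tail bound~\eqref{eq:bandwise-l2-tail} then follows from Markov's inequality at the threshold $e^{\tau n}P^2$.

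\textbf{Expected obstacle.} The only delicate point is the passage from weighted to unweighted deviations. This needs the $1+O(1/n)$ uniformity of $K_R$ across each micro-shell, and hence the full compactness of $J$, including the boundary shells of the cover. It also needs a check that the $h=1$ target bound is indeed uniform over all $O(n^2)$ ratios simultaneously, so that no union bound is required before the final Markov step. Both points are supplied by Lemma~\ref{lem:thin-shell-coarea} and by the uniformity statement of Proposition~\ref{thm:intrinsic-marked-moments}.
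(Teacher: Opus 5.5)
Your proposal is correct and follows essentially the same route as the paper's proof. Both arguments cover $T_I$ by the standard micro-shells inside a slightly enlarged dense window, apply the $h=1$ target bound of Proposition~\ref{thm:intrinsic-marked-moments} on each shell, and remove the $1+O(1/n)$ flattening weight. Both then recombine via $\vol(T_j)m_{*,j}\le\int_{T_j}K_R$, bounded overlap, and \eqref{eq:kernel-mass}, and finish with Markov's inequality.
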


\begin{proof}
Write $I=[a,b]$ and cover $T_I$ by the $O(n^2)$ standard closed micro-shells
$T_i=S_{\rho_i}$ of~\eqref{eq:standard-shell-cover}.  For all sufficiently
large $n$, their union lies in a fixed slightly larger compact interval
$I^+\Subset(0,2)$ on which the strict density condition
\eqref{eq:dense-window-condition} still holds.  Every target is covered,
and the overlap multiplicity is at most two.  Put
$m_i:=\inf_{\mathbf d\in T_i}K_R(\mathbf d)$, and
$\omega_i(\mathbf d):=m_i/K_R(\mathbf d)$.  The coarea derivative bound in
Lemma~\ref{lem:thin-shell-coarea} gives, uniformly in $i$,
\begin{equation}\label{eq:micro-shell-weight-uniform}
 \omega_i(\mathbf d)=1+O(1/n)\qquad(\mathbf d\in T_i).
\end{equation}
The density condition on $I^+$ and the same coarea formula give
$m_i\ge e^{a_In}$ for some $a_I>0$ and all large $n$.

Apply the target half of
Proposition~\ref{thm:intrinsic-marked-moments} with $h=1$ to each full shell
$T_i$.  Since $m_i>1$, its two terms give, for a fixed $C>0$,
\begin{align*}
 &\E\sum_{\mathbf d\in\cL\cap T_i}
   |\omega_i(\mathbf d)r_{\cL,R}(\mathbf d)-m_i|^2\\
 &\qquad\le
 C\vol(T_i)m_i\bigl(1+n^C\Xi_n\bigr)
 \le e^{o(n)}\vol(T_i)m_i.
\end{align*}
Pointwise on $T_i$,
\[
 \omega_i(\mathbf d)r_{\cL,R}(\mathbf d)-m_i
 =\omega_i(\mathbf d)
   \bigl(r_{\cL,R}(\mathbf d)-K_R(\mathbf d)\bigr).
\]
Thus~\eqref{eq:micro-shell-weight-uniform} permits removal of the weight.
Since the errors are nonnegative, the covering inequality and
$m_i\vol(T_i)\le\int_{T_i}K_R(\mathbf d)d\mathbf d$ give
\begin{align*}
 &\E\sum_{\mathbf d\in\cL\cap T_I}
       |r_{\cL,R}(\mathbf d)-K_R(\mathbf d)|^2\\
 &\qquad\le e^{o(n)}\sum_i\int_{T_i}K_R(\mathbf d)d\mathbf d
 \le 2e^{o(n)}\int_{\R^n}K_R(\mathbf d)d\mathbf d
 =e^{o(n)}P^2.
\end{align*}
Here the second inequality is the bounded-overlap assertion in
\eqref{eq:standard-shell-cover}, and the last identity is
\eqref{eq:kernel-mass}.  This proves
\eqref{eq:bandwise-l2-expectation}; Markov's inequality proves
\eqref{eq:bandwise-l2-tail}.  A fixed union changes only the subexponential
factor.
\end{proof}

\subsection{Energy remainder and squared-kernel estimates}

\begin{lemma}[Fixed-order Rogers remainder for the energy]
\label{lem:energy-remainder}
For fixed $c>1$,
\begin{equation}\label{eq:energy-expectation}
 \E\mathcal E(A_{\cL}(c))
 \le I_2(R)+e^{o(n)}P^2.
\end{equation}
\end{lemma}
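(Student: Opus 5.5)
We will treat $\mathcal E=\sum_{\mathbf d}r_{\cL,R}(\mathbf d)^2$ as a raw Rogers sum over the tuple $(\mathbf x_1,\mathbf x_2,\mathbf x_3,\mathbf x_4)$ of shell points subject to $\mathbf x_1-\mathbf x_2=\mathbf x_3-\mathbf x_4$. We do not use the constraint as a kernel on four free columns, because a linear equality has measure zero. Instead we write the sum as a marked order-three sum over $(\mathbf d,\mathbf x,\mathbf x')$ with $\mathbf d$ the common difference:
\[
 \mathcal E=\sum_{\mathbf d\in\cL}\sum_{\mathbf x,\mathbf x'\in\cL}
 \mathbf1_{S_R}(\mathbf x)\mathbf1_{S_R}(\mathbf x-\mathbf d)
 \mathbf1_{S_R}(\mathbf x')\mathbf1_{S_R}(\mathbf x'-\mathbf d).
\]
We then split off $\mathbf d=0$, which contributes exactly $N^2$. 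By Lemma~\ref{lem:thin-shell-count} its mean is $\E N^2=P^2+O(P)$. For $\mathbf d\ne0$ all three columns are nonzero lattice vectors, since $S_R$ avoids the origin, so Proposition~\ref{thm:rogers-general} applies at order $k=3$ to the nonnegative kernel $F_2^{\rm ct}$ of \eqref{eq:raw-kernel-ct} with $\omega\equiv1$ and $T$ replaced by all of $\R^n$.

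The full-rank identity diagram gives exactly $\int K_R(\mathbf d)^2\,d\mathbf d=I_2(R)$. Every other admissible matrix has rank at most two. We first take the diagrams in which $\mathbf x'$ is a height-one function of $(\mathbf d,\mathbf x)$. By the longitudinal relation of Lemma~\ref{lem:four-kernel-relation-certificate}(i), or by direct enumeration, these are $\mathbf x'=\mathbf x$ and $\mathbf x'=\mathbf d-\mathbf x$. Each contributes $\int K_R=P^2$ by \eqref{eq:kernel-mass}. The rank-two diagrams in which $\mathbf d$ is dependent, namely $\mathbf d=\mathbf x\pm\mathbf x'$ type reuses and $\mathbf d$ a multiple of $\mathbf x$, are also bounded by $O(P^2)$: drop one shell indicator, then integrate the two remaining free shell variables, giving at most $P^2$.

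One subtlety is that the marked framework assumed $\|\mathbf d\|/R$ lies in a compact $J\Subset(0,2)$. Here $\mathbf d$ ranges over all of $\R^n$. To handle this we cover $\{0<\|\mathbf d\|\le 2R(1+w_n)\}$ by the micro-shells of \eqref{eq:standard-shell-cover}. Near $\alpha\to0$ or $\alpha\to2$ there is no need for the refined classification: the crude bound already works, since every nonidentity diagram has at most one free fiber by Lemma~\ref{fs-lem:missing-fiber}. Its integral is therefore at most $c_D\int\mathbf1_{S_R}(\mathbf d')K_R(\mathbf d')\,d\mathbf d'$-type quantities, bounded by $c_D P^2$ after dropping indicators.

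The remaining task is summing over all $q\ge2$ and large-coefficient matrices at $k=3$. We use Lemma~\ref{fs-lem:arithmetic-direct}, or more simply the fixed-order finiteness of the enumeration together with $c_D\le q^{-n}$. Each such diagram has one free fiber of mass at most $\sup K_R\le P$ and a mark domain of volume $O(P)$ for $\mathbf d\in S_R-S_R$. It therefore contributes $O(P^2)\,e^{O(1)}[2^{-n}+(e/4+o(1))^{n/2}]$, and the sum over $q$ converges to $O(P^2)$. Collecting terms gives $\E\mathcal E\le I_2(R)+O(P^2)\le I_2(R)+e^{o(n)}P^2$.

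The main obstacle is the bookkeeping for rank-two diagrams whose mark is not at a compact ratio, and for unbounded $q$ with large coefficients. Since only an $e^{o(n)}P^2$ bound is needed, we plan to avoid the angular penalty entirely. The key fact is that any nonidentity diagram has at most two independent integration vectors, each confined to a shell of volume $P$, after dropping the indicators. For unbounded coefficients we use the Rogers coefficient $q^{-n}$ and the dilation Jacobian, exactly as in Lemma~\ref{lem:l2-rogers-variance}, to make the series over $(q,a)$ converge with total $O(P^2)$.
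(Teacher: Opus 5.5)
Your reduction is fine as far as it goes. The identity $\mathcal E=\sum_{\mathbf d\in\cL}r_{\cL,R}(\mathbf d)^2$ is correct, the $\mathbf d=0$ term is $N^2$ with mean $P^2+O(P)$, and the identity diagram gives $I_2(R)$. After the substitution $\mathbf y=\mathbf x-\mathbf d$, each single nonidentity diagram is at most $c_DP^2$.

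\textbf{The gap.} What is missing is the real content of the lemma: summing over the infinitely many admissible matrices. In your parametrization the mark $\mathbf d$ is not confined to a shell or to a compact ratio range. It runs over all of $B_{2R(1+w_n)}\setminus\{0\}$, whose volume is $(2c)^{n+o(n)}=2^{n+o(n)}P$, not $O(P)$ as you state. At both degenerate ends of this range the set of supported matrices is infinite, even at $q=1$.
\begin{itemize}
\item $D=\begin{pmatrix}1&0&A\\0&1&1\end{pmatrix}$, i.e.\ $\mathbf x'=\mathbf x+A\mathbf d$, is Han-admissible and has positive integral for every $A\in\Z$: take $\mathbf d$ very short and nearly orthogonal to $\mathbf x$.
\item Relations $\mathbf x'=A\mathbf d+B\mathbf x$ with $|B|\ge2$ are supported when $\|\mathbf d\|$ is extremely close to $2R$.
\end{itemize}
These are rank-two matrices with $c_D=1$. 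There is no $q^{-n}$ factor, and the dilation-Jacobian device of Lemma~\ref{lem:l2-rogers-variance} applies only to rank one. Summing the per-diagram bound $P^2$ over them therefore diverges.

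The results you lean on do not cover these ends. Lemmas~\ref{lem:four-kernel-relation-certificate} and~\ref{fs-lem:arithmetic-direct} assume $\alpha\in J\Subset(0,2)$ precisely to exclude them. The cover \eqref{eq:standard-shell-cover} needs $[a,b]$ compact in $(0,\infty)$, so it cannot reach $\alpha\to0$. Consequently the ``crude bound near the ends'' does not close the argument, and ``fixed-order finiteness of the enumeration'' is false in this parametrization. A minor point: in Han's normalization column~$1$ is always a pivot, so no diagram has $\mathbf d$ dependent. The cases you list are the pivot pattern $\{1,3\}$ and the reflection column.

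\textbf{Repairing your route.} You would have to prove that the support of a high-coefficient relation shrinks summably. Two such estimates:
\begin{itemize}
\item $\mathbf x'=\mathbf x+A\mathbf d$ with $|A|\ge2$ forces $(A^2+A)\|\mathbf d\|^2=O(|A|w_nR^2)$. This confines $\mathbf d$ to a ball of radius $O(R/(n\sqrt{|A|}))$, whose volume is summable in $A$.
\item For $\|\mathbf d\|\ge R$, a coefficient $|B|\ge2$ on $\mathbf x$ forces the part of $\mathbf x$ orthogonal to $\mathbf d$ to have length $O(R/(n|B|))$.
\end{itemize}
Similar estimates are needed for the remaining families (including $|B|\ge2$ with tiny $\mathbf d$) and for the numerators at each $q\ge2$.

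\textbf{The paper's route.} The paper avoids all of this by choosing a different Rogers tuple. It swaps $\mathbf x_3,\mathbf x_4$ and eliminates $\mathbf x_4$, giving the kernel $\prod_{j\le3}\mathbf1_{S_R}(\mathbf x_j)\,\mathbf1_{S_R}(\mathbf x_1-\mathbf x_2+\mathbf x_3)$, in which every column is a shell vector. Then:
\begin{itemize}
\item rank-one numerators satisfy $|a_j|\le Cq$;
\item a rank-two column of coefficient norm at least $H\ge2$ needs two nearly collinear shell pivots, penalized by $(e/H^2)^{(n-O(1))/2}$ via Lemma~\ref{fs-lem:general-spherical-penalty};
\item one-parent columns with $H\ge2$ are unsupported;
\item there are at most $(CqH)^6$ matrices per bin.
\end{itemize}
The sums over $q$ and dyadic $H$ then total $e^{o(n)}P^2$.
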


\begin{proof}
First interchange the dummy variables $\mathbf x_3$ and $\mathbf x_4$ in
\eqref{eq:additive-energy-def}; the relation then reads
$\mathbf x_1-\mathbf x_2=\mathbf x_4-\mathbf x_3$.  Eliminating
$\mathbf x_4$ gives the order-three lattice sum with kernel
\[
 F(\mathbf x_1,\mathbf x_2,\mathbf x_3)
 =\prod_{j=1}^3\mathbf1_{S_R}(\mathbf x_j)
  \mathbf1_{S_R}(\mathbf x_1-\mathbf x_2+\mathbf x_3).
\]
By Lemma~\ref{lem:rogers-pivot-facts}, the only rank-three diagram is the
$q=1$ identity, and its integral is precisely $I_2(R)$.  It remains to sum
all ranks $r\le2$.  Every such diagram has at most two free
shell-restricted pivots.  Dropping all dependent-column indicators bounds
its pivot integral by $P^r\le P^2$ because $P=e^{\Omega_c(n)}$.

We now justify that this $P^2$ majorant is summable over the infinitely many
admissible integer matrices.  There are only finitely many pivot patterns at
order three.  In rank one, every nonpivot column equals
$(a_j/q)\mathbf v_1$.  Comparable input and output norms force
$|a_j|\le Cq$, so the number of numerator arrays for fixed $q$ is
$O(q^3)$.  The $q\ge2$ contribution is therefore at most
\[
 P^2\sum_{q\ge2}q^{-n+3}=2^{-n+O(1)}P^2.
\]
For $q=1$ the same support bound leaves only $O(1)$ rank-one arrays; their
total pivot integral is $O(P)\le O(P^2)$.

Consider rank two and write the free pivots as
$\mathbf v_i=r_i\mathbf u_i$, $i=1,2$.  Under the product measure of their
retained shell factors, conditioning on $r_1,r_2$ leaves independent uniform
$\mathbf u_1,\mathbf u_2\in S^{n-1}$.  For each nonpivot column let
$b_j/q\in q^{-1}\mathbb Z^2$ be its coefficient vector.  Retaining its
output shell constraint gives
\[
 \left\|\sum_{i=1}^2 c_{ij}\mathbf u_i\right\|^2=1+O(w_n),
 \qquad c_{ij}:=\frac{b_{ij}r_i}{qR},
 \qquad \|c_j\|^2=(1+O(w_n))\|b_j/q\|^2.
\]
The last estimate follows from $r_i/R=1+O(w_n)$, uniformly in $q$ and the
coefficient height.  Thus Lemma~\ref{fs-lem:general-spherical-penalty}
applies with $d_\perp=n$.  Put a matrix in the bounded bin $H=1$ when
$\max_j\|b_j\|/q<2$; otherwise let $H\ge2$ be the unique dyadic number
satisfying
\[
 H\le\max_j\|b_j\|/q<2H.
\]
At fixed $(q,H)$ all entries have size $O(qH)$, so even after ignoring the
row-echelon zeros and primitivity there are at most $(CqH)^6$ matrices.  If
$H\ge2$ and the height is attained by a column with two nonzero
coefficients, Lemma~\ref{fs-lem:general-spherical-penalty} and
Lemma~\ref{fs-lem:fiber-product-majorization} bound its complete pivot
integral by
\[
 P^2e^{O(\log n)}
 \left(\frac{e+o(1)}{H^2}\right)^{(n-O(1))/2}.
\]
If the height-attaining column has only one nonzero coefficient, support is
impossible for $H\ge2$: a single shell pivot scaled by a factor at least two
cannot return to the same relative-width-$n^{-2}$ shell.  Consequently
\begin{align*}
 &\sum_{H=2,4,\ldots}(CH)^6
 \left(\frac{e+o(1)}{H^2}\right)^{(n-O(1))/2}\\
 &\hspace{25mm}\le
 e^{O(\log n)}\left(\frac e4+o(1)\right)^{n/2}.
\end{align*}

For $q\ge2$, the bounded-height bin has at most $O(q^6)$ matrices and the
exact Rogers coefficient satisfies $c_D\le q^{-n}$.  The large-height bins
have the same factor in addition to the preceding summable angular tail.
Thus their complete contribution is bounded by
\[
 P^2e^{O(\log n)}
 \sum_{q\ge2}q^{-n+6}
 \left[1+\left(\frac e4+o(1)\right)^{n/2}\right]
 =2^{-n+O(\log n)}P^2.
\]
For $q=1$, the bounded-height bin contains only finitely many signed
height-one matrices and contributes $O(P^2)$; the bins $H\ge2$ contribute
$e^{O(\log n)}(e/4+o(1))^{n/2}P^2$.  Combining ranks one and two gives an
$e^{o(n)}P^2$ lower-rank remainder, proving
\eqref{eq:energy-expectation}.
\end{proof}

\begin{lemma}[Concentration of the squared lens kernel]
\label{lem:squared-kernel-concentration}
Let $J$ be a sufficiently small fixed interval around $2/\sqrt3$.  Then,
with probability $1-e^{-\Omega_c(n)}$,
\begin{equation}\label{eq:squared-kernel-concentration-sharp}
 \sum_{\substack{\mathbf d\in\cL\\\|\mathbf d\|/R\in J}}
 K_R(\mathbf d)^2
 =I_2(R)\bigl(1+O(e^{-\eta_cn})\bigr)
\end{equation}
for some $\eta_c>0$.
\end{lemma}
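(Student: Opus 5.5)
The plan is a second-moment (Chebyshev) argument applied to the Siegel transform of $f:=K_R^2\mathbf 1_{\{\|\cdot\|/R\in J\}}$. First choose $J$ as a closed interval around $\alpha_2=2/\sqrt3$, contained in a fixed compact subinterval of $(0,2)$. Siegel's formula (Proposition~\ref{thm:siegel}) gives
\[
 \E\sum_{\mathbf d}f(\mathbf d)=\int_{\|\mathbf d\|/R\in J}K_R(\mathbf d)^2\,d\mathbf d .
\]
By the Laplace tail bound~\eqref{eq:lens-laplace-tail} with $p=2$, this equals $I_2(R)(1-O(e^{-\delta_{2,J}n}))$. The function $f$ vanishes at the origin, because $J$ stays away from $0$. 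It is bounded, compactly supported and square integrable, so Lemma~\ref{lem:l2-rogers-variance} applies and gives
\[
 \Var\Bigl(\sum_{\mathbf d\in\cL}f(\mathbf d)\Bigr)\le(2+o(1))\int_{\|\mathbf d\|/R\in J}K_R(\mathbf d)^4\,d\mathbf d\le(2+o(1))\,I_4(R).
\]

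The key step is to compare $I_4$ with $I_2^2$ using the exponents in~\eqref{eq:I2} and~\eqref{eq:I4}:
\[
 \frac{I_4(R)}{I_2(R)^2}=\exp\Bigl(n\log\Bigl(\frac{32c^5}{25\sqrt5}\cdot\frac{27}{16c^6}\Bigr)+O(\log n)\Bigr)
 =\exp\Bigl(n\log\frac{54}{25\sqrt5\,c}+O(\log n)\Bigr).
\]
Since $54/(25\sqrt5)\approx0.966<1$ and $c>1$, the base is strictly below one. The ratio is therefore at most $e^{-2\theta_cn}$ for some $\theta_c>0$ and all large $n$.

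I expect this exponent comparison to be the main point to verify, and the constants check out. Everything else is routine. On the band $J$ itself, $K_R^4$ is bounded by the full $I_4$, so no restriction issue arises there.

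Finally apply Chebyshev with deviation $e^{-\theta_cn/2}I_2(R)$. The failure probability is at most
\[
 (2+o(1))\,\frac{I_4}{I_2^2}\,e^{\theta_cn}\le e^{-\theta_cn+o(n)}=e^{-\Omega_c(n)}.
\]
Outside this event the sum equals $I_2(1+O(e^{-\theta_cn/2})+O(e^{-\delta_{2,J}n}))$. This gives~\eqref{eq:squared-kernel-concentration-sharp} with $\eta_c=\min(\theta_c/2,\delta_{2,J})$.

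"Sufficiently small" is only needed for $J$ to be a compact interval inside $(0,2)$ with $\alpha_2$ in its interior, so that~\eqref{eq:lens-laplace-tail} applies. If $f$ needs to be Borel bounded, note that $K_R\le P$, so $f\le P^2$, which is finite.
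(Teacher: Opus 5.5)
Your proposal is correct and follows the paper's proof essentially step for step. Both use Siegel's formula with the $p=2$ Laplace tail for the mean, and Lemma~\ref{lem:l2-rogers-variance} to bound the variance by $(2+o(1))I_4(R)$. Both then use the exponent comparison $I_4/I_2^2=\exp\bigl(n\log\frac{54}{25\sqrt5\,c}+O(\log n)\bigr)$ and Chebyshev at an exponentially small relative threshold, with $\eta_c$ chosen below both the tail rate and half the variance-ratio rate, as in the paper.
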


\begin{proof}
Let
$f_J(\mathbf d):=\mathbf1_{\{\|\mathbf d\|/R\in J\}}K_R(\mathbf d)^2$.
The global tail bound~\eqref{eq:lens-laplace-tail} with $p=2$ gives a
$\delta_J>0$ such that
\[
 \int f_J(\mathbf d)d\mathbf d
 =I_2(R)\bigl(1+O(e^{-\delta_Jn})\bigr).
\]
This is the mean of the lattice sum by Siegel.  Applying
Lemma~\ref{lem:l2-rogers-variance} to $f_J$ gives the explicit variance
bound
\[
 \Var\left(\sum_{\mathbf d\in\cL\setminus\{0\}}f_J(\mathbf d)\right)
 \le(2+o(1))\int_JK_R(\mathbf d)^4d\mathbf d
 \le(2+o(1))I_4(R).
\]
Equations \eqref{eq:I2}--\eqref{eq:I4} give
\[
 \frac{I_4(R)}{I_2(R)^2}
 =\exp\!\left(n\log\frac{54}{25\sqrt5\,c}+O(\log n)\right),
\]
where $54/(25\sqrt5)<1$, so the leading exponent is negative for every
$c>1$.  Choose $\eta_c>0$ smaller than $\delta_J/2$ and than
$\tfrac14\log(25\sqrt5\,c/54)$.
Chebyshev at relative threshold $e^{-\eta_cn}$ then has failure
$e^{-\Omega_c(n)}$ and proves
\eqref{eq:squared-kernel-concentration-sharp}.  The zero lattice point is
outside $J$.
\end{proof}

\subsection{Proof of the energy exponent and its consequences}

\begin{proof}[Proof of Theorem~\ref{thm:energy}]
Fix $\varepsilon>0$.  Lemma~\ref{lem:energy-remainder} and
\eqref{eq:I2} give
$\E\mathcal E(A_{\cL}(c))\le e^{nL(c)+o(n)}$.
Markov at $e^{n(L(c)+\varepsilon)}$ gives upper-tail failure
$e^{-\varepsilon n+o(n)}$.  Since
$r_{\cL,R}(\mathbf d)$ is a nonnegative integer,
\[
 \mathcal E(A_{\cL}(c))=\sum_{\mathbf d}r_{\cL,R}(\mathbf d)^2
 \ge\sum_{\mathbf d}r_{\cL,R}(\mathbf d)=|A_{\cL}(c)|^2
 =c^{2n+o(n)}
\]
with high probability; this is the diagonal lower bound.

Suppose $c>c_E$.  Restrict targets to the interval $J$ in
Lemma~\ref{lem:squared-kernel-concentration}.  Since
\[
 c_E\sqrt{1-\frac{(2/\sqrt3)^2}{4}}
 =\frac{3\sqrt3}{4}\sqrt{\frac23}>1,
\]
$J$ may be chosen sufficiently small that the dense-window condition
\eqref{eq:dense-window-condition} also holds.  Put
\begin{align*}
 E_J&:=\sum_{\substack{\mathbf d\in\cL\\\|\mathbf d\|/R\in J}}
 |r_{\cL,R}(\mathbf d)-K_R(\mathbf d)|^2,\\
 S_J&:=\sum_{\substack{\mathbf d\in\cL\\\|\mathbf d\|/R\in J}}
 K_R(\mathbf d)^2.
\end{align*}
Lemma~\ref{lem:bandwise-l2-approximation} says that, for every fixed
$\tau>0$,
$E_J\le e^{\tau n}P^2$ except with probability $e^{-\tau n+o(n)}$.
Lemma~\ref{lem:squared-kernel-concentration} simultaneously gives
$S_J=I_2(R)(1+O(e^{-\eta_cn}))$ for some $\eta_c>0$.

Now
\begin{equation}\label{eq:energy-continuum-diagonal-ratio}
 \frac{I_2(R)}{P^2}
 =\exp\!\left(n\log\frac{4c}{3\sqrt3}+O(\log n)\right).
\end{equation}
For $c>c_E=3\sqrt3/4$, choose
$0<\tau<\tfrac12\log(4c/(3\sqrt3))$.  Then
$\sqrt{E_J}/\sqrt{S_J}=e^{-\Omega_c(n)}$.  The reverse triangle inequality
in $\ell^2(J\cap\cL)$ gives
\begin{align*}
 \left(\sum_{\substack{\mathbf d\in\cL\\\|\mathbf d\|/R\in J}}
 r_{\cL,R}(\mathbf d)^2\right)^{1/2}
 &\ge \sqrt{S_J}-\sqrt{E_J}\\
 &=\sqrt{I_2(R)}\bigl(1-e^{-\Omega_c(n)}\bigr).
\end{align*}
Squaring proves
$\mathcal E(A_{\cL}(c))\ge I_2(R)(1-e^{-\Omega_c(n)})$, and hence the
continuum lower exponent above $c_E$.  Below the threshold the diagonal
term already matches the upper exponent, and at equality the two exponents
agree.  These lower bounds have failure $e^{-\Omega_c(n)}$, so together
with the upper bound they prove the assertion for every fixed tolerance
$\varepsilon$.  Letting the tolerance tend to zero sufficiently slowly
gives \eqref{eq:energy-exponent} with probability tending to one.
The loss of a fixed exponential failure rate in that $o(1)$ formulation
comes from the vanishing Markov slack in the upper bound.
\end{proof}

\begin{corollary}[Collision entropy and effective difference support]
\label{cor:collision-entropy}
Let
\(
 \Gamma_{\cL,c}:=U_{\cL,c}*\widetilde U_{\cL,c}
\)
and define its order-two R\'enyi, or collision, entropy by
\[
 H_2(\Gamma_{\cL,c})
 :=-\log\sum_{\mathbf d\in\cL}\Gamma_{\cL,c}(\mathbf d)^2.
\]
For every fixed $c>1$, with probability tending to one,
\begin{equation}\label{eq:collision-entropy-exponent}
 \frac1nH_2(\Gamma_{\cL,c})
 =\min\left\{2\log c,\log\frac{3\sqrt3\,c}{4}\right\}+o(1).
\end{equation}
Thus the collision-effective number of differences is
\begin{equation}\label{eq:collision-effective-support}
 \exp(H_2(\Gamma_{\cL,c}))
 =\exp\!\left(n\min\left\{2\log c,
                 \log\frac{3\sqrt3\,c}{4}\right\}+o(n)\right).
\end{equation}
Moreover, on the event $N=|A_{\cL}(c)|>0$ one has the deterministic bound
\begin{equation}\label{eq:difference-set-lower}
 |A_{\cL}(c)-A_{\cL}(c)|
 \ge \exp(H_2(\Gamma_{\cL,c})),
\end{equation}
and hence, for $1<c\le3\sqrt3/4$, including equality,
\begin{equation}\label{eq:difference-set-exponent}
 \frac1n\log|A_{\cL}(c)-A_{\cL}(c)|=2\log c+o(1)
\end{equation}
with probability tending to one.
\end{corollary}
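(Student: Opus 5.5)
The collision sum is an exact identity in the additive energy. By \eqref{eq:empirical-difference-law}, on the event $N>0$,
\[
 \sum_{\mathbf d}\Gamma_{\cL,c}(\mathbf d)^2
 =\frac{1}{N^4}\sum_{\mathbf d}r_{\cL,R}(\mathbf d)^2
 =\frac{\mathcal E(A_{\cL}(c))}{N^4},
\]
so
\[
 H_2(\Gamma_{\cL,c})=4\log N-\log\mathcal E(A_{\cL}(c)).
\]
The plan is to feed two earlier estimates into this identity. First, Lemma~\ref{lem:thin-shell-count} together with Chebyshev gives $N=(1+o(1))P=c^{n+o(n)}$ with probability $1-e^{-\Omega_c(n)}$, so $\tfrac1n4\log N=4\log c+o(1)$. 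Second, Theorem~\ref{thm:energy} gives $\tfrac1n\log\mathcal E=\max\{2\log c,\,3\log c+\log\tfrac4{3\sqrt3}\}+o(1)$ with probability tending to one. Subtracting, and using $4\log c-\max\{a,b\}=\min\{4\log c-a,\,4\log c-b\}$, produces $\min\{2\log c,\log\tfrac{3\sqrt3c}{4}\}$. This proves \eqref{eq:collision-entropy-exponent}, and exponentiating gives \eqref{eq:collision-effective-support}. The intersection of the two high-probability events still has probability tending to one.

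The deterministic bound \eqref{eq:difference-set-lower} is the standard comparison between collision entropy and support size. The measure $\Gamma_{\cL,c}$ is a probability measure supported on $A_{\cL}(c)-A_{\cL}(c)$. By Cauchy--Schwarz on that support,
\[
 1=\Bigl(\sum_{\mathbf d}\Gamma(\mathbf d)\Bigr)^2\le|\operatorname{supp}\Gamma|\sum_{\mathbf d}\Gamma(\mathbf d)^2,
\]
so $|A_{\cL}(c)-A_{\cL}(c)|\ge(\sum\Gamma^2)^{-1}=\exp(H_2)$. In energy language this is $|A-A|\ge N^4/\mathcal E$.

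For \eqref{eq:difference-set-exponent}, the lower bound comes from combining \eqref{eq:difference-set-lower} with \eqref{eq:collision-entropy-exponent}. When $1<c\le3\sqrt3/4$, the minimum equals $2\log c$, because $\log\tfrac{3\sqrt3c}{4}\ge2\log c$ is equivalent to $c\le3\sqrt3/4$, with equality included. The upper bound is trivial: $|A-A|\le N^2=c^{2n+o(n)}$ on the shell-count event. Together these give $\tfrac1n\log|A-A|=2\log c+o(1)$.

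I do not expect a real obstacle, since every step is bookkeeping on top of Theorem~\ref{thm:energy}. The only care points are two. First, the events $N>0$ and $N=(1+o(1))P$ must be combined with the energy event. Second, the $o(1)$ must be uniform enough that the minimum of the two exponents is taken correctly at the crossover $c=c_E$, where both expressions coincide, so no separate argument is needed there.
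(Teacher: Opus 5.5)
Your proposal is correct and follows the paper's proof essentially step for step: the identity $H_2(\Gamma_{\cL,c})=4\log N-\log\mathcal E(A_{\cL}(c))$ on $N>0$, substitution of Theorem~\ref{thm:energy} together with the shell count, Cauchy--Schwarz on the support $A_{\cL}(c)-A_{\cL}(c)$, and the trivial upper bound $|A_{\cL}(c)-A_{\cL}(c)|\le N^2$. As you note, the crossover $c=c_E$ needs no separate argument, since Theorem~\ref{thm:energy} already includes that case.
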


\begin{proof}
Work first on $N=|A_{\cL}(c)|>0$.  The exact identity
\(
 \Gamma_{\cL,c}(\mathbf d)=r_{\cL,R}(\mathbf d)/N^2
\)
gives
\begin{equation}\label{eq:collision-energy-identity}
 H_2(\Gamma_{\cL,c})=4\log N-\log\mathcal E(A_{\cL}(c)).
\end{equation}
By the thin-shell count, this event has probability
$1-e^{-\Omega_c(n)}$, and $\log N=n\log c+o(n)$ with high probability.
Substitution of Theorem~\ref{thm:energy} into
\eqref{eq:collision-energy-identity} proves
\eqref{eq:collision-entropy-exponent}.  For every finitely supported
probability measure $\nu$, Cauchy--Schwarz gives
\(
 1\le|\operatorname{supp}\nu|\sum_x\nu(x)^2
\).
On $N>0$ the support of $\Gamma_{\cL,c}$ is exactly
$A_{\cL}(c)-A_{\cL}(c)$, so this proves
\eqref{eq:difference-set-lower}.  The empty-shell convention
$\Gamma_{\cL,c}=\delta_0$ is not used in this deterministic inequality.
In the stated range its lower
exponent is $2\log c$, while the deterministic upper bound
$|A_{\cL}(c)-A_{\cL}(c)|\le|A_{\cL}(c)|^2$ has the same exponent; this proves
\eqref{eq:difference-set-exponent}.
\end{proof}

\begin{remark}[Interpretation and provenance]
Below the energy threshold the number of distinct differences has the
maximal exponential rate $2\log c$, as in~\eqref{eq:difference-set-exponent}.
This global support statement allows exponentially large individual
multiplicities: Theorem~\ref{thm:coverage} gives this for every same-shell
target when $2/\sqrt3<c<3\sqrt3/4$, and the zero difference has multiplicity
$N$.  Above the threshold,
continuum lens overlaps reduce the effective support exponent.  The
corollary is an algebraic consequence of Theorem~\ref{thm:energy}; it needs
no additional Rogers estimate and carries no separate novelty claim.
\end{remark}

\section{The quenched shell-convolution transition}
\label{sec:convolution}

Fix $c>1$ and $R=c\Rstar$, and retain $A_{\cL}(c)$, $N$, and $P$ from
Section~\ref{sec:additive-phases}.  We compare the complete difference law
with the lattice-sampled lens law of Section~\ref{sec:comparison-framework}.
Recall $U_{\cL,c}=U_{\cL,R}$ for $R=c\Rstar$: it is uniform on
$A_{\cL}(c)$ when the shell is nonempty, and equals $\delta_0$ otherwise.
Let $\widetilde U_{\cL,c}$ denote its reflection.  Define a continuum-kernel law
on the lattice by
\begin{equation}\label{eq:Pi-kernel}
 \Pi_{\cL,R}(\mathbf d):=\frac{K_R(\mathbf d)}{Z_{\cL,R}},
 \qquad Z_{\cL,R}:=\sum_{\mathbf z\in\cL}K_R(\mathbf z).
\end{equation}
On $|A_{\cL}(c)|>0$, the actual difference law assigns mass
$r_{\cL,R}(\mathbf d)/|A_{\cL}(c)|^2$ to $\mathbf d$; on the empty
event it is $\delta_0$.  The comparator is always defined because
$Z_{\cL,R}\ge K_R(0)=P>0$.

\begin{theorem}[Quenched shell-convolution transition]
\label{thm:convolution}
For every fixed $c>1$, $c\ne\sqrt2$, with probability
$1-e^{-\Omega_c(n)}$,
\[
 \TV\bigl(U_{\cL,c}*\widetilde U_{\cL,c},\Pi_{\cL,R}\bigr)
 =1-e^{-\Theta_c(n)}\quad(1<c<\sqrt2),
\]
while
\[
 \TV\bigl(U_{\cL,c}*\widetilde U_{\cL,c},\Pi_{\cL,R}\bigr)
 \le e^{-\Omega_c(n)}\quad(c>\sqrt2).
\]
\end{theorem}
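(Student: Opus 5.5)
The plan is to separate the two regimes using the scale of the lens at the typical relative lag $\alpha=\sqrt2$. By Lemma~\ref{lem:additive-continuum}, two independent continuum shell points have relative separation outside any fixed neighbourhood of $\sqrt2$ only with probability $e^{-\Omega(n)}$. At $\alpha=\sqrt2$ the lens exponent is $g_c(\sqrt2)=\log(c/\sqrt2)$. So the typical lens volume is exponentially large for $c>\sqrt2$ and exponentially small for $c<\sqrt2$.

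\emph{Common normalization step.} First we control the two normalizers. Lemma~\ref{lem:thin-shell-count} and Chebyshev give $N=P(1+e^{-\Omega(n)})$ with probability $1-e^{-\Omega_c(n)}$. For $Z_{\cL,R}=P+\sum_{\mathbf z\ne0}K_R(\mathbf z)$, Siegel and \eqref{eq:kernel-mass} give mean $P+P^2$. Lemma~\ref{lem:l2-rogers-variance} gives variance at most $(2+o(1))I_2(R)$. Since $I_2(R)/P^4=(4/(3\sqrt3c))^{n+o(n)}$ is exponentially small for $c>1$, Chebyshev yields $Z_{\cL,R}=P^2(1+e^{-\Omega_c(n)})$.

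\emph{Dense side $c>\sqrt2$.} Fix a small interval $I\ni\sqrt2$ on which the dense-window condition \eqref{eq:dense-window-condition} holds, which is possible because $c\sqrt{1-1/2}=c/\sqrt2>1$.
\begin{itemize}
\item[(a)] On $T_I$, Cauchy--Schwarz and Lemma~\ref{lem:bandwise-l2-approximation} give
\[
 \sum_{\mathbf d\in\cL\cap T_I}|r_{\cL,R}(\mathbf d)-K_R(\mathbf d)|
 \le \bigl(|\cL\cap T_I|\,e^{\tau n}P^2\bigr)^{1/2}.
\]
Siegel and Markov bound $|\cL\cap T_I|\le e^{\tau n}\vol(T_I)$, and $\vol(T_I)\le (c\sup I)^{n+o(n)}$. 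The right side is then at most $(c\sqrt2)^{n/2}$ times $e^{O(\tau+|I|)n}P$, hence is $e^{-\Omega_c(n)}P^2$ once $\tau$ and $I$ are small, because $\sqrt2<c$.
\item[(b)] We compare $\sum_{\cL\cap T_I}K_R$ with $P^2$. Its mean is $\int_{T_I}K_R=P^2(1-e^{-\Omega(n)})$ by the separation concentration in Lemma~\ref{lem:additive-continuum}. Its variance is again at most $2I_2(R)$. So this lattice sum is $P^2(1-e^{-\Omega(n)})$ and $\Pi_{\cL,R}(T_I)\ge1-e^{-\Omega(n)}$.
\item[(c)] Together, (a) and (b) give $\sum_{T_I}r_{\cL,R}\ge P^2(1-e^{-\Omega(n)})$. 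Since the total $r$-mass is exactly $N^2$, the empirical law $\Gamma_{\cL,R}$ also gives $T_I^c$ mass $e^{-\Omega(n)}$.
\end{itemize}
Splitting the total variation into $T_I$ and $T_I^c$, and inserting the normalization errors for $N^2$ and $Z_{\cL,R}$, gives $\TV\le e^{-\Omega_c(n)}$.

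\emph{Sparse side $1<c<\sqrt2$.} It suffices to find a set of small $\Pi$-mass carrying all of $\Gamma$. Let $E=\{\mathbf d\in\cL: r_{\cL,R}(\mathbf d)\ge1\}$, which supports $\Gamma_{\cL,R}$. Then $\TV\ge1-\Pi_{\cL,R}(E)$. Split $E$ according to whether $\|\mathbf d\|/R\in I$, for a small interval $I\ni\sqrt2$.
\begin{itemize}
\item[(i)] Inside the band, $K_R\le\exp(n\log(c\sqrt{1-\alpha^2/4})+o(n))=e^{-\Omega_c(n)}$ by \eqref{eq:additive-lens}. Hence the band part of $\Pi_{\cL,R}(E)$ is at most $\max_{T_I}K_R\cdot|\cL\cap T_I|/Z_{\cL,R}$. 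We bound $|\cL\cap T_I|\le e^{\tau n}\vol(T_I)$ as before. Then $\vol(T_I)/P^2\le(\sqrt2\,c^{-1})^{n+o(n)}$ up to $e^{O(|I|)n}$. The product with $\max K$ is roughly $(c/\sqrt2)^n(\sqrt2/c)^n$, which is not decisive. I will therefore replace the crude count with $\sum_{E\cap T_I}K_R\le\sum_{T_I}r_{\cL,R}K_R\le\max_{T_I}K_R\cdot N^2$. This is $e^{-\Omega_c(n)}P^2$.
\item[(ii)] Outside the band, $\Pi_{\cL,R}(T_I^c)\le e^{-\Omega(n)}$. This follows from Siegel, the continuum concentration near $\sqrt2$, Markov, and the normalization of $Z_{\cL,R}$.
\end{itemize}
Hence $\TV\ge1-e^{-\Omega_c(n)}$.

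For the matching $1-\TV\ge e^{-O(n)}$, use the zero atom. We have $\Gamma_{\cL,R}(0)=1/N$ and $\Pi_{\cL,R}(0)=P/Z_{\cL,R}\approx1/P$, so $1-\TV\ge\sum_{\mathbf d}\min(\Gamma_{\cL,R}(\mathbf d),\Pi_{\cL,R}(\mathbf d))\ge c^{-n-o(n)}$.

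The step I expect to be most delicate is (a) on the dense side. The $\ell^1$ error must beat $P^2$ after Cauchy--Schwarz, and the margin there is exactly the threshold comparison $c>\sqrt2$. That step therefore needs $I$ and $\tau$ chosen small relative to $\log(c/\sqrt2)$, and uses the fact that the band's lattice population is only about $(c\sqrt2)^n$.
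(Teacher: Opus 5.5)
Your proposal is correct and follows essentially the same route as the paper. On the dense side you bound the band error in $\ell^1$ by Cauchy--Schwarz, Lemma~\ref{lem:bandwise-l2-approximation} and $|\cL\cap T_I|\le e^{\tau n}\vol(T_I)$, so the threshold comes from $\sqrt{\vol(T_I)}/P$. On the sparse side you bound the support of the difference law by $N^2$ points inside a band where the lens is exponentially small, and add the kernel tail. The zero atom gives $1-\TV\ge e^{-O(n)}$. The only deviation is step~(c): you get the empirical pair-law tail outside $I$ from the exact identity $\sum_{\mathbf d}r_{\cL,R}(\mathbf d)=N^2$, rather than from the separate Rogers first-moment bound on $T_{\rm out}$ in Lemma~\ref{lem:kernel-normalization}, and that shortcut is valid.
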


El-Baz--Marklof--Vinogradov obtain mixed-moment convergence and Poissonian
two-point correlations for directions in a fixed planar affine lattice
under a Diophantine hypothesis
\cite[Theorem~2 and Corollary~4]{ElBazMarklofVinogradov2015}.
Kim--Marklof extend the pair-correlation theorem to every fixed dimension
at least three for every irrational shift
\cite[Theorem~1.1]{KimMarklof2025}.  Those limits send the radius to infinity
in fixed dimension and resolve shrinking angular separations.  The present
limit sends the dimension to infinity and compares the entire discrete
difference law in total variation, for a hard shell of relative half-width
$n^{-2}$.  Its comparator is sampled on the same lattice.

The proof combines the integrated $L^2$ estimate of
Lemma~\ref{lem:bandwise-l2-approximation} with normalization and
angular-tail bounds.  The shared variance estimate was also established
in Section~\ref{sec:energy}.

\subsection{Kernel normalization and angular tails}

\begin{lemma}[Kernel normalization and angular tails]
\label{lem:kernel-normalization}
For fixed $c>1$, there is $\eta_c>0$ such that, with probability
$1-e^{-\Omega_c(n)}$,
\begin{equation}\label{eq:kernel-normalization}
 Z_{\cL,R}=P^2\bigl(1+O(e^{-\eta_c n})\bigr).
\end{equation}
Moreover, for every sufficiently small fixed neighborhood $I$ of $\sqrt2$,
chosen after $c$ is fixed, both
\begin{align*}
 \frac1{P^2}\sum_{\mathbf x,\mathbf y\in A_{\cL}(c)}
 \mathbf1_{\{\|\mathbf x-\mathbf y\|/R\notin I\}},\qquad
 \frac1{Z_{\cL,R}}\sum_{\substack{\mathbf d\in\cL\\
                     \|\mathbf d\|/R\notin I}}K_R(\mathbf d)
\end{align*}
are $e^{-\Omega_c(n)}$.
\end{lemma}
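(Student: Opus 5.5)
The normalization \eqref{eq:kernel-normalization} follows the template of Lemma~\ref{lem:squared-kernel-concentration}, with the kernel to the first power in place of its square.

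\emph{Mean and variance.} Write
\[
 Z_{\cL,R}=K_R(0)+\sum_{\mathbf z\in\cL\setminus\{0\}}K_R(\mathbf z)=P+\widehat{K_R}(\cL).
\]
By Siegel and \eqref{eq:kernel-mass}, $\E\widehat{K_R}=P^2$. Since $K_R$ is bounded, compactly supported, and vanishes near the origin, Lemma~\ref{lem:l2-rogers-variance} gives $\Var\widehat{K_R}\le(2+o(1))I_2(R)$. By \eqref{eq:I2},
\[
 \frac{I_2(R)}{P^4}=\exp\Bigl(n\log\frac{4}{3\sqrt3\,c}+O(\log n)\Bigr),
\]
and $4/(3\sqrt3c)<1$ for every $c>1$.

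\emph{Normalization.} Chebyshev at relative threshold $e^{-\eta_cn}$ gives failure probability at most $e^{2\eta_cn}I_2/P^4$. Choosing $\eta_c<\tfrac14\log(3\sqrt3c/4)$ makes this $e^{-\Omega_c(n)}$. The diagonal term satisfies $P/P^2=c^{-n+o(n)}$, which is absorbed by shrinking $\eta_c$ further if necessary.

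\emph{Kernel tail.} Let $f_I:=\mathbf1_{\{\|\mathbf d\|/R\notin I\}}K_R$. The origin contributes $P$, which is $e^{-\Omega(n)}$ relative to $P^2$. Its Siegel mean is $\int f_I$, and I bound it by a Laplace estimate at exponent $p=1$. The global ball-lens bound \eqref{eq:global-ball-lens-bound} gives
\[
 \int f_I\le c^{2n}e^{O(\log n)}\int_{u\notin I_0}\bigl[u(1-u^2/4)^{1/2}\bigr]^{n-1}du .
\]
Here $\phi_1(u)=u(1-u^2/4)^{1/2}$ has its unique maximum $1$ at $u=\sqrt2$. Outside a closed neighbourhood $I_0$ of $\sqrt2$, $\phi_1$ is bounded by $e^{-\eta_I}$, so $\int f_I\le e^{-\Omega(n)}P^2$. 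Equivalently, this is the continuum angular concentration stated in Lemma~\ref{lem:additive-continuum}. Markov then gives $\widehat{f_I}\le e^{-\Omega(n)}P^2$ with failure $e^{-\Omega(n)}$, and dividing by the normalization $Z_{\cL,R}\approx P^2$ finishes this term. The neighbourhood $I$ is chosen after $c$ only so that its complement lies within the range where these constants are fixed.

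\emph{Empirical pair tail.} The quantity
\[
 \sum_{\mathbf x,\mathbf y\in A_{\cL}(c)}\mathbf1_{\{\|\mathbf x-\mathbf y\|/R\notin I\}}
\]
has two parts. The diagonal $\mathbf x=\mathbf y$ (difference $0\notin I$) contributes $N=P^{1+o(1)}$ w.h.p., which is $e^{-\Omega(n)}P^2$. The off-diagonal part is a nonnegative order-two lattice sum with kernel
\[
 \mathbf1_{S_R}(\mathbf x)\mathbf1_{S_R}(\mathbf y)\mathbf1_{\{\|\mathbf x-\mathbf y\|/R\notin I\}} .
\]
I apply Rogers at $k=2$ as in Lemma~\ref{lem:thin-shell-count}. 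The identity diagram gives $P^2$ times the continuum probability that $\|\mathbf X-\mathbf Y\|/R\notin I$, which is $e^{-\Omega(n)}$ by Lemma~\ref{lem:additive-continuum}. The rank-one diagrams $\mathbf y=(a/q)\mathbf x$ contribute $O(P)$. Markov then concludes.

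The main obstacle I expect is that the tail neighbourhood region includes separations near $0$ and near $2$, where the compact coarea asymptotics are not uniform. I handle this through the global bound \eqref{eq:global-ball-lens-bound} rather than \eqref{eq:additive-lens}, exactly as in the proof of \eqref{eq:lens-laplace-tail}. The remaining steps are routine.
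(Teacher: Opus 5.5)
Your proposal is correct and follows the paper's proof essentially step for step. For $Z_{\cL,R}$ you use Siegel, Lemma~\ref{lem:l2-rogers-variance} and Chebyshev. For the pair tail you use an order-two Rogers expansion (the identity term is $P^2$ times the continuum angular tail, the rank-one terms are $O(P)$) followed by Markov. For the sampled kernel tail you use Siegel plus Markov, and there your $p=1$ Laplace bound via the global ball-lens majorant is only a longer route: by Fubini, $\int_{\{\|\mathbf d\|/R\notin I\}}K_R\,d\mathbf d$ equals $P^2$ times the same continuum tail probability, which is how the paper bounds it. One small slip is harmless: $K_R$ does not vanish near the origin (it is close to $P$ there), but Lemma~\ref{lem:l2-rogers-variance} only needs $f(0)$ set to zero, and the Siegel transform does this automatically.
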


\begin{proof}
Equation~\eqref{eq:kernel-mass} and Siegel's formula show that the nonzero
part of $Z_{\cL,R}$ has mean $P^2$; the zero term is $K_R(0)=P=o(P^2)$.
Apply Lemma~\ref{lem:l2-rogers-variance} to $f=K_R$, whose squared
$L^2$ norm is $I_2(R)$.  It gives the explicit bound
\begin{equation}\label{eq:kernel-normalization-variance}
 \Var(Z_{\cL,R})\le(2+O(2^{-n/2}))I_2(R).
\end{equation}
By \eqref{eq:I2},
\[
 \frac{I_2(R)}{P^4}
 =\left(\frac4{3\sqrt3\,c}\right)^{n+o(n)}=e^{-\Omega_c(n)}.
\]
For completeness, put
\[
 \delta_c:=-\log\left(\frac4{3\sqrt3\,c}\right)>0
\]
and choose $0<\theta_c<\min\{\delta_c/3,\log(c)/2\}$.  Chebyshev gives
\begin{align*}
 &\Prb\left[
  \left|Z_{\cL,R}-(P^2+P)\right|>e^{-\theta_cn}P^2\right]\\
 &\qquad\le e^{2\theta_cn}\frac{\Var(Z_{\cL,R})}{P^4}
 \le e^{-(\delta_c-2\theta_c)n+o(n)}.
\end{align*}
Since $P/P^2=c^{-n+o(n)}$, both the deterministic zero term and the random
deviation are $e^{-\Omega_c(n)}P^2$.  After decreasing the exponent to absorb
the $o(n)$ terms, one may take any
$\eta_c<\min\{\theta_c,\log c\}$.  This proves
\eqref{eq:kernel-normalization} with an exponential relative error and an
exponential failure probability, rather than merely an $o(1)$ error.

Let $T_{\rm out}$ denote the actual ordered pair count with
$\|\mathbf x-\mathbf y\|/R\notin I$.  Its full-rank Rogers term is the
continuum angular tail, at most $P^2e^{-\delta_In}$ for some $\delta_I>0$.
The rank-one terms include every rational dilation
$\mathbf y=(a/q)\mathbf x$, where $q\ge1$, $a\ne0$, and $(a,q)=1$.
For $q=1$ only $a=\pm1$ can meet both thin shells, contributing at most
$2P$.  For $q\ge2$, the enumeration in
Lemma~\ref{lem:thin-shell-count}, after dropping the tail indicator, gives
\[
 CP\sum_{q\ge2}q^{-n}(1+q/n^2)=O(2^{-n})P.
\]
Consequently
\[
 \E T_{\rm out}\le P^2e^{-\delta_In}+(2+O(2^{-n}))P.
\]
Choose $0<\eta<\tfrac12\min\{\delta_I,\log c\}$.  Since
$P=c^{n+o(n)}$, Markov at $P^2e^{-\eta n}$ gives both an exponentially
small tail mass and an exponentially small failure probability.
For the sampled kernel tail, Siegel gives a nonzero-part mean at most
$P^2e^{-\delta_In}$, and its zero term is at most $P$.
The same choice of $\eta$ and Markov therefore apply.
Combining with \eqref{eq:kernel-normalization} proves both tail assertions.
\end{proof}

\subsection{Proof of the convolution transition}

\begin{proof}[Proof of Theorem~\ref{thm:convolution}]
Suppose first that $c>\sqrt2$.  Choose a small fixed interval $I$ around
$\sqrt2$ so that
\eqref{eq:dense-window-condition} holds.  Put
\[
 M_I:=|\{\mathbf d\in\cL:\|\mathbf d\|/R\in I\}|,
 \qquad Q_I:=\vol\{\mathbf d:\|\mathbf d\|/R\in I\}.
\]
Fix a small $\tau>0$.  Lemma~\ref{lem:bandwise-l2-approximation} and Markov
give, except with probability $e^{-\tau n+o(n)}$,
\begin{equation}\label{eq:convolution-l2-event}
 \sum_{\substack{\mathbf d\in\cL\\\|\mathbf d\|/R\in I}}
 |r_{\cL,R}(\mathbf d)-K_R(\mathbf d)|^2
 \le e^{\tau n}P^2.
\end{equation}
Siegel's theorem and Markov's inequality independently give
$M_I\le e^{\tau n}Q_I$ with the same quality of failure.  Hence
Cauchy--Schwarz gives the fully normalized estimate
\begin{align}
 \frac1{P^2}
 \sum_{\substack{\mathbf d\in\cL\\\|\mathbf d\|/R\in I}}
 |r_{\cL,R}(\mathbf d)-K_R(\mathbf d)|
 &\le e^{\tau n}\frac{\sqrt{Q_I}}P.             \label{eq:convolution-l1-event}
\end{align}
If $I=[\sqrt2-\delta,\sqrt2+\delta]$, then
\[
 \frac{\sqrt{Q_I}}P
 \le\left(\sqrt{\frac{\sqrt2+\delta}{c}}+o(1)\right)^n.
\]
At $\delta=0$ this base is $2^{1/4}/\sqrt c<1$.  Choose first $\delta$ and
then $\tau$ sufficiently small; the right side of
\eqref{eq:convolution-l1-event} is $e^{-a_cn}$ for some $a_c>0$.  This
display is the precise source of the threshold $c=\sqrt2$.

Lemma~\ref{lem:kernel-normalization} puts exponentially small mass outside
$I$ for both the actual pair law and the sampled continuum kernel.  It
remains only to compare normalizers.  Choose
$0<\eta_N<\tfrac12\log c$.  Lemma~\ref{lem:thin-shell-count} and Chebyshev
give
\[
 \Prb[|N_R-P|>e^{-\eta_Nn}P]
 \le e^{2\eta_Nn}\frac{(2+o(1))P}{P^2}
 =e^{-(\log c-2\eta_N)n+o(n)}.
\]
Together with~\eqref{eq:kernel-normalization}, this yields
$N_R^2=P^2(1+O(e^{-\eta n}))$ and
$Z_{\cL,R}=P^2(1+O(e^{-\eta n}))$ for a common $\eta>0$.
On the common high-probability event,
\begin{align*}
 2\TV(\Gamma_{\cL,R},\Pi_{\cL,R})
 &\le \frac1{N_R^2}\sum_{\mathbf d\in\cL}|r_{\cL,R}(\mathbf d)-K_R(\mathbf d)|
       +\frac{|Z_{\cL,R}-N_R^2|}{N_R^2}\\
 &\le e^{-\Omega_c(n)}.
\end{align*}
The first sum is split into $I$ and its complement and bounded by
\eqref{eq:convolution-l1-event} and the two tail estimates.  This proves the
smooth-side upper bound.

Now assume $1<c<\sqrt2$ and choose $I$ so that
$K_R(\mathbf d)\le e^{-\delta n}$ whenever $\|\mathbf d\|/R\in I$.
The actual convolution is supported on
$B_{\cL}:=A_{\cL}(c)-A_{\cL}(c)$, and
$|B_{\cL}|\le|A_{\cL}(c)|^2=P^{2+o(1)}$.  Therefore
\[
 \Pi_{\cL,R}\bigl(B_{\cL}\cap\{\|\mathbf d\|/R\in I\}\bigr)
 \le\frac{P^{2+o(1)}e^{-\delta n}}{Z_{\cL,R}}
 =e^{-\Omega_c(n)}.
\]
The comparison mass outside $I$ is exponentially small by
Lemma~\ref{lem:kernel-normalization}.  Thus
$\Pi_{\cL,R}(B_{\cL})\le e^{-a_cn}$ for some $a_c>0$, whereas the actual convolution gives
$B_{\cL}$ mass one, proving the lower bound
$\TV\ge1-e^{-a_cn}$.  For the matching exponential scale of the
deficit, use the common atom at zero.  The actual difference law and the
comparison law assign it respectively the masses
\[
 \frac{r_{\cL,R}(0)}{|A_{\cL}(c)|^2}
 =\frac1{|A_{\cL}(c)|},
 \qquad
 \frac{K_R(0)}{Z_{\cL,R}}=\frac P{Z_{\cL,R}}.
\]
On the same high-probability event, $N_R=P(1+o(1))$ and
$Z_{\cL,R}=P^2(1+o(1))$.  Hence both displayed masses are at least
$e^{-C_cn}$ for some $C_c<\infty$.  Since for probability measures on a
countable set
\[
 1-\TV(\Gamma,\Pi)=\sum_{\mathbf d}\min\{\Gamma(\mathbf d),\Pi(\mathbf d)\},
\]
their common zero atom gives $1-\TV\ge e^{-C_cn}$.  This proves the stated
$1-e^{-\Theta_c(n)}$ granular behavior.
\end{proof}

\begin{remark}[The critical total-variation radius]\label{rem:tv-critical}
At $c=\sqrt2$, no fixed neighborhood of the typical lag $\sqrt2$
satisfies the dense-window condition, and the limiting base in
\eqref{eq:convolution-l1-event} is one.  The sparse-side proof likewise
has no fixed interval carrying almost all continuum mass on which the lens
is exponentially small.  In fact $g_{\sqrt2}(\sqrt2)=0$ and
$g'_{\sqrt2}(\sqrt2)=-1/\sqrt2$; the usual $n^{-1/2}$ angular fluctuations
therefore vary the logarithmic lens mean on a scale $\sqrt n$.
Determining the boundary law requires estimates in this shrinking angular
window, including the polynomial shell prefactors.  The present
fixed-window estimates do not determine that law.
\end{remark}

\section{Angular marginal universality}
\label{sec:angular-transfer}

The preceding sections analyzed parent counts and difference laws.  We
now return to the shell itself and prove complementary results about its
angular marginals.  Their proof uses the order-two theory, independently
of the growing-order moment argument.  Bounded pair-kernel identities and
one-sided upper tails are recorded separately in
Appendix~\ref{app:angular-kernels}.

Throughout this section assume $n\ge3$, so the centered order-two Rogers
integrals are finite.  Let $\sigma_{n-1}$ denote normalized rotation-invariant measure on
$S^{n-1}$.  For $R>0$ put
\[
 A_R(\cL):=\cL\cap S_R,\qquad
 N_R:=|A_R(\cL)|,\qquad P_R:=\vol(S_R),
 \qquad \widehat{\mathbf x}:=\frac{\mathbf x}{\|\mathbf x\|}.
\]
The shell is separated from the origin, so every displayed direction is
defined.

\subsection{Main angular results}

Define the directional scatter matrix
\[
 Q_R(\cL):=\sum_{\mathbf x\in A_R(\cL)}
 \widehat{\mathbf x}\widehat{\mathbf x}^{\mathsf T},
 \qquad
 C_R(\cL):=\frac n{N_R}Q_R(\cL)
\]
whenever $N_R>0$, and set $C_R=0$ when $N_R=0$.  Thus $C_R=I_n$
is exact isotropy on a nonempty shell.
We write $\|\cdot\|_{\rm op}$ for the Euclidean operator norm.

\begin{theorem}[Quenched dense-shell isometry]
\label{thm:dense-shell-isometry}
There is an absolute constant $C>0$ such that, for every $R>0$ and
$0<\varepsilon\le1/2$,
\begin{equation}
 \Prb_{\cL}\left[N_R=0\ \text{or}\
   \|C_R-I_n\|_{\rm op}>\varepsilon\right]
 \le \frac{C n^2}{\varepsilon^2P_R}.
 \label{eq:dense-shell-isometry-tail}
\end{equation}
The same assertion holds on $\widetilde X_n$ with the shell, $N_R$, $Q_R$,
and $C_R$ defined analogously for $(\boldsymbol\tau+\cL)\cap S_R$.

Equivalently, on the complementary event the analysis map
\[
 \Phi_R:\R^n\longrightarrow\R^{A_R(\cL)},
 \qquad
 (\Phi_R\mathbf u)_{\mathbf x}
 :=\sqrt{\frac n{N_R}}\,
   \langle\mathbf u,\widehat{\mathbf x}\rangle
\]
satisfies
\begin{equation}
 (1-\varepsilon)\|\mathbf u\|^2
 \le\|\Phi_R\mathbf u\|^2
 \le(1+\varepsilon)\|\mathbf u\|^2
 \qquad(\mathbf u\in\R^n).
 \label{eq:dense-shell-analysis-map}
\end{equation}
If $R=c_n\Rstar$ and $c_n\to c>1$, then
\begin{equation}
 P_R=\frac{2c_n^n}{n}\bigl(1+O(n^{-2})\bigr).
 \label{eq:thin-shell-volume-angular}
\end{equation}
Consequently, for every fixed $0<\beta<\tfrac12\log c$, taking
$\varepsilon=e^{-\beta n}$ in
\eqref{eq:dense-shell-isometry-tail} gives failure at most
\begin{equation}
 \exp\bigl(-[\log c-2\beta]n+o(n)\bigr).
 \label{eq:dense-shell-exponential-isometry}
\end{equation}
At $c=\sqrt{4/3}$, every
$\beta<\tfrac14\log(4/3)=0.0719205\ldots$ is admissible.
\end{theorem}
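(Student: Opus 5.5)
The proof is a second-moment (Chebyshev) argument applied to the matrix-valued Siegel transform $Q_R(\cL)-\frac{N_R}{n}I_n$, controlled in Hilbert--Schmidt norm. The op-norm deviation $\|C_R-I_n\|_{\rm op}$ is bounded by $\frac{n}{N_R}\|Q_R-\frac{N_R}{n}I_n\|_{\rm HS}$, so it suffices to bound $\E\|Q_R-\frac{N_R}nI_n\|_{\rm HS}^2$ and combine this with a lower bound on $N_R$.

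\emph{Step 1: the mean.} Put $f(\mathbf x):=\mathbf1_{S_R}(\mathbf x)\bigl(\widehat{\mathbf x}\widehat{\mathbf x}^{\mathsf T}-\frac1nI_n\bigr)$, a bounded symmetric-matrix-valued function vanishing at the origin. Rotation invariance of $S_R$ gives $\int f=0$, so by Proposition~\ref{thm:siegel} applied entrywise to positive and negative parts, $\E\widehat f(\cL)=0$.

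\emph{Step 2: the second moment.} Apply Lemma~\ref{lem:l2-rogers-variance} to each scalar entry $f_{ij}$ and sum over $i,j$. This gives
\[
 \E\|\widehat f\|_{\rm HS}^2\le(2+o(1))\int\|f(\mathbf x)\|_{\rm HS}^2\,d\mathbf x=(2+o(1))\Bigl(1-\frac1n\Bigr)P_R .
\]
One can also see this directly: the rank-one diagrams $\mathbf y=(a/q)\mathbf x$ reproduce the same direction $\widehat{\mathbf y}=\pm\widehat{\mathbf x}$, so the summands coincide. Lemma~\ref{lem:l2-rogers-variance} requires compact support, which holds for any fixed $R$; the constant $C$ absorbs all small-$n$ effects.

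\emph{Step 3: Chebyshev and assembly.} The condition $\|C_R-I_n\|_{\rm op}>\varepsilon$ forces $\|\widehat f\|_{\rm HS}>\varepsilon N_R/n$. Split into two events.
\begin{itemize}
\item On the event $N_R<P_R/2$ (which includes $N_R=0$), Lemma~\ref{lem:thin-shell-count} and Chebyshev give failure probability $O(1/P_R)$. Lemma~\ref{lem:thin-shell-count} is stated only for radii in a range; for general $R$ I would instead use the same rank-one enumeration, which gives $\Var N_R\le C P_R$ uniformly in $R$.
\item Otherwise, Markov applied to $\|\widehat f\|_{\rm HS}^2$ gives failure probability at most $4n^2\E\|\widehat f\|_{\rm HS}^2/(\varepsilon^2P_R^2)=O\bigl(n^2/(\varepsilon^2P_R)\bigr)$.
\end{itemize}
Because $\varepsilon\le1/2$, the first term is dominated by the second, which proves \eqref{eq:dense-shell-isometry-tail}. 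The analysis-map inequality \eqref{eq:dense-shell-analysis-map} is just the identity $\|\Phi_R\mathbf u\|^2=\mathbf u^{\mathsf T}C_R\mathbf u$.

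\emph{Affine case.} Use Corollary~\ref{cor:affine-count-moments} and the affine Campbell identity, Corollary~\ref{cor:affine-centered-campbell}. Writing pairs as $\mathbf z,\mathbf z+\mathbf v$ and unfolding the mark, the factorial second moment of $\widehat f$ equals $(\int f)^2=0$, so the variance is exactly $\int\|f\|_{\rm HS}^2$. This case needs no rank-one bookkeeping.

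\emph{Step 4: volume asymptotics.} The formula \eqref{eq:thin-shell-volume-angular} is direct: $P_R=\kappa_nR^n\bigl((1+w_n)^n-(1-w_n)^n\bigr)$, and $\kappa_n(c_n\Rstar)^n=c_n^n$. Substituting $\varepsilon=e^{-\beta n}$ into the tail bound then gives \eqref{eq:dense-shell-exponential-isometry}, and the case $c=\sqrt{4/3}$ is arithmetic.

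The main obstacle is Step 2 at full uniformity in $R$, including radii so small that the shell has volume below one. There the rank-one terms, with coefficients $q^{-n}$, must be shown to stay $O(P_R)$ without using any lower bound on $R$. I expect the Cauchy--Schwarz dilation bound in Lemma~\ref{lem:l2-rogers-variance} already handles this, because that argument is scale-free.
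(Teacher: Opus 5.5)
Your proof is correct and follows essentially the paper's argument: an order-two Rogers second moment for the matrix-valued shell transform, then Markov, a separate shell-count event, and the direct expansion of $P_R$. Two details differ. You center at $N_RI_n/n$ with the traceless kernel, so that $C_R-I_n=(n/N_R)\widehat f$ holds exactly; the paper centers at $P_RI_n/n$ and carries an extra $|P_R/N_R-1|$ term. You also apply the scale-free $2\zeta(n/2)^2$ bound of Lemma~\ref{lem:l2-rogers-variance} entrywise, where the paper uses the Hilbert-valued Lemma~\ref{lem:hilbert-angular-second-moment}; both routes give constants uniform in $R$.
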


For $k\ge1$, equip $(\R^n)^{\otimes k}$ with its Hilbert--Schmidt norm and,
when $N_R>0$, define
\[
 M^{(k)}_{\cL,R}:=\frac1{N_R}
 \sum_{\mathbf x\in A_R(\cL)}
 \widehat{\mathbf x}^{\otimes k},
 \qquad
 M^{(k)}_{\sigma}:=\int_{S^{n-1}}
 \mathbf u^{\otimes k}\,d\sigma_{n-1}(\mathbf u).
\]

\begin{theorem}[Linear-degree spherical tensor moments]
\label{thm:linear-degree-tensor-moments}
For every even $k=2r\ge2$, put
\begin{equation}
 m_{k,n}:=\|M^{(k)}_{\sigma}\|_F^2
 =\E_{\mathbf U,\mathbf V}
   \langle\mathbf U,\mathbf V\rangle^k
 =\frac{(k-1)!!}{n(n+2)\cdots(n+k-2)},
 \label{eq:spherical-tensor-norm}
\end{equation}
where $\mathbf U,\mathbf V$ are independent and uniform on $S^{n-1}$.
There is an absolute $C>0$ such that, for $0<\varepsilon\le1/2$,
\begin{equation}
 \Prb_{\cL}\left[N_R=0\ \text{or}\
 \|M^{(k)}_{\cL,R}-M^{(k)}_{\sigma}\|_F
   >\varepsilon\|M^{(k)}_{\sigma}\|_F\right]
 \le\frac{C}{\varepsilon^2P_Rm_{k,n}}.
 \label{eq:tensor-moment-tail}
\end{equation}
The same estimate holds for a random-affine shell.  For odd $k$, both
$M^{(k)}_{\cL,R}$ and $M^{(k)}_{\sigma}$ vanish identically in the centered
case; the affine absolute-error estimate remains available directly from
Lemma~\ref{lem:hilbert-angular-second-moment}.

Suppose $R=c_n\Rstar$ with $c_n\to c>1$.  Define, with
$0\log0:=0$,
\begin{equation}
 \psi(\rho):=\rho\log\rho-
 \left(\rho+\frac12\right)\log\left(\rho+\frac12\right)
 -\frac12\log2.
 \label{eq:tensor-rate-function}
\end{equation}
If $\kappa>0$ satisfies
\begin{equation}
 \log c+\psi(\kappa/2)>0,
 \label{eq:tensor-degree-condition}
\end{equation}
then, for every
$0<\beta<\tfrac12[\log c+\psi(\kappa/2)]$, with failure
$e^{-\Omega(n)}$ one has simultaneously for every even
$2\le k\le\kappa n$,
\begin{equation}
 \frac{\|M^{(k)}_{\cL,R}-M^{(k)}_{\sigma}\|_F}
      {\|M^{(k)}_{\sigma}\|_F}
 \le e^{-\beta n}.
 \label{eq:simultaneous-linear-tensor-moments}
\end{equation}
This simultaneous conclusion also holds on $\widetilde X_n$.  At
$c=\sqrt{4/3}$, condition \eqref{eq:tensor-degree-condition} is equivalent
to
\begin{equation}
 \kappa<\kappa_*=0.080964856\ldots,
 \qquad
 \frac12\log\frac43+\psi(\kappa_*/2)=0.
 \label{eq:critical-tensor-degree}
\end{equation}
Moreover, throughout this range the empirical pair-angle moments obey
\begin{equation}
 \frac1{N_R^2}\sum_{\mathbf x,\mathbf y\in A_R(\cL)}
 \langle\widehat{\mathbf x},\widehat{\mathbf y}\rangle^k
 =\bigl(1+O(e^{-\beta n})\bigr)m_{k,n}
 \label{eq:pair-angle-moments}
\end{equation}
simultaneously for the same even degrees.

Conversely, if $\log c+\psi(\kappa/2)<0$ and
$k_n=2\lfloor\kappa n/2\rfloor$, then for some $a>0$, with probability
$1-e^{-\Omega(n)}$,
\begin{equation}\label{eq:tensor-degree-obstruction}
 \frac{\|M^{(k_n)}_{\cL,R}-M^{(k_n)}_\sigma\|_F}
      {\|M^{(k_n)}_\sigma\|_F}\ge e^{an}.
\end{equation}
This also holds for a random-affine shell.  Thus the strict degree boundary
is sharp at exponential scale for this relative tensor norm.
\end{theorem}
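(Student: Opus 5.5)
The plan is a second-moment argument built on one exact algebraic identity. For even $k$, rotation invariance gives $\langle M^{(k)}_\sigma,\widehat{\mathbf x}^{\otimes k}\rangle=\E_{\mathbf U}\langle\mathbf U,\widehat{\mathbf x}\rangle^k=m_{k,n}$ for every unit vector $\widehat{\mathbf x}$. Put
\[
 W:=\sum_{\mathbf x\in A_R(\cL)}\bigl(\widehat{\mathbf x}^{\otimes k}-M^{(k)}_\sigma\bigr)=N_R\bigl(M^{(k)}_{\cL,R}-M^{(k)}_\sigma\bigr).
\]
Then $\langle M^{(k)}_\sigma,W\rangle=0$, and this yields the two identities
\[
 \|W\|_F^2=\sum_{\mathbf x,\mathbf y\in A_R}\bigl(\langle\widehat{\mathbf x},\widehat{\mathbf y}\rangle^k-m_{k,n}\bigr),
 \qquad
 \frac1{N_R^2}\sum_{\mathbf x,\mathbf y}\langle\widehat{\mathbf x},\widehat{\mathbf y}\rangle^k=m_{k,n}+\|M^{(k)}_{\cL,R}-M^{(k)}_\sigma\|_F^2 .
\]
The second identity reduces \eqref{eq:pair-angle-moments} to \eqref{eq:simultaneous-linear-tensor-moments}. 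It also drives the converse.

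\textbf{Step 1: expectation of $\|W\|_F^2$.} I will compute $\E\|W\|_F^2$ by the order-two Rogers formula, Proposition~\ref{thm:rogers-general}, exactly as in Lemma~\ref{lem:thin-shell-count}.
\begin{itemize}
\item The full-rank term is $P_R^2\bigl(\E\langle\mathbf U,\mathbf V\rangle^k-m_{k,n}\bigr)=0$. This uses the independence of radius and direction under Lebesgue measure on $S_R$.
\item The diagonal $\mathbf x=\mathbf y$ contributes $\E N_R(1-m_{k,n})\le P_R$.
\item A rank-one term $\mathbf y=(a/q)\mathbf x$ has $\langle\widehat{\mathbf x},\widehat{\mathbf y}\rangle^k=1$ because $k$ is even. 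Enumerating these terms as in Lemma~\ref{lem:thin-shell-count} gives at most $(2+O(2^{-n}))P_R$.
\end{itemize}
Hence $\E\|W\|_F^2\le CP_R$ with $C$ absolute and independent of $k$. In the affine case, Corollary~\ref{cor:affine-centered-campbell} gives the same bound: the full-rank term again vanishes and the diagonal gives $P_R$. This is presumably the content of Lemma~\ref{lem:hilbert-angular-second-moment}, which I would state and prove here.

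\textbf{Step 2: the single-degree tail.} By Chebyshev and Lemma~\ref{lem:thin-shell-count} (or Corollary~\ref{cor:affine-count-moments} in the affine case), $N_R\ge P_R/2$ except with probability $O(1/P_R)$. On that event a violation of \eqref{eq:tensor-moment-tail} forces $\|W\|_F^2>\varepsilon^2P_R^2m_{k,n}/4$. Markov then gives probability at most $4C/(\varepsilon^2P_Rm_{k,n})$, and $m_{k,n}\le1$ absorbs the Chebyshev term.

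Theorem~\ref{thm:dense-shell-isometry} follows from the same computation at $k=2$. There $m_{2,n}=1/n$, the operator norm is bounded by the Frobenius norm, and $\|C_R-I_n\|_{\rm op}=n\|M^{(2)}_{\cL,R}-M^{(2)}_\sigma\|_{\rm op}$. The formula \eqref{eq:thin-shell-volume-angular} is a direct shell-volume computation.

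\textbf{Step 3: asymptotics of $m_{k,n}$.} Write $k=2r$ and use
\[
 m_{k,n}=\frac{\Gamma(r+\tfrac12)\,\Gamma(n/2)}{\sqrt\pi\,\Gamma(n/2+r)}.
\]
Stirling's formula, uniform for $1\le r\le\kappa n/2$, gives $\log m_{k,n}=n\psi(r/n)+O(\log n)$. The linear terms and the $\log n$ terms cancel exactly as needed, and the boundary case of small $r$ is handled by $0\log 0=0$. Since $\psi'(\rho)=\log\bigl(\rho/(\rho+\tfrac12)\bigr)<0$, the worst degree is $k\approx\kappa n$.

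With $\varepsilon=e^{-\beta n}$ and $P_R=c^{n+o(n)}$, the bound from Step 2 becomes
\[
 \exp\bigl(-n[\log c+\psi(\kappa/2)-2\beta]+o(n)\bigr)
\]
uniformly in the degree. A union bound over the at most $n$ even degrees gives \eqref{eq:simultaneous-linear-tensor-moments}. The value $\kappa_*$ at $c=\sqrt{4/3}$ is then obtained numerically from \eqref{eq:critical-tensor-degree}.

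\textbf{Step 4: the converse.} All summands $\langle\widehat{\mathbf x},\widehat{\mathbf y}\rangle^{k_n}$ are nonnegative, so keeping only the diagonal in the pair identity gives
\[
 \|M^{(k_n)}_{\cL,R}-M^{(k_n)}_\sigma\|_F^2\ge\frac1{N_R}-m_{k_n,n}.
\]
On the event $N_R\le2P_R$, whose probability is $1-e^{-\Omega(n)}$, the relative squared error is at least $1/(2P_Rm_{k_n,n})-1$. Since $P_Rm_{k_n,n}=e^{n[\log c+\psi(\kappa/2)]+o(n)}$ and this exponent is negative by hypothesis, the relative error is at least $e^{an}$. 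The affine case is identical.

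\textbf{Main obstacle.} I expect the main difficulty to be keeping every constant uniform in $k$ up to linear degree. This means checking two things. First, the rank-one Rogers enumeration must carry no hidden $k$-dependence; this holds because the kernel is bounded by $1$ and even powers equal $1$ on collinear pairs. Second, the Stirling expansion of $m_{k,n}$ must have an $O(\log n)$ error uniformly over the entire range $1\le r\le\kappa n/2$.
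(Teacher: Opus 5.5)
Your proposal is correct and follows the paper's proof in all essentials. Both arguments use an order-two Rogers second moment, Markov plus a Chebyshev bound on the shell count, Stirling for $m_{k,n}$ with a union over the $O(n)$ even degrees, and the diagonal bound $\|M^{(k)}_{\cL,R}\|_F^2\ge 1/N_R$ for the converse. The one real variation is your centering. You center at the random $N_R M^{(k)}_\sigma$ and use $\langle M^{(k)}_\sigma,\widehat{\mathbf x}^{\otimes k}\rangle=m_{k,n}$ to get a scalar pair kernel whose full-rank term vanishes. This replaces the paper's separate event $|N_R-P_R|\le\varepsilon P_R/4$ and its triangle inequality by the cruder $N_R\ge P_R/2$, and it even sharpens the pair-angle error to $O(e^{-2\beta n})$. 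The paper, for its part, uses the exact ratio $m_{k+2,n}/m_{k,n}=(k+1)/(n+k)<1$, so it needs Stirling only at the top degree rather than uniformly in $r$.
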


Figure~\ref{fig:tensor-capacity} displays the entropy condition in
Theorem~\ref{thm:linear-degree-tensor-moments} as a density--degree tradeoff.
The strict sides of this boundary distinguish relative tensor agreement
from an exponentially diverging relative error.  Denser shells permit
agreement through a larger linear degree.

\begin{figure}[t]
\centering
\begin{tikzpicture}
\begin{axis}[
  width=.84\textwidth,
  height=6.5cm,
  xmin=1,xmax=1.55,
  ymin=0,ymax=.45,
  xlabel={normalized shell radius $c$ in $R=c\Rstar$},
  ylabel={relative tensor degree $\kappa=k/n$},
  xtick={1,1.1547005,1.2990381,1.4142136,1.55},
  xticklabels={$1$,$2/\sqrt3$,$3\sqrt3/4$,$\sqrt2$,$1.55$},
  ytick={0,.1,.2,.3,.4},
  tick label style={font=\small},
  label style={font=\small},
  clip=false
]
\addplot[draw=none,fill=figblue!14]
  coordinates {
    (1.000000000,0.000000000000)
    (1.005000000,0.001305302015)
    (1.010000000,0.002908834335)
    (1.015000000,0.004676392833)
    (1.020000000,0.006569579268)
    (1.025000000,0.008567860576)
    (1.030000000,0.010658064429)
    (1.035000000,0.012830873301)
    (1.040000000,0.015079275609)
    (1.045000000,0.017397763606)
    (1.050000000,0.019781872043)
    (1.055000000,0.022227892098)
    (1.060000000,0.024732683634)
    (1.065000000,0.027293546354)
    (1.070000000,0.029908128156)
    (1.075000000,0.032574357989)
    (1.080000000,0.035290395403)
    (1.085000000,0.038054591856)
    (1.090000000,0.040865460473)
    (1.095000000,0.043721652035)
    (1.100000000,0.046621935659)
    (1.105000000,0.049565183038)
    (1.110000000,0.052550355458)
    (1.115000000,0.055576492992)
    (1.120000000,0.058642705417)
    (1.125000000,0.061748164533)
    (1.130000000,0.064892097598)
    (1.135000000,0.068073781704)
    (1.140000000,0.071292538900)
    (1.145000000,0.074547731969)
    (1.150000000,0.077838760723)
    (1.154700538,0.080964856009)
    (1.155000000,0.081165058761)
    (1.160000000,0.084526090599)
    (1.165000000,0.087921349134)
    (1.170000000,0.091350353386)
    (1.175000000,0.094812646480)
    (1.180000000,0.098307793839)
    (1.185000000,0.101835381559)
    (1.190000000,0.105395014947)
    (1.195000000,0.108986317191)
    (1.200000000,0.112608928160)
    (1.205000000,0.116262503312)
    (1.210000000,0.119946712697)
    (1.215000000,0.123661240044)
    (1.220000000,0.127405781930)
    (1.225000000,0.131180047012)
    (1.230000000,0.134983755324)
    (1.235000000,0.138816637629)
    (1.240000000,0.142678434817)
    (1.245000000,0.146568897355)
    (1.250000000,0.150487784771)
    (1.255000000,0.154434865177)
    (1.260000000,0.158409914829)
    (1.265000000,0.162412717714)
    (1.270000000,0.166443065165)
    (1.275000000,0.170500755505)
    (1.280000000,0.174585593708)
    (1.285000000,0.178697391090)
    (1.290000000,0.182835965014)
    (1.295000000,0.187001138610)
    (1.299038106,0.190384319680)
    (1.300000000,0.191192740526)
    (1.305000000,0.195410604673)
    (1.310000000,0.199654570008)
    (1.315000000,0.203924480308)
    (1.320000000,0.208220183974)
    (1.325000000,0.212541533834)
    (1.330000000,0.216888386967)
    (1.335000000,0.221260604527)
    (1.340000000,0.225658051582)
    (1.345000000,0.230080596961)
    (1.350000000,0.234528113110)
    (1.355000000,0.239000475948)
    (1.360000000,0.243497564744)
    (1.365000000,0.248019261985)
    (1.370000000,0.252565453261)
    (1.375000000,0.257136027153)
    (1.380000000,0.261730875123)
    (1.385000000,0.266349891415)
    (1.390000000,0.270992972952)
    (1.395000000,0.275660019250)
    (1.400000000,0.280350932325)
    (1.405000000,0.285065616608)
    (1.410000000,0.289803978865)
    (1.414213562,0.293815373340)
    (1.415000000,0.294565928122)
    (1.420000000,0.299351375585)
    (1.425000000,0.304160234574)
    (1.430000000,0.308992420455)
    (1.435000000,0.313847850572)
    (1.440000000,0.318726444186)
    (1.445000000,0.323628122417)
    (1.450000000,0.328552808182)
    (1.455000000,0.333500426147)
    (1.460000000,0.338470902667)
    (1.465000000,0.343464165739)
    (1.470000000,0.348480144953)
    (1.475000000,0.353518771444)
    (1.480000000,0.358579977847)
    (1.485000000,0.363663698256)
    (1.490000000,0.368769868176)
    (1.495000000,0.373898424489)
    (1.500000000,0.379049305415)
    (1.505000000,0.384222450469)
    (1.510000000,0.389417800430)
    (1.515000000,0.394635297304)
    (1.520000000,0.399874884293)
    (1.525000000,0.405136505759)
    (1.530000000,0.410420107195)
    (1.535000000,0.415725635194)
    (1.540000000,0.421053037422)
    (1.545000000,0.426402262587)
    (1.550000000,0.431773260412)
    (1.550000000,0)
    (1.000000000,0)
  } \closedcycle;
\addplot[figblue,line width=1.8pt]
  coordinates {
    (1.000000000,0.000000000000)
    (1.005000000,0.001305302015)
    (1.010000000,0.002908834335)
    (1.015000000,0.004676392833)
    (1.020000000,0.006569579268)
    (1.025000000,0.008567860576)
    (1.030000000,0.010658064429)
    (1.035000000,0.012830873301)
    (1.040000000,0.015079275609)
    (1.045000000,0.017397763606)
    (1.050000000,0.019781872043)
    (1.055000000,0.022227892098)
    (1.060000000,0.024732683634)
    (1.065000000,0.027293546354)
    (1.070000000,0.029908128156)
    (1.075000000,0.032574357989)
    (1.080000000,0.035290395403)
    (1.085000000,0.038054591856)
    (1.090000000,0.040865460473)
    (1.095000000,0.043721652035)
    (1.100000000,0.046621935659)
    (1.105000000,0.049565183038)
    (1.110000000,0.052550355458)
    (1.115000000,0.055576492992)
    (1.120000000,0.058642705417)
    (1.125000000,0.061748164533)
    (1.130000000,0.064892097598)
    (1.135000000,0.068073781704)
    (1.140000000,0.071292538900)
    (1.145000000,0.074547731969)
    (1.150000000,0.077838760723)
    (1.154700538,0.080964856009)
    (1.155000000,0.081165058761)
    (1.160000000,0.084526090599)
    (1.165000000,0.087921349134)
    (1.170000000,0.091350353386)
    (1.175000000,0.094812646480)
    (1.180000000,0.098307793839)
    (1.185000000,0.101835381559)
    (1.190000000,0.105395014947)
    (1.195000000,0.108986317191)
    (1.200000000,0.112608928160)
    (1.205000000,0.116262503312)
    (1.210000000,0.119946712697)
    (1.215000000,0.123661240044)
    (1.220000000,0.127405781930)
    (1.225000000,0.131180047012)
    (1.230000000,0.134983755324)
    (1.235000000,0.138816637629)
    (1.240000000,0.142678434817)
    (1.245000000,0.146568897355)
    (1.250000000,0.150487784771)
    (1.255000000,0.154434865177)
    (1.260000000,0.158409914829)
    (1.265000000,0.162412717714)
    (1.270000000,0.166443065165)
    (1.275000000,0.170500755505)
    (1.280000000,0.174585593708)
    (1.285000000,0.178697391090)
    (1.290000000,0.182835965014)
    (1.295000000,0.187001138610)
    (1.299038106,0.190384319680)
    (1.300000000,0.191192740526)
    (1.305000000,0.195410604673)
    (1.310000000,0.199654570008)
    (1.315000000,0.203924480308)
    (1.320000000,0.208220183974)
    (1.325000000,0.212541533834)
    (1.330000000,0.216888386967)
    (1.335000000,0.221260604527)
    (1.340000000,0.225658051582)
    (1.345000000,0.230080596961)
    (1.350000000,0.234528113110)
    (1.355000000,0.239000475948)
    (1.360000000,0.243497564744)
    (1.365000000,0.248019261985)
    (1.370000000,0.252565453261)
    (1.375000000,0.257136027153)
    (1.380000000,0.261730875123)
    (1.385000000,0.266349891415)
    (1.390000000,0.270992972952)
    (1.395000000,0.275660019250)
    (1.400000000,0.280350932325)
    (1.405000000,0.285065616608)
    (1.410000000,0.289803978865)
    (1.414213562,0.293815373340)
    (1.415000000,0.294565928122)
    (1.420000000,0.299351375585)
    (1.425000000,0.304160234574)
    (1.430000000,0.308992420455)
    (1.435000000,0.313847850572)
    (1.440000000,0.318726444186)
    (1.445000000,0.323628122417)
    (1.450000000,0.328552808182)
    (1.455000000,0.333500426147)
    (1.460000000,0.338470902667)
    (1.465000000,0.343464165739)
    (1.470000000,0.348480144953)
    (1.475000000,0.353518771444)
    (1.480000000,0.358579977847)
    (1.485000000,0.363663698256)
    (1.490000000,0.368769868176)
    (1.495000000,0.373898424489)
    (1.500000000,0.379049305415)
    (1.505000000,0.384222450469)
    (1.510000000,0.389417800430)
    (1.515000000,0.394635297304)
    (1.520000000,0.399874884293)
    (1.525000000,0.405136505759)
    (1.530000000,0.410420107195)
    (1.535000000,0.415725635194)
    (1.540000000,0.421053037422)
    (1.545000000,0.426402262587)
    (1.550000000,0.431773260412)
  };
\addplot[figorange,densely dotted,line width=1pt]
  coordinates {(1.1547005,0) (1.1547005,0.080964856)};
\addplot[figorange,densely dotted,line width=1pt]
  coordinates {(1,0.080964856) (1.1547005,0.080964856)};
\addplot[figorange,only marks,mark=*,mark size=2.2pt]
  coordinates {(1.1547005,0.080964856)};
\node[font=\small,align=center,text=figblue!80!black]
  at (axis cs:1.37,.105)
  {simultaneous spherical\\tensor agreement};
\node[font=\scriptsize,anchor=south west,fill=white,inner sep=1.5pt]
  at (axis cs:1.162,.086)
  {$\kappa_*=0.080964856\ldots$};
\node[font=\scriptsize,align=center,text=gray!70!black]
  at (axis cs:1.27,.36)
  {relative tensor agreement\\fails above the boundary};
\end{axis}
\end{tikzpicture}
\caption[Tensor capacity of a dense shell.]{Tensor capacity of a dense shell.
The blue shading numerically illustrates the region
$\log c+\psi(\kappa/2)>0$; its boundary is drawn from densely sampled
numerical roots.  For every pair $(c,\kappa)$ satisfying this strict
inequality, every smaller even degree
$2\le k\le\kappa n$ then matches the spherical tensor moment with
exponentially small relative error and exponentially small failure
probability.  The marked point is the paper's near-critical specialization
$c=\sqrt{4/3}=2/\sqrt3$, where
$\kappa_*=0.080964856\ldots$.  Strictly above the boundary,
\eqref{eq:tensor-degree-obstruction} gives exponentially diverging relative
error at the largest indicated even degree.  No boundary claim is made
at equality.}
\label{fig:tensor-capacity}
\end{figure}

The short-vector angular limits of S\"odergren and Holm
\cite{SodergrenAngles2011,Holm2022} concern joint laws of sparse initial
vector families.  Here the population is the complete exponential shell:
the first theorem controls all test directions in one operator bound, and
the second controls even tensor degrees linear in the dimension.  The
spherical tensor means and the order-two Rogers formula are classical.
They yield the aggregate quenched bounds proved below.

Both theorems follow from the next Hilbert-valued estimate.  Its Rogers order
stays two even when the target Hilbert space is a high tensor power.

\subsection{A Hilbert-valued order-two estimate}

\begin{lemma}[Hilbert-valued angular second moment]
\label{lem:hilbert-angular-second-moment}
Let $\mathsf H$ be a finite-dimensional real Hilbert space and let
$h:S^{n-1}\to\mathsf H$ be Borel measurable with $\|h(\mathbf u)\|\le1$.
Write
\[
 \overline h:=\int_{S^{n-1}}h(\mathbf u)\,d\sigma_{n-1}(\mathbf u),
 \qquad
 Z_{h,R}(\cL):=\sum_{\mathbf x\in A_R(\cL)}h(\widehat{\mathbf x}).
\]
There is a deterministic $\delta_n=e^{-\Omega(n\log n)}$, independent of
$R$, $\mathsf H$, and $h$, such that
\begin{equation}
 \E_{\cL}\bigl\|Z_{h,R}-P_R\overline h\bigr\|^2
 \le(2+\delta_n)P_R.
 \label{eq:hilbert-angular-centered}
\end{equation}
For a random affine coset put
\[
 \widetilde Z_{h,R}:=
 \sum_{\mathbf z\in(\boldsymbol\tau+\cL)\cap S_R}
 h(\widehat{\mathbf z}).
\]
Then the exact identity
\begin{equation}
 \E_{\cL,\boldsymbol\tau}
 \bigl\|\widetilde Z_{h,R}-P_R\overline h\bigr\|^2
 =P_R\int_{S^{n-1}}\|h(\mathbf u)\|^2
 \,d\sigma_{n-1}(\mathbf u)
 \label{eq:hilbert-angular-affine}
\end{equation}
holds.
\end{lemma}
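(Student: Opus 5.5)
The plan is to expand the squared Hilbert norm and apply the order-two Rogers formula (Proposition~\ref{thm:rogers-general} with $k=2$) in the centered case, and the affine unfolding (Lemma~\ref{lem:affine-fiber-unfold} together with Corollary~\ref{cor:affine-centered-campbell}) in the affine case. Write $\mathsf H$ in an orthonormal basis; then $\|Z_{h,R}-P_R\overline h\|^2=\|Z_{h,R}\|^2-2P_R\langle Z_{h,R},\overline h\rangle+P_R^2\|\overline h\|^2$. By Siegel (Proposition~\ref{thm:siegel}) applied to each coordinate of $f(\mathbf x)=\mathbf 1_{S_R}(\mathbf x)h(\widehat{\mathbf x})$, polar coordinates give $\E Z_{h,R}=\int_{S_R}h(\widehat{\mathbf x})\,d\mathbf x=P_R\overline h$, since the shell is rotation invariant. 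Hence the left side of \eqref{eq:hilbert-angular-centered} equals $\E\|Z_{h,R}\|^2-P_R^2\|\overline h\|^2$. Because $\langle h,h\rangle$ is a signed kernel, I will split it into positive and negative parts, or equivalently apply Rogers coordinatewise to the products $h_i(\widehat{\mathbf x})h_i(\widehat{\mathbf y})$ using positive and negative parts, exactly as in Lemma~\ref{lem:l2-rogers-variance}. All integrals are finite because the kernels are bounded and compactly supported.

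The order-two formula for $\E\sum_{\mathbf x,\mathbf y}\langle h(\widehat{\mathbf x}),h(\widehat{\mathbf y})\rangle\mathbf 1_{S_R}(\mathbf x)\mathbf 1_{S_R}(\mathbf y)$ has three kinds of terms. The full-rank identity term gives $\|\int_{S_R}h(\widehat{\mathbf x})d\mathbf x\|^2=P_R^2\|\overline h\|^2$, which cancels. The rank-one terms have the form $\mathbf y=(a/q)\mathbf x$ with coefficient $q^{-n}$. For $q=1$ and $a=\pm1$ the contribution is $\int_{S_R}\langle h(\widehat{\mathbf x}),h(\pm\widehat{\mathbf x})\rangle d\mathbf x$, which has absolute value at most $P_R$ each, for a total of at most $2P_R$. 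Any other $a$ with $q=1$ cannot keep both points in the thin shell. For $q\ge2$, support forces $|a|/q\in[(1-w_n)/(1+w_n),(1+w_n)/(1-w_n)]$. As in Lemma~\ref{lem:thin-shell-count}, this leaves $O(1+qw_n)$ admissible values of $a$. Each term is bounded in absolute value by $P_R$, using $\|h\|\le1$ and dropping the second indicator. The total is then $\le CP_R\sum_{q\ge2}q^{-n}(1+q/n^2)$.

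This is the step I expect to be the main obstacle. That crude bound is $O(2^{-n})P_R$, but the statement asks for $\delta_n=e^{-\Omega(n\log n)}$. To get this I will exploit the thin shell more sharply. For $q\ge2$ with $\gcd(a,q)=1$, the ratio $|a|/q$ lies within $O(n^{-2})$ of $1$, so $||a|-q|\le O(q/n^2)$. This forces $|a|=q$, which is incompatible with coprimality unless $q=1$, as long as $q\le cn^2$. So only $q\gtrsim n^2$ survive, and their total weight is $\sum_{q\ge cn^2}q^{-n}\cdot O(q/n^2)=e^{-\Omega(n\log n)}$. This $\delta_n$ depends only on $n$, not on $R$, $\mathsf H$, or $h$, which proves \eqref{eq:hilbert-angular-centered}.

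For the affine case, expand $\|\widetilde Z_{h,R}\|^2$ as a diagonal sum plus an off-diagonal sum over $\mathbf z\neq\mathbf z'$ in the coset. The diagonal sum has expectation $\int_{S_R}\|h(\widehat{\mathbf z})\|^2d\mathbf z=P_R\int\|h\|^2d\sigma_{n-1}$ by unfolding. In the off-diagonal sum, write $\mathbf z'=\mathbf z+\mathbf v$ with $\mathbf v\in\cL\setminus\{0\}$. Corollary~\ref{cor:affine-centered-campbell}, applied to positive and negative parts, then gives exactly $\|P_R\overline h\|^2$, with no lower-rank corrections. The mean is $P_R\overline h$ by the same unfolding, so subtracting $P_R^2\|\overline h\|^2$ yields the exact identity \eqref{eq:hilbert-angular-affine}.
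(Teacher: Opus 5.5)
Your proposal is correct and follows the paper's proof essentially step for step. Both arguments apply the order-two Rogers formula componentwise, using positive and negative parts, and cancel the full-rank term against $P_R^2\|\overline h\|^2$. Both bound the $(q,a)=(1,\pm1)$ diagonal and antipodal terms by $2P_R$. Both then note that coprimality together with $|a|/q=1+O(n^{-2})$ forces $q\ge cn^2$, which gives $\delta_n=e^{-\Omega(n\log n)}$ uniformly in $R$, $\mathsf H$, and $h$, and both use Campbell unfolding for the exact affine identity. The only cosmetic difference is that you compute the mean $P_R\overline h$ separately via Siegel's formula, whereas the paper reads the cancellation directly off the full-rank term.
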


\begin{proof}
Choose an orthonormal basis of $\mathsf H$, expand the squared norm, and
apply the order-two Rogers formula componentwise.  Each resulting signed
kernel is handled by its positive and negative parts; bounded shell support
makes both applications finite.  The full-rank term equals
$P_R^2\|\overline h\|^2$ and cancels after centering.  A rank-one term has
$\mathbf y=(a/q)\mathbf x$, where $q\ge1$, $a\in\Z\setminus\{0\}$,
$(a,q)=1$, and coefficient $q^{-n}$.  For $(q,a)=(1,1)$ and $(1,-1)$ the
sum of the two integrals is
\[
 P_R\int_{S^{n-1}}
 \bigl(\|h(\mathbf u)\|^2+
       \langle h(\mathbf u),h(-\mathbf u)\rangle\bigr)
 \,d\sigma_{n-1}(\mathbf u)
 \le2P_R.
\]

Every other supported reduced ratio $b=|a|/q$ must satisfy
\[
 \frac{1-w_n}{1+w_n}\le b\le
 \frac{1+w_n}{1-w_n}.
\]
Since $w_n=n^{-2}$ and $a\ne\pm q$, this forces $q\ge c n^2$ for an
absolute $c>0$.  For a fixed $q$ there are $O(1+q/n^2)$ possible
numerators.  Taking absolute values of their Hilbert inner products bounds
their total contribution by
\[
 P_R\sum_{q\ge cn^2}q^{-n}O(1+q/n^2)
 =P_R e^{-\Omega(n\log n)}.
\]
This proves \eqref{eq:hilbert-angular-centered}.

For the affine identity, the diagonal contribution to the uncentered second
moment is
$P_R\int\|h\|^2d\sigma_{n-1}$.  Write two distinct affine points as
$\mathbf z$ and $\mathbf z+\mathbf v$ with
$\mathbf v\in\cL\setminus\{\mathbf0\}$.  Applying
Corollary~\ref{cor:affine-centered-campbell} to their Hilbert inner product
shows that the off-diagonal contribution is exactly
$P_R^2\|\overline h\|^2$.  Subtracting the squared mean proves
\eqref{eq:hilbert-angular-affine}.
\end{proof}

\subsection{Proofs of the angular theorems and the degree range}

\begin{proof}[Proof of Theorem~\ref{thm:dense-shell-isometry}]
Apply Lemma~\ref{lem:hilbert-angular-second-moment} first to $h\equiv1$ and
then to the Frobenius-Hilbert-space-valued map
$h(\mathbf u)=\mathbf u\mathbf u^{\mathsf T}$.  Since
$\|\mathbf u\mathbf u^{\mathsf T}\|_F=1$ and
$\int\mathbf u\mathbf u^{\mathsf T}d\sigma_{n-1}=I_n/n$, Markov's
inequality gives
\begin{align*}
 \Prb[|N_R-P_R|>\varepsilon P_R/4]
 &\le \frac{C}{\varepsilon^2P_R},\\
 \Prb[\|Q_R-P_RI_n/n\|_F>\varepsilon P_R/(4n)]
 &\le \frac{Cn^2}{\varepsilon^2P_R}.
\end{align*}
On the complement, $N_R\ge7P_R/8$ and
\[
 \|C_R-I_n\|_{\rm op}
 \le\frac n{N_R}\|Q_R-P_RI_n/n\|_F
      +\left|\frac{P_R}{N_R}-1\right|<\varepsilon.
\]
The affine proof is identical, using
\eqref{eq:hilbert-angular-affine}.  Equation
\eqref{eq:dense-shell-analysis-map} is the quadratic-form formulation of the
operator bound.  Finally,
\[
 P_R=c_n^n\bigl((1+n^{-2})^n-(1-n^{-2})^n\bigr)
 =\frac{2c_n^n}{n}(1+O(n^{-2})),
\]
which proves the exponential specialization.
\end{proof}

\begin{proof}[Proof of Theorem~\ref{thm:linear-degree-tensor-moments}]
Apply Lemma~\ref{lem:hilbert-angular-second-moment} to
$h(\mathbf u)=\mathbf u^{\otimes k}$, which has Hilbert--Schmidt norm one.
Write
\[
 Z_k:=\sum_{\mathbf x\in A_R(\cL)}\widehat{\mathbf x}^{\otimes k},
 \qquad M_k:=M^{(k)}_\sigma,
 \qquad \|M_k\|_F^2=m_{k,n}.
\]
There are two bad events.  Markov's inequality and the Hilbert-valued lemma
give
\[
 \Prb\left[\|Z_k-P_RM_k\|_F>
               \frac{\varepsilon}{4}P_R\sqrt{m_{k,n}}\right]
 \le\frac{C}{\varepsilon^2P_Rm_{k,n}},
\]
while Lemma~\ref{lem:thin-shell-count} gives
\[
 \Prb\left[|N_R-P_R|>\frac{\varepsilon}{4}P_R\right]
 \le\frac{C}{\varepsilon^2P_R}
 \le\frac{C}{\varepsilon^2P_Rm_{k,n}}.
\]
On their complement $N_R\ge(1-\varepsilon/4)P_R$ and
\begin{align*}
 \left\|\frac{Z_k}{N_R}-M_k\right\|_F
 &\le\frac{\|Z_k-P_RM_k\|_F}{N_R}
       +\left|\frac{P_R}{N_R}-1\right|\|M_k\|_F\\
 &<\varepsilon\sqrt{m_{k,n}},
\end{align*}
for $0<\varepsilon\le1/2$.  This proves
\eqref{eq:tensor-moment-tail} and also shows explicitly that the random
denominator is not absorbed into the exponentially small spherical tensor
norm.  The affine case uses the exact affine second moment and
Corollary~\ref{cor:affine-count-moments} in the same two-event argument.

For odd $k$, antipodal centered-shell points cancel pairwise.  For even $k$,
the first equality in
\eqref{eq:spherical-tensor-norm} follows from
\[
 \langle M^{(k)}_\sigma,M^{(k)}_\sigma\rangle
 =\E\langle\mathbf U,\mathbf V\rangle^k,
\]
and the last equality is the standard even moment of one coordinate of a
uniform spherical vector.

Writing $k=2r$ and using the gamma-function form
\[
 m_{2r,n}=
 \frac{\Gamma(n/2)\Gamma(r+1/2)}
      {\sqrt\pi\,\Gamma(n/2+r)},
\]
Stirling's formula gives, whenever $r/n\to\rho$,
\begin{equation}
 \frac1n\log m_{2r,n}=\psi(\rho)+o(1).
 \label{eq:tensor-stirling}
\end{equation}
Also $m_{k+2,n}/m_{k,n}=(k+1)/(n+k)<1$.  A union bound in
\eqref{eq:tensor-moment-tail}, using the largest permitted even degree, is
therefore at most
\[
 \exp\bigl(-[\log c+\psi(\kappa/2)-2\beta]n+o(n)\bigr).
\]
This proves the simultaneous claim.  The numerical constant in
\eqref{eq:critical-tensor-degree} is the unique positive root of the
displayed strictly decreasing equation.  Finally,
\[
 \frac1{N_R^2}\sum_{\mathbf x,\mathbf y}
 \langle\widehat{\mathbf x},\widehat{\mathbf y}\rangle^k
 =\|M^{(k)}_{\cL,R}\|_F^2,
\]
and relative norm closeness changes the squared norm by at most
$2e^{-\beta n}+e^{-2\beta n}$.

For the converse, rotation invariance gives
$\langle\mathbf u^{\otimes k},M^{(k)}_\sigma\rangle=m_{k,n}$ for every
unit vector $\mathbf u$ and even $k$.  For $N>0$ unit directions, let $M^{(k)}_{\rm emp}$ be the average
of their $k$th tensor powers.  It satisfies the deterministic identity
and bound
\begin{equation}\label{eq:tensor-diagonal-obstruction}
 \frac{\|M^{(k)}_{\rm emp}-M^{(k)}_\sigma\|_F^2}{m_{k,n}}
 =\frac{\|M^{(k)}_{\rm emp}\|_F^2}{m_{k,n}}-1
 \ge\frac1{Nm_{k,n}}-1.
\end{equation}
Indeed all pair inner products raised to the even power $k$ are
nonnegative, and the $N$ diagonal terms give
$\|M^{(k)}_{\rm emp}\|_F^2\ge1/N$.
The shell count gives $N_R=c^{n+o(n)}$ with
failure $e^{-\Omega(n)}$.  At $k=k_n$, \eqref{eq:tensor-stirling} now
shows that $N_Rm_{k_n,n}=\exp(n[\log c+\psi(\kappa/2)]+o(n))$.
Under the negative-sign hypothesis, \eqref{eq:tensor-diagonal-obstruction}
proves \eqref{eq:tensor-degree-obstruction}, with any sufficiently small
fixed $a>0$.  The argument applies equally to affine shells.
\end{proof}

\begin{remark}[Sharpness and the role of diagonal pairs]
The union over $O(n)$ even degrees costs only a polynomial factor.
The exponential restriction is $P_Rm_{k,n}\to\infty$, and
\eqref{eq:tensor-diagonal-obstruction} shows why it is necessary for the
present statistic on the opposite strict side.
The constant $\kappa_*$ is consequently not merely an artefact of that
union bound.  At equality, polynomial factors and the rate at which
$c_n$ approaches $c$ matter; the strict-sign conclusions above do not
resolve all such boundary sequences.  Removing diagonal pairs would
define a different pair statistic and remove this particular obstruction.
\end{remark}

\appendix
\section{Bounded angular kernels and upper tails}\label{app:angular-kernels}
Throughout this appendix $n\ge3$, and $\sigma_{n-1}$ is normalized
spherical measure as in Section~\ref{sec:angular-transfer}.
The results below are consequences of the order-two formulas; they give
exact expectations and one-sided upper tails for bounded nonnegative
kernels.

Let $K:[-1,1]\to[0,\infty)$ be bounded and Borel.  Write
\begin{equation}
 \overline K_n:=
 \int_{S^{n-1}}\int_{S^{n-1}}
 K(\langle\mathbf u,\mathbf v\rangle)
 \,d\sigma_{n-1}(\mathbf u)d\sigma_{n-1}(\mathbf v),
 \qquad K_\infty:=\|K\|_\infty.
 \label{eq:angular-kernel-mean}
\end{equation}
For $A=S_R$, $B=S_{R'}$, with volumes $P,Q$, define
\begin{align*}
 W_{K;A,B}(\cL)
 &:=\sum_{\mathbf x\in\cL\cap A}
     \sum_{\mathbf y\in\cL\cap B}
 K(\langle\widehat{\mathbf x},\widehat{\mathbf y}\rangle),\\
 W^{\rm ind}_{K;A,B}(\cL)
 &:=\sum_{\substack{\mathbf x\in\cL\cap A,\ \mathbf y\in\cL\cap B\\
             \dim\Span_{\R}\{\mathbf x,\mathbf y\}=2}}
 K(\langle\widehat{\mathbf x},\widehat{\mathbf y}\rangle).
\end{align*}

\begin{proposition}[Exact independent-pair transfer and collinear correction]
\label{thm:weighted-angular-transfer}
For every $R,R'>0$,
\begin{equation}
 \E_\cL W^{\rm ind}_{K;A,B}=PQ\overline K_n.
 \label{eq:independent-pair-exact-transfer}
\end{equation}
For the workload containing all ordered pairs,
\begin{equation}
 \E_\cL W_{K;A,B}=PQ\overline K_n+\mathcal R_{K;A,B},
 \label{eq:all-pair-angular-transfer}
\end{equation}
where the nonnegative correction is exactly
\begin{equation}
 \mathcal R_{K;A,B}
 =\sum_{q\ge1}\ 
   \sum_{\substack{a\in\Z\setminus\{0\}\\(a,q)=1}}
 q^{-n}K(\operatorname{sgn}a)
 \int_{\R^n}\mathbf1_A(\mathbf x)
     \mathbf1_B\!\left(\frac aq\mathbf x\right)d\mathbf x.
 \label{eq:collinear-correction-exact}
\end{equation}
If $R'/R$ remains in a fixed compact subinterval $J\Subset(0,\infty)$,
then
\begin{equation}
 0\le\mathcal R_{K;A,B}
 \le C_J K_\infty\min(P,Q).
 \label{eq:collinear-correction-bound}
\end{equation}

For two points in the same random affine coset, let
\begin{equation*}
 \widetilde W^{\ne}_{K;A,B}:=
 \sum_{\substack{\mathbf z\in(\boldsymbol\tau+\cL)\cap A,\
                    \mathbf z'\in(\boldsymbol\tau+\cL)\cap B\\
                    \mathbf z'\ne\mathbf z}}
 K(\langle\widehat{\mathbf z},\widehat{\mathbf z'}\rangle).
\end{equation*}
Then
\begin{equation}
 \E_{\cL,\boldsymbol\tau}\widetilde W^{\ne}_{K;A,B}
 =PQ\overline K_n.
 \label{eq:affine-offdiagonal-transfer}
\end{equation}
The mixed affine--centered workload also satisfies the exact identity
\begin{equation}
 \E_{\cL,\boldsymbol\tau}
 \sum_{\mathbf z\in(\boldsymbol\tau+\cL)\cap A}
 \sum_{\mathbf v\in\cL\cap B}
 K(\langle\widehat{\mathbf z},\widehat{\mathbf v}\rangle)
 =PQ\overline K_n.
 \label{eq:mixed-affine-centered-transfer}
\end{equation}
\end{proposition}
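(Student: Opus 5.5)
The plan is to apply the order-two Rogers formula of Proposition~\ref{thm:rogers-general} (with $k=2$) to the nonnegative kernel $F(\mathbf x,\mathbf y)=\mathbf1_A(\mathbf x)\mathbf1_B(\mathbf y)K(\langle\widehat{\mathbf x},\widehat{\mathbf y}\rangle)$. This kernel is bounded, Borel and compactly supported, and it vanishes at zero coordinates because both shells avoid the origin. Lemma~\ref{lem:haar-kernel-measurable} justifies Tonelli.

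The admissible matrices at $k=2$ fall into two types. The first is the full-rank identity ($r=2$, $q=1$), whose integral is $\int\!\int\mathbf1_A(\mathbf x)\mathbf1_B(\mathbf y)K(\langle\widehat{\mathbf x},\widehat{\mathbf y}\rangle)\,d\mathbf x\,d\mathbf y$. The second is the rank-one family $(q\;a)$ with $a\ne0$ and $\gcd(a,q)=1$. For these the coefficient is exactly $c_D=q^{-n}$, since the count in \eqref{eq:rogers-coeff} is $1$ by coprimality. They produce $\mathbf y=(a/q)\mathbf x$, and hence $\langle\widehat{\mathbf x},\widehat{\mathbf y}\rangle=\operatorname{sgn}a$.

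To evaluate the identity term, write $\mathbf x=r\mathbf u$ and $\mathbf y=s\mathbf v$ in polar coordinates. Since $A$ and $B$ are radial, the integral factorizes as $PQ$ times the double spherical average, which gives $PQ\overline K_n$. The rank-one terms give exactly \eqref{eq:collinear-correction-exact}, so \eqref{eq:all-pair-angular-transfer} follows with a nonnegative correction.

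For \eqref{eq:independent-pair-exact-transfer}, apply the same formula to $F\cdot\mathbf1\{\dim\Span\{\mathbf x,\mathbf y\}=2\}$. Every rank-one diagram vanishes, because there $\mathbf y$ is a multiple of $\mathbf x$. In the identity diagram, the collinear set has Lebesgue measure zero in $(\R^n)^2$, so the indicator leaves the integral unchanged.

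For the bound \eqref{eq:collinear-correction-bound}, bound $K$ by $K_\infty$. For each rational $b=|a|/q$, the integral in \eqref{eq:collinear-correction-exact} is at most $\min(P,b^{-n}Q)$. Support forces $b\in[\frac{R'}{R}\frac{1-w_n}{1+w_n},\frac{R'}{R}\frac{1+w_n}{1-w_n}]$, so for each $q$ there are $O_J(1+q)$ admissible numerators, with both signs counted. The key point is to check that the sum $\sum_q q^{-n}\cdot O_J(1+q)$ stays bounded. It does: the $q=1$ term is $O_J(1)$, and $\sum_{q\ge2}q^{-n+1}=O(2^{-n})$ for $n\ge3$. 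Using $\min(P,b^{-n}Q)$ requires care, since $b^{-n}$ could be exponentially large when $R'<R$. The cleaner route is the substitution $\mathbf z=b\mathbf x$. It shows the integral equals $b^{-n}\vol(bA\cap B)\le b^{-n}Q$, and it is also at most $P$. Choosing whichever of these is smaller gives a bound by $\min(P,Q)$ up to factors: the $b\ge1$ terms are handled by the $b^{-n}Q$ bound and the $b<1$ terms by $P$, and symmetrically. Note that $P$ and $Q$ themselves may differ exponentially, since $P\asymp R^n$. If one of $P$ and $Q$ is exponentially smaller than the other, then $b$ near $R'/R$ forces $b^{-n}Q\approx P$ anyway. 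So every term is $O(\min(P,Q))$ up to the constant $C_J$, and only finitely many $(q,a)$ with small $q$ contribute non-negligibly. This bookkeeping is the main obstacle I expect.

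The affine identities follow the pattern of Corollary~\ref{cor:affine-count-moments}. For \eqref{eq:affine-offdiagonal-transfer}, write $\mathbf z'=\mathbf z+\mathbf v$ with $\mathbf v\in\cL\setminus\{0\}$ and apply Corollary~\ref{cor:affine-centered-campbell} to $G(\mathbf z,\mathbf v)=\mathbf1_A(\mathbf z)\mathbf1_B(\mathbf z+\mathbf v)K(\cdot)$. After the change of variables $\mathbf y=\mathbf z+\mathbf v$, the resulting double Lebesgue integral is the identity integral above, namely $PQ\overline K_n$. For \eqref{eq:mixed-affine-centered-transfer}, apply the same corollary directly with $G(\mathbf z,\mathbf v)=\mathbf1_A(\mathbf z)\mathbf1_B(\mathbf v)K(\langle\widehat{\mathbf z},\widehat{\mathbf v}\rangle)$, which vanishes at $\mathbf v=0$.
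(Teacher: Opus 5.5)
\emph{This is a genuine but local gap: everything is right except the proof of \eqref{eq:collinear-correction-bound} when $R'<R$.} Your Rogers expansion matches the paper's proof, namely the identity term plus the coprime rank-one rows with $c_D=q^{-n}$. So do the polar evaluation $PQ\overline K_n$, the independence indicator killing every rank-one term, the exact formula \eqref{eq:collinear-correction-exact}, and both affine identities via Corollary~\ref{cor:affine-centered-campbell}.

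Put $\xi:=R'/R<1$. Both shells have relative half-width $n^{-2}$, so $Q=\xi^nP$ exactly. On the support, $b=|a|/q=\xi(1+O(n^{-2}))$, so, as you note, $b^{-n}Q=P(1+O(1/n))$. That means every supported integral $I_{a,q}$ in \eqref{eq:collinear-correction-exact} has size about $P$. For $\xi<1$ this is exponentially larger than $\min(P,Q)=Q$. For example, with $\xi=1/2$ the pair $(a,q)=(\pm1,2)$ has $I_{a,q}=P=2^nQ$. Hence your claim that the smaller of $P$ and $b^{-n}Q$ bounds each integral by $O(\min(P,Q))$ is false. Your sum $\sum_q q^{-n}O_J(1+q)$ uses the Rogers weight only to make the series over all $q\ge1$ converge, so it proves no more than $\mathcal R_{K;A,B}\le C_JK_\infty P$.

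The missing step is to pair the weight $q^{-n}$ with the support constraint $q\ge|a|/b$. This gives $q^{-n}I_{a,q}\le q^{-n}(q/|a|)^nQ=|a|^{-n}Q$. Then re-index by $|a|$: for fixed $|a|$ only $O_J(1+|a|/n^2)$ denominators are supported. The sum is therefore $\sum_{|a|\ge1}|a|^{-n}O_J(1+|a|/n^2)\,Q=O_J(Q)$; in the example this is just $2^{-n}P=Q$. For $\xi\ge1$ your route works, since then $\min(P,Q)=P$. The paper finishes the bound with exactly this two-case split: it sums $I_{a,q}\le P$ against $q^{-n}$ when $\xi\ge1$, and sums $I_{a,q}\le(q/|a|)^nQ$ after re-indexing by $|a|$ when $\xi\le1$.
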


\begin{proof}
Apply the two-vector Rogers formula to
\[
 F(\mathbf x,\mathbf y)=\mathbf1_A(\mathbf x)\mathbf1_B(\mathbf y)
 K(\langle\widehat{\mathbf x},\widehat{\mathbf y}\rangle).
\]
Set $F=0$ when either coordinate is zero.
Its full-rank term is the double Euclidean integral, which equals
$PQ\overline K_n$ by polar coordinates.  If $F$ is multiplied by the
indicator that $\mathbf x,\mathbf y$ are linearly independent, every
rank-one Rogers integral vanishes.  This proves
\eqref{eq:independent-pair-exact-transfer}.  Without that indicator, the
rank-one parametrization $\mathbf y=(a/q)\mathbf x$ gives exactly
\eqref{eq:collinear-correction-exact}.

For completeness, put $\xi=R'/R\in J$.  Shell support forces
$|a|/q=\xi(1+O(n^{-2}))$, so for a fixed $q$ there are
$O_J(1+q/n^2)$ supported absolute numerators.  Moreover, if the integral in
\eqref{eq:collinear-correction-exact} is denoted by $I_{a,q}$, then
\[
 I_{a,q}\le P,
 \qquad
 I_{a,q}\le(q/|a|)^nQ.
\]
When $\xi\ge1$, sum the first bound against $q^{-n}$; when $\xi\le1$,
re-index by $|a|$ and sum the second bound.  In either case
$\sum q^{-n}I_{a,q}\le C_J\min(P,Q)$, including both signs.  This proves
\eqref{eq:collinear-correction-bound}.

For \eqref{eq:affine-offdiagonal-transfer}, write
$\mathbf z'=\mathbf z+\mathbf v$ with
$\mathbf v\in\cL\setminus\{\mathbf0\}$ and apply
Corollary~\ref{cor:affine-centered-campbell}; the change of variables
$(\mathbf z,\mathbf v)\mapsto(\mathbf z,\mathbf z')$ has unit Jacobian.
The same corollary applied directly to the affine point and centered vector
proves \eqref{eq:mixed-affine-centered-transfer}.  The centered shell $B$
does not contain the origin.
\end{proof}

The annealed identity has the following one-sided quenched consequence.  It
is the precise statement that a dense random-lattice shell has no
anomalously large nonnegative aggregate angular workload relative to two
independent spherical directions, up to a chosen Markov slack.  Since no
variance estimate for $W_{K;A,B}$ is used, the result supplies neither a
matching lower tail nor convergence of the normalized workload.
Put $N_A:=|\cL\cap A|$ and $N_B:=|\cL\cap B|$.
In the normalized formulas below, a ratio with $N_AN_B=0$ is interpreted as
$+\infty$.

\begin{corollary}[Quenched upper-tail bound for weighted kernels]
\label{cor:quenched-weighted-kernel-transfer}
Assume $\overline K_n>0$ and $R'/R\in J\Subset(0,\infty)$.  For every
$t\ge1$,
\begin{equation}
 \Prb_\cL\left[\frac{W_{K;A,B}}{PQ}>t\overline K_n\right]
 \le\frac1t\left(1+
   \frac{C_JK_\infty}{\overline K_n\max(P,Q)}\right).
 \label{eq:quenched-kernel-star-volume}
\end{equation}
Moreover,
\begin{align}
 &\Prb_\cL\left[
 N_A<\frac P2\ \text{or}\ N_B<\frac Q2\ \text{or}\
 \frac{W_{K;A,B}}{N_AN_B}>4t\overline K_n\right]\notag\\
 &\qquad\le
 C_J\left(\frac1P+\frac1Q\right)
 +\frac1t\left(1+
   \frac{C_JK_\infty}{\overline K_n\max(P,Q)}\right),
 \label{eq:quenched-kernel-star-count}
\end{align}
For $W^{\rm ind}_{K;A,B}$ the last fraction inside parentheses is absent.
For the affine off-diagonal or mixed workload in
\eqref{eq:affine-offdiagonal-transfer}--
\eqref{eq:mixed-affine-centered-transfer}, the analogous bound also has no
collinear fraction.  The bounds hold simultaneously for a predetermined
finite family after summing their right-hand sides.
\end{corollary}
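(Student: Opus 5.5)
The bound \eqref{eq:quenched-kernel-star-volume} comes from Markov's inequality applied to the annealed identity of Proposition~\ref{thm:weighted-angular-transfer}. By \eqref{eq:all-pair-angular-transfer} and \eqref{eq:collinear-correction-bound},
\[
 \E_\cL W_{K;A,B}\le PQ\overline K_n+C_JK_\infty\min(P,Q).
\]
Dividing by $tPQ\overline K_n$ and using $\min(P,Q)/(PQ)=1/\max(P,Q)$ gives exactly the displayed right-hand side. For $W^{\rm ind}_{K;A,B}$ we use \eqref{eq:independent-pair-exact-transfer} instead, so no collinear term appears. The affine off-diagonal and mixed workloads use \eqref{eq:affine-offdiagonal-transfer} and \eqref{eq:mixed-affine-centered-transfer}, which are also exact. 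Markov is legitimate because $K\ge0$ makes every workload nonnegative.

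For \eqref{eq:quenched-kernel-star-count}, we split into three bad events and take a union bound.

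\emph{Count events.} Lemma~\ref{lem:thin-shell-count} gives $\E N_A=P$ and $\Var N_A\le(2+o(1))P$. Chebyshev then gives
\[
 \Prb[N_A<P/2]\le 4\Var N_A/P^2\le C/P,
\]
and likewise $\Prb[N_B<Q/2]\le C/Q$. This is uniform in $R,R'$: the rank-one enumeration in that lemma holds for every $R$, and only its constant needs checking for small $R$. In the affine case, Corollary~\ref{cor:affine-count-moments} gives $\Var=\vol$ exactly. Absorbing constants into $C_J$ yields the term $C_J(1/P+1/Q)$.

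\emph{Workload event.} On the complement of the two count events we have $N_AN_B\ge PQ/4$. Hence $W/(N_AN_B)>4t\overline K_n$ forces $W/(PQ)>t\overline K_n$, whose probability is bounded by the first part. The convention that a ratio with $N_AN_B=0$ is $+\infty$ is harmless, since the case $N_AN_B=0$ lies inside the count events.

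\emph{Predetermined finite family.} The simultaneous statement follows by applying the same union bound across the family, so the failure probability is at most the sum of the individual right-hand sides.

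The only point requiring care is uniformity of the Chebyshev constant in the radius. If Lemma~\ref{lem:thin-shell-count} is stated only for $R/\Rstar$ bounded below, I would instead bound the rank-one sum directly. Those terms involve $q^{-n}$ times a number of numerators $O(1+q/n^2)$ for $n\ge3$, which is summable, so the bound $\Var N_A\le CP$ holds for all $R$. Everything else is routine algebra.
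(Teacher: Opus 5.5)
Your proof is correct and matches the paper's argument. It applies Markov to the annealed identities of Proposition~\ref{thm:weighted-angular-transfer}, uses Chebyshev for the two shell counts, notes that $N_AN_B\ge PQ/4$ off the count events, and finishes with a union bound. The only difference is the source of the count variance: the paper takes it from the $h\equiv1$ case of Lemma~\ref{lem:hilbert-angular-second-moment}, which holds for every $R>0$ with an $R$-independent error $\delta_n$. That lemma therefore already settles the uniformity-in-radius concern you raise, and your fallback rank-one computation reproduces its content.
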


\begin{proof}
The shell-count instances of
Lemma~\ref{lem:hilbert-angular-second-moment} and Chebyshev give the first two
terms.  On $N_A\ge P/2$ and $N_B\ge Q/2$, the displayed normalized workload
is at most $4W_{K;A,B}/(PQ)$.  Markov's inequality and
\eqref{eq:all-pair-angular-transfer}--
\eqref{eq:collinear-correction-bound} give the remaining terms.  Use
\eqref{eq:independent-pair-exact-transfer},
\eqref{eq:affine-offdiagonal-transfer}, or
\eqref{eq:mixed-affine-centered-transfer} for the correction-free versions,
and then take a union bound.
\end{proof}

\begin{corollary}[Common-cap statistics of complete shells]
\label{cor:common-cap-statistics}
Let $A=S_R$, $B=S_{R'}$ be predetermined shells with volumes $P,Q$ and
$R'/R\in J\Subset(0,\infty)$.  For $a,b\in(0,1)$ and $s\in[-1,1]$ define
\begin{align*}
 \mathsf C_n(a)
 &:=\Prb_{\mathbf z\sim\sigma_{n-1}}
       [\langle\mathbf z,\mathbf u\rangle\ge a],\\
 K_{n;a,b}(s)
 &:=\Prb_{\mathbf z\sim\sigma_{n-1}}
       [\langle\mathbf z,\mathbf u\rangle\ge a,\
        \langle\mathbf z,\mathbf v\rangle\ge b],
 \qquad \langle\mathbf u,\mathbf v\rangle=s,
\end{align*}
where $\mathbf u,\mathbf v\in S^{n-1}$; rotational invariance makes these
definitions independent of their choice.  Put
$\Phi_{\cL}^{a,b}(A,B):=W_{K_{n;a,b};A,B}(\cL)/(PQ)$.
Then
\begin{equation}
 0\le \E_{\cL}\Phi_{\cL}^{a,b}(A,B)-\mathsf C_n(a)\mathsf C_n(b)
 \le \frac{C_J\min\{\mathsf C_n(a),\mathsf C_n(b)\}}{\max(P,Q)}.
 \label{eq:common-cap-mean}
\end{equation}
For every $t\ge1$,
\begin{equation}
 \Prb_{\cL}\!\left[
   \Phi_{\cL}^{a,b}(A,B)>t\mathsf C_n(a)\mathsf C_n(b)\right]
 \le\frac1t\left(1+
 \frac{C_J}{\max(P,Q)\max\{\mathsf C_n(a),\mathsf C_n(b)\}}\right).
 \label{eq:common-cap-tail}
\end{equation}
For the mixed affine--centered sum in
\eqref{eq:mixed-affine-centered-transfer}, with $K=K_{n;a,b}$ and
normalized by $PQ$, the expectation over $(\cL,\boldsymbol\tau)$ equals
$\mathsf C_n(a)\mathsf C_n(b)$ and the corresponding tail bound is $1/t$.
The conclusions hold simultaneously for any predetermined finite family
of shell pairs and thresholds after summing the failure bounds.
\end{corollary}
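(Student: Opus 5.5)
The plan is to specialize Proposition~\ref{thm:weighted-angular-transfer} and Corollary~\ref{cor:quenched-weighted-kernel-transfer} to the kernel $K=K_{n;a,b}$. The argument has three steps: first identify the spherical mean $\overline K_n$, then bound $K_\infty$ and $K(\pm1)$ in the collinear correction, and finally read off the tail bound.

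\emph{Identifying the mean.} Take $\mathbf u,\mathbf v$ independent uniform on $S^{n-1}$ and $\mathbf z$ independent uniform. Then
\[
 \overline K_n=\Prb[\langle\mathbf z,\mathbf u\rangle\ge a,\ \langle\mathbf z,\mathbf v\rangle\ge b].
\]
Condition on $\mathbf z$. The two events are then independent, and each has probability $\mathsf C_n(a)$, respectively $\mathsf C_n(b)$, by rotation invariance. Hence $\overline K_n=\mathsf C_n(a)\mathsf C_n(b)$, and \eqref{eq:all-pair-angular-transfer} gives
\[
 \E\Phi=\mathsf C_n(a)\mathsf C_n(b)+\mathcal R/(PQ),
\]
with $\mathcal R\ge0$. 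This is the lower inequality in \eqref{eq:common-cap-mean}.

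\emph{Bounding the kernel values.} Monotonicity of probability gives
\[
 K_\infty\le\min\{\mathsf C_n(a),\mathsf C_n(b)\}.
\]
Then \eqref{eq:collinear-correction-bound} yields $\mathcal R\le C_J\min\{\mathsf C_n(a),\mathsf C_n(b)\}\min(P,Q)$. Dividing by $PQ=\min(P,Q)\max(P,Q)$ gives the upper inequality in \eqref{eq:common-cap-mean}. One point needs a check: \eqref{eq:collinear-correction-exact} involves only $K(\pm1)$. Since $a,b>0$, the value $K(-1)$ vanishes, because $\langle\mathbf z,\mathbf u\rangle\ge a$ and $\langle\mathbf z,-\mathbf u\rangle\ge b$ are incompatible. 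The value $K(1)$ equals $\mathsf C_n(\max(a,b))$, so the generic bound via $K_\infty$ already suffices.

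\emph{Tail bound.} Apply \eqref{eq:quenched-kernel-star-volume} with $\overline K_n=\mathsf C_n(a)\mathsf C_n(b)$ and the bound on $K_\infty$ from the previous step. The ratio $K_\infty/\overline K_n$ is at most
\[
 \frac{\min\{\mathsf C_n(a),\mathsf C_n(b)\}}{\mathsf C_n(a)\mathsf C_n(b)}=\frac{1}{\max\{\mathsf C_n(a),\mathsf C_n(b)\}},
\]
which gives \eqref{eq:common-cap-tail}. Note that $\overline K_n>0$ because $a,b<1$.

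For the mixed affine--centered sum, \eqref{eq:mixed-affine-centered-transfer} gives the exact mean $PQ\,\mathsf C_n(a)\mathsf C_n(b)$, so Markov's inequality gives the tail bound $1/t$. Simultaneity over a predetermined finite family follows from a union bound.

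The only step I expect to need care is the factorization of $\overline K_n$. It relies on the conditional independence given $\mathbf z$, which requires $\mathbf u,\mathbf v$ to be independent and uniform, exactly as in the definition \eqref{eq:angular-kernel-mean}. The remaining steps are bookkeeping of the constants $C_J$.
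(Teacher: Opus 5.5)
Your proposal is correct and follows the paper's own proof. It identifies $\overline K_n=\mathsf C_n(a)\mathsf C_n(b)$ by conditioning on $\mathbf z$, bounds $K_\infty$ by $\min\{\mathsf C_n(a),\mathsf C_n(b)\}$ (the paper notes this value is attained at $s=1$), and substitutes into Proposition~\ref{thm:weighted-angular-transfer}, Corollary~\ref{cor:quenched-weighted-kernel-transfer}, and \eqref{eq:mixed-affine-centered-transfer}. Your extra observation that $K(-1)=0$ is correct but not needed.
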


\begin{proof}
The kernel $K=K_{n;a,b}$ is bounded, nonnegative, and Borel, with
$K_\infty=\min\{\mathsf C_n(a),\mathsf C_n(b)\}$, attained at $s=1$.
Let $\mathbf U,\mathbf V,\mathbf z$ be independent uniform directions.
Conditioning on $\mathbf z$ makes the two cap events independent, so Fubini gives
$\overline K_n=\mathsf C_n(a)\mathsf C_n(b)$.
Substitute these identities into
Proposition~\ref{thm:weighted-angular-transfer} and
Corollary~\ref{cor:quenched-weighted-kernel-transfer}, and use
\eqref{eq:mixed-affine-centered-transfer} for the affine assertion.
\end{proof}

\begin{remark}[Scope of the kernel bounds]
The exact independent-pair identities are direct consequences of the
classical order-two Rogers formula and affine unfolding.  Their role here
is a reusable bounded-kernel interface with an explicit centered collinear
correction.  The resulting upper tails do not imply relative concentration
for every exponentially rare kernel or uniform discrepancy over all caps.
\end{remark}

\begin{table}[t]
\centering\footnotesize
\renewcommand{\arraystretch}{1.14}
\setlength{\tabcolsep}{4pt}
\begin{tabularx}{\textwidth}{@{}>{\raggedright\arraybackslash}p{.22\textwidth}
 X >{\raggedright\arraybackslash}p{.22\textwidth}@{}}
\toprule
Symbol & Meaning & Definition\\
\midrule
$X_n,\widetilde X_n$ & Centered and random-affine lattice probability spaces
 & Section~\ref{sec:setup}\\
$\kappa_n,R^*$ & Unit-ball volume and unit-volume-ball radius
 & Section~\ref{sec:setup}\\
$w_n,S_R,A_R$ & Relative half-width $n^{-2}$, continuum shell, and its lattice points
 & Sections~\ref{sec:setup}--\ref{sec:comparison-framework}\\
$N_R,P_R$; $P,Q$ & Shell count and volume; endpoint and target volumes
 & Sections~\ref{sec:comparison-framework}, \ref{sec:shell-moments}\\
$r_{\cL,R},K_R=\mu_R$ & Ordered representation count and continuum lens volume
 & \eqref{eq:representation-function}; Lemma~\ref{lem:thin-shell-coarea}\\
$U_{\cL,R},\Gamma_{\cL,R}$ & Uniform empirical shell law and its difference law
 & \eqref{eq:empirical-shell-measures}--\eqref{eq:empirical-difference-law}\\
$Z_{\cL,R},\Pi_{\cL,R}$ & Sampled lens mass and normalized comparator
 & \eqref{eq:framework-sampled-comparator}\\
$\ell,h,H$ & $\lceil\log n\rceil$; centered half-order; fixed order-range constant
 & Lemma~\ref{lem:growing-order-legal}\\
$\gamma,T_R$ & Near-unit ratio $1-1/\ell$ and band $S_{\gamma R}$
 & Section~\ref{sec:rogers-engine}\\
$T_{\alpha,R},m_*$ & Band $S_{\alpha R}$ and its minimal lens mean
 & Section~\ref{sec:shell-moments}\\
$\mu_*(R),\omega_R$ & Near-unit minimal lens mean and flattening weight; zero off the band
 & Before Lemma~\ref{lem:fiber-polar}\\
$a_R,a_R^\omega,\bar a_R,a^*$ & Unweighted/weighted endpoint means; near-unit spatial average and general supremum
 & Section~\ref{sec:shell-moments}; Corollary~\ref{cor:centered-unweighted-interface}\\
$D^\omega_{\cL,R,\alpha}$ & Weighted number of partners in a prescribed difference band
 & Proposition~\ref{thm:intrinsic-marked-moments}\\
$\mathcal I_D,q,r,k$ & Rogers summand including its coefficient; denominator, rank, tuple order
 & \eqref{eq:diagram-integral-definition}; Proposition~\ref{thm:rogers-general}\\
$\chi,f$ & Centered-mark indicator ($1$ centered, $0$ affine); free-fiber count $r-\chi$
 & Lemma~\ref{fs-lem:missing-fiber}\\
$\Delta_{\rm lp},\Xi_n$ & Low-parent gap and summed correction factor
 & \eqref{eq:low-parent-gap}, \eqref{eq:diagram-correction-budget}\\
$\mathcal N_\star$ & Shell-scale factor $(4/3)^{n/2}$
 & Section~\ref{sec:rogers-engine}\\
$R_\ell,h_\kappa$ & Standard buffered radius and adjustable logarithmic half-order
 & Theorem~\ref{fs-thm:main-pointwise}; Corollary~\ref{cor:moment-buffer}\\
$\mathcal B_{\rm crit},R_{\rm crit}$ & Critical target band and its outer radius
 & Proposition~\ref{thm:critical-ball-lenses}\\
$T_I$ & Targets whose relative lengths lie in $I$
 & Lemma~\ref{lem:bandwise-l2-approximation}\\
$I_p,\mathcal E,H_2$ & Integrated lens powers, additive energy, collision entropy
 & Lemma~\ref{lem:additive-continuum}; Section~\ref{sec:energy}\\
$M^{(k)}_{\cL,R},M^{(k)}_\sigma$ & Empirical and spherical directional moment tensors
 & Theorem~\ref{thm:linear-degree-tensor-moments}\\
$m_{k,n},\psi,\kappa_*$ & Spherical squared tensor norm, its rate function, and critical degree fraction
 & \eqref{eq:spherical-tensor-norm}--\eqref{eq:critical-tensor-degree}\\
\bottomrule
\end{tabularx}
\medskip
\begin{minipage}{\textwidth}
Parameters in a single proof or lemma are local to that statement:
in particular, $\beta$ is a radius buffer in
Corollary~\ref{cor:moment-buffer} and an error exponent in the angular
theorems; $\kappa$ controls moment order in that corollary and tensor degree
in Theorem~\ref{thm:linear-degree-tensor-moments}.
The Rayleigh quadratic form uses $\tau$, while $t$ counts parents in
the spherical multi-parent bound.
Generic errors $\delta_n,\eta_n$ in the angular inequalities have the
size specified in each statement; they are distinct from the fixed
sequence $\varepsilon_n$ used in $\Xi_n$.
\end{minipage}
\caption{Notation index.  All entries refer back to definitions in the text.}
\label{tab:notation-index}
\end{table}

\end{document}